\PassOptionsToPackage{unicode}{hyperref}
\PassOptionsToPackage{hyphens}{url}
\PassOptionsToPackage{dvipsnames,svgnames,x11names}{xcolor}
\documentclass[
  12pt]{article}

\usepackage{amsmath,amssymb}

\usepackage{mathrsfs}
\usepackage{bm}
\usepackage{amssymb}
\usepackage{multirow} 
\usepackage{makecell}
\usepackage{algorithm}
\usepackage{algorithmicx}
\usepackage{algpseudocode}
\usepackage{color}
\usepackage{float}
\usepackage{setspace}
\usepackage{subfig}
\usepackage{amsthm} 
\theoremstyle{plain}

\newtheorem{theorem}{Theorem}[section]
\newtheorem{lemma}[theorem]{Lemma}
\newtheorem{remark}[theorem]{Remark}
\newtheorem{corollary}[theorem]{Corollary}

\newtheorem{assumption}[theorem]{Assumption}
\DeclareMathOperator*{\supp}{\text{supp}}

\DeclareMathOperator*{\argmin}{\textup{arg\,min}}

\usepackage{iftex}
\ifPDFTeX
  \usepackage[T1]{fontenc}
  \usepackage[utf8]{inputenc}
  \usepackage{textcomp} 
\else 
  \usepackage{unicode-math}
  \defaultfontfeatures{Scale=MatchLowercase}
  \defaultfontfeatures[\rmfamily]{Ligatures=TeX,Scale=1}
\fi
\usepackage{lmodern}
\ifPDFTeX\else  
\fi
\IfFileExists{upquote.sty}{\usepackage{upquote}}{}
\IfFileExists{microtype.sty}{
  \usepackage[]{microtype}
  \UseMicrotypeSet[protrusion]{basicmath} 
}{}
\makeatletter
\@ifundefined{KOMAClassName}{
  \IfFileExists{parskip.sty}{%
    \usepackage{parskip}
  }{
    \setlength{\parindent}{0pt}
    \setlength{\parskip}{6pt plus 2pt minus 1pt}}
}{
  \KOMAoptions{parskip=half}}
\makeatother
\usepackage{xcolor}
\setlength{\emergencystretch}{3em} 
\setcounter{secnumdepth}{5}
\makeatletter
\ifx\paragraph\undefined\else
  \let\oldparagraph\paragraph
  \renewcommand{\paragraph}{
    \@ifstar
      \xxxParagraphStar
      \xxxParagraphNoStar
  }
  \newcommand{\xxxParagraphStar}[1]{\oldparagraph*{#1}\mbox{}}
  \newcommand{\xxxParagraphNoStar}[1]{\oldparagraph{#1}\mbox{}}
\fi
\ifx\subparagraph\undefined\else
  \let\oldsubparagraph\subparagraph
  \renewcommand{\subparagraph}{
    \@ifstar
      \xxxSubParagraphStar
      \xxxSubParagraphNoStar
  }
  \newcommand{\xxxSubParagraphStar}[1]{\oldsubparagraph*{#1}\mbox{}}
  \newcommand{\xxxSubParagraphNoStar}[1]{\oldsubparagraph{#1}\mbox{}}
\fi
\makeatother

\usepackage{longtable,booktabs,array}
\usepackage{calc} 
\usepackage{etoolbox}
\makeatletter
\patchcmd\longtable{\par}{\if@noskipsec\mbox{}\fi\par}{}{}
\makeatother
\IfFileExists{footnotehyper.sty}{\usepackage{footnotehyper}}{\usepackage{footnote}}
\makesavenoteenv{longtable}
\usepackage{graphicx}
\makeatletter
\def\maxwidth{\ifdim\Gin@nat@width>\linewidth\linewidth\else\Gin@nat@width\fi}
\def\maxheight{\ifdim\Gin@nat@height>\textheight\textheight\else\Gin@nat@height\fi}
\makeatother
\setkeys{Gin}{width=\maxwidth,height=\maxheight,keepaspectratio}
\makeatletter
\def\fps@figure{htbp}
\makeatother

\addtolength{\oddsidemargin}{-.5in}%
\addtolength{\evensidemargin}{-.1in}%
\addtolength{\textwidth}{1in}%
\addtolength{\textheight}{1.7in}%
\addtolength{\topmargin}{-1in}
\makeatletter
\@ifpackageloaded{caption}{}{\usepackage{caption}}
\AtBeginDocument{%
\ifdefined\contentsname
  \renewcommand*\contentsname{Table of contents}
\else
  \newcommand\contentsname{Table of contents}
\fi
\ifdefined\listfigurename
  \renewcommand*\listfigurename{List of Figures}
\else
  \newcommand\listfigurename{List of Figures}
\fi
\ifdefined\listtablename
  \renewcommand*\listtablename{List of Tables}
\else
  \newcommand\listtablename{List of Tables}
\fi
\ifdefined\figurename
  \renewcommand*\figurename{Figure}
\else
  \newcommand\figurename{Figure}
\fi
\ifdefined\tablename
  \renewcommand*\tablename{Table}
\else
  \newcommand\tablename{Table}
\fi
}
\@ifpackageloaded{float}{}{\usepackage{float}}
\floatstyle{ruled}
\@ifundefined{c@chapter}{\newfloat{codelisting}{h}{lop}}{\newfloat{codelisting}{h}{lop}[chapter]}
\floatname{codelisting}{Listing}

\makeatother
\makeatletter
\makeatother
\makeatletter
\@ifpackageloaded{caption}{}{\usepackage{caption}}
\@ifpackageloaded{subcaption}{}{\usepackage{subcaption}}
\makeatother

\ifLuaTeX
  \usepackage{selnolig}  
\fi
\usepackage[]{natbib}
\bibliographystyle{agsm}
\usepackage{bookmark}

\IfFileExists{xurl.sty}{\usepackage{xurl}}{} 
\urlstyle{same} 
\hypersetup{
  pdftitle={Title},
  pdfauthor={Author 1; Author 2},
  pdfkeywords={3 to 6 keywords, that do not appear in the title},
  colorlinks=true,
  linkcolor={blue},
  filecolor={Maroon},
  citecolor={Blue},
  urlcolor={Blue},
  pdfcreator={LaTeX via pandoc}}

\newcommand{\anon}{1}


\begin{document}

\def\spacingset#1{\renewcommand{\baselinestretch}%
{#1}\small\normalsize} \spacingset{1}


\if1\anon
{
  \title{\bf Nonparametric Prior Learning in Differential \\
  	\vskip 0.3 cm
  	Equation Modeling}
  \author{
    Junxiong Jia \\
    School of Mathematics and Statistics, Xi'an Jiaotong University\\
    and \\
    Deyu Meng, Zongben Xu\\
    School of Mathematics and Statistics, Xi'an Jiaotong University\\
    Pazhou Laboratory (Huangpu)\\
    and \\
    Fang Yao\thanks{Junxiong Jia is the first author. Fang Yao is the corresponding author, E-mail: fyao@math.pku.edu.cn. This research is partially supported by the National Natural Science Foundation of Chain (No. 12322116, 12271428, 12326606, 12292981), the National Key Research and Development Program of China (No. 2022YFA1004100, 2022YFA1003800), the New Cornerstone Science Foundation through Xplorer Prize, the Major Key Project of PCL (No. PCL2024A06), the Tianyuan Fund for Mathematics of the National Natural Science Foundation of China (No. 12426105), the Major projects of the National Natural Science Foundation of Chain (No. 12090021, 12090020), and the Social Science Foundation of the Ministry of Education of China (No. 24YJC790233), Fundamental Research Funds for the Central Universities, Peking University, and LMEQF.} \\
    Department of Probability and Statistics, \\
    Center for Statistical Science, Peking University
    }
    \date{}
  \maketitle
} \fi

\if0\anon
{
  \bigskip
  \bigskip
  \bigskip
  \begin{center}
    {\LARGE\bf Nonparametric Prior Learning in Differential \\
  	\vskip 0.3 cm
  	Equation Modeling}
  \end{center}
  \medskip
} \fi

\bigskip
\begin{abstract}
This paper addresses Bayesian inference related to partial differential equations (PDEs), particularly nonparametric regression constrained by PDEs. To effectively encode prior information, we propose a novel framework that learns a prediction function of the prior distribution from historical training datasets. We introduce hyper-prior and hyper-posterior distributions and derive a generalization error estimate, which accommodates data-dependent priors by extending the concept of differential privacy. Some mild conditions are given to validate the error estimate, where various typical PDEs such as diffusion and Darcy flow equations can be integrated. We thus formulate an infinite-dimensional optimization problem to obtain the point estimate of the hyper-posterior. Numerical examples demonstrate the performance of our proposed method in learning the prediction function of priors.
\end{abstract}

\noindent%
{\it Keywords:}
nonparametric Bayesian inference;
data-dependent prior;
learning prediction function;
inverse problems of PDEs
\vfill

\newpage
\spacingset{1.2} 

\section{Introduction}

Estimating function parameters in partial differential equations (PDEs) from observed data is a fundamental problem in various scientific and engineering fields. Such problems can be framed as regression tasks constrained by PDEs. From a deterministic perspective, these inverse problems have been thoroughly studied in the mathematical literature \citep{Engl1996Book}. Recently, statistical approaches to efficient parameter estimation—particularly Bayesian inference—have gained significant attention \citep{Cotter2013SS, Oates2019JASA}. 

Unlike deterministic methods, Bayesian inference provides a general framework for addressing uncertainties in inverse problems \citep{Arridge2019ActaNum, Stuart2010AN}, originally developed in finite-dimensional spaces \citep{Kaipio2004Book}. However, finite-dimensional theories often fail to characterize the continuous nature of inverse problems of PDEs. Thus, this work focuses on recent advances in Bayesian inversion for infinite-dimensional (non-parametric) spaces, as detailed in \cite{Dashti2017} and \cite{Stuart2010AN}.  Bayesian inverse problems face two key challenges: constructing effective priors and developing computationally efficient algorithms. Effective priors are essential for algorithmic efficiency, acting as regularizers and encoding prior knowledge of model parameters. We now briefly summarize two research directions for constructing effective priors.  

In constructing structured priors, a common approach uses convergent series expansions with coefficients from finite-dimensional distributions, yielding uniform and Gaussian priors \citep{Dashti2017}. These have significantly contributed to the advancement of sequential Monte Carlo algorithms \citep{Beskos2015SC} and full waveform inversion \citep{Dunlop2021JUQ}. For further insights of Gaussian measures in Bayesian inversion, see \cite{Tan2013SISC}, \cite{Dashti2013IP}, and referenced works.  
The Besov prior, valued for edge preservation \citep{Lassas2009IPI}, often employs Haar wavelet bases, which can impose restrictive assumptions \citep{Arridge2019ActaNum}. As an alternative, \cite{Markkanen2019JIIP} proposed the Cauchy difference prior. While these priors characterize the smoothness of the function parameters, they may not fully reflect specific inverse problem structures.  

Optimal posterior contraction in infinite-dimensional settings presents unique challenges, as many priors fail to ensure posterior consistency \citep{Arridge2019ActaNum}. Thus, prior-induced errors may persist even with infinite amount of data. For linear forward operators with Gaussian priors and noise, consistency is well-established \citep{Knapik2016,Szabo2015AS}, achieving minimax contraction rates when prior regularity matches the true function.  In nonlinear cases, tail conditions on prior measures were proposed in \cite{Vollmer2013IP}, and recent studies applying measure concentration theory have derived minimax-optimal posterior contraction rates \citep{Nickl2017AoS,Nickl2020JEMS}. For detailed insights into posterior contraction in nonlinear inverse problems, see \cite{Monard2019AoS}, \cite{Nickl2022LectureNotes}, and cited works. As summarized in \cite{Arridge2019ActaNum}, the findings on posterior consistency and optimal contraction rates underscore the regularizing characteristics of the prior measure. However, research on selecting appropriate priors for fixed data and specific noise levels remains limited.  

The challenge in creating effective prior measures in infinite-dimensional spaces is highlighted due to their complexity, which is akin to manually designed features in machine learning. The paper draws a parallel with deep learning's ability to outperform traditional feature engineering by automatically extracting features from data \citep{Goodfellow2016Book}, suggesting a need for data-driven methods to learn prior measures. Despite numerous studies focusing on learning regularizers from data, there is a noted limitation in equating regularizers to probability measures, which hinders a direct connection to Bayesian inference \citep{Dashti2013IP}. A bilevel optimization framework has been proposed for learning regularizers, with a focus on parameterized regularizers that involve derivatives or filters \citep{Haber2003IP}. This approach has been extended to non-smooth regularizers, primarily addressing optimization issues such as the existence of optimal solutions, rather than learning theory \citep{Antil2020IP, Reyes2016JMIV}. Recent developments in this field incorporate deep learning techniques, including generative adversarial networks \citep{Lunz2018NIPS} and auto-encoder-based methods \citep{Li2020IP}. However, as noted by \cite{Afkham2021IP}, the efficacy of estimated regularizer parameters in being optimally averaged can be diminished when dealing with highly complex distributions of model parameters.

\subsection{Our contribution}

In this study, we reframe the inverse problem as a constrained regression problem, influenced by forward processes like PDEs. This perspective casts model parameter estimation as a machine learning problem, treating the parameters as components of an unknown regression operator. Consequently, learning prior measures emerges as a meta-learning problem, akin to methods like model-agnostic meta-learning \citep{Finn2018ICML}, albeit with distinct techniques and contexts. Recent extensions of the probably approximately correct (PAC)-Bayesian learning theory to meta-learning \citep{Rothfuss2021PMLR,Rezazadeh2022ArXiv,Riou2023ArXiv,Rothfuss2023JMLR} have primarily targeted data-independent priors, which capture shared hyperparameters across different learning tasks. 

{\color{black}Inspired by \cite{Shu2021JMLR}, we aim to develop a general method for learning prediction functions from historical inverse task datasets. The functions should generate prior measures tailored to a new single inverse problem. To this end, we construct a data-dependent prior learning method by rigorously extending PAC-Bayesian theory to infinite-dimensional settings. }Unlike traditional machine learning, Bayesian inversion for PDEs employs a fully nonparametric (infinite-dimensional) approach \citep{Stuart2010AN,Nickl2022LectureNotes}, with both unknown parameters and observed data in infinite-dimensional function spaces. Formulating suitable PAC-Bayesian theory here is challenging due to issues like absent classical density functions, PDE regularity, and singularities of Gaussian measures. Standard machine learning prior measures—such as the Gaussian $N(0,\sigma^2 I)$ with $\sigma^2 \in \mathbb{R}^{+}$—are ill-suited to this context. The principal contributions can be summarized as follows:  

\begin{enumerate}
	\item Inspired by the studies of empirical Bayesian approach \citep{Szabo2015AS} and investigations of meta-learning \citep{Shu2021JMLR}, we introduce the prediction function of the prior probability measures, i.e., a function mapping from the data space to the space of prior measures. By generalizing the classical concept of differential privacy, we prove a general PAC-Bayesian generalization estimate for learning data-dependent prior probability measures of the inverse problems of PDEs.
	\item We establish general conditions for forward PDE operators, encompassing both diffusion and Darcy flow equations. Utilizing estimations of PDEs, we derive specific generalization bounds under these conditions, thereby validating the sub-Gaussian assumption of loss functions, characterized by the negative logarithm of the likelihood function. To our knowledge, this represents a novel approach not previously explored in the realm of nonparametric Bayesian regression constrained by PDEs.
	\item Although the derived estimate may not be optimal, we formulate an infinite-dimensional optimization problem that avoids the bilevel structure prevalent in prior learning studies. This approach is thus more computationally tractable. The proposed learning algorithm is rigorously defined on infinite-dimensional spaces, consistent with the principles of infinite-dimensional sampling algorithms (see \cite{Cotter2013SS}). We apply these algorithms to both backward diffusion (linear) and Darcy flow (nonlinear) problems. The numerical results not only validate the theoretical insights but also demonstrate the practical efficacy of the proposed methods.
\end{enumerate}

The paper unfolds as follows: Section \ref{SectionLearningPriorMeasure} consists of three parts. Subsection \ref{SubsectionGeneralForm} introduces the general framework. Subsection \ref{GeneralTheory} develops a PAC-Bayesian generalization bound. Subsection \ref{SectionDataDependentPrior} presents refined bounds with data-dependent priors. Section \ref{SectionApplication} derives specific generalization bounds under general forward operator conditions. Section \ref{SectionLearningAlgorithms} constructs learning algorithms based on these bounds, applied to the Darcy flow problem in Section \ref{SectionNumerics}. Section \ref{SectionConcludion} discusses potential extensions for future research. Additional analyses, detailed proofs, and code implementations are provided in the online supplement materials.

\section{Learning Theory of Prior Measures}\label{SectionLearningPriorMeasure}
In this section, we develop PAC-Bayesian theory for the learning of data-dependent prior measures in a nonparametric Bayesian inversion approach.

\subsection{Bayesian formulation}\label{SubsectionGeneralForm}
This subsection establishes the fundamental framework of the study. {\color{black}Consider a forward operator $\mathcal{G}:\mathcal{U}\to\tilde{\mathcal{H}}$, where $\mathcal{U}$ and $\tilde{\mathcal{H}}$ are separable Banach spaces. Data follows the model:  
\begin{align}\label{problem1}
	y_j = \mathcal{L}_{x_j}(\mathcal{G}(u)) + \eta_j, \quad\forall \,j=1,\ldots,m,
\end{align}  
with $u$ as the model parameter, $x_j\in\mathcal{X}$ denoting measurement points or input functions, $y_j\in\mathcal{Y}$ as observed data, and $\eta_j$ representing independent identically distributed (i.i.d.) noise. Introduce another separable Banach space $\mathcal{H}\subset \mathcal{Y}$, where $\mathcal{X}$ and $\mathcal{Y}$ are separable Banach spaces and $\mathcal{L}_{x_j}:\tilde{\mathcal{H}}\to\mathcal{H}$ is a bounded linear operator dependent on $x_j$.} Define $\bm{y} = (y_1,\cdots,y_{m})^T$, $\bm{\eta} = (\eta_1,\cdots, \eta_{m})^T$, $\bm{x} = (x_1,\cdots,x_{m})^T$, so the model (\ref{problem1}) compacts to  
$\bm{y} = \mathcal{L}_{\bm{x}}\mathcal{G}(u) + \bm{\eta},$  
where $\mathcal{L}_{\bm{x}}\mathcal{G}(u) := (\mathcal{L}_{x_1}(\mathcal{G}(u)),\ldots,\mathcal{L}_{x_{m}}(\mathcal{G}(u)))^T$. For inverse problems, assume a true parameter $u^{\dagger}$, leading to the forward model  
$\bm{y} = \mathcal{L}_{\bm{x}}\mathcal{G}(u^{\dagger}) + \bm{\eta}.$  
Detailed examples (linear and nonlinear problems) are given in Section \ref{SectionApplication} and Subsection A.7 
of the supplement. In several subsequent sections, we instead focus on separable Hilbert spaces. 

Throughout this work, let $\mathcal{P}(\mathcal{Y})$ denote the set of all probability measures on a separable Banach space $\mathcal{Y}$. In Bayesian inverse theory, the unknown parameter $u$ is a random element under a prior measure $\mathbb{P}\in\mathcal{P}(\mathcal{U})$. The posterior measure $\mathbb{Q}\in\mathcal{P}(\mathcal{U})$ is defined by:  
\begin{align}\label{BayesFormula1}
	\frac{d\mathbb{Q}}{d\mathbb{P}}(u) = \frac{1}{Z_m}\exp\left( - \sum_{j=1}^{m}\Phi(u;y_j)\right),
\end{align}  
where $\Phi(u;y_j)$ is the potential function from the likelihood, and $Z_{m}$ is the normalizing constant. For a fixed $u$, data $\bm{y}$ depends on $\bm{x}$ and $\bm{\eta}$, so we also write the potential as $\Phi(u;x_j,\eta_j)$ for $j=1,\ldots,m$. Conditions on $\mathcal{G}$ for formula (\ref{BayesFormula1}) are reserved for Section~\ref{SectionApplication}, keeping fundamental concepts accessible to all readers..  

We consider a joint probability measure $\mathbb{D}(dx,dy) := \mathbb{D}_1(dx)\mathbb{D}_2(x, u^{\dagger}, dy)$, where $\mathbb{D}_1$ and $\mathbb{D}_2$ are probability measures on $\mathcal{X}$ and $\mathcal{Y}$, respectively. Define $\mathcal{Z} := \mathcal{X}\times\mathcal{Y}$, so the random element $z := (x,y)$ satisfies $z\sim\mathbb{D}\in\mathcal{P}(\mathcal{Z})$. The dataset $S = \{z_j = (x_j,y_j)\}_{j=1}^{m}$ consists of i.i.d. samples from $\mathbb{D}$ (denoted $S\sim\mathbb{D}^m$). The measure $\mathbb{D}_2$ depends on the true parameter $u^{\dagger}$, $\mathbb{D}_1$, and the noise distribution $\mathbb{D}_3$ (a probability measure on $\mathcal{Y}$), with $\{x_j\}_{j=1}^{m} \sim \mathbb{D}_1$. For the Darcy flow model detailed in Section~\ref{SectionApplication}, the set $\{x_j\}_{j=1}^{m}$ are sparse observed points, often assumed evenly distributed over the domain, see \cite{Dunlop2016SC}. Taking $\mathbb{D}_1$ as a uniform distribution ensures these points are statistically evenly spaced within the domain of the Darcy flow problem \citep{Monard2021CPAM}.

\subsection{General theory}\label{GeneralTheory}

We assume $n$ measurement datasets $\{S_1, \dots, S_n\}$, each generated by a distinct parameter $u_i^\dagger$, such that  
$y_{ij} = \mathcal{L}_{x_{ij}}\mathcal{G}(u_i^\dagger) + \eta_{ij}$  
for $i=1,\ldots,n$ and $j=1,\ldots,m_i$. Here, $\{u_i^\dagger\}_{i=1}^n$ are $n$ background true parameters, with dataset $S_i$ defined as  
$S_i = \{(x_{i1}, y_{i1}),\ldots,(x_{im_i}, y_{im_i})\}$ for all $i=1,\ldots,n$.  Further assume the background true parameters are random elements sampled from a probability measure $\mathscr{E} \in \mathcal{P}(\mathcal{U})$. This parallels the ``environment'' concept in meta-learning \citep{Maurer2005JMLR}.  

Observe that $\mathbb{D}(dx, dy) = \mathbb{D}_2(x, u, dy)\mathbb{D}_1(dx)$ reveals a correspondence between each $u \sim \mathscr{E}$ and a measure $\mathbb{D} \in \mathcal{P}(\mathcal{Z})$.
Based on these observations, we can introduce a probability measure $\mathcal{T}$ defined on $\mathcal{P}(\mathcal{Z})\times\mathbb{Z}^{+}$ such 
that $(\mathbb{D}, m)\sim\mathcal{T}$ corresponds to a parameter $u$ sampled from $\mathscr{E}$ and a positive integer $m$ sampled from some probability distribution defined on $\mathbb{Z}^+$. 
With these notations, the data $S_i$ can be recognized as a sample of $\mathbb{D}_i^{m_i}$ 
with i.i.d. samples $(\mathbb{D}_i, m_i)\sim\mathcal{T}$ for $i=1,\ldots,n$. 

{\color{black}Assume the prior measure $\mathbb{P}$ is parameterized by $v \in \mathcal{V}$ (a separable Banach space), so that $\mathbb{P} = \mathbb{P}(\cdot; v)$. Define a parameterized mapping $f(\cdot; m_i, \theta): \mathcal{Z}^{m_i} \to \mathcal{V}$ with $\theta \in \Theta$ (a separable Banach space). Since $m_i$ is irrelevant to the following discussions, we often omit it and just use $f(\cdot;\theta)$. Here, we define $S_i \mapsto f(S_i;\theta)$ as the \emph{prediction function} of the prior measure. For $\theta$, we consider two probability measures: the hyper-prior $\mathscr{P} \in \mathcal{P}(\Theta)$ and hyper-posterior $\mathscr{Q} \in \mathcal{P}(\Theta)$. }Our goal is to develop algorithms that integrate the information encoded in $\{S_1, \ldots, S_n\}$ with the hyper-prior $\mathscr{P}$ to produce the hyper-posterior $\mathscr{Q}$. This measure $\mathscr{Q}$ should be capable of yielding a meaningful prior measure $\mathbb{P}$.

\begin{remark}
	If $\mathcal{V} = \Theta$ and $f$ is a constant mapping that maps every $S \in \mathcal{Z}^{m_i}$ to $\theta$, then we can consider the prior measure $\mathbb{P} = \mathbb{P}(\cdot; \theta)$, where $\mathscr{P}$ and $\mathscr{Q}$ are the hyper-prior and hyper-posterior measures for the parameter $\theta$, respectively.
\end{remark}

{\color{black}
	\begin{remark} 
		Once the hyper-posterior is fixed, the data-dependent base prior $\mathbb{P}(\cdot;f(S;\theta))$ relies on a single dataset $S$ for a newly encountered inverse problem, consistent throughout this paper. After training, parameters $\theta\sim\mathscr{Q}$ depend on all historical datasets $\{S_i\}_{i=1}^{n}$. Conceptually, $\mathscr{Q}$ extracts information from the data distribution $\mathbb{D}$ or environment $\mathcal{T}$.  
	\end{remark}
	
	\begin{remark}
		For simplicity, we can model $\mathbb{P}$ as a Gaussian measure $\mathcal{N}(v, \mathcal{C}_0)$ defined on $\mathcal{U}$, parameterized by a learnable mean $v$ and fixed covariance $\mathcal{C}_0$. To ensure equivalence across means $v$, $\mathcal{V}$ is taken to be the Cameron-Martin space of $\mathcal{C}_0$. The mapping $f$ could be a neural network with parameters $\theta \in \Theta$. A sophisticated example is in Subsection \ref{SectionDataDependentPrior}.  
	\end{remark}
}

To validate our theoretical results, define a loss function $\ell: \mathcal{U}\times\mathcal{Z}\to\mathbb{R}$ and denote the expected error under $\mathbb{D}$ as $\mathcal{L}(u, \mathbb{D}) := \mathbb{E}_{z\sim\mathbb{D}}\ell(u, z)$. For inverse problems, since the true distributions of $u$ and $\bm{\eta}$ are unknown, we define the empirical error  
$\hat{\mathcal{L}}(u, S) := \frac{1}{m}\sum_{j=1}^{m}\ell(u, z_j).$  
The Gibbs error and its empirical counterpart are given by  
$\mathcal{L}(\mathbb{Q}, \mathbb{D}) := \mathbb{E}_{u\sim\mathbb{Q}}\mathcal{L}(u, \mathbb{D}) \text{ and } \hat{\mathcal{L}}(\mathbb{Q}, S) := \mathbb{E}_{u\sim\mathbb{Q}}\hat{\mathcal{L}}(u, S).$  
To distinguish hyper-prior/hyper-posterior from base prior/posterior, refer to (\ref{BayesFormula1}) as the base prior and base posterior. Since $\mathbb{Q}$ depends on $S$ and $\mathbb{P}$, denote it as $\mathbb{Q}(S,\mathbb{P})$ for clarity.  
Inspired by the investigations of meta-learning \citep{Shu2021JMLR,Rothfuss2021PMLR}, the performance of the hyper-posterior is measured via the \emph{transfer-error}:  
$\mathcal{L}(\mathscr{Q}, \mathcal{T}) := \mathbb{E}_{\theta\sim\mathscr{Q}}\left[ 
\mathbb{E}_{(\mathbb{D},m)\sim\mathcal{T}}\left[ 
\mathbb{E}_{S\sim\mathbb{D}^m}\mathcal{L}(\mathbb{Q}(S, \mathbb{P}_{S}^{\theta}), \mathbb{D})
\right]\right],$
where $\mathbb{P}_{S}^{\theta} := \mathbb{P}(\cdot; f(S;\theta))$. While the transfer error is unknown in practice, we can estimate it using the empirical \emph{multi-task error} defined by
$\hat{\mathcal{L}}(\mathscr{Q}, S_1,\ldots,S_n) := \mathbb{E}_{\theta\sim\mathscr{Q}}\left[ 
\frac{1}{n}\sum_{i=1}^n \hat{\mathcal{L}}(\mathbb{Q}(S_i, \mathbb{P}_{S_i}^{\theta}), S_i)
\right].$

Let us denote $D_{\text{KL}}(\cdot || \cdot)$ be the usual Kullback-Leibler (KL) divergence of two probability measures. Then, we provide an upper bound on the true transfer error $\mathcal{L}(\mathscr{Q}, \mathcal{T})$ in terms of the empirical multi-task error $\hat{\mathcal{L}}(\mathscr{Q}, S_1,\ldots,S_n)$ plus some general complexity terms. 

\begin{theorem}\label{PAC_Bounds_Theorem1}
	Given the data space $\mathcal{Z}=\mathcal{X}\times\mathcal{Y}$ where $\mathcal{X}$ and $\mathcal{Y}$ are separable 
	Banach spaces, parameter space $\mathcal{U}$ and $\Theta$ are separable Banach spaces. 
	Assume the loss function $\ell: \mathcal{U}\times\mathcal{Z}\rightarrow\mathbb{R}$ be a measurable function.
	For $i=1,\ldots,n$, let $\mathbb{Q}(S_i,\mathbb{P}_{S_i}^{\theta})$ be the $i$-th base 
	posterior measure, $\mathscr{P}\in\mathcal{P}(\Theta)$ be some fixed hyper-prior and $\lambda,\gamma > 0$. 
	Let us define $\mathbb{E}f := \mathbb{E}_{(\mathbb{D}_1,m_1)\sim\mathcal{T}}\cdots\mathbb{E}_{(\mathbb{D}_n,m_n)\sim\mathcal{T}}
	\mathbb{E}_{S_1\sim\mathbb{D}_1^{m_1}}\cdots\mathbb{E}_{S_n\sim\mathbb{D}_n^{m_n}}f$ for every measurable function $f$. 
	For any confidence level $\delta\in(0,1]$, we have 
	\begin{align}
			\mathbb{P}\Bigg(\forall \mathscr{Q}\in\mathcal{P}(\Theta), \,
			\mathcal{L}(\mathscr{Q}, & \mathcal{T}) \leq \, \hat{\mathcal{L}}(\mathscr{Q},S_1,\ldots,S_n) +  
			\left( \frac{1}{\lambda} + \frac{1}{\gamma} \right)D_{\text{KL}}(\mathscr{Q} || \mathscr{P})  \nonumber\\
			& + \frac{1}{\gamma}\sum_{i=1}^{n}\mathbb{E}_{\theta\sim\mathscr{Q}}D_{\text{KL}}
			\big(\mathbb{Q}(S_i,\mathbb{P}_{S_i}^{\theta}) || \mathbb{P}_{S_i}^{\theta}\big) \label{GeneralThmInequality1}\\
			& \, + \frac{1}{\sqrt{n}}\ln\mathbb{E}\big[\exp\big(\sqrt{n}\Pi^1(\gamma) + \sqrt{n}\Pi^2(\lambda)\big)\big]
			+ \frac{1}{\sqrt{n}}\ln\frac{1}{\delta}\Bigg) \geq 1-\delta,    \nonumber
	\end{align}
	where $\mathbb{P}$ is the probability with respect to the datasets $\{S_i\}_{i=1}^n$. 
	In estimate (\ref{GeneralThmInequality1}), the notations
	\begin{align*}
		& \Pi^{1}(\gamma) = \frac{1}{\gamma}\!\ln\mathbb{E}_{\theta\sim\mathscr{P}}\mathbb{E}_{u_1\sim\mathbb{P}_{S_1}^{\theta}}
		\cdots\mathbb{E}_{u_n\sim\mathbb{P}_{S_n}^{\theta}}\exp\left( 
		\frac{\gamma}{n} \sum_{i=1}^{n}\frac{1}{m_i}\sum_{j=1}^{m_i}\left[ 
		\mathbb{E}_{z\sim\mathbb{D}_{i}}\ell(u_i, z) -  \ell(u_i, z_{ij})
		\right]\!\right), \\
		& \Pi^2(\lambda) = \frac{1}{\lambda}  \ln\mathbb{E}_{\theta\sim\mathscr{P}}\exp\left(
		\frac{\lambda}{n} \sum_{i=1}^{n}\left[ 
		\mathbb{E}_{(\mathbb{D}, m)\sim\mathcal{T}}\mathbb{E}_{S\sim\mathbb{D}^{m}}\!\left[ \mathcal{L}(\mathbb{Q}(S, \mathbb{P}_{S}^{\theta}), \mathbb{D}) \right]  -  
		\mathcal{L}(\mathbb{Q}(S_i,\mathbb{P}_{S_i}^{\theta}),\mathbb{D}_i) 
		\right]	\right).
	\end{align*}
\end{theorem}

\begin{remark}\label{RemarkMeaningParam} {\color{black}
		Here we consider \( n \) i.i.d. historical datasets from \( n \) inverse tasks. For the \( n \) base inverse problems, the parameter \( \gamma \) is defined at the base level (not the task level), taking the form \( \gamma = n\beta \), where \( \beta \) is from Theorem A.1 
		(a classical PAC-Bayes bound) in the supplementary materials. Here, each task has a different number of measurement points. We define $\bar{m} = \left(\frac{1}{n}\sum_{i=1}^n\frac{1}{m_i}\right)^{-1}$ and take $\beta = \bar{m}$ or $\sqrt{\bar{m}}$ (see Corollary \ref{PACDependentThmSubGaussian}). In contrast, \( \lambda \) is characterized at the task level. Intuitively, each task-level inverse task can be viewed as a data pair. Analogous to classical settings, \( \lambda \) may be chosen as \( n \) or \( \sqrt{n} \). }
\end{remark}

In inequality (\ref{GeneralThmInequality1}), the term $\ln\mathbb{E}\big[\exp\big(\sqrt{n}\Pi^1(\gamma) + \sqrt{n}\Pi^2(\lambda)\big)\big]$ is a log moment generating function that quantifies how much the empirical multi-task error deviates from the transfer-error. It is crucial to derive an explicit estimate of this term to obtain a meaningful bound.

\subsection{Data-dependent prior}\label{SectionDataDependentPrior}

The conventional assumption posits that prior measures are independent of observed data. However, specifying prior parameters remains challenging in many practical inverse problems, often relying on the practitioner’s expertise. To mitigate this subjectivity, empirical Bayes methods adjust these parameters using dataset $S$ \citep{Szabo2015AS}. Building on this idea, we extend the prior measure $\mathbb{P}_{S}^{\theta}$ to be data-dependent in Subsection \ref{GeneralTheory}. Unlike empirical Bayes, our approach trains a probabilistic model to directly map datasets to hyper-parameters of the prior, analogous to non-Bayesian methods \citep{Shu2021JMLR}.

{\color{black}A PAC-Bayesian learning theory for meta-learning with data-dependent priors \citep{Liu2021Neurocomputing} has been developed for finite-dimensional settings. The data-dependency there arises from data separation, whereas our framework introduces a prediction function (not considered in previous work). Previous studies on data-dependent priors (not for meta-learning) leverage differential privacy \citep{Dziugaite2018NIPS}, but this approach is too restrictive for the infinite-dimensional Gaussian measures widely used in inverse problems. We address this limitation by proposing a new assumption inspired by differential privacy. The intuition of Assumption \ref{DataDependentAssumption} is that discrepancies between data-dependent prior measures can be controlled, enabling the use of techniques for data-independent priors. For further discussion on differential privacy, see \citep{Dimitrakakis2016JMLR,Dwork2015NIPS,Hall2013JMLR} and related works.}

In the following, for convenience, we define an induced measure $\mathbb{D}_{\mathcal{T}} \in\mathcal{P}(\mathcal{Z}^m)$ 
as in \cite{Maurer2005JMLR}, by $\mathbb{D}_{\mathcal{T}}(dS) = \mathbb{E}_{(\mathbb{D},m)\sim\mathcal{T}}\left[\mathbb{D}^m(dS)\right]$. The corresponding expectation for a measurable function $f$ on $\mathcal{Z}^m$ is then
$\mathbb{E}_{S\sim\mathbb{D}_{\mathcal{T}}}[f] = \mathbb{E}_{(\mathbb{D},m)\sim\mathcal{T}}\left[\mathbb{E}_{S\sim\mathbb{D}^m}[f]\right]$.

\begin{assumption}\label{DataDependentAssumption}
	Let $S\in\mathcal{Z}^m$, $S'\in\mathcal{Z}^{m'}$, $\theta\in\Theta$, and $u\in\mathcal{U}$. We assume that the probability measure 
	$\mathbb{P}_{S}^{\theta}$ is absolutely continuous with respect to the measure $\mathbb{P}_{S'}^{\theta}$ such that 
	$$\frac{d\mathbb{P}_{S}^{\theta}}{d\mathbb{P}_{S'}^{\theta}}(u) = \Psi(S,S',u,\theta),$$
	where the function $\Psi(S,S',u, \theta)$ satisfies 
	$\tilde{\Psi}_{E}:=\mathbb{E}_{S'\sim\mathbb{D}_{\mathcal{T}}}
	\mathbb{E}_{u\sim\mathbb{P}_{S'}^{\theta}}\mathbb{E}_{S\sim\mathbb{D}_{\mathcal{T}}}\Psi(S,S',u,\theta)^2 \leq \Psi_{E} < +\infty$
	with $\Psi_{E}$ being a positive constant independent of $m$, $m'$, and $\theta$.
\end{assumption}

{\color{black}
	To show that Assumption \ref{DataDependentAssumption} holds under certain conditions, consider a Gaussian prior $\mathbb{P} := \mathcal{N}(0, \mathcal{C}_0)$, where $\mathcal{C}_0$ is a positive-definite, symmetric trace-class operator. Let $\mathcal{U}$ be a separable Hilbert space with orthogonal basis $\{e_k\}_{k=1}^{\infty}$. Assume the prior covariance operator has spectral decomposition $\mathcal{C}_0 = \sum_{k=1}^{\infty} \lambda_k \, e_k \otimes e_k$ with $\lambda_1 \geq \lambda_2 \geq \cdots > 0.$  Define $\mathcal{U}^2$ as the Hilbert scale induced by $\mathcal{C}_0$ (see \cite{Engl1996Book} or Subsection \ref{SubsectionUnbounded}). Before proceeding, we state the following assumptions, which will be employed in the subsequent analysis and examples.  
	
	\begin{assumption}\label{HyperpriorAssumption}
		{\color{black}Consider the parameter space $\Theta = \Theta_1 \times \Theta_2$, with $\Theta_1$ a separable Hilbert space and $\Theta_2 = \mathbb{R}^{N_c}$. Assume the hyper-prior measure $\mathscr{P} = \mathscr{P}_1 \otimes \mathscr{P}_2$, where $\mathscr{P}_1$ is a probability measure on $\Theta_1$, and $\mathscr{P}_2$ is a probability measure on $\mathbb{R}^{N_c}$ with compact support $[\theta_{\text{min}},\theta_{\text{max}}]^{N_c}$. Here, the parameters $\theta_{\text{min}},\theta_{\text{max}}\in\mathbb{R}$ ($\theta_{\text{min}} < \theta_{\text{max}}$) are fixed real numbers. }
	\end{assumption} 
	
	We assume the above Assumption \ref{HyperpriorAssumption} holds true and introduce the mapping $f(S; \theta) := \big( f_m(S; \theta_1), \, \theta_2 \big) \in \mathcal{V} := \mathcal{U}^2 \times \mathbb{R}^{N_c}$. The data-dependent learned prior measure is specified as $\mathbb{P}_S^{\theta} := \mathcal{N} \Big( f_m(S; \theta_1), \, \mathcal{C}_0(\theta_2) \Big)$, where the covariance operator  
	\begin{align}\label{learnedCov}
		\mathcal{C}_0(\theta_2) = \sum_{k=1}^{N_c} e^{\theta_{2k}} \, e_k \otimes e_k + \sum_{k=N_c+1}^{\infty} \lambda_k \, e_k \otimes e_k
	\end{align}  
	with $N_c \in \mathbb{N}^+$ is a predefined constant. For the mean function $f_m(S; \theta_1)$, we constrain all admissible $f_m(S; \theta_1)$ to the ball $B_{\mathcal{U}^2}(R_u) \subset \mathcal{U}^2$ defined by $B_{\mathcal{U}^2}(R_u) := \Big\{ f \in \mathcal{U}^2 \, \big| \, \| \mathcal{C}_0^{-1} f \|_{\mathcal{U}} \leq R_u \Big\}$, where $R_u > 0$ being a sufficiently large prespecified constant. Under these assumptions, taking $0 < \epsilon < \min\left\{ \frac{1}{4}e^{-\theta_{\text{max}}}, \frac{1}{4\lambda_1} \right\}$ yields the finiteness of $\tilde{\Psi}_E$:
	\begin{align}\label{EstimatePsiE}
		\tilde{\Psi}_E \leq \Psi_{E} = \exp\left(\frac{(4+8\epsilon^2)R_u^2}{\epsilon}\right)\frac{N_c}{\sqrt{1-4\epsilon e^{\theta_{\text{max}}}}}\prod_{k=N_c+1}^{\infty}(1-4\epsilon \lambda_k)^{-\frac{1}{2}} < +\infty.
	\end{align}  
	Detailed explanations and the derivation of (\ref{EstimatePsiE}) are provided in Subsection A.4
	of the supplementary materials. To refine Theorem \ref{PAC_Bounds_Theorem1}, we introduce two additional assumptions.  
}

\begin{assumption}[sub-Gaussian assumption of $\Pi^1$]\label{UnboundedLossAssumptionSubGaussian1}
	For $i=1,\ldots,n$, we assume that the random variables $\mathcal{L}(u_i,\mathbb{D}_i) - \ell(u_i, z_i)$
	are sub-Gaussian with variance factor $s_{\text{I}}^2$. Specifically, for some $\tilde{\gamma} > 0$, the following inequality holds: 
	$$V_i^1 := \mathbb{E}_{(\mathbb{D}_i, m_i)\sim\mathcal{T}}\mathbb{E}_{z_i\sim\mathbb{D}_i}
	\mathbb{E}_{\theta\sim\mathscr{P}}\mathbb{E}_{u_i\sim\mathbb{P}_{S_i}^{\theta}}\exp\left( 
	\tilde{\gamma}\left[ 
	\mathcal{L}(u_i,\mathbb{D}_i) - \ell(u_i, z_i)
	\right]\right) 
	\leq \exp\left(\tilde{\gamma}^2s_{\text{I}}^2/2 \right).$$
\end{assumption}

\begin{assumption}[sub-Gaussian assumption of $\Pi^2$]\label{UnboundedLossAssumptionSubGaussian2}
	For $i=1,\ldots,n$, we assume that the random variables  $\mathbb{E}_{(\mathbb{D},m)\sim\mathcal{T}}\mathbb{E}_{S\sim\mathbb{D}^m}\mathcal{L}(\mathbb{Q}(S,\mathbb{P}^{\theta}),\mathbb{D})
	- \mathcal{L}(\mathbb{Q}(S_i,\mathbb{P}_{S_i}^{\theta}),\mathbb{D}_i)$ are sub-Gaussian with 
	variance factor $s_{\text{II}}^2$. Specifically, for some $\tilde{\lambda} > 0$, the following inequality holds:
	\begin{align*}
		V_i^2 &:= \mathbb{E}_{(\mathbb{D}_i,m_i)\sim\mathcal{T}}\mathbb{E}_{S_i\sim\mathbb{D}_i^{m_i}}\mathbb{E}_{\theta\sim\mathscr{P}}
		\exp\big( 
		\tilde{\lambda}[ 
		\mathbb{E}_{(\mathbb{D},m)\sim\mathcal{T}}\mathbb{E}_{S\sim\mathbb{D}^m}\mathcal{L}(\mathbb{Q}(S,\mathbb{P}_{S_i}^{\theta}),\mathbb{D}\big)
		- \mathcal{L}(\mathbb{Q}(S_i,\mathbb{P}_{S_i}^{\theta}),\mathbb{D}_i)]) \\
		& \,\, \leq \exp ( \tilde{\lambda}^2s_{\text{II}}^2/2 ).
	\end{align*}
\end{assumption}

Under Assumptions \ref{DataDependentAssumption}, \ref{UnboundedLossAssumptionSubGaussian1}, and \ref{UnboundedLossAssumptionSubGaussian2}, we resolve the challenge of interchanging the expectations $\mathbb{E}$ (from Theorem \ref{PAC_Bounds_Theorem1}) and $\mathbb{E}_{u\sim\mathbb{P}_{S}^{\theta}}$ (with $S\sim\mathbb{D}_{\mathcal{T}}$), yielding a refined estimate.  

\begin{corollary}\label{PACDependentThmSubGaussian}
	Assume that all of the assumptions in Theorem \ref{PAC_Bounds_Theorem1}, 
	Assumptions \ref{DataDependentAssumption}, \ref{UnboundedLossAssumptionSubGaussian1}, and \ref{UnboundedLossAssumptionSubGaussian2}
	hold true. Let us denote $\bar{m} = \left(\frac{1}{n}\sum_{i=1}^n\frac{1}{m_i}\right)^{-1}$. 
	For some $\gamma\geq2\sqrt{n}$, $\lambda\geq2\sqrt{n}$, and any $\delta\in(0,1]$, we have 
	\begin{align*}
		\mathbb{P}\Bigg( \forall \mathscr{Q}\in\mathcal{P}(\Theta), \,\,
		\mathcal{L}(\mathscr{Q}, \mathcal{T}) \leq & \, \hat{\mathcal{L}}(\mathscr{Q},S_1,\ldots,S_n) + 
		\left( \frac{1}{\lambda} + \frac{1}{\gamma} \right)D_{\text{KL}}(\mathscr{Q} || \mathscr{P})  \\
		& \, + \frac{1}{\gamma}\sum_{i=1}^{n}\mathbb{E}_{\theta\sim\mathscr{Q}}D_{\text{KL}}
		\big(\mathbb{Q}(S_i,\mathbb{P}_{S_i}^{\theta}) || \mathbb{P}_{S_i}^{\theta}\big)  \\
		& \, + \frac{n}{2\gamma}\ln\Psi_E  
		+ \frac{\gamma s_{\text{I}}^2}{n\bar{m}} + \frac{\lambda s_{\text{II}}^2}{2n}
		+ \frac{1}{\sqrt{n}}\ln\frac{1}{\delta}\Bigg) \geq 1-\delta.
	\end{align*}
\end{corollary}

\begin{remark}
	In Bayesian frameworks, the loss function is often associated with noise distribution, with Gaussian and Laplace noise as standard assumptions \citep{Stuart2010AN} yielding unbounded loss functions. We focus on sub-Gaussian assumptions for unbounded losses, contrasting with the bounded loss assumption in deterministic PDE inverse problems linked to the generalized Bayes' formula and PAC-Bayesian theory.  Here, the generalized Bayes' formula refers to Bayes' formula with tempered likelihoods; see \cite{Guedj2019ArXiv,Alquier2024Book} for details. Bounded loss functions are discussed in Subsection A.6
	of the supplement. Another standard assumption is the sub-Gamma condition (see Supplement, Subsection A.3).
\end{remark}

\begin{remark}
	The parameter $\gamma$ is typically chosen as $\gamma = n\sqrt{\bar{m}}$ or $n\bar{m}$ (where $\bar{m}$ is defined in Corollary \ref{PACDependentThmSubGaussian}). Accordingly, the external term from the data-dependent prior is $\frac{n}{2\gamma}\ln\Psi_E = \frac{1}{2\sqrt{\bar{m}}}\ln\Psi_E$ or $\frac{1}{2\bar{m}}\ln\Psi_E$, which asymptotically vanishes as $\bar{m}\to\infty$. 
\end{remark}

\begin{remark}
	An additional term arises when the prior measure depends on dataset $S$. It appears that the bound in Corollary \ref{PACDependentThmSubGaussian} is larger than that of the data-independent prior (Corollary A.6 
	in the supplement). However, the KL divergence term decreases when the prior depends on $S$, creating a trade-off between the KL term and the extra term $\frac{n}{2\gamma}\ln\Psi_E$. With $\gamma = n\sqrt{\bar{m}}$, see the estimate (\ref{EstimatePsiE}), the extra term scales as $\frac{R_u^2}{\sqrt{\bar{m}}}$. Since data-independent priors often lie far from the posterior, we can expect that introducing data-dependent priors is meaningful.  
\end{remark}

\section{Learning Priors for Inverse Problems}\label{SectionApplication}
This section presents our general framework for linear and nonlinear inverse problems, designed to unify the general theory with typical PDE problems. Here, we set $\gamma = n\beta$ throughout. For details on $\beta$, see Subsections A.2 and A.3 in the supplement. 

\subsection{General assumptions and formulas}\label{SubsectionUnbounded}
{\color{black}We outline standard prior and noise measure assumptions in nonparametric Bayesian inversion \citep{Stuart2010AN,Cotter2013SS}. The recent Gaussian process work \citep{Mora2025CMAME} may also apply; see Subsection A.5 
	of the supplementary materials.}
\begin{assumption}\label{ApplicationAssump1}
	Let us give some basic assumptions used in the rest of this section:
	\begin{enumerate}
		\item {\color{black}Assume $\mathcal{U}$ is an infinite-dimensional separable Hilbert space. The prior measure $\mathbb{P}_{S}^{\theta} := \mathcal{N}(f_m(S;\theta_1),\mathcal{C}_0(\theta_2))$ defined on $\mathcal{U}$ as in Subsection \ref{SectionDataDependentPrior}, with the mean function $f_m(S;\theta_1)$ lying in the Cameron-Martin space of $\mathcal{C}_0$ (see Subsection \ref{SectionDataDependentPrior}) for all $S\in\mathcal{Z}^m$ and $\theta \in\Theta$. Without loss of generality, let $\{\lambda_k, e_k\}_{k=1}^{\infty}$ be the eigensystem of $\mathcal{C}_0$, where $\{\lambda_k\}_{k=1}^{\infty}$ are non-increasing and $\{e_k\}_{k=1}^{\infty}$ are normalized eigenfunctions. Assume there exists $s_0 \in [0,1)$ such that $\mathcal{C}_0^s$ is trace class for all $s > s_0$.  
		}
		\item Assume that the space $\mathcal{H}$ introduced in Subsection \ref{SubsectionGeneralForm} is either $\mathbb{R}^{N_d}$, where $N_d$ is a positive integer, or $\mathcal{H}$ is an infinite-dimensional separable Hilbert space.
		\item Assume that the noise $\eta$ is distributed according to a Gaussian distribution $\mathbb{D}_{3}$ defined as $\mathbb{D}_{3} := \mathcal{N}(0, \Gamma)$, {\color{black}where $\Gamma:\mathcal{H}\rightarrow\mathcal{H}$ is a self-adjoint, positive-definite linear operator, but not necessarily trace class. Similar settings can be found in \citep{Agapiou2013SPA}.}
		\item If $\mathcal{H}$ is an infinite-dimensional Hilbert space, let us introduce $\mathcal{C}_1: \mathcal{H}\rightarrow\mathcal{H}$, which is a self-adjoint, positive-definite, trace class, linear operator. 
		Suppose $\mathcal{C}_1\Gamma = \Gamma\mathcal{C}_1$ and there exists $s_1\in[0,1)$ such that $\mathcal{C}_1^{s}$ 
		is trace class for all $s>s_1$.
	\end{enumerate}
\end{assumption}

Consider $\mathcal{H}$, a finite or infinite-dimensional space. For the infinite-dimensional case, we introduce the Hilbert scale \citep{Engl1996Book}, central to our analysis. Since $\mathcal{C}_1$ (Assumption \ref{ApplicationAssump1}) is injective and self-adjoint,  
$\mathcal{H} = \overline{\mathcal{R}(\mathcal{C}_1)}\oplus\mathcal{R}(\mathcal{C}_1)^{\perp} = \overline{\mathcal{R}(\mathcal{C}_1)},$  
making $\mathcal{C}_1^{-1}:\mathcal{R}(\mathcal{C}_1)\to\mathcal{H}$ a densely defined, unbounded, symmetric, positive-definite linear operator. It extends to a self-adjoint operator with domain $\mathcal{D}(\mathcal{C}_1^{-1}) := \{u\in\mathcal{H} \mid \mathcal{C}_1^{-1}u\in\mathcal{H}\}$. Thus, we define the Hilbert scale $(\mathcal{H}^t)_{t\in\mathbb{R}}$, where  
$\mathcal{H}^t := \overline{\mathcal{M}_1}^{\|\cdot\|_{\mathcal{H}^t}},\mathcal{M}_1 := \bigcap_{\ell=0}^{\infty}\mathcal{D}(\mathcal{C}_1^{-\ell}),$  
with inner product and norm  
$\langle u, v\rangle_{\mathcal{H}^t} = \left\langle \mathcal{C}_1^{-t/2}u, \mathcal{C}_1^{-t/2}v\right\rangle_{\mathcal{H}}, \|u\|_{\mathcal{H}^t} = \left\| \mathcal{C}_1^{-t/2}u \right\|_{\mathcal{H}}.$ Assumption \ref{ApplicationAssump1} states $\mathcal{C}_1\Gamma = \Gamma\mathcal{C}_1$ (infinite-dimensional case), a harmless assumption since $\mathcal{C}_1$ only defines the function space.  
Analogously, define the Hilbert scale $(\mathcal{U}^t)_{t\in\mathbb{R}}$ for $\mathcal{U}$ and prior covariance $\mathcal{C}_0$:  
$\mathcal{U}^t := \overline{\mathcal{M}_2}^{\|\cdot\|_{\mathcal{U}^t}}, \mathcal{M}_2 := \bigcap_{\ell=0}^{\infty}\mathcal{D}(\mathcal{C}_0^{-\ell}),$ with  
$\langle u, v\rangle_{\mathcal{U}^t} = \left\langle \mathcal{C}_0^{-t/2}u, \mathcal{C}_0^{-t/2}v\right\rangle_{\mathcal{U}}, \|u\|_{\mathcal{U}^t} = \left\| \mathcal{C}_0^{-t/2}u \right\|_{\mathcal{U}}.$
The Hilbert scale motivates the following assumptions.  

\begin{assumption}\label{ApplicationAssump2}
	Suppose constants $s\in(s_1, 1]$, $\tilde{s}\in(s_0,1]$, and $\alpha \geq 0$ such that 
	\begin{enumerate}
		\item If the space $\mathcal{H}$ is a separable Hilbert space, for all 
		$w\in\mathcal{H}^{\alpha-s}$, $s\in(s_1, 1]$ and $\rho\in[ \lceil \alpha-s_1-1 \rceil, \alpha-s_1 )$, 
		we assume $\left\|\mathcal{C}_1^{\frac{s}{2}}\Gamma^{-\frac{1}{2}}w\right\|_{\mathcal{H}} \leq C_1 
		\left\| \mathcal{C}_1^{\frac{s-\alpha}{2}}w \right\|_{\mathcal{H}}\text{ and }\left\| \mathcal{C}_1^{-\frac{\rho}{2}}\Gamma^{\frac{1}{2}}w \right\|_{\mathcal{H}} \leq 
		C_2 \left\| \mathcal{C}_1^{\frac{\alpha-\rho}{2}}w \right\|_{\mathcal{H}}.$
		\item If the space $\mathcal{H}$ is a separable Hilbert space, {\color{black}for any $x\!\in\!\mathcal{X}$ and $u,\! u_1,\! u_2\!\in\!\mathcal{U}^{1-\tilde{s}}$,} we have  
		\begin{align*}
			& \quad\quad\quad\quad\quad 
			\|\mathcal{C}_1^{-\frac{s}{2}}\Gamma^{-\frac{1}{2}}\mathcal{L}_{x}\mathcal{G}(u)\|_{\mathcal{H}} \leq  M_1(\|u\|_{\mathcal{U}^{1-\tilde{s}}}), \\
			& 	\|\mathcal{C}_1^{-\frac{s}{2}}\Gamma^{-\frac{1}{2}}(\mathcal{L}_{x}\mathcal{G}(u_1) -   \mathcal{L}_{x}\mathcal{G}(u_2))\|_{\mathcal{H}} 
			\leq M_2(\|u_1\|_{\mathcal{U}^{1-\tilde{s}}},\|u_2\|_{\mathcal{U}^{1-\tilde{s}}}) 
			\|u_1 - u_2\|_{\mathcal{U}^{1-\tilde{s}}}, \\
			& \quad\quad\quad
			\|\mathcal{C}_1^{-\frac{s}{2}}\Gamma^{-\frac{1}{2}}(\mathcal{L}_{x}\mathcal{G}(u) -   \mathcal{L}_{x'}\mathcal{G}(u))\|_{\mathcal{H}} \rightarrow 0, \quad\text{as }x\rightarrow x' \text{ in }\mathcal{X},
		\end{align*}
		where $M_1(\cdot)$ is a monotonic non-decreasing function and $M_2(\cdot,\cdot)$ is a function monotonic non-decreasing separately in each argument. If $\mathcal{H}$ is a finite-dimensional space, say $\mathcal{H} = \mathbb{R}^{N_d}$, we can omit the operator $\mathcal{C}_1$ from the aforementioned requirements.
	\end{enumerate}
\end{assumption}

The two inequalities given in (1) of Assumption \ref{ApplicationAssump2} can be intuitively interpreted as $\Gamma \simeq \mathcal{C}_1^{\alpha}$, where the notation $\simeq$ indicates that the two operators are equal in some sense of norm equivalence. Employing the second inequality in (1) of Assumption \ref{ApplicationAssump2}, we obtain $\mathbb{E}_{\eta\sim\mathbb{D}_3}\|\eta\|_{\mathcal{H}^{\alpha - s}}^2 \leq  C_2^2\mathbb{E}_{\eta\sim\mathbb{D}_3}\|\mathcal{C}_1^{\frac{s}{2}}\Gamma^{-\frac{1}{2}}\eta\|_{\mathcal{H}}^2
< +\infty$ by utilizing Lemma 3.3 of \cite{Agapiou2013SPA}. 
This implies that the noise $\eta$ drawn from $\mathbb{D}_3$ almost surely belongs to $\mathcal{Y}:=\mathcal{H}^{\alpha-s}$. 

When $\mathcal{H}$ is infinite-dimensional, we define the loss function as  
$\ell(u, z) := \Phi(u;x,y) = \frac{1}{2}\|\Gamma^{-\frac{1}{2}}\mathcal{L}_{x}\mathcal{G}(u)\|_{\mathcal{H}}^2 - 
\langle \Gamma^{-\frac{1}{2}}y, \Gamma^{-\frac{1}{2}}\mathcal{L}_{x}\mathcal{G}(u)\rangle_{\mathcal{H}}.$
Under the assumed Gaussian noise model, this function acts as the potential, or equivalently, the negative log-likelihood; see, e.g., \citep{Dashti2017,Stuart2010AN}. To establish generalization estimates, it is essential to verify Assumptions \ref{UnboundedLossAssumptionSubGaussian1} and \ref{UnboundedLossAssumptionSubGaussian2} in the current setting. This requires a rigorous formulation the generalized posterior measure $\mathbb{Q}(S,\mathbb{P}_{S}^{\theta})$ associated with the base inverse problem.

\begin{theorem}\label{BayesianTheorem}
	{\color{black}Suppose that Assumptions \ref{HyperpriorAssumption}, \ref{ApplicationAssump1}, and \ref{ApplicationAssump2} are satisfied.} For the function $M_1(\cdot)$ in Assumption \ref{ApplicationAssump2}, assume there exists a constant $\delta_0 > 0$ such that for all $\delta \in [0, \delta_0)$, the quantity  $\mathbb{E}_{u \sim \mathbb{P}_S^\theta} \exp\left( \delta M_1\left(\|u\|_{\mathcal{U}^{1 - \tilde{s}}}\right)^2 \right)$
	is finite. Let $\mathcal{H}$ be a separable Hilbert space. {\color{black}For any fixed $\theta \in \Theta$, suppose that $\|f_m(S;\theta_1)\|_{\mathcal{U}^{1+\tilde{s}}} \leq M(r)$ whenever $\|y_j\|_{\mathcal{H}^{\alpha-s}} \leq r$ for all $j = 1,\ldots,m$, where $M(\cdot)$ is a non-decreasing function. Additionally, assume that $\|f_m(S;\theta_1) - f(S';\theta_1)\|_{\mathcal{U}^{1+\tilde{s}}} \to 0, \text{ as } x_j \to x'_j \text{ in } \mathcal{X} \text{ and } y_j \to y'_j \text{ in } \mathcal{H}^{\alpha - s},$ for $j = 1,\ldots,m$, where $S' = \{(x'_j,y'_j)\}_{j=1}^m$.} Under these conditions, the generalized posterior measure $\mathbb{Q}(S,\mathbb{P}_S^{\theta})$ is absolutely continuous with respect to $\mathbb{P}_S^{\theta}$, and
	\begin{align}\label{ProofBayesFormula0}
		\frac{d\mathbb{Q}(S,\mathbb{P}_S^{\theta})}{d\mathbb{P}_{S}^{\theta}}(u) = \frac{1}{Z_m}\exp\left( 
		-\frac{\beta}{m}\sum_{j=1}^{m}\Phi(u;x_j,y_j)
		\right),
	\end{align}
	where $\beta > 0$ is a constant,
	$\Phi(u;x_j,y_j) := \frac{1}{2}\left\| \Gamma^{-\frac{1}{2}}\mathcal{L}_{x_j}\mathcal{G}(u) \right\|_{\mathcal{H}}^2 
	- \left\langle \Gamma^{-\frac{1}{2}}y_j, \Gamma^{-\frac{1}{2}}\mathcal{L}_{x_j}\mathcal{G}(u) \right\rangle_{\mathcal{H}},$ and $Z_m \in (0,\infty)$ is the normalizing constant. Moreover, the mapping $S \mapsto \mathbb{Q}(S,\mathbb{P}_S^{\theta})$ is continuous with respect to the Hellinger metric: $d_{\text{Hell}}\big(\mathbb{Q}(S,\mathbb{P}_S^{\theta}), \mathbb{Q}(S',\mathbb{P}_{S'}^{\theta})\big) \to 0, \text{ as } x_j \to x'_j \text{ in } \mathcal{X} \text{ and } y_j \to y'_j \text{ in } \mathcal{H}^{\alpha - s}.$ If $\mathcal{H} = \mathbb{R}^{N_d}$ is finite-dimensional, then all of the above results remain valid upon replacing $\|\cdot\|_{\mathcal{H}^{\alpha - s}}$ with $\|\cdot\|_{\mathcal{H}}$, which corresponds to the standard Euclidean norm on $\mathbb{R}^{N_d}$.
\end{theorem}


\begin{remark}\label{FiniteDimHRemark1}
	If the space $\mathcal{H}$ is finite-dimensional, we define the potential function as
	$\Phi(u;x_j,y_j) := \frac{1}{2}\left\| \Gamma^{-\frac{1}{2}}(\mathcal{L}_{x_j}\mathcal{G}(u) - y_j) \right\|_{\mathcal{H}}^2,$
	which is non-negative. Therefore, the condition
	$\mathbb{E}_{u\sim\mathbb{P}_{S}^{\theta}}\exp\left( \delta M_1\left(\|u\|_{\mathcal{U}^{1-\tilde{s}}}\right)^2 \right) < \infty$ is not required to ensure the lower bound in the proof.
\end{remark}

\begin{remark}\label{TruncaedGaussianRemark1}
	For nonlinear inverse problems, assume the true parameter $u^{\dagger}$ lies in the ball  
	$B_{\tilde{s}}(R_u) := \left\{ u\in\mathcal{U} \mid \|u\|_{\mathcal{U}^{1-\tilde{s}}} \leq R_u \right\}$  
	(see Subsection \ref{SubsectionLearnTheoryLinear}). Similar constraints are imposed in deterministic and statistical theories \citep{Engl1996Book,Nickl2022LectureNotes,Nickl2020JEMS}.  
	Define $\mathbb{P}_{R_u}^{S,\theta}$ as the truncated Gaussian measure:  
	$\mathbb{P}_{R_u}^{S,\theta}(du) := \frac{1}{\mathbb{P}_S^{\theta}(B_{\tilde{s}}(R_u))}
	1_{B_{\tilde{s}}(R_u)}(u)\mathbb{P}_{S}^{\theta}(du).$  
	When $u^{\dagger} \in B_{\tilde{s}}(R_u)$ is known a priori, using this truncated prior is more reasonable \citep{Beskos2018JUQ,Rudolf2018FCM}; see Subsections A.4 and A.6 
	in the supplement for additional discussions. Based on an analogous illustration of Theorem \ref{BayesianTheorem}, we find that 
	\begin{align}\label{TruncatedBayesFormula}
		\frac{d\mathbb{Q}(S_, \mathbb{P}_{R_u}^{S,\theta})}{d\mathbb{P}_{R_u}^{S,\theta}}(u) = 
		\frac{\mathbb{P}_{S}^{\theta}(B_{\tilde{s}}(R_u))}{Z_m^{R_u}}\exp\left( 
		-\frac{\beta}{m}\sum_{j=1}^{m}\Phi(u;x_j,y_j)
		\right)1_{B_{\tilde{s}}(R_u)}(u)
	\end{align} 
	is well-defined, where $Z_m^{R_u}$ is the normalization constant.  
\end{remark}

\subsection{Linear and nonlinear problems}\label{SubsectionLearnTheoryLinear}
In this subsection, assume the forward operator $\mathcal{G}$ is linear. Recall notations from Subsection \ref{SubsectionGeneralForm}: denote $\bm{y} = (y_1,\ldots,y_m)^T$ and $\bm{\eta} = (\eta_1,\dots,\eta_m)^T$. The forward problem with $m$ measurements compacts to  
$\bm{y} = \mathcal{L}_{\bm{x}}\mathcal{G}u + \bm{\eta},$   
where $\mathcal{L}_{\bm{x}}\mathcal{G} = (\mathcal{L}_{x_1}\mathcal{G}, \ldots, \mathcal{L}_{x_m}\mathcal{G})^{T}$. The noise vector $\bm{\eta} \sim \mathcal{N}(0,\Gamma_m)$ with $\Gamma_m = \text{diag}(\Gamma,\ldots,\Gamma)$. The generalized posterior measure in Theorem \ref{BayesianTheorem} thus takes the form:
\begin{align}\label{basePosteriorMeasure}
	\frac{d\mathbb{Q}(S,\mathbb{P}_{S}^{\theta})}{d\mathbb{P}_{S}^{\theta}}(u) \propto 
	\exp\left( -\frac{\beta}{2m}\|\mathcal{L}_{\bm{x}}\mathcal{G}u\|_{\Gamma_m}^2 + 
	\frac{\beta}{m}\langle \mathcal{L}_{\bm{x}}\mathcal{G}u, \bm{y} \rangle_{\Gamma_m} \right),
\end{align}
where 
$\|\mathcal{L}_{\bm{x}}\mathcal{G}u\|_{\Gamma_m}^2 = 
\sum_{j=1}^{m}\|\Gamma^{-\frac{1}{2}}\mathcal{L}_{x_j}\mathcal{G}u\|_{\mathcal{H}}^2$ and 
$\langle \mathcal{L}_{\bm{x}}\mathcal{G}u, \bm{y} \rangle_{\Gamma_m} =  
\sum_{j=1}^m \langle \Gamma^{-\frac{1}{2}}\mathcal{L}_{x_j}\mathcal{G}u, \Gamma^{-\frac{1}{2}} y_j \rangle_{\mathcal{H}}.$
Since the forward operator is assumed to be linear, the generalized posterior measure $\mathbb{Q}(S, \mathbb{P}_{S}^{\theta})$ is actually a Gaussian measure $\mathcal{N}(u_p, \mathcal{C}_p)$, Example 6.23 in \cite{Stuart2010AN} {\color{black}with 
	\begin{align}
		u_p = & \,f_m(S;\theta_1) + \mathcal{C}_0(\theta_2) (\mathcal{L}_{\bm{x}}\mathcal{G})^*\Big(\text{\small$\frac{m}{\beta}$}\Gamma_m + \mathcal{L}_{\bm{x}}\mathcal{G}\mathcal{C}_0(\theta_2)(\mathcal{L}_{\bm{x}}\mathcal{G})^*\Big)^{-1}(\bm{y} - \mathcal{L}_{\bm{x}}\mathcal{G}f_m(S;\theta_1)), 
		\label{PosteriorMeanVII1} \\
		\mathcal{C}_p = & \,\mathcal{C}_0(\theta_2) - \mathcal{C}_0(\theta_2)(\mathcal{L}_{\bm{x}}\mathcal{G})^*\Big(\text{\small$\frac{m}{\beta}$}\Gamma_m + \mathcal{L}_{\bm{x}}\mathcal{G}\mathcal{C}_0(\theta_2)(\mathcal{L}_{\bm{x}}\mathcal{G})^*\Big)^{-1}\mathcal{L}_{\bm{x}}\mathcal{G}\mathcal{C}_0(\theta_2).  \label{PosteriorVarVII1}
	\end{align}
	
}In Lemma \ref{LearningLemmaUnboundedExample2}, we show that Assumptions \ref{UnboundedLossAssumptionSubGaussian1} and \ref{UnboundedLossAssumptionSubGaussian2} hold under mild PDE-related conditions. This validates applying the general theory in Section \ref{SectionLearningPriorMeasure} to the linear case.

\begin{lemma}\label{LearningLemmaUnboundedExample2}
	{\color{black}Assume Assumptions \ref{HyperpriorAssumption}, \ref{ApplicationAssump1}, and \ref{ApplicationAssump2} hold.} For the linear forward operator, reformulate the conditions as  $\|\mathcal{C}_1^{-s/2}\Gamma^{-1/2}\mathcal{L}_{x}\mathcal{G}u\|_{\mathcal{H}} \leq M_1 \|u\|_{\mathcal{U}^{1-\tilde{s}}}$ and $\|\mathcal{C}_1^{-s/2}\Gamma^{-1/2}\mathcal{L}_{x}\mathcal{G}(u_1 - u_2)\|_{\mathcal{H}} \leq M_2 \|u_1 - u_2\|_{\mathcal{U}^{1-\tilde{s}}},$ with positive constants $M_1, M_2$. Let the probability measure $\mathscr{E}$ have compact support $\supp\mathscr{E} \subset B_{\tilde{s}}(R_u) := \{ u \in \mathcal{U} : \|u\|_{\mathcal{U}^{1-\tilde{s}}} \leq R_u \}$ for some $R_u \in \mathbb{R}^+$. {\color{black}Define $\tilde{C} := \|\mathcal{C}_1^{s/2}\|_{\mathcal{B}(\mathcal{H})}$ (the operator norm) and assume  
		$\max\left\{3\tilde{\gamma}\tilde{C}^2M_2^2, 3\tilde{\gamma}^2M_1^2\text{Tr}(\mathcal{C}_1^s)\right\} \leq \min\left\{e^{-\theta_{\text{max}}}, \lambda_1^{-1}\right\},$ where $\theta_{\text{max}}$ and $\lambda_1$ are as in Assumption \ref{ApplicationAssump1}. The base posterior measure follows from (\ref{basePosteriorMeasure}).  Consequently, $s_{\text{I}}^2$ and $s_{\text{II}}^2$ in Assumptions \ref{UnboundedLossAssumptionSubGaussian1} and \ref{UnboundedLossAssumptionSubGaussian2} are finite positive reals for $\tilde{\gamma} := \frac{\gamma}{n\bar{m}}$ and $\tilde{\lambda} := \frac{\lambda}{n}$, with $\gamma$ and $\lambda$ are proportional to $n\bar{m}$ and $n$ respectively. Under some boundedness assumptions (include the backward diffusion problem), $s_{\text{I}}^2$ and $s_{\text{II}}^2$ become independent of $\tilde{\gamma}$ and $\tilde{\lambda}$, allowing $\gamma := n\sqrt{\bar{m}}$ and $\lambda := 2\sqrt{n}$.}
\end{lemma}

For brevity, we omit the explicit formulas for $s_{\text{I}}$ and $s_{\text{II}}$ and the auxiliary boundedness assumptions, which are detailed in Subsection B.1 
of the online supplement. We now focus on nonlinear forward operators. Unlike linear cases, the generalized posterior measure lacks a tractable explicit form, rendering the exact expectation intractable—a key challenge in nonlinear settings. Leveraging Remark \ref{TruncaedGaussianRemark1}, we employ a truncated Gaussian prior, yielding a well-posed generalized posterior. Our proofs require the prior to be supported on a ball in some function space. Conceptually, the priors used in \cite{Nickl2020JEMS} are also applicable here. Specifically, we state a lemma that estimates $V_i^1$ and $V_i^2$ under Assumptions \ref{UnboundedLossAssumptionSubGaussian1} and \ref{UnboundedLossAssumptionSubGaussian2} for nonlinear problems, validating the general theory's application.  

\begin{lemma}\label{LearningLemmaUnboundedExample3}
	Suppose that the probability measure $\mathscr{E}$ and the loss
	function $\ell$ satisfy the conditions stated in Lemma \ref{LearningLemmaUnboundedExample2}.
	We further assume that {\color{black}Assumptions \ref{HyperpriorAssumption}, \ref{ApplicationAssump1}, and \ref{ApplicationAssump2} are valid,} with the prior measure $\mathbb{P}_{S}^{\theta}=\mathcal{N}(f_m(S;\theta_1),\mathcal{C}_0(\theta_2))$ being replaced by the truncated Gaussian assumption.
	Specifically, $\mathbb{P}_{S}^{\theta}$ is substituted with	$\mathbb{P}_{R_u}^{S,\theta}$ as defined in Remark \ref{TruncaedGaussianRemark1}. Additionally, we assume that the base posterior measure is obtained through the generalized Bayes' formula with truncated Gaussian prior as defined in (\ref{TruncatedBayesFormula}). {\color{black}Consequently, for $s_{\text{I}}^2$ and $s_{\text{II}}^2$ as defined in Assumptions \ref{UnboundedLossAssumptionSubGaussian1} and \ref{UnboundedLossAssumptionSubGaussian2}, we have 
		$s_{\text{I}}^2 = 4\tilde{C}^4M_2(R_u,R_u)^4R_u^4 + \frac{1}{4}\tilde{C}^4M_1(R_u)^4R_u^4 + M_1(R_u)^2\text{Tr}(\mathcal{C}_1^s)R_u^2 < +\infty$, and $s_{\text{II}}^2 = 25\|\mathcal{C}_1^{\frac{s}{2}}\|_{\mathcal{B}(\mathcal{H})}^4M_1(R_u)^4 < +\infty$, 
		where $\tilde{C}=\|\mathcal{C}_1^{\frac{s}{2}}\|_{\mathcal{B}(\mathcal{H})}$. }
\end{lemma}

For concrete linear and nonlinear inverse problems, the conditions in Assumption \ref{ApplicationAssump2} must be validated via PDE estimations in suitable function spaces. This allows explicit identification of constants $s_{\text{I}}$ and $s_{\text{II}}$, enabling derivation of an explicit PAC-Bayesian estimate by integrating results from Section~\ref{SectionLearningPriorMeasure}. Examples for backward diffusion (linear) and Darcy flow (nonlinear) problems are detailed in Subsection A.7 
of the online supplement.  

\section{Learning Algorithms}\label{SectionLearningAlgorithms}
In this section, we develop practical learning algorithms based on the derived PAC-Bayesian generalization bound. Following the finite-dimensional approach in \citep{Rothfuss2023JMLR}, we set $\gamma = n\beta$, where $\beta$ is the parameter in the generalized Bayes' formula of Theorem \ref{BayesianTheorem}. Next, we denote $\{Z_{m_i} = Z_{m_i}(S_i,\mathbb{P}_{S_i}^{\theta})\}_{i=1}^n$, noting that $Z_{m_i}$ depends on the dataset and prior measure. With these choices, we further refine the PAC-Bayesian estimate to obtain  
\begin{align}\label{AlgBound}
	\begin{split}
		\mathcal{L}(\mathscr{Q},\mathcal{T}) \leq & - \frac{1}{n\beta}\sum_{i=1}^{n}\mathbb{E}_{\theta\sim\mathscr{Q}}\ln 
		Z_{m_i}(S_i, \mathbb{P}_{S_i}^{\theta}) + \frac{\lambda + n\beta}{\lambda n\beta}D_{\text{KL}}(\mathscr{Q}||\mathscr{P}) \\
		& + \frac{1}{2\beta}\ln\Psi_{E} + \frac{\beta s_{\text{I}}^2}{\bar{m}} + \frac{\lambda s_{\text{II}}^2}{2n} + \frac{1}{\sqrt{n}}\ln\frac{1}{\delta}.
	\end{split}
\end{align}
To minimize the right-hand side of (\ref{AlgBound}), we address the following optimization problem:
\begin{align}\label{AlgOptimalProblem}
	\argmin_{\mathscr{Q}\in\mathcal{P}(\Theta)}\mathbb{E}_{\theta\sim\mathscr{Q}}\bigg[ 
	-\frac{\lambda}{\lambda + n\beta}\sum_{i=1}^{n}\ln Z_{m_i}(S_i, \mathbb{P}_{S_i}^{\theta}) 
	\bigg] + D_{\text{KL}}(\mathscr{Q}||\mathscr{P}).
\end{align} 
In light of this optimization problem, we aim to establish a theorem that provides an analytical solution for the optimal measure $\mathscr{Q}$, thereby avoiding the complex bilevel optimization.
\begin{theorem}\label{AlgSolOptimal}
	For both bounded and unbounded loss cases, assume Assumptions \ref{HyperpriorAssumption}, \ref{ApplicationAssump1}, \ref{ApplicationAssump2}, and the conditions in Theorem \ref{BayesianTheorem} hold. Further posit that for any dataset $S$ and $\theta_1 \in \Theta_1$, the mean function $f_m(S;\theta_1)$ satisfies $\|f_m(S;\theta_1)\|_{\mathcal{U}^{1+\tilde{s}}} \leq R_u$ for some positive constant $R_u$. 
	The optimization problem (\ref{AlgOptimalProblem}) then has a solution defined by  
	\begin{align}\label{AlgBayesHyperPosterior}
		\frac{d\mathscr{Q}}{d\mathscr{P}}(\theta) = \frac{1}{Z_p}\exp\left( \frac{\lambda}{\lambda + n\beta}\sum_{i=1}^{n}\ln Z_{m_i}(S_i,\mathbb{P}_{S_i}^{\theta}) \right),
	\end{align}  
	where $Z_p := \int_{\Theta} \exp\left( \frac{\lambda}{\lambda + n\beta}\sum_{i=1}^{n}\ln Z_{m_i}(S_i,\mathbb{P}_{S_i}^{\theta}) \right) \mathscr{P}(d\theta).$
\end{theorem}

Different from the parametric setting in \cite{Rothfuss2021PMLR}, the above Bayes' formula (\ref{AlgBayesHyperPosterior}) does not trivially hold true.
Equipped with the formula (\ref{AlgBayesHyperPosterior}), we can now extract information about the parameter $\theta$ from the hyper-posterior measure $\mathscr{Q}$.  
Note that the term inside the exponential of formula (\ref{AlgBayesHyperPosterior}) can be calculated approximately by:
\begin{align}\label{logsumexp3}
	\begin{split}
		\sum_{i=1}^{n}\ln Z_{m_i}(S_i,\mathbb{P}_{S_i}^{\theta}) & = \sum_{i=1}^{n}\ln\int_{\mathcal{U}}\exp\bigg(
		-\frac{\beta}{m_i}\sum_{j=1}^{m_i}\Phi(u;z_{ij})
		\bigg) \mathbb{P}_{S_i}^{\theta}(du) \\
		& \approx \sum_{i=1}^{n}\ln\left[\sum_{\ell=1}^{L}\exp\bigg(
		-\frac{\beta}{m_i}\sum_{j=1}^{m_i}\Phi(u_{\ell}^{i};z_{ij})
		\bigg)\right] - n\ln L,
	\end{split}
\end{align}
where $\{u_{\ell}^{i}\}_{\ell=1}^{L}$ are samples from the base prior $\mathbb{P}_{S_i}^{\theta}$ for $i=1,\ldots,n$. To evaluate the last term in (\ref{logsumexp3}), we must compute $nL$ forward PDEs, which is computationally intensive. Hence, we focus on the maximum a posteriori (MAP) estimate in this work.

{\color{black}Given the parameter $\theta$ in a general separable Hilbert space $\Theta$, we adopt the weak maximum a posteriori (MAP) estimator introduced in \cite{Agapiou2018IP}. Let the hyper-prior $\mathscr{P} = \mathscr{P}_1 \otimes \mathscr{P}_2$ (see Assumption \ref{HyperpriorAssumption}), where $\mathscr{P}_1$ denotes a Gaussian measure with Cameron–Martin space $E_{\Theta_1}$ and $p_2(\cdot)$ represents the density of $\mathscr{P}_2$. Under suitable conditions (see Subsection A.8 
	of the supplementary materials for a rigorous exposition), computing the MAP estimate reduces to solving:}
\begin{align}\label{MAPoptim4}
	\argmin_{\theta} - \frac{\lambda}{\lambda + n\beta}\sum_{i=1}^{n}\ln\left[\sum_{\ell=1}^{L}\exp\bigg(
	-\frac{\beta}{m_i}\sum_{j=1}^{m_i}\Phi(u_{\ell}^{i};z_{ij})
	\bigg)\right] + \frac{1}{2}\|\theta_1\|_{E_{\Theta_1}}^2 - \ln p_2(\theta_2).
\end{align}
The above minimization problem becomes computationally intensive for large $n$. We address this by substituting the first term with a mini-batch approximation during iteration:  
\begin{align}\label{MAPoptim5}
	- \frac{\lambda n}{(\lambda + n\beta)H}\sum_{h=1}^{H}\ln\left[\sum_{\ell=1}^{L}\exp\bigg(
	-\frac{\beta}{m_i}\sum_{j=1}^{m_i}\Phi(u_{\ell}^{h};z_{hj})
	\bigg)\right],
\end{align}
where the datasets $\{S_h\}_{h=1}^{H}$ are randomly selected from $\{S_i\}_{i=1}^{n}$. 
Algorithm \ref{alg A} summarizes the resulting generic prior distribution learning procedure. 
For further details and a discussion on potential sampling methods, refer to Subsection A.8 
of the supplementary materials. 

\begin{algorithm}
	\caption{Prior Learning Algorithm}
	\label{alg A}
	\begin{algorithmic}[1]
		\State \textbf{Input}: Specify the hyper-prior $\mathscr{P}$, datasets $\{ S_1,\ldots,S_n \}$, maximum iterative number $N$, 
		the mini-batch size $H$, and the learning rate schedule $\alpha_k$. Initialize the parameters of the prior $\mathbb{P}_{\cdot}^{\theta} = \mathcal{N}(f(\cdot;\theta_1),\mathcal{C}_0(\theta_2))$ with $\theta = \theta^0$ and set the counter $k$ to $0$; 
		\State \textbf{Repeat}
		\State \qquad Set $k=k+1$;
		\State \qquad Select $\{ S_{h}\}_{h=1}^H$ randomly from $\{ S_i \}_{i=1}^{n}$ with $S_i = \{ z_{i1},\ldots,z_{im_i} \}$;
		\State \qquad {\color{black}Calculate the parameters $\theta^{k+1} = \theta^{k} - \alpha_k\nabla_{\theta^k}L(\theta^k)$ with
			\begin{align*}
				L(\theta^k) = - \frac{\lambda n}{(\lambda + n\beta)H}\sum_{h=1}^{H}\ln\left[\sum_{\ell=1}^{L}\exp\bigg(
				-\frac{\beta}{m_i}\sum_{j=1}^{m_i}\Phi(u_{\ell}^{h}(\theta^k);z_{hj})
				\bigg)\right] + \frac{1}{2}\|\theta_1^k\|_{E_{\Theta_1}}^2 - \ln p_2(\theta_2^k).
			\end{align*}
			\qquad\, Variants of SGD can also be employed in this step. }
		\State \textbf{Until}: $k=K_{\text{max}}$.
		\State \textbf{Output}: The learned prior measure $\mathbb{P}_{\cdot}^{\theta^{K_{\text{max}}}} = \mathcal{N}(f(\cdot;\theta^{K_{\text{max}}}_1), \mathcal{C}_0(\theta^{K_{\text{max}}}_2))$.
	\end{algorithmic}
\end{algorithm}

\section{Numerical Examples}\label{SectionNumerics}
{\color{black}
	We focus on the inverse problem of the Darcy flow equation, a more challenging nonlinear inverse problem than the backward diffusion problem. Numerical results for the backward diffusion problem are deferred to Subsection A.9 
	of the supplementary materials. 
	
	Let $\Omega=(0,1)^2$, and define the Hilbert space $\mathcal{U}$ as $(L^2(\Omega), \langle \cdot, \cdot \rangle, \| \cdot \|)$. The Darcy flow equation takes the following form:
	\begin{align}\label{DarcyEqApp2}
		\begin{split}
			-\nabla\cdot(e^u\nabla w) = f \quad\text{in }\Omega, \qquad
			w = 0 \quad\text{on }\partial\Omega,
		\end{split}
	\end{align}
	where $f$ represents the sources, and $e^{u(x)}$ describes the permeability of the porous medium. For the inverse problem, the dataset consists of discrete measured values of $w$, obtained via the measurement operator $\mathcal{L}_{x_j}(w) = w(x_j)$ where $x_j \in \Omega$ for $j=1,\dots,m$. The function $u(x)$ is the parameter that needs to be inferred. As noted in Remark \ref{RemarkMeaningParam}, we assume $m_i = m$ for $i = 1, \dots, n$ and set $\beta = m$, $\gamma = nm$, and $\lambda = n$. For this problem, we have $\tilde{\mathcal{H}} = H^{1}(\Omega)$, $\mathcal{H} = \mathbb{R}$, and assume Gaussian noise $\eta \sim \mathcal{N}(0,\Gamma)$ with $\Gamma = \tau^2I$ ($\tau > 0$). In all numerical examples, $\tau$ is set to $0.01\max_j\{|w(x_j)|\}$. The equation is discretized on $\Omega$ using the finite element method with continuous Lagrange basis functions $\{\phi_k\}_{k=1}^N$. We define the finite-dimensional space $V_N = \text{span}\{\phi_1,\ldots,\phi_N\}$, where approximate functions $f = \sum_{k=1}^Nf_k\phi_k \in V_N\subset L^2(\Omega)$. For the parameters $H$ and $L$ in Algorithm \ref{alg A}, we set $H=20$ and $L = 10$. More details can be found in Subsection A.9 
	of the supplementary materials.  
	
	Recall the operator $\mathcal{C}_0$ introduced in Subsection \ref{SectionDataDependentPrior}, we take $\mathcal{C}_0 = (I - 0.1\Delta)^{-2}$, where $\Delta$ is the Laplace operator satisfying homogeneous Neumann boundary condition. For the definitions of $\theta := (\theta_1, \theta_2)$ and $\mathcal{C}_0(\theta_2)$, see Subsection \ref{SectionDataDependentPrior}. The dimension of $\theta_2$ is $N_c = 45$ in our implementation. Here, we consider three types of base prior measures as follows: 
	(1) Unlearned prior: $\mathbb{P} := \mathcal{N}(0, \mathcal{C}_0)$;
	(2) Learned data-independent prior: $\mathbb{P}^{\theta} := \mathcal{N}(f_m(\theta_1), \mathcal{C}_0(\theta_2))$;
	(3) Learned data-dependent prior: $\mathbb{P}_{S}^{\theta} := \mathcal{N}(f_m(S; \theta_1), \mathcal{C}_0(\theta_2))$.
	
	In the above, the data-independent learned mean function $f_m(\theta_1):=\sum_{k=1}^{N}\theta_{1k}\varphi_k$ with $\{\phi_k\}_{k=1}^{N}$ as the basis of $V_N$. To keep the discretization-invariant property of the infinite-dimensional formulation, the data-dependent learned mean function $f_m(S;\theta_1)$ is taken to be a Fourier neural operator illustrated in \cite{Kovachki2021JMLR}. Since the dataset \( S \) consists of discrete vectors, the Fourier neural network cannot be directly applied. Actually, we transform the dataset \( S \) into a function via the adjoint measurement operator \( \mathcal{L}_{\bm{x}}^*: \mathcal{H} \rightarrow \tilde{\mathcal{H}} \). For more implementation specifics, see Subsection A.9 
	in the supplementary materials.
	
	When the base prior is $\mathbb{P}^{\theta}$, we take the hyper-prior $\mathscr{P} := \mathcal{N}(0, A_h^{-2}) \otimes \mathcal{N}(\ln\bm{\lambda}, 10I_{N_c})$, where $A_h = 0.01 I - 0.1\Delta$, $\ln\bm{\lambda} := (\ln\lambda_1, \ldots, \ln\lambda_{N_c})$, and $I_{N_c}$ is the identity operator on $\mathbb{R}^{N_c}$.
	For the mean function $f_m(S;\theta_1)$, specifying a suitable hyper-prior is challenging due to the lack of intuition regarding the Fourier neural operator's parameter $\theta_1$. Instead, we leverage the a priori regularity of $f_m(S;\theta_1)$, randomly selecting $n_S$ datasets ($n_S=4$ in practice), and define $g(\theta)$ as the sum of $f_m(S_i;\theta_1)$ over these datasets. Consequently, we establish $\mathscr{P}$ as $\mathscr{P} := g_{\#}\mathcal{N}(0, A^{-2}) \otimes \mathcal{N}(\ln\bm{\lambda}, 10 I_{N_c})$ when the base prior is $\mathbb{P}_{S}^{\theta}$.
	
	\subsection{Simple environment}\label{NumericalSimple}
	We evaluate our method in both simple and complex environments. In the simple environment, the measure $\mathscr{E}$ generates ground-truth parameters via the random function:  
	\begin{align}\label{DarcyFlowSimpleUMain}
		u(x_1,x_2) = u_1(x_1,x_2) + u_2(x_1,x_2) + u_3(x_1,x_2),
	\end{align}
	where $u_i(x_1,x_2) = a_{i3}(1-x_1^2)^{a_{i1}}(1-x_2^2)^{a_{i2}}e^{-a_{i4}(x_1-a_{i5})^2 - a_{i6}(x_2-a_{i7})^2}$ with $i=1,2,3$. For the parameters in the above formula, we let 
	$a_{i1}\sim U(0.1,0.5)$, $a_{i2}\sim U(0.1,0.5)$, $a_{i3}\sim U(3,4)$, $a_{i4}\sim U(30, 35)$, $a_{i6}\sim U(30, 35)$, $a_{15}\sim U(0.15,0.25)$, $a_{17}\sim U(0.65,0.75)$, $a_{25}\sim U(0.45,0.55)$, $a_{27}\sim U(0.45,0.55)$, $a_{35}\sim U(0.65,0.75)$, and $a_{37}\sim U(0.15,0.25)$, where $i=1,2,3$. We generate 2000 and 50 random functions via formula (\ref{DarcyFlowSimpleUMain}) to construct the historical datasets $\{S_i\}_{i=1}^{2000}$ and testing datasets, respectively. For constructing these datasets, we select 100 statistically equally-spaced sparse measurement points ($m=100$).  
	
	\begin{figure}[ht!]
		\centering
		\includegraphics[width=1.0\textwidth]{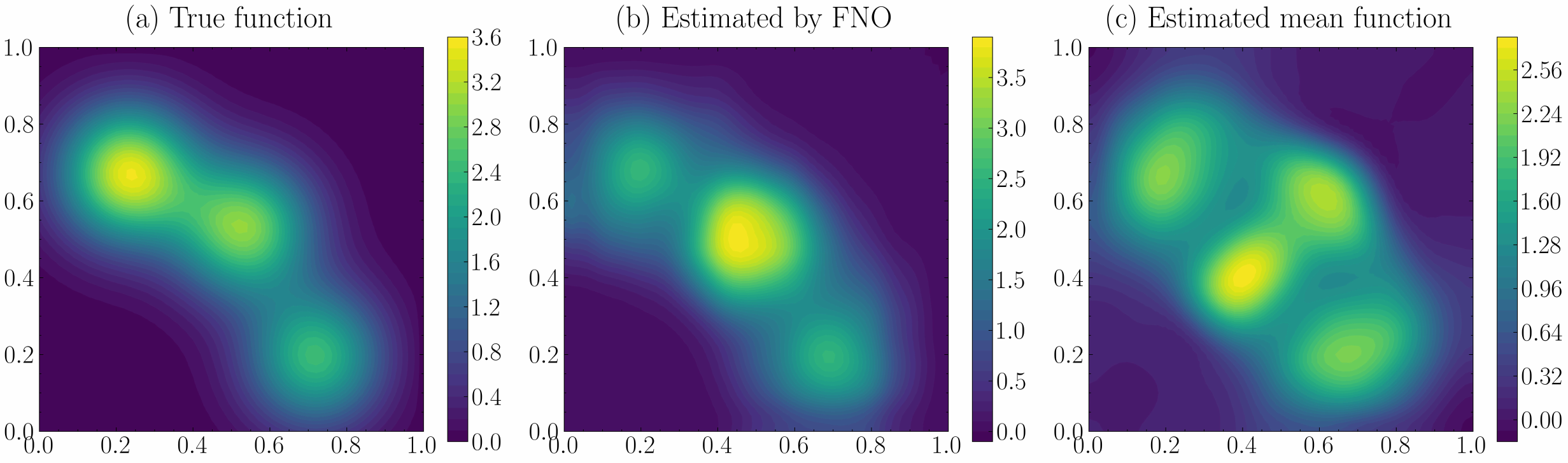}\\
		\caption{\emph{\small True function and estimated mean functions in the simple environment setting. 
				(a) One of the functions in the testing dataset;
				(b) The learned mean function $f_m(S;\theta_1)$ of $\mathbb{P}_{S}^{\theta}$; 
				(c) The learned mean function $f_m(\theta_1)$ of $\mathbb{P}^{\theta}$. }
		}\label{FigEx2_1}
	\end{figure}
	
	Before presenting quantitative comparisons, we provide a visual comparison in Fig. \ref{FigEx2_1}.
	In Fig. \ref{FigEx2_1} (a), we display one of the functions in the testing datasets, offering an intuitive understanding of the model parameters. In Fig. \ref{FigEx2_1} (b), we present the learned mean function $f_m(S;\theta_1)$ of $\mathbb{P}_{S}^{\theta}$.
	Lastly, in Fig. \ref{FigEx2_1} (c), we display the learned mean function $f_m(\theta_1)$ of $\mathbb{P}^{\theta}$. Notably, the learned mean function shown in (b) of Fig. \ref{FigEx2_1} closely resembles the true underlying function. In contrast, the mean function of the learned data-independent prior depicted in (c) of Fig. \ref{FigEx2_1} only represents the averaged information and visually deviates from this specific true underlying function. 
	
	The objective of learning prior measures is to enhance efficiency in solving novel inverse problems associated with historical inverse tasks. To show the proposed method's superiority, we evaluate base posteriors induced by three priors $\mathbb{P}$, $\mathbb{P}^{\theta}$, and $\mathbb{P}_{S}^{\theta}$ via maximum a posteriori (MAP) estimates, posterior means, and credible regions. The matrix-free Newton conjugate gradient algorithm \citep{Ghattas2021ActaNum} computes MAP estimates for testing datasets, while the mixture Gaussian sequential Monte Carlo (MGSMC) algorithm \citep{Lu2024arXiv} generates samples from the base posterior. Algorithmic details are provided in Subsection A.9 
	of the supplementary materials.  
	
	In Table \ref{TableDarcyFlowMain}, we report relative errors (in $L^2$-norm) of the computed MAP estimates against ground truth, indexed by the number of iterations. Regardless of the environment, the learned data-dependent Bayesian model yields the smallest relative error, requiring only 5 iterations. In the simple environment, the learned data-independent model converges faster than the unlearned Bayesian model but yields a similar relative error. 
	
	\begin{table}[ht!]
		\caption{For the 50 test datasets, we list the average relative errors of the MAP estimates from base posteriors under priors $\mathbb{P}$, $\mathbb{P}^{\theta}$, and $\mathbb{P}_{S}^{\theta}$, respectively. Here, ItN denotes the iteration count of the inexact matrix-free Newton-conjugate gradient method. }
		\begin{center}
			\begingroup
			\setlength{\tabcolsep}{1.0pt}
			\renewcommand{\arraystretch}{1.0}
			\begin{spacing}{1.0}
				\begin{tabular}{c|ccc|ccc}
					\Xhline{1.1pt}
					& \multicolumn{3}{c|}{Simple Environment} & \multicolumn{3}{c}{Complex Environment}  \\
					\hline
					$\quad$ItN $\quad$& $\qquad$$\mathbb{P}$$\qquad$ & $\qquad$$\mathbb{P}^{\theta}$$\qquad$ & $\qquad$$\mathbb{P}_S^{\theta}$$\qquad$ &  $\qquad$$\mathbb{P}$$\quad$ & $\qquad$$\mathbb{P}^{\theta}$$\qquad$ & $\qquad$$\mathbb{P}_S^{\theta}$$\qquad$ \\
					\hline
					$1$  & .7322 & .1932 & \textbf{.0973} & .7130 & .9120 & \textbf{.1078} \\
					$5$  & .2756 & .1674 & \textbf{.0911} & .3515 & .7056 & \textbf{.0895}	\\
					$10$ & .1614 & .1574 & \textbf{.0911} & .2798 & .6794 & \textbf{.0875} 	\\
					$15$ & .1460 & .1520 & \textbf{.0914} & .2714 & .6723 & \textbf{.0874}	\\                   
					$20$ & .1401 & .1457 & \textbf{.0914} & .2683 & .6685 & \textbf{.0869}	\\
					\Xhline{1.1pt}
				\end{tabular}
			\end{spacing}
			\endgroup
		\end{center}\label{TableDarcyFlowMain}
	\end{table}
	
	\begin{figure}[ht!]
		\centering
		\includegraphics[keepaspectratio=true, width=0.32\textwidth, clip=false]{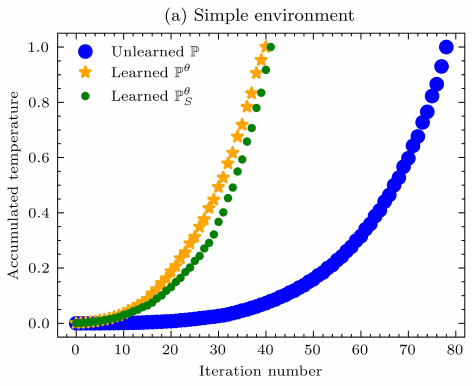}
		\hspace{0.0pt} 
		\includegraphics[keepaspectratio=true, width=0.32\textwidth, clip=false]{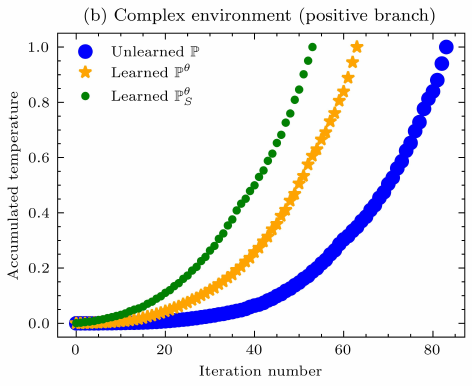} 
		\hspace{0.0pt} 
		\includegraphics[keepaspectratio=true, width=0.32\textwidth, clip=false]{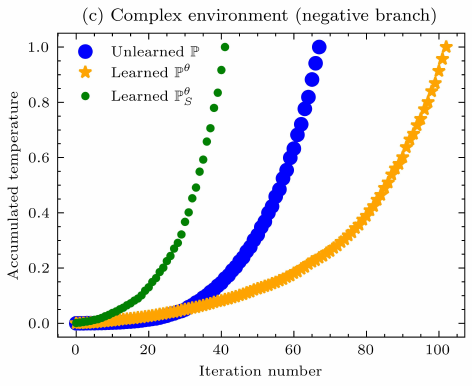}
		\vspace*{-1pt} 
		\caption{\emph{\small 
				Accumulated temperatures selected by effective sample size are shown in (a)-(c) for one test dataset from each case: the simple environment, the positive-valued branch, and the negative-valued branch of the complex environment. In all panels: blue dots represent unlearned Bayesian model temperatures, orange star dots denote learned data-independent Bayesian model temperatures, and green dots signify learned data-dependent Bayesian model temperatures.
		}}
		\label{FigSMCSpeed}
	\end{figure}
	
	In the Bayesian framework, we need to characterize the behavior of the base posterior. We use the MGSMC algorithm to generate 1000 samples from the posterior. Specifically, similar to other sequential Monte Carlo algorithms, the MGSMC algorithm decomposes the original problem into a series of simpler sampling problems. This decomposition relies on a sequence of positive numbers $\{h_k\}_{k=1}^{K_{\text{total}}}$ (termed temperatures) satisfying $\sum_{k=1}^{K_{\text{total}}}h_k = 1$. The temperatures $\{h_k\}_{k=1}^{K_{\text{total}}}$ are automatically determined based on the effective sample size (see supplementary materials for details). With other parameters fixed, $K_{\text{total}}$ governs the sampling speed of the MGSMC algorithm. The left panel of Fig. \ref{FigSMCSpeed} plots the accumulated temperatures $\sum_{k=1}^K h_k$ ($K=1,\dots,K_{\text{total}}$) under the simple environment setting. The learned data-dependent and learned data-independent models achieve a similar sampling speed, and the unlearned model is significantly slower than the other two methods. Thus, the Bayesian model with learned prior substantially accelerates posterior calculation. 
	
	\begin{figure}[ht!]
		\centering
		\includegraphics[width=1.0\textwidth]{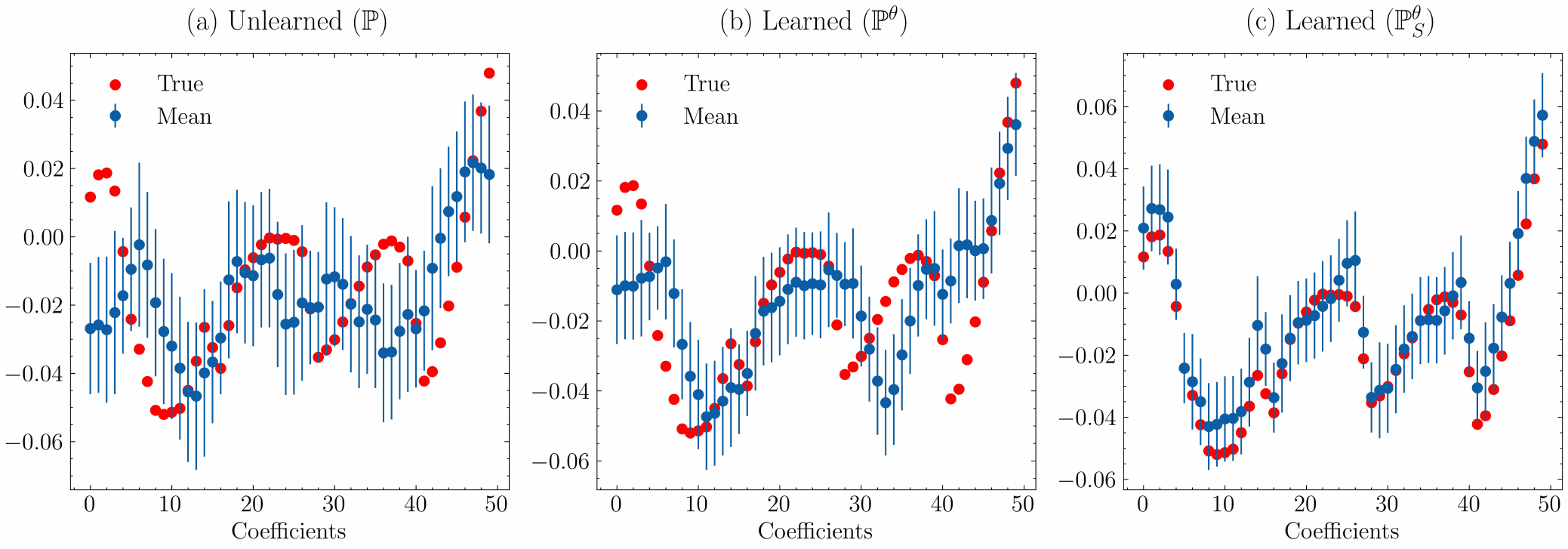}\\
		\vspace*{-2pt}
		\caption{\emph{\small In all panels, red and blue dots denote the true function's projected coefficients and the base posterior mean's projected coefficients, respectively. Blue vertical lines mark $95\%$ credible intervals. Panels (a), (b), and (c) display results of simple environment under the priors $\mathbb{P}$, $\mathbb{P}^{\theta}$, and $\mathbb{P}_S^{\theta}$, respectively.}
		}\label{FigSimpleStd}
	\end{figure}
	
	In addition to sampling speed, we compare uncertainty quantification results. Since parameter $u$ is a function, we project it onto the first 200 eigenfunctions $\{e_k\}_{k=1}^{200}$ of operator $\mathcal{C}_0$. We then compute the $95\%$ credible intervals for the projected variables $\{\langle u, e_k \rangle\}_{k=1}^{200}$. In Fig. \ref{FigSimpleStd}, red and blue dots denote the projected coefficients of the true function and the posterior mean of the base posterior, respectively. For clarity, we display only 50 projected points. Results from the unlearned, learned data-independent, and data-dependent models are shown in the left, middle, and right panels, respectively. Notably, all true values lie within the $95\%$ credible intervals obtained by the learned data-dependent model. By contrast, some true values fall outside the $95\%$ credible intervals of the other two models. Furthermore, the credible intervals of the unlearned and learned data-independent models are wider than those of the learned data-dependent model.
	
	\begin{figure}[ht!]
		\centering
		\includegraphics[width=1.0\textwidth]{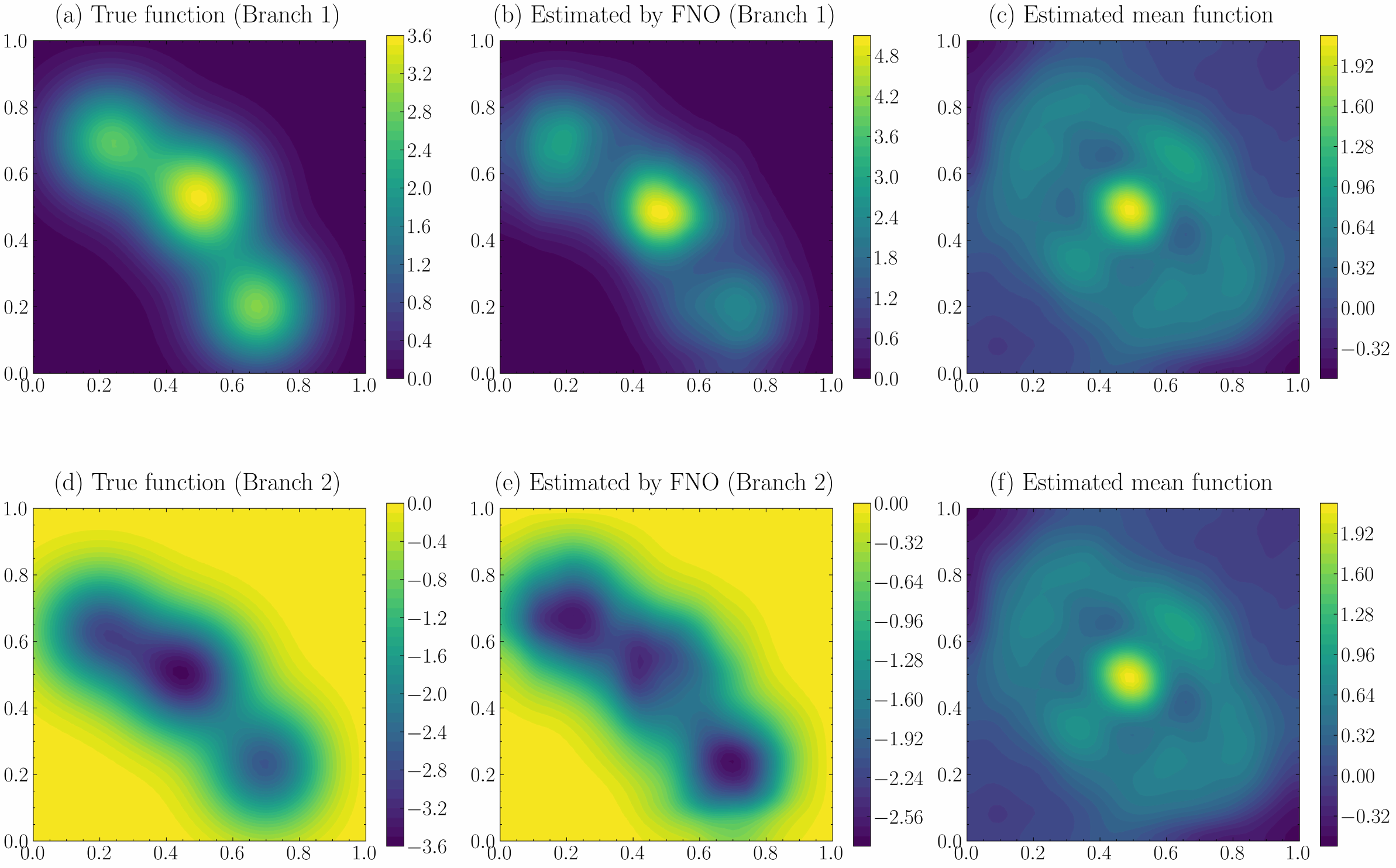}\\
		\caption{\emph{\small True and estimated functions for Branches 1 and 2: (a) shows a random function of Branch 1; (b) and (c) present the mean functions of priors $\mathbb{P}_{S}^{\theta}$ and $\mathbb{P}^{\theta}$ for Branch 1, respectively; (d) depicts a random function of Branch 2; (e) and (f) display the corresponding mean functions from both priors for Branch 2.}
		}\label{FigEx2_2_Main}
	\end{figure}
	
	\subsection{Complex environment}\label{NumericalComplex}
	
	Except for the simple environment, we design a more challenging complex environment, which has two branches compared to the simple environment. Specifically, the background true functions are generated based on the formula:  
	$u(x_1,x_2) = (2\alpha-1)(u_1(x_1,x_2) + u_2(x_1,x_2) + u_3(x_1,x_2)),$  
	where $\alpha\sim\text{Bern}(0.5)$ and settings are the same as in the simple environment. As in the simple environment, we generate 2000 training and 50 testing functions. In Figs. \ref{FigEx2_2_Main} (a) and (d), we show two functions randomly selected from the two branches of the test examples. Figs. \ref{FigEx2_2_Main} (b) and (e) present the learned mean functions under the data-dependent prior assumption. Notably, the Fourier neural operator distinguishes between the two branches and provides reasonable estimates. Figs. \ref{FigEx2_2_Main} (c) and (f) (same figure) display the learned mean functions under the data-independent prior assumption, demonstrating that this approach fails to capture the branches' characteristics.  
	
	\begin{figure}[ht!]
		\centering
		\includegraphics[width=1.0\textwidth]{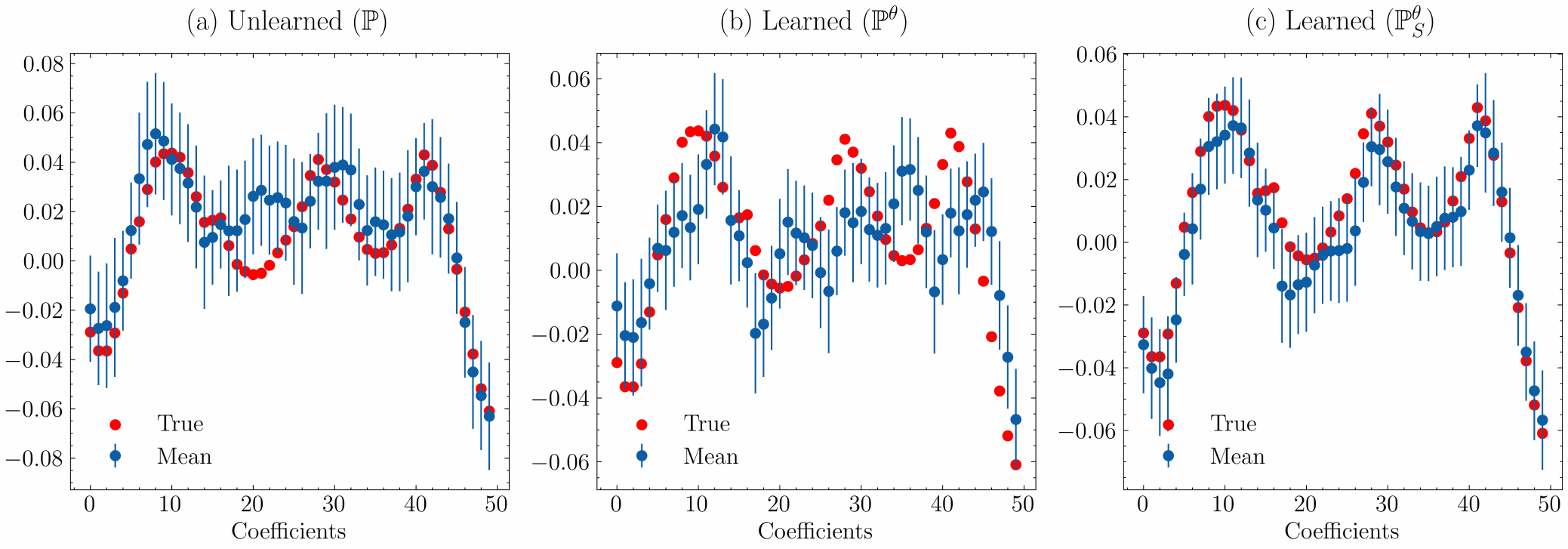}\\
		\vspace*{-3pt}
		\caption{\emph{\small In all panels, red and blue dots denote the true function's projected coefficients and the base posterior mean's projected coefficients, respectively. Blue vertical lines mark $95\%$ credible intervals. Panels (a), (b), and (c) show results for a negative-valued branch (complex environment) using priors $\mathbb{P}$, $\mathbb{P}^{\theta}$, and $\mathbb{P}_S^{\theta}$, respectively. }
		}\label{FigNegativeBranchStd}
	\end{figure}
	
	Table \ref{TableDarcyFlowMain} shows the average relative errors of MAP estimates in the complex environment. The learned data-dependent Bayesian model yields the lowest errors and requires only 5 iterations. The learned data-independent model performs significantly worse than the other two, with detailed analysis in Subsection A.9 
	of the supplement. Sampling speeds are shown in Fig. \ref{FigSMCSpeed} (b,c). For the positive-valued branch, all models exhibit similar speeds to the simple environment. For the negative-valued branch, however, the learned data-independent model requires $K_{\text{total}} > 100$—more than twice that of the data-dependent model — highlighting the efficiency of the data-dependent prior model in complex scenarios. In Fig. \ref{FigNegativeBranchStd}, we show the $95\%$ credible intervals of the three methods. The results clearly demonstrate that the data-dependent prior is capable of extracting valuable information from historical inverse tasks, utilizing only the historical measurement datasets without the true parameter.
	
	Finally, we analyze computational times. All programs ran on a system with an Intel(R) Xeon(R) Platinum 8180 CPU, 48 GB NVIDIA RTX A6000 GPU, and Ubuntu 22.04.5 LTS OS. We randomly selected a dataset generated from a negative-valued function in a complex environment and recorded the runtime of the MGSMC algorithm. The unlearned, learned data-independent, and learned data-dependent models consumed approximately 3074s, 6257s, and 1682s, respectively. Notably, the MGSMC algorithm used 1000 particles; increasing the particle number exacerbates the performance gap. By leveraging a mixture Gaussian approximation, MGSMC reduces computational cost significantly compared to the classical SMC algorithm \citep{Dashti2017}, though at the cost of reduced accuracy. For comparison, the classical SMC (500 particles) with preconditioned Crank–Nicolson sampling (200 steps) required approximately 57396s, 142843s, and 31362s for the respective models. This translates to a savings of nearly 7.2 hours per inverse problem for learned data-dependent models over unlearned models. Given computational constraints, we used 500 particles; extrapolating, 2000 particles would save nearly 28.8 hours.  Training $\mathbb{P}_S^{\theta} = \mathcal{N}(f_m(S;\theta_1), \mathcal{C}_0(\theta_2))$ with parameters $L=10$, $H=20$, and 7200 maximum iterations required 75750s (approximately 21 hours). While substantial, this is a one-time cost: once trained, the neural operator predictor can be employed to solve new inverse problems (given a new dataset $S$, no further training is needed).
}

\section{Concluding Remarks}\label{SectionConcludion}

This paper formulates infinite-dimensional inverse problems as regression tasks constrained by forward operators (e.g., heat or elliptic equations). Leveraging meta-learning, we develop a PAC-Bayesian framework for predicting prior probability measures in infinite-dimensional spaces. A key innovation is introducing data-dependent base priors via a condition inspired by the concept of differential privacy. By extending infinite-dimensional Bayesian theory, we derive general forward operator conditions compatible with both linear and nonlinear problems. Specific generalization bounds are established through careful log-moment generating function estimations. This theory yields an optimization problem avoiding the bilevel structure typical of prior learning. Finally, we propose a tractable learning algorithm that constructs effective prior-generating mappings.  

{\color{black}Designing neural operator architectures to process discrete datasets \( S \) and probability measures is critical for solving inverse problems. Architectural inspirations can be drawn from DeepSets \citep{ZaheerNIPS2017} and NIO frameworks \citep{MolinaroICML2023}, which offer structural paradigms for handling such input modalities. While only empirical PAC-Bayes bounds are obtained in this work, deriving oracle PAC-Bayes bounds and tighter empirical bounds \citep{Alquier2024Book,Riou2023ArXiv} remains an important future research direction.} 

\section{Disclosure statement}\label{disclosure-statement}

The authors report there are no competing interests to declare.

\section{Data Availability Statement}\label{data-availability-statement}

The generated data (6.84GB) have been made available at the following URL: \url{https://drive.google.com/drive/folders/1ekb_fWj4p7CZEBsx6BOMSkb5eCBMU_9A?usp=drive_link}.

\phantomsection\label{supplementary-material}
\bigskip

\begin{center}

{\large\bf SUPPLEMENTARY MATERIAL}

\end{center}

\begin{description}
\item[Learn\_Prior\_Supp:]
The additional theoretical results related to Sections \ref{SectionLearningPriorMeasure} and \ref{SectionApplication}, more numerical results, all proofs are deferred to the Supplementary Material. (.pdf file). 
\item[Code and Data:]
Python code to generate the synthetic datasets and reproduce the simulations and corresponding output.
\end{description}


\spacingset{1.1}
\bibliography{reference}

@inproceedings{Germain2016NIPS,
	author = {Germain, P. and Bach, F. and Lacoste, A. and Lacoste-Julien, S.},
	title = {{PAC}-{B}ayesian theory meets {B}ayesian inference},
	volume = {29},
	year = {2016},
	booktitle = {Advances in Neural Information Processing Systems},
	pages = {1884–1892},
	numpages = {9},
}

@inproceedings{Rothfuss2021PMLR,
	author = {Rothfuss, J. and Fortuin, V. and Josifoski, M. and Krause, A.},
	title = {{PACOH}: {B}ayes-optimal meta-learning with {PAC}-guarantees},
	booktitle = {Proceedings of the International Conference on Machine Learning},
	year = {2021},
	pages = {9116--9126},
	volume = {139},
}

@inproceedings{Pentina2014ICML,
	author = {Pentina, A. and Lampert, C.},
	title = {A {PAC}-{B}ayesian bound for life-long learning},
	booktitle = {Proceedings of the International Conference on Machine Learning},
	year = {2014},
	pages = {991--999},
	volume = {32},
}

@inproceedings{Amit2018ICML,
	author = {Amit, R. and Meir, R.},
	title = {Meta-learning by adjusting priors based on extended {PAC-B}ayes theory},
	booktitle = {Proceedings of the International Conference on Machine Learning},
	year = {2018},
	pages = {205--214},
	volume = {80},
}

@article{Dashti2017,
	author = {Dashti, M. and Stuart, A. M.},
	title = {The {Bayesian} Approach to Inverse Problems},
	journal = {Handbook of Uncertainty Quantification},
	pages = {311--428},
	year = {2017},
}

@article{Rothfuss2023JMLR,
	title = {Scalable {PAC-B}ayesian meta-learning vis the {PAC-O}ptimal hyper-posterior: {F}rom theory to practice}, 
	author = {Rothfuss, J. and Josifoski, M. and Fortuin, V. and Krause, A.},
	journal = {J. Mach. Learn. Res.}, 
	volume = {24},
	year = {2023},
	pages = {1--62},
}

@article{Dashti2013IP,
	title={{MAP} estimators and their consistency in {B}ayesian nonparametric inverse problems},
	author={Dashti, M. and Law, K. J. and Stuart, A. M. and Voss, J.},
	journal={Inverse Probl.},
	volume={29},
	number={9},
	pages={095017},
	year={2013},
}

@article{Helin2015IP,
	title={Maximum a posteriori probability estimates in infinite-dimensional Bayesian inverse problems},
	author={Helin, T. and Burger, M.},
	journal={Inverse Probl.},
	volume={31},
	number={8},
	pages={085009},
	year={2015},
}

@article{Agapiou2018IP,
	year = {2018},
	volume = {34},
	number = {4},
	pages = {045002},
	author = {Agapiou, S. and Burger, M. and Dashti, M. and Helin, T.},
	title = {Sparsity-promoting and edge-preserving maximum a posteriori estimators in non-parametric {B}ayesian inverse problems},
	journal = {Inverse Probl.},

}

@article{Haber2003IP,
	year = {2003},
	volume = {19},
	number = {3},
	pages = {611--626},
	author = {Haber, E. and Tenorio, L.},
	title = {Learning regularization functionals a supervised training approach},
	journal = {Inverse Probl.},

}

@article{Afkham2021IP,
	year = {2021},
	volume = {37},
	number = {10},
	pages = {105017},
	author = {Afkham, B. M. and Chung, J. and Chung, M.},
	title = {Learning regularization parameters of inverse problems via deep neural networks},
	journal = {Inverse Probl.},

}

@inproceedings{Liu2016NIPS,
	author = {Liu, Q. and Wang, D.},
	title = {Stein variational gradient descent: {A} general purpose {B}ayesian inference algorithm},
	booktitle = {Advances in Neural Information Processing Systems},
	volume = {29},
	year = {2016},
}

@inproceedings{MolinaroICML2023,
	author = {Molinaro, R. and Yang, Y. and Engquist, B. and Mishra, S.},
	title = {Neural inverse operators for solving {PDE} inverse problems},
	year = {2023},
	booktitle = {Proceedings of the 40th International Conference on Machine Learning},
	articleno = {1046},
	numpages = {35},
}

@inproceedings{ZaheerNIPS2017,
	author = {Zaheer, M. and Kottur, S. and Ravanbakhsh, S. and Poczos, B. and Salakhutdinov, R. R. and Smola, A. J.},
	booktitle = {Advances in Neural Information Processing Systems},
	pages = {},
	title = {Deep Sets},
	volume = {30},
	year = {2017}
}

@article{Reyes2016JMIV,
	author = {Reyes, J. C. De los and Sch\"{o}nlib, C.-B. and Valkonen, T.},
	title = {Bilevel parameter learning for higher-order total variation regularisation models},
	journal = {J. Math. Imaging Vis.},
	volume = {57},
	pages = {1--25},
	year = {2017},
}

@article{Antil2020IP,
	year = {2020},
	volume = {36},
	number = {6},
	pages = {064001},
	author = {Antil, H. and Di, Z. W. and Ratna Khatri, R.},
	title = {Bilevel optimization, deep learning and fractional Laplacian regularization with applications in tomography},
	journal = {Inverse Probl.},

}

@article{Jin2010JCP,
	author = {Jin, B. and Zou, J. },
	title = {Hierarchical {B}ayesian inference for ill-posed problems via variational method},
	journal = {J. Comput. Phys.},
	volume = {229},
	number = {19},
	year = {2010},
	pages = {7317--7343},
}

@article{Li2020IP,
	year = {2020},
	volume = {36},
	number = {6},
	pages = {065005},
	author = {Li, H. and Schwab, J. and Antholzer, S. and Haltmeier, M.},
	title = {{NETT}: solving inverse problems with deep neural networks},
	journal = {Inverse Probl.},

}

@inproceedings{Dziugaite2018NIPS,
	author = {Dziugaite, G. K. and Roy, D. M.},
	title = {Data-dependent {PAC}-{B}ayes priors via differential privacy},
	booktitle = {Advances in Neural Information Processing Systems},
	volume = {32},
	year = {2018},
}

@inproceedings{Lunz2018NIPS,
	author = {Lunz, S. and \"{O}ktem, O. and Sch\"{o}nlieb, C.-B.},
	booktitle = {Advances in Neural Information Processing Systems},
	title = {Adversarial Regularizers in Inverse Problems},
	volume = {31},
	year = {2018},
}

@inproceedings{Dwork2015NIPS,
	author = {Dwork, C. and Feldman, V. and Hardt, M. and Pitassi, T.},
	title = {Generalization in adaptive data analysis and holdout reuse},
	booktitle = {Advances in Neural Information Processing Systems},
	volume = {28},
	year = {2015},
}

@article{Alquier2016JMLR,
	author = {Alquier, P. and Ridgway, J. and Chopin, N.},
	title = {On the properties of variational approximations of {G}ibbs posteriors},
	journal = {J. Mach. Learn. Res.},
	volume = {17},
	year = {2016},
	pages = {1--41},
}

@article{Cotter2013SS,
	title={{MCMC} methods for functions: modifying old algorithms to make them faster},
	author={Cotter, S. L. and Roberts, G. O. and Stuart, A. M. and White, D. },
	journal={Stat. Sci.},
	volume={28},
	number={3},
	pages={424--446},
	year={2013},
}

@inproceedings{McAllester1998ACCLT,
	title={Some pac-bayesian theorems},
	author={McAllester, David A},
	booktitle={Proceedings of the 11th Annual Conference on Computational Learning Theory},
	pages={230--234},
	year={1998}
}

@article{Oates2019JASA,
    author = {Oates, C. J. and Cockayne, J. and Aykroyd, R. G. and Girolami, M.}, 
    title = {Bayesian probabilistic numerical methods in time-dependent state estimation for industrial hydrocyclone equipment},
    journal = {J. Am. Stat. Assoc.},
    year = {2019}, 
    volume = {114}, 
    number = {528},
    pages = {1518--1531},
}

@article{Jingzhi2009CPAA,
	author = {Li, J. and Yammamoto, M. and Zou, J.},
	title = {Conditional stability and numerical reconstruction of initial temperature}, 
	journal = {Commun. Pur. Appl. Math.},
	volume = {8},
	number = {1},
	year = {2009},
	pages = {361--382},
}

@article{Jia2018JFA,
	author = {Jia, J. and Yue, S. and Peng, J. and Gao, J.},
	title = {Infinite-dimensional {B}ayesian approach for inverse scattering problems of a fractional {H}elmholtz equation},
	journal = {J. Funct. Anal.}, 
	volume = {275},
	number = {9},
	year = {2018},
	pages = {2299--2332},
}

@ARTICLE{Yue2024TPAMI,
	author={Yue, Zongsheng and Yong, Hongwei and Zhao, Qian and Zhang, Lei and Meng, Deyu and Wong, Kwan-Yee K.},
	journal={IEEE Transactions on Pattern Analysis and Machine Intelligence}, 
	title={Deep Variational Network Toward Blind Image Restoration}, 
	year={2024},
	volume={46},
	number={11},
	pages={7011-7026},
}

@article{Beskos2018JUQ,
	author = {Beskos, A. and Jasra, A. and Law, K. and Marzouk, Y. and Zhou, Y.},
	title = {Multilevel sequential {M}onte {C}arlo with dimension-independent likelihood-informed proposals},
	journal = {SIAM-ASA J. Uncertain.}, 
	volume = {6},
	number = {2},
	year = {2018},
	pages = {762--786},
}

@article{Rudolf2018FCM,
	author = {Rudolf, D. and Sprungk, B.}, 
	title = {On a generalization of the preconditioned {C}rank-{N}icolson metropolis algorithm}, 
	journal = {Found. Comput. Math.}, 
	volume = {18},
	year = {2018},
	pages = {309--343},
}

@article{Jia2017JDE,
	author = {Jia, J. and Peng, J. and Yang, J.},
	title = {Harnack's inequality for a space-time fractional diffusion equation and applications to an inverse source problem},
	journal = {J. Differ. Equations}, 
	volume = {262},
	number = {8},
	year = {2017},
	pages = {4415--4450},
}

@article{Jin2015IP,
	author = {Jin, B. and Rundell, W.},
	title = {A tutorial on inverse problems for anomalous diffusion processes},
	journal = {Inverse Probl.}, 
	volume = {31},
	number = {3}, 
	year = {2015},
	pages = {035003},
}

@article{Hall2013JMLR,
	author = {Hall, R. and Rinaldo, A. and Wasserman, L.},
	title = {Differential privacy for functions and functional data},
	journal = {J. Mach. Learn. Res.}, 
	volume = {14}, 
	year = {2013},
	pages = {703--727},
}

@article{Vollmer2013IP,
	author={Vollmer, S. J. },
	title={Posterior consistency for {Bayesian} inverse problems through stability and regression results},
	journal={Inverse Probl.},
	volume={29},
	number={12},
	pages={125011},
	year={2013},
}

@article{Richter1981IP,
	title={An inverse problem for the steady state diffusion equation},
	author={Richter, G. R.},
	journal={SIAM J. Appl. Math.},
	volume={41},
	number={2},
	pages={210--221},
	year={1981},
}

@article{Dimitrakakis2016JMLR,
	author = {Dimitrakakis, C. and Nelson, B. and Zhang, Z. and Mitrokotsa, A. and Rubinstein, B. I. P.}, 
	title = {Bayesian differential privacy through posterior sampling}, 
	journal = {J. Mach. Learn. Res.}, 
	volume = {16},
	year = {2016},
	pages = {1--38},
}

@article{Alquier2024Book,
	title={User-friendly introduction to {PAC}-{B}ayes bounds},
	author={Alquier, Pierre},
	journal={Foundations and Trends{\textregistered} in Machine Learning},
	volume={17},
	number={2},
	pages={174--303},
	year={2024},
	publisher={Now Publishers, Inc.}
}

@misc{Guedj2019ArXiv,
	author = {Guedj, B.},
	title = {A primer on {PAC-B}ayesian learning},
	howpublished = {arXiv:1901.05353},
	year = {2019},
}

@article{Riou2023ArXiv,
	title={Bayes meets {B}ernstein at the meta level: an analysis of fast rates in meta-learning with {PAC-B}ayes},
	author={Riou, C. and Alquier, P. and Ch\'{e}rief-Abdellatif, B.-E.},
	journal={Journal of Machine Learning Research},
	volume={26},
	number={2},
	pages={1--60},
	year={2025}
}

@misc{Rezazadeh2022ArXiv,
	title={A general framework for {PAC}-{B}ayes bounds for meta-learning},
	author={Rezazadeh, A.},
	howpublished={arXiv:2206.05454},
	year={2022},
}

@article{Shu2021JMLR,
	author = {Shu, J. and Meng, D. and Xu, Z.}, 
	title = {Learning an explicit hyper-parameter prediction function conditioned on tasks}, 
	journal = {J. Mach. Learn Res.},
    volume = {24},
	year = {2023},
    pages = {1--74},
}

@article{ShuYuan2023TPAMI,
	author={Shu, Jun and Yuan, Xiang and Meng, Deyu and Xu, Zongben},
	journal={IEEE Transactions on Pattern Analysis and Machine Intelligence}, 
	title={{CMW-Net}: Learning a Class-Aware Sample Weighting Mapping for Robust Deep Learning}, 
	year={2023},
	volume={45},
	number={10},
	pages={11521-11539},
	doi={10.1109/TPAMI.2023.3271451},
}

@article{ShuZhu2023TPAMI,
	author={Shu, Jun and Zhu, Yanwen and Zhao, Qian and Meng, Deyu and Xu, Zongben},
	journal={IEEE Transactions on Pattern Analysis and Machine Intelligence}, 
	title={{MLR-SNet}: Transferable LR Schedules for Heterogeneous Tasks}, 
	year={2023},
	volume={45},
	number={3},
	pages={3505-3521},
	doi={10.1109/TPAMI.2022.3184315}, 
}

@InProceedings{Finn2018ICML,
	title = 	 {Model-Agnostic Meta-Learning for Fast Adaptation of Deep Networks},
	author =       {Finn, C. and Abbeel, P. and Levine, S.},
	booktitle = 	 {Proceedings of the International Conference on Machine Learning},
	pages = 	 {1126--1135},
	year = 	 {2017},
	volume = 	 {70},
}

@article{Jia2022SINUM,
	author = {Jia, J. and Li, P. and Meng, D.},
	title = {Stein variational gradient descent on infinite-dimensional space and applications to statistical inverse problems}, 
	journal = {SIAM J. Numer. Anal.}, 
	volume = {60},
	number = {4},
	pages = {2225--2252},
	year = {2022},
}

@article{Thanh2016IPI,
	author = {Bui-Thanh, T. and Nguyen, Q. P.},
	title = {{FEM}-based discretization-invariant {MCMC} methods for {PDE}-constrained {B}ayesian inverse problems},
	journal = {Inverse Probl. Imag.},
	volume = {10},
	number = {4},
	year = {2016},
	pages = {943--975},
}

@article{Szabo2015AS,
	author = "Szab\'{o}, B. and van der Vaart, A. W. and van Zanten, J. H.",
	title = "Frequentist coverage of adaptive nonparametric {Bayesian} credible sets",
	journal = "Ann. Statist.",
	volume = "43",
	number = "4",
	pages = "1391--1428",
	year = "2015",
}

@article{Maurer2005JMLR,
	author = {Maurer, A.},
	title = {Algorithmic stability and meta-learning},
	journal = {J. Mach. Learn. Res.},
	volume = {6},
	year = {2005},
	pages = {967--994},
}

@article{Jia2018IPI,
	author = {Jia, J. and Peng, J. and Gao, J. and Li, Y.},
	title = {Backward problem for a time-space fractional diffusion equation},
	journal = {Inverse Probl. Imag.},
	volume = {12},
	number = {3},
	year = {2018},
	pages = {773--799},
}

@article{Agapiou2013SPA,
	author = "Agapiou, S. and Larsson, S. and Stuart, A. M. ",
	title = "Posterior contraction rates for the {B}ayesian approach to linear ill-posed inverse problems",
	journal = "Stoch. Proc. Appl.",
	year = "2013",
	volume = "123",
	number = "10",
	pages = "3828--3860",
}

@article{Pinski2015SISC,
	author = {Pinski, F. J. and Simpson, G. and Stuart, A. M. and Weber, H.},
	title = {Algorithms for {K}ullback--{L}eibler Approximation of Probability Measures in Infinite Dimensions},
	journal = {SIAM J. Sci. Comput.},
	volume = {37},
	number = {6},
	pages = {A2733-A2757},
	year = {2015},
}

@article{Pinski2015SIAMMA,
	author = {Pinski, F. J. and Simpson, G. and Stuart, A. M. and Weber, H.},
	title = {Kullback-{L}eibler approximation for probability measures on infinite dimensional space},
	journal = {SIAM J. Math. Anal.},
	volume = {47},
	number = {6},
	year = {2015},
	pages = {4091--4122},
}

@article{Beskos2015SC,
	author = {Beskos, A. and Jasra, A. and Muzaffer, E. A. and Stuart, A. M.},
	title = {Sequential {M}onte {C}arlo methods for {B}ayesian elliptic inverse problems},
	journal = {Stat. Comput.},
	volume = {25},
	year = {2015},
	pages = {727--737},
}

@article{Markkanen2019JIIP,
	author = {Markkanen, M. and Roininen, L. and Huttunen, J. M. J.  and Lasanen, S.},
	title = {Cauchy difference priors for edge-preserving {B}ayesian inversion},
	journal = {J. Inverse Ill-Pose. P. },
	number = {2},
	volume = {27},
	year = {2019},
	pages = {225--240},
}

@article{Jia2016IP,
	author = {Jia, J. and Peng, J. and Gao, J.},
	title = {Bayesian approach to inverse problems for functions with a variable-index {B}esov prior},
	journal = {Inverse Probl.},
	volume = {32},
	number = {8},
	year = {2016},
	pages = {085006},
}

@article{Jia2021SISC,
	title={Variational {B}ayes' method for functions with applications to some inverse problems},
	author={Jia, J. and Zhao, Q. and Xu, Z. and Meng, D. and Leung, Y.},
	journal={SIAM J. Sci. Comput.},
	volume={43},
	number={1},
	pages={A355--A383},
	year={2021},
}

@article{Jia2023JMLR,
	title = {Variational inverting network for statistical inverse problems of partial differential equations},
	author = {Jia, J. and Wu, Y. and Li, P. and Meng, D.},
	howpublished = {J. Mach. Learn. Res.},
	volume = {24},
	number = {201},
	year = {2023},
	pages = {1--60},
}

@article{Dunlop2021JUQ,
	author = {Dunlop, M. M. and Yang, Y.},
	title = {Stability of Gibbs Posteriors from the Wasserstein Loss for Bayesian Full Waveform Inversion},
	journal = {SIAM-ASA J. Uncertain.},
	volume = {9},
	number = {4},
	pages = {1499-1526},
	year = {2021},
}

@article{Lassas2009IPI,
	title={Discretization-invariant Bayesian inversion and Besov space priors},
	author={Lassas, M. and Saksman, E. and Siltanen, S.},
	journal={Inverse Probl. Imag.},
	volume={3},
	number = {1},
	year={2009},
	pages = {87--122},
}

@article {Li2020SIAMImaging,
	AUTHOR = {Li, Zhaoxing and Deng, Zhiliang and Sun, Jiguang},
	TITLE = {Extended-sampling-{B}ayesian method for limited aperture
	inverse scattering problems},
	JOURNAL = {SIAM J. Imaging Sci.},
	FJOURNAL = {SIAM Journal on Imaging Sciences},
	VOLUME = {13},
	YEAR = {2020},
	NUMBER = {1},
	PAGES = {422--444},
}

@article{Bao2015IP,
	author={Bao, G. and Li, P. and Lin, J. and Triki, F.},
	title={Inverse scattering problems with multi-frequencies},
	journal={Inverse Probl.},
	volume={31},
	number={9},
	pages={093001},
	year={2015},
}

@article{Jia2019IP,
	year = {2019},
	volume = {35},
	number = {7},
	pages = {075003},
	author = {Jia, J. and Wu, B. and Peng, J. and Gao, J.},
	title = {Recursive linearization method for inverse medium scattering problems with complex mixture Gaussian error learning},
	journal = {Inverse Probl.},
}

@article{Stuart2010AN,
	author = {Stuart, A. M.}, 
	title = {Inverse problems: {A} {B}ayesian perspective}, 
	journal = {Acta Numer.}, 
	volume = {19},
	year = {2010},
	pages = {451--559},
}

@article{Arridge2019ActaNum, 
	title={Solving inverse problems using data-driven models}, 
	volume={28}, 
	journal={Acta Numer.}, 
	author={Arridge, S. and Maass, P. and \"{O}ktem, O. and Sch\"{o}nlieb, C.-B.}, 
	year={2019}, 
	pages={1–174},
}

@article {Ghattas2021ActaNum,
	AUTHOR = {Ghattas, O. and Willcox, K.},
	TITLE = {Learning physics-based models from data: perspectives from
	inverse problems and model reduction},
	JOURNAL = {Acta Numer.},
	FJOURNAL = {Acta Numerica},
	VOLUME = {30},
	YEAR = {2021},
	PAGES = {445--554},
}

@article{Bottou2018SIAMReview,
	author = {Bottou, L. and Curtis, F. E. and Nocedal, J.},
	title = {Optimization Methods for Large-Scale Machine Learning},
	journal = {SIAM Rev.},
	volume = {60},
	number = {2},
	pages = {223-311},
	year = {2018},
}

@article{Dunlop2016SC,
	author = {Dunlop, M. M. and Iglesias, M. A. and Stuart, A. M.},
	title = {Hierarchical {B}ayesian level set inversion},
	journal = {Stat. Comput.},
	volume = {27},
	year = {2017},
	pages = {1555--1584},
}

@article{Monard2021CPAM,
	author = {Monard, F. and Nickl, R. and Paternain, G. P.},
	title = {Consistent inversion of noisy non-{A}belian {X}-ray transforms}, 
	journal = {Commun. Pur. Appl. Math.},
	volume = {74},
	number = {5},
	year = {2021},
	pages = {1045--1099},
}

@article{Nickl2020JUQ,
	author = {Nickl, R. and Van De Geer, S. and Wang, S.}, 
	title = {Convergence rates for penalized least estimators in {PDE} constrained regression problems}, 
	journal = {SIAM/ASA J. Uncertainty Quantification},
	volume = {8},
	number = {1},
	year = {2020},
	pages = {374--413},
}

@article{Liu2021Neurocomputing,
	author = {Liu, T. and Lu, J. and Yan, Z. and Zhang, G.}, 
	title = {Statistical generalization performance guarantee for meta-learning with data dependent prior}, 
	journal = {Neruocomputing}, 
	volume = {465},
	year = {2021},
	pages = {391--405},
}

@book{Prato2006IDAnalysis,
	author = {Prato, G. D.},
	title = {An Introduction to Infinite-Dimensional Analysis}, 
	year = {2006},
	publisher = {Springer-Verlag},
	address = {Berlin},
}

@book{Mohri2018FML,
	author = {Mohri, M. and Rostamizadeh, A. and Talwalkar, A.},
	title = {Foundations of Machine Learning},
	year = {2018},
	publisher = {The MIT Press},
	address = {London, England},
}

@book{Engl1996Book,
	author = "Engl, H. W. and Hanke, M. and Neubauer, A. ",
	title = "Regularization of Inverse Problems",
	publisher = "Springer",
	address = "Netherlands",
	year = "1996",	
}

@book{Kaipio2004Book,
	author = {Kaipio, J. and Somersalo, E.},
	title = {Statistical and Computational Inverse Problems},
	year = {2005},
	publisher = {Springer-Verlag}, 
	address = {New York},
}

@book{Kazufumi2015Book,
	author = {Ito, K. and Jin, B.},
	title = {Inverse Problems: Tikhonov Theory and Algorithms},
	publisher = {World Scientific Publishing Co. Pte. Ltd.},
	address = {Hackensack, NJ},
	year = {2015},
}

@book{Pazy1983Book,
	author = {Pazy, A. }, 
	title = {Semigroups of Linear Operators and Applications to Partial Differential Equations}, 
	publisher = {Springer-Verlag},
	address = {New York},
	year = {1983},
}

@book{Prato2014Book,
	author = {Prato, G. D. and Zabczyk, J.},
	title = {Stochastic Equations in Infinite Dimensions},
	edition = {Second}, 
	publisher = {Cambridge University Press},
	address = {Cambridge},
	year = {2014},
}

@article{Tan2013SISC,
	author = {Bui-Thanh, T. and Ghattas, O. and Martin, J. and Stadler, G. },
	title = {A computational framework for infinite-dimensional {B}ayesian inverse problems 
	part {I}},
	journal = {SIAM J. Sci. Comput.},
	volume = {35},
	number = {6},
	year = {2013},
	pages = {A2494--A2523},
}

@article{Petra2014SISC,
	author = {Petra, N. and Martin, J. and Stadler, G. and Ghattas, O.},
	title = {A Computational Framework for Infinite-Dimensional {B}ayesian Inverse Problems, Part {II}},
	journal = {SIAM J. Sci. Comput.},
	volume = {36},
	number = {4},
	pages = {A1525-A1555},
	year = {2014},
}

@article{Bhattacharya2021SMAI-JCM,
	author = {Bhattacharya, K. and Hosseini, B. and Kovachki, N. B. and Stuart, A. M.},
	title = {Model Reduction And Neural Networks For Parametric {PDEs}},
	journal = {SMAI J. Comput. Math.},
	volume = {7},
	year = {2021},
	pages = {121-157},
}

@article{Nelsen2021SISC,
	author = {Nelsen, N. H. and Stuart, A. M.},
	title = {The Random Feature Model for Input-Output Maps between {B}anach Spaces},
	journal = {SIAM J. Sci. Comput.},
	volume = {43},
	number = {5},
	pages = {A3212-A3243},
	year = {2021},
}

@article{Kovachki2021JMLR,
	title={On universal approximation and error bounds for Fourier Neural Operators},
	author={Kovachki, N. and Lanthaler, S. and Mishra, S.},
	journal={J. Mach. Learn. Res.},
	volume={22},
	pages={1--76},
	year={2021},
}

@inproceedings{anandkumar2020ICLR,
	title={Neural operator: Graph kernel network for partial differential equations},
	author={Anandkumar, A. and Azizzadenesheli, K. and Bhattacharya, K. and Kovachki, N. and Li, Z. and Liu, B. and Stuart, A. M.},
	booktitle={ICLR 2020 Workshop on Integration of Deep Neural Models and Differential Equations},
	year={2020},
}

@inproceedings{Li2020ICLR,
	title={Fourier Neural Operator for Parametric Partial Differential Equations},
	author={Li, Z. and Kovachki, N. and Azizzadenesheli, K. and Liu, B. and Bhattacharya, K. and Stuart, A. M. and Anandkumar, A.},
	booktitle={International Conference on Learning Representations},
	year={2020},
}

@book{Logg2012Book,
	title = {Automated Solution of Differential Equations by the Finite Element Method},
	author = {Logg, A and Mardal, K. -A. and Wells, G. N. },
	year = {2012},
	publisher = {Springer},
	address = {Heidelberg},
}

@article {Premchandar2023NoticeAMS,
	AUTHOR = {Premchandar, S. and Shrijita, B. and Tapabrata, M.},
	TITLE = {Normalizing flows aided variational inference: a useful
	alternative to {MCMC}?},
	JOURNAL = {Notices Amer. Math. Soc.},
	FJOURNAL = {Notices of the American Mathematical Society},
	VOLUME = {70},
	YEAR = {2023},
	NUMBER = {7},
	PAGES = {1059--1070},
}

@inproceedings{Yue2019NIPS,
	author = {Yue, Z. and Yong, H. and Zhao, Q. and Zhang, L. and Meng, D.},
	title = {Variational denoising network: toward blind noise modeling and removal},
	booktitle = {Advances in Neural Information Processing Systems},
	year = {2019},
	volume = {32},
}

@misc{Zhao2024FNF,
	title={Functional normalizing flow for statistical inverse problems of partial differential equations}, 
	author={Zhao, Y. and Lu, H. and Jia, J. and Zhou, T.},
	year={2024},
	howpublished = {arXiv:2411.13277},
}

@misc{Lu2024arXiv,
	title = {Sequential Monte Carlo with {G}aussian Mixture Approximation for Infinite-Dimensional Statistical Inverse Problems},
	author = {Lu, H. and Jia, J. and Meng, D. },
	year = {2024},
	howpublished = {arXiv:2503.16028},
}

@article{Papamakarios2021JMLE,
	author  = {Papamakarios, G. and Nalisnick, E. and Rezende, D. J. and Mohamed, S. and Lakshminarayanan, B.},
	title   = {Normalizing Flows for Probabilistic Modeling and Inference},
	journal = {Journal of Machine Learning Research},
	year    = {2021},
	volume  = {22},
	number  = {57},
	pages   = {1--64},
}

@article{Knapik2016,
	author = "Knapik, B. T. and Szab\'{o}, B. T. and van der Vaart, A. W. and van Zanten, J. H. ",
	title = "Bayes procedures for adaptive inference in inverse problems for the white noise model",
	journal = "Probab. Theory Rel.",
	year = "2016",
	volume = "164",
	number = "3",
	pages = "771--813",
}

@article{Nickl2017AoS,
	author = {Nickl, R. and S\"{o}hl, J.},
	title = {{Nonparametric Bayesian posterior contraction rates for discretely observed scalar diffusions}},
	volume = {45},
	journal = {Ann. Statist.},
	number = {4},
	pages = {1664 -- 1693},
	year = {2017},
}

@article{Nickl2020JEMS,
	author = {Nickl, R.},
	title = {Bernstein–von {M}ises theorems for statistical inverse problems {I}: {S}chr\"{o}dinger equation},
	journal = {J. Eur. Math. Soc. }, 
	volume = {22},
	number = {8},
	pages = {2697--2750},
	year = {2020},
}

@article{Mora2025CMAME,
	title = {Operator learning with {G}aussian processes},
	journal = {Computer Methods in Applied Mechanics and Engineering},
	volume = {434},
	pages = {117581},
	year = {2025},
	author = {Mora, C. and Yousefpour, A. and Hosseinmardi, S. and Owhadi, H. and Bostanabad, R. },
}

@article{Monard2019AoS,
	author = {Monard, F. and Nickl, R. and Paternain, G. P.},
	title = {{Efficient nonparametric Bayesian inference for $X$-ray transforms}},
	volume = {47},
	journal = {Ann. Statist.},
	number = {2},
	pages = {1113 -- 1147},
	year = {2019},
}

@article {Dunlop2017StatComput,
	AUTHOR = {Dunlop, Matthew M. and Iglesias, Marco A. and Stuart, Andrew
	M.},
	TITLE = {Hierarchical {B}ayesian level set inversion},
	JOURNAL = {Stat. Comput.},
	FJOURNAL = {Statistics and Computing},
	VOLUME = {27},
	YEAR = {2017},
	NUMBER = {6},
	PAGES = {1555--1584},
}

@book{Nickl2022LectureNotes,
	author = {Nickl, R.},
	title = {Bayesian Non-Linear Statistical Inverse Problems},
	publisher = {EMS Press},
	address = {Berlin},
	year = {2023},
}

@book {Nickl2016Book,
	AUTHOR = {Gin\'e, E. and Nickl, R.},
	TITLE = {Mathematical foundations of infinite-dimensional statistical
	models},
	SERIES = {Cambridge Series in Statistical and Probabilistic Mathematics},
	VOLUME = {[40]},
	PUBLISHER = {Cambridge University Press, New York},
	YEAR = {2016},
}

@book {Rasmussen2006Book,
	AUTHOR = {Rasmussen, C. E. and Williams, C. K. I.},
	TITLE = {Gaussian Processes for Machine Learning},
	SERIES = {Adaptive Computation and Machine Learning},
	PUBLISHER = {MIT Press, Cambridge, MA},
	YEAR = {2006},
}

@book{Goodfellow2016Book,
	title={Deep learning},
	author={Goodfellow, I. and Bengio, Y. and Courville, A.},
	year={2016},
	publisher={MIT press},
	address = {Cambridge, MA},
}

@article{Xie2019AAAI, 
	title={Learning Dynamic Generator Model by Alternating Back-Propagation through Time}, 
	volume={33}, 
	DOI={10.1609/aaai.v33i01.33015498}, 
	number={01}, 
	journal={Proceedings of the AAAI Conference on Artificial Intelligence}, 
	author={Xie, Jianwen and Gao, Ruiqi and Zheng, Zilong and Zhu, Song-Chun and Wu, Ying Nian}, 
	year={2019},
	pages={5498-5507}, 
}

\end{document}


\def\spacingset#1{\renewcommand{\baselinestretch}%
{#1}\small\normalsize} \spacingset{1}

\bigskip
\bigskip
\bigskip
\begin{spacing}{1.5}
\begin{center}
	{\LARGE\bf Supplementary Materials of ``Nonparametric Prior Learning in Differential Equation Modeling"}
\end{center}
\end{spacing}


\tableofcontents 

\spacingset{1.2}

\begin{appendix}
\section{Additional Results}\label{AddReA}
\subsection{PAC-Bayesian for Inverse Problems}\label{SuppA:ShortIntroPACBayes}

The present work focuses on the infinite-dimensional Bayesian statistical approach for solving inverse problems associated with partial differential equations (PDEs). Bayesian statistics have a deep connection with the PAC-Bayesian framework, as illustrated in \cite{Germain2016NIPS}, specifically in the finite-dimensional setting. Although deep connections have been observed in the field of machine learning, to our knowledge, there seems to be little discussion on solving inverse problems of PDEs by linking the Bayesian inverse approach with PAC-Bayesian learning. For the reader's convenience, we intend to provide a brief introduction to the PAC-Bayesian learning theory from the perspective of infinite-dimensional inverse problems.

As in the main text, let us assume that the forward operator $\mathcal{G}:\mathcal{U}\times\mathcal{X}\rightarrow\tilde{\mathcal{H}}$ with $\mathcal{U}$, $\mathcal{X}$, and $\tilde{\mathcal{H}}$ being separable Banach spaces. We assume that the data are generated by:
\begin{align}\label{problem1A}
	y_j = \mathcal{G}(u,x_j) + \eta_j, \quad\forall \,j=1,\ldots,m,
\end{align}
where $u$ represents the model parameter, $x_j\in\mathcal{X}$ is the measurement point or an input function, $y_j\in\mathcal{Y}$ is the measured data, and $\eta_j$ is the independent identically distributed (i.i.d.) random noise.
Let us denote $\mathcal{H}$ be a separable Banach space. 
In the following, we assume that the forward operator can be written as $\mathcal{L}_{x_j}(\mathcal{G}(u))$, where $\mathcal{G}(u)$ belongs to some separable Hilbert space and $\mathcal{L}_{x_j}:\tilde{\mathcal{H}}\rightarrow\mathcal{H}$ is a bounded linear operator determined by $x_j\in\mathcal{X}$.
Denote the vector $\bm{x} = (x_1,\cdots,x_{m})^T$, the vector $\bm{y} = (y_1,\cdots,y_{m})^T$, and the vector $\bm{\eta} = (\eta_1,\cdots, \eta_{m})^T$, then model (\ref{problem1A}) can be rewritten in a more compact form
$$\bm{y} = \mathcal{L}_{\bm{x}}\mathcal{G}(u) + \bm{\eta},$$
where $\mathcal{L}_{\bm{x}}\mathcal{G}(u) := (\mathcal{L}_{x_1}\mathcal{G}(u),\ldots,\mathcal{L}_{x_{m}}\mathcal{G}(u))^T$.
For solving inverse problems, we assume that there is a background true parameter denoted by $u^{\dag}$.
Then, the forward model will be
$$\bm{y} = \mathcal{L}_{\bm{x}}\mathcal{G}(u^{\dag}) + \bm{\eta}.$$

Define $\mathcal{P}(\mathcal{Y})$ as the set of all probability measures defined on a separable Banach space $\mathcal{Y}$.
In the field of infinite-dimensional Bayesian inverse theory, we assume that the unknown parameter $u$ is a random function distributed according to a prior probability measure $\mathbb{P}$, which is an element of $\mathcal{P}(\mathcal{U})$. Then, we introduce a posterior probability measure $\mathbb{Q}$, also an element of $\mathcal{P}(\mathcal{U})$, such that
\begin{align}\label{BayesFormula1A}
	\frac{d\mathbb{Q}}{d\mathbb{P}}(u) = \frac{1}{Z_m} \exp\left( - \sum_{j=1}^{m}\Phi(u;y_j)\right),
\end{align}
where $\Phi(u;y_j)$ is the potential function derived from the likelihood function, and $Z_m$ is the normalization constant, defined as
\begin{align*}
	Z_{m} := \int_{\mathcal{U}} \exp\left( -\sum_{j=1}^{m}\Phi(u;y_j) \right) \mathbb{P}(du).
\end{align*}
For a fixed parameter $u$, the observed data vector $\bm{y}$ is determined by the input vector $\bm{x}$ and the noise vector $\bm{\eta}$. As in the main text, we also express the potential function $\Phi(u;y_j)$ as $\Phi(u;x_j,\eta_j)$ for $j=1,\ldots,m$, and assume that the formula (\ref{BayesFormula1A}) is well defined.

Assume that there is a probability measure $\mathbb{D}(dx, dy) := \mathbb{D}_1(dx) \mathbb{D}_2(x, u^{\dag}, dy)$, with $\mathbb{D}_1$ and $\mathbb{D}_2$ being two probability measures defined on $\mathcal{X}$ and $\mathcal{Y}$, respectively. Define $\mathcal{Z} := \mathcal{X} \times \mathcal{Y}$, then we have $z := (x, y) \sim \mathbb{D} \in \mathcal{P}(\mathcal{Z})$. For convenience, let us denote the dataset as $S = \{z_j = (x_j, y_j)\}_{j=1}^{m}$, consisting of i.i.d. samples from the distribution $\mathbb{D}$, that is, $S \sim \mathbb{D}^m$. Obviously, the measure $\mathbb{D}_2$ depends on the background true parameter $u^{\dag}$, the distribution $\mathbb{D}_1$, and the distribution of the noise, denoted by $\mathbb{D}_3$ (a probability measure defined on $\mathcal{Y}$). Here, we assume that $\{x_j\}_{j=1}^{m}$ are generated according to the probability measure $\mathbb{D}_1$. To provide a brief illustration of the PAC-Bayesian framework, we introduce a loss function $\ell: \mathcal{U} \times \mathcal{Z} \rightarrow \mathbb{R}$. Accordingly, we aim to minimize the expected error under the data distribution $\mathbb{D}$, that is,
$$\mathcal{L}(u, \mathbb{D}) := \mathbb{E}_{z \sim \mathbb{D}} \ell(u, z).$$
For solving inverse problems, the true distributions of $u$ and the noise $\bm{\eta}$ are unknown, i.e., the data distribution $\mathbb{D}$ is unknown. Hence, it is necessary to define the empirical error 
$$\hat{\mathcal{L}}(u, S) := \frac{1}{m} \sum_{j=1}^{m} \ell(u, z_j).$$

To give the key bound in the PAC-Bayesian framework, we define the so-called Gibbs error as $\mathcal{L}(\mathbb{Q}, \mathbb{D}) := \mathbb{E}_{u\sim\mathbb{Q}}\mathcal{L}(u, \mathbb{D})$ and its empirical counterpart as $\hat{\mathcal{L}}(\mathbb{Q}, S) := \mathbb{E}_{u\sim\mathbb{Q}}\hat{\mathcal{L}}(u, S)$.
Let us denote $D_{\text{KL}}(\cdot || \cdot)$ as the usual Kullback-Leibler (KL) divergence between two probability measures.
Now, we can state the following theorem, which provides an upper bound for the unknown generalization error based on its empirical estimate.

\begin{theorem}\label{PACBoundTheoremA} 
	Given data distribution $\mathbb{D}$, parameter space $\mathcal{U}$, loss function $\ell(u,z)$, 
	prior measure $\mathbb{P}\in\mathcal{P}(\mathcal{U})$, confidence level $\delta\in(0,1]$, and $\beta > 0$, with probability at least $1-\delta$
	over samples $S\sim\mathbb{D}^m$, we have that, for all $\mathbb{Q}\in\mathcal{P}(\mathcal{U})$:
	\begin{align}\label{PACBoundBaseA}
		\mathcal{L}(\mathbb{Q},\mathbb{D}) \leq \hat{\mathcal{L}}(\mathbb{Q},S) + \frac{1}{\beta}\left[
		D_{\text{KL}}(\mathbb{Q}||\mathbb{P}) + \ln\frac{1}{\delta} + \Psi(\beta, m)
		\right],
	\end{align}
	where $\Psi(\beta, m) := \ln\mathbb{E}_{u\sim\mathbb{P}}\mathbb{E}_{S\sim\mathbb{D}^m}\exp\left[
	\beta \left( \mathcal{L}(u,\mathbb{D}) - \hat{\mathcal{L}}(u,S) \right)
	\right]$ is a term that depends on the tail of the distribution of the loss function.
\end{theorem}
If $\beta = m$, given the loss function $\ell(u, z) := \Phi(u, x, \eta)$, and with a proper estimate of $\Psi(\beta, m)$, then the minimum value on the right-hand side of estimate (\ref{PACBoundBaseA}) corresponds to the posterior measure specified in equation (\ref{BayesFormula1A}). Generally, the optimal solution provided by the PAC-Bayesian bound may not correspond to the posterior measure given by the Bayes' formula, and the optimal solution typically corresponds to the posterior measure obtained from a generalized Bayes' formula. The generalization bound (\ref{PACBoundBaseA}) is selected from \cite{Germain2016NIPS} and  \cite{Alquier2016JMLR}, which generalizes the original PAC-Bayes bounds \citep{McAllester1998ACCLT}. For a comprehensive illustration, we refer to \cite{Alquier2024Book}.

\begin{remark}
	For the current work, we focus on the problem of learning data-dependent prior measures. Theorem \ref{PACBoundTheoremA} can be conceptually adapted to the statistical inverse problems of PDEs. Bounding the term $\Psi(\beta, m)$ and generalizing the results shown in \cite{Alquier2016JMLR} for inverse problems present interesting future research challenges. Similar to the adaptation of the nonparametric Bayesian approach to inverse problems \citep{Nickl2020JEMS, Nickl2017AoS, Monard2021CPAM}, such a generalization is not a trivial task, due to issues of measurability, appropriate estimations of PDEs, and the singularity of infinite-dimensional measures.
\end{remark}

\subsection{Data-Independent Prior}\label{SuppA:data-dependent prior}

In the main text, we provide illustrations of the data-dependent prior case. For completeness, we intend to give a detailed explanation of the data-independent prior case, i.e., the prior measure $\mathbb{P}_{S}^{\theta}$ is assumed to be independent of the dataset $S$. Hence, we denote this prior measure as $\mathbb{P}^{\theta}$.

\textbf{Case 1: Bounded loss.} 
For simplicity, let us first assume that the loss function $\ell(u, z)$ is bounded in $[a, b]$, where $-\infty < a < b < +\infty$. Under this assumption, we can apply Hoeffding's lemma to obtain the following corollary.
\begin{corollary}\label{PACBoundsBoundedLoss1}
	Assume that all of the assumptions in Theorem 2.4 
    hold true and the loss function 
	$\ell(u,\bm{z})$ is bounded in $[a,b]$ with $-\infty < a < b < +\infty$. 
	Let us denote 
	\begin{align}
		\bar{m} = \left( \frac{1}{n}\sum_{i=1}^{n}\frac{1}{m_i} \right)^{-1}.
	\end{align}
	For any confidence level $\delta\in(0,1]$, $\gamma\geq2\sqrt{n}$, and $\lambda\geq2\sqrt{n}$, we have  
	\begin{align*}
		\mathbb{P}\Bigg( \forall \mathscr{Q}\in\mathcal{P}(\Theta), \,\,
		\mathcal{L}(\mathscr{Q}, \mathcal{T}) \leq & \, \hat{\mathcal{L}}(\mathscr{Q},S_1,\ldots,S_n) + 
		\left( \frac{1}{\lambda} + \frac{1}{\gamma} \right)D_{\text{KL}}(\mathscr{Q} || \mathscr{P})  \\
		& \, + \frac{1}{\gamma}\sum_{i=1}^{n}\mathbb{E}_{\theta\sim\mathscr{Q}}D_{\text{KL}}
		\big(\mathbb{Q}(S_i,\mathbb{P}^{\theta}) || \mathbb{P}^{\theta}\big) \\
		& \,  
		+ \left( \frac{\gamma}{8n\bar{m}} + \frac{\lambda}{8n} \right)(b-a)^2 
		+ \frac{1}{\sqrt{n}}\ln\frac{1}{\delta}\Bigg) \geq 1-\delta.
	\end{align*}
\end{corollary}
\begin{proof}
	Remember that we assumed the loss function $\ell(u,z)$ is bounded in $[a,b]$ 
	with $-\infty < a < b < +\infty$.
	Under this assumption, we can apply Hoeffding's lemma (Lemma D.1 in \cite{Mohri2018FML}) to
	each factor of the term $\mathbb{E}\big[ \exp(\sqrt{n}\Pi^1(\gamma) + \sqrt{n}\Pi^2(\lambda)) \big]$, obtaining: 
	\begin{align}\label{DataIndependentBoundedLoss1}
		\begin{split}
			\mathbb{E}\left[ e^{(\sqrt{n}\Pi^1(\gamma) + \sqrt{n}\Pi^{2}(\lambda))} \right] \leq & 
			\bigg\{\prod_{i=1}^{n}\mathbb{E}_{u_i\sim\mathbb{P}^{\theta}}\bigg[
			e^{\frac{\gamma^2}{8n^2m_i}(b-a)^2}
			\bigg]\bigg\}^{\frac{\sqrt{n}}{\gamma}}
			\bigg\{  
			e^{\frac{\lambda^2}{8n}(b-a)^2}
			\bigg\}^{\frac{\sqrt{n}}{\lambda}}	\\
			\leq & \exp\left(\left( \frac{\gamma}{8\sqrt{n}\bar{m}} + \frac{\lambda}{8\sqrt{n}} \right)(b-a)^2\right),
		\end{split}
	\end{align}
	when $\gamma\geq2\sqrt{n}$ and $\lambda\geq2\sqrt{n}$. 
	Plugging this estimate into the general estimate shown in Theorem 2.4 
    of the main text, 
	we obtain an explicit bound on the moment generating function, which completes the proof.
\end{proof}

\textbf{Case 2: Unbounded loss.}
The inverse problems of PDEs can be seen as PDE-constrained regression problems, which are often modeled using unbounded loss functions, such as the squared loss function. To provide appropriate estimates of the log moment generating term, we consider sub-Gaussian type loss functions, which are employed in the study of regression problems with a fixed finite number of parameters \citep{Germain2016NIPS}.

\begin{assumption}[sub-Gaussian assumption related to $\Pi^1$]\label{UnboundedLossAssumptionSubGaussian1A}
	For $i=1,\ldots,n$, we assume that the random variables $\mathcal{L}(u_i,\mathbb{D}_i) - \ell(u_i, z_i)$
	are sub-Gaussian with variance factor $s_{\text{I}}^2$, i.e., for some $\tilde{\gamma} > 0$, we have 
	\begin{align*}
		V_i^1 := & \mathbb{E}_{(\mathbb{D}_i, m_i)\sim\mathcal{T}}\mathbb{E}_{z_i\sim\mathbb{D}_i}
		\mathbb{E}_{\theta\sim\mathscr{P}}\mathbb{E}_{u_i\sim\mathbb{P}^{\theta}}\exp\left( 
		\tilde{\gamma}\left[ 
		\mathcal{L}(u_i,\mathbb{D}_i) - \ell(u_i, z_i)
		\right]\right) 
		\leq \exp\left( \frac{\tilde{\gamma}^2s_{\text{I}}^2}{2} \right).
	\end{align*}
\end{assumption}

\begin{assumption}[sub-Gaussian assumption related to $\Pi^2$]\label{UnboundedLossAssumptionSubGaussian2A}
	For $i=1,\ldots,n$, we assume that the random variables $\mathbb{E}_{(\mathbb{D},m)\sim\mathcal{T}}\mathbb{E}_{S\sim\mathbb{D}^m}\mathcal{L}(\mathbb{Q}(S,\mathbb{P}^{\theta}),\mathbb{D})
	- \mathcal{L}(\mathbb{Q}(S_i,\mathbb{P}^{\theta}),\mathbb{D}_i)$ are sub-Gaussian with 
	variance factor $s_{\text{II}}^2$, i.e., for some $\tilde{\lambda} > 0$, we have
	\begin{align*}
		V_i^2 := &\mathbb{E}_{(\mathbb{D}_i,m_i)\sim\mathcal{T}}\mathbb{E}_{S_i\sim\mathbb{D}_i^{m_i}}\mathbb{E}_{\theta\sim\mathscr{P}}
		\exp\left( 
		\tilde{\lambda}\left[ 
		\mathbb{E}_{(\mathbb{D},m)\sim\mathcal{T}}\mathbb{E}_{S\sim\mathbb{D}^m}\mathcal{L}(\mathbb{Q}(S,\mathbb{P}^{\theta}),\mathbb{D})
		- \mathcal{L}(\mathbb{Q}(S_i,\mathbb{P}^{\theta}),\mathbb{D}_i)
		\right]\right) \\
		\leq & \exp\left( 
		\frac{\tilde{\lambda}^2s_{\text{II}}^2}{2}
		\right).
	\end{align*}
\end{assumption}

With these two assumptions, we can prove the following corollary.

\begin{corollary}\label{PACBoundsUnoundedLossThmSubGaussian}
	Assume that all of the assumptions in Theorem 2.4 
    and Assumptions \ref{UnboundedLossAssumptionSubGaussian1A} and \ref{UnboundedLossAssumptionSubGaussian2A} hold true.
	In addition, we assume $2\sqrt{n}\leq\lambda$ and $2\sqrt{n}\leq\gamma$.
	Let us denote $\bar{m} = \left( \frac{1}{n}\sum_{i=1}^{n}\frac{1}{m_i} \right)^{-1}.$
	For any confidence level $\delta\in(0,1]$, the inequality 
	\begin{align*}
			\mathbb{P}\Bigg( \forall \mathscr{Q}\in\mathcal{P}(\Theta), \,\,
			\mathcal{L}(\mathscr{Q}, \mathcal{T}) \leq & \, \hat{\mathcal{L}}(\mathscr{Q},S_1,\ldots,S_n) + 
			\left( \frac{1}{\lambda} + \frac{1}{\gamma} \right)D_{\text{KL}}(\mathscr{Q} || \mathscr{P})  \\
			& \, + \frac{1}{\gamma}\sum_{i=1}^{n}\mathbb{E}_{\theta\sim\mathscr{Q}}D_{\text{KL}}
			\big(\mathbb{Q}(S_i,\mathbb{P}^{\theta}) || \mathbb{P}^{\theta}\big) \\
			& \,
			+ \frac{\gamma s_{\text{I}}^2}{2n\bar{m}} + \frac{\lambda s_{\text{II}}^2}{2n}
			+ \frac{1}{\sqrt{n}}\ln\frac{1}{\delta}
			\Bigg) \geq 1-\delta. 
	\end{align*}
\end{corollary}
\begin{proof}
	In the following $\frac{\gamma}{nm_i}$ will be recognized as $\tilde{\gamma}$ and 
	we assume $\gamma \geq 2\sqrt{n}$ and $\lambda\geq2\sqrt{n}$.
	Employing the assumption of $V_i^1$ in Assumption \ref{UnboundedLossAssumptionSubGaussian1A}, 
	we have 
	\begin{align}\label{PACBoundsUnboundedLoss21}
		\begin{split}
			& \mathbb{E}\left\{ \mathbb{E}_{\theta\sim\mathscr{P}}\mathbb{E}_{u_1\sim\mathbb{P}^{\theta}}\cdots 
			\mathbb{E}_{u_n\sim\mathbb{P}^{\theta}}\exp\left( 
			\frac{\gamma}{n}\sum_{i=1}^{n}\frac{1}{m_i}\sum_{j=1}^{m_i}[\mathcal{L}(u_i,\mathbb{D}_i) - \ell(u_i,z_{ij})]
			\right) \right\}^{\frac{2\sqrt{n}}{\gamma}} \\
			& \quad
			\leq \left\{ \mathbb{E}\prod_{i=1}^n\mathbb{E}_{\theta\sim\mathscr{P}}\mathbb{E}_{u_i\sim\mathbb{P}^{\theta}}
			\prod_{j=1}^{m_i} \exp\left( \frac{\gamma}{nm_i}[\mathcal{L}(u_i,\mathbb{D}_i) - \ell(u_i, z_{ij})] \right) \right\}^{\frac{2\sqrt{n}}{\gamma}} \\
			& \quad
			\leq \exp\left( \frac{\gamma s_{\text{I}}^2}{\sqrt{n}\bar{m}} \right),
		\end{split}
	\end{align}
	which can be written more compactly as follow:
	\begin{align}\label{PACBoundsUnboundedLoss22}
		\Big(\mathbb{E}\exp(2\sqrt{n}\Pi^1(\gamma))\Big)^{1/2} \leq \exp\left( \frac{\gamma s_{\text{I}}^2}{2\sqrt{n}\bar{m}} \right).
	\end{align}
	Employing the assumption of $V_i^2$ in Assumption \ref{UnboundedLossAssumptionSubGaussian2A} of the main text, 
	we have 
	\begin{align*}
			& \mathbb{E}\left\{
			\mathbb{E}_{\theta\sim\mathscr{P}}\exp\left( 
			\frac{\lambda}{n}\sum_{i=1}^{n}
			\left[ 
			\mathbb{E}_{(\mathbb{D},m)\sim\mathcal{T}}\mathbb{E}_{S\sim\mathbb{D}^m}\mathcal{L}(\mathbb{Q}(S,\mathbb{P}^{\theta}),\mathbb{D})
			- \mathcal{L}(\mathbb{Q}(S_i,\mathbb{P}^{\theta}),\mathbb{D}_i)
			\right]
			\right)   
			\right\}^{\frac{2\sqrt{n}}{\lambda}} \\
			& 
			\leq \left\{ 
			\prod_{i=1}^{n}\mathbb{E}\mathbb{E}_{\theta\sim\mathscr{P}}\exp\left( 
			\frac{\lambda}{n}\left[ 
			\mathbb{E}_{(\mathbb{D},m)\sim\mathcal{T}}\mathbb{E}_{S\sim\mathbb{D}^m}\mathcal{L}(\mathbb{Q}(S,\mathbb{P}^{\theta}),\mathbb{D})
			- \mathcal{L}(\mathbb{Q}(S_i,\mathbb{P}^{\theta}),\mathbb{D}_i)
			\right]
			\right)\right\}^{2\frac{\sqrt{n}}{\lambda}}  \\
			& \leq \left\{ 
			\prod_{i=1}^{n}\exp\left( 
			\frac{\lambda^2 s_{\text{\text{II}}}^2}{2n^2}
			\right)\right\}^{\frac{2\sqrt{n}}{\lambda}} = \exp\left( 
			\frac{\lambda s_{\text{II}}^2}{\sqrt{n}}
			\right),
	\end{align*}
	which yields
	\begin{align}\label{PACBoundsUnboundedLoss23}
		\Big(\mathbb{E}\exp(2\sqrt{n}\Pi^2(\lambda))\Big)^{1/2} \leq \exp\left( 
		\frac{\lambda s_{\text{II}}^2}{2\sqrt{n}}
		\right).
	\end{align}
	Considering estimates (\ref{PACBoundsUnboundedLoss22}) and (\ref{PACBoundsUnboundedLoss23}), we obtain the desired result. 
\end{proof}

\begin{remark}
	In Assumptions \ref{UnboundedLossAssumptionSubGaussian1A} and \ref{UnboundedLossAssumptionSubGaussian2A}, 
	we introduced the parameters $\tilde{\gamma}$ and $\tilde{\lambda}$, which correspond to the parameters $\gamma/nm_i$ and $\lambda/n$ in Corollary \ref{PACBoundsUnoundedLossThmSubGaussian}.
	Typically, we set $\gamma=n\beta$ with $\beta=\sqrt{\bar{m}} \text{ or }\beta=\bar{m}$ when $2\sqrt{n}\leq n\beta$, 
	and $\lambda=2\sqrt{n} \text{ or }n$ when $2\sqrt{n} \leq n$. 
	For deriving Corollary \ref{PACBoundsUnoundedLossThmSubGaussian}, we require 
	$\lambda \geq 2\sqrt{n}$ and $\gamma\geq 2\sqrt{n}$, conditions not mentioned in the work of meta-learning \citep{Rothfuss2021PMLR} for the finite-dimensional case. Since the random variables $\Pi^1(\gamma)$ and $\Pi^2(\lambda)$ are not independent, we cannot obtain 
	\begin{align*}
		\mathbb{E}\left[ e^{(\sqrt{n}\Pi^1(\gamma) + \sqrt{n}\Pi^{2}(\lambda))} \right] = 
		\mathbb{E}\left[ e^{\sqrt{n}\Pi^1(\gamma)} \right]
		\mathbb{E}\left[ e^{\sqrt{n}\Pi^{2}(\lambda)} \right]. 
	\end{align*}
	Hence, it may be necessary for us to employ
	\begin{align*}
		\mathbb{E}\left[ e^{(\sqrt{n}\Pi^1(\gamma) + \sqrt{n}\Pi^{2}(\lambda))} \right] & \leq 
		\left(\mathbb{E}\left[ e^{2\sqrt{n}\Pi^1(\gamma)} \right]\right)^{1/2}
		\left(\mathbb{E}\left[ e^{2\sqrt{n}\Pi^{2}(\lambda)} \right]\right)^{1/2} \\
		& \leq \left(\mathbb{E}\left[ e^{\gamma\Pi^1(\gamma)} \right]\right)^{\sqrt{n}/\gamma}
		\left(\mathbb{E}\left[ e^{\lambda\Pi^{2}(\lambda)} \right]\right)^{\sqrt{n}/\lambda},
	\end{align*}
	which requires that $\lambda \geq 2\sqrt{n}$ and $\gamma\geq 2\sqrt{n}$. 
\end{remark}

\subsection{Results Under Sub-Gamma Assumptions}\label{AppA1}
In the main text, we provide the sub-Gaussian assumptions for the terms $\Pi^1$ and $\Pi^2$ related to the loss function.
In the following, we list the sub-Gamma assumptions, which often appear in the studies of the PAC-Bayesian theory \citep{Germain2016NIPS,Rothfuss2021PMLR}.

\begin{assumption}[sub-Gamma assumption related to $\Pi^1$]\label{UnboundedLossAssumptionSubGamma1}
	For $i=1,\ldots,n$, we assume that the random variables $$\mathcal{L}(u_i,\mathbb{D}_i) - \ell(u_i, z_i)$$
	are sub-Gamma with variance factor $s_{\text{I}}^2$ and scale parameter $c_{\text{I}} < 1$, i.e., 
	\begin{align*}
		V_i^1 := & \mathbb{E}_{(\mathbb{D}_i,m_i)\sim\mathcal{T}}\mathbb{E}_{z_i\sim\mathbb{D}_i}
		\mathbb{E}_{\theta\sim\mathscr{P}}\mathbb{E}_{u_i\sim\mathbb{P}^{\theta}}\exp\left( 
		\tilde{\gamma}\left[ 
		\mathcal{L}(u_i,\mathbb{D}_i) - \ell(u_i, z_i)
		\right]\right) \\
		\leq & \exp\left( \frac{\tilde{\gamma}^2s_{\text{I}}^2}{2(1-\tilde{\gamma}c_{\text{I}})} \right)
	\end{align*}
	for all $\tilde{\gamma} \in (0,1/c_{\text{I}})$. 
\end{assumption}

\begin{assumption}[sub-Gamma assumption related to $\Pi^2$]\label{UnboundedLossAssumptionSubGamma2}
	For $i=1,\ldots,n$, we assume that the random variables $$\mathbb{E}_{(\mathbb{D},m)\sim\mathcal{T}}\mathbb{E}_{S\sim\mathbb{D}^m}\mathcal{L}(\mathbb{Q}(S,\mathbb{P}^{\theta}),\mathbb{D})
	- \mathcal{L}(\mathbb{Q}(S_i,\mathbb{P}^{\theta}),\mathbb{D}_i)$$ are sub-Gamma with 
	variance factor $s_{\text{II}}^2$ and scale parameter $c_{\text{II}} < 1$, i.e., 
	\begin{align*}
		V_i^2 := &\mathbb{E}_{(\mathbb{D}_i,m_i)\sim\mathcal{T}}\mathbb{E}_{S_i\sim\mathbb{D}_i^{m_i}}\mathbb{E}_{\theta\sim\mathscr{P}}
		\exp\left( 
		\tilde{\lambda}\left[ 
		\mathbb{E}_{(\mathbb{D},m)\sim\mathcal{T}}\mathbb{E}_{S\sim\mathbb{D}^m}\mathcal{L}(\mathbb{Q}(S,\mathbb{P}^{\theta}),\mathbb{D})
		- \mathcal{L}(\mathbb{Q}(S_i,\mathbb{P}^{\theta}),\mathbb{D}_i)
		\right]\right) \\
		\leq & \exp\left( 
		\frac{\tilde{\lambda}^2s_{\text{II}}^2}{2(1-\tilde{\lambda}c_{\text{II}})}
		\right)
	\end{align*}
	for all $\tilde{\lambda} \in (0,1/c_{\text{II}})$. 
\end{assumption}

With these two assumptions, we can derive the following two corollaries of Theorem 2.4 
in the main text, when we assume that the prior measures are either data-independent or data-dependent, respectively.

\begin{corollary}\label{PACBoundsUnoundedLossThmSubGamma}
	Assume that all of the assumptions in Theorem 2.4 
	and Assumptions \ref{UnboundedLossAssumptionSubGamma1} and \ref{UnboundedLossAssumptionSubGamma2} hold true.
	In addition, we assume $2\sqrt{n}\leq\lambda \leq n$ and $2\sqrt{n}\leq\gamma \leq n\min_{1\leq i\leq n}m_i$.
	For any confidence level $\delta\in(0,1]$, we have  
	\begin{align*}
			\mathbb{P}\Bigg( \forall \mathscr{Q}\in\mathcal{P}(\Theta), \,\,
			\mathcal{L}& (\mathscr{Q}, \mathcal{T}) \leq  \hat{\mathcal{L}}(\mathscr{Q},S_1,\ldots,S_n) + 
			\left( \frac{1}{\lambda} + \frac{1}{\gamma} \right)D_{\text{KL}}(\mathscr{Q} || \mathscr{P})  \\
			& \, + \frac{1}{\gamma}\sum_{i=1}^{n}\mathbb{E}_{\theta\sim\mathscr{Q}}D_{\text{KL}}
			\big(\mathbb{Q}(S_i,\mathbb{P}^{\theta}) || \mathbb{P}^{\theta}\big) \\
			& \, + \frac{\gamma s_{\text{I}}^2}{2n}\left( \frac{1}{n}\sum_{i=1}^{n}\frac{1}{m_i\left( 1-c_I\frac{\gamma}{nm_i} \right)} \right) + \frac{\lambda s_{\text{II}}^2}{2n\left(1-\frac{\lambda}{n} c_{\text{II}}\right)}
			+ \frac{1}{\sqrt{n}}\ln\frac{1}{\delta} \Bigg)\geq 1-\delta. 
	\end{align*}
\end{corollary}
\begin{proof}
	In the following $\frac{\gamma}{nm_i}$ will be recognized as $\tilde{\gamma}$ and we assume $\gamma \geq 2\sqrt{n}$
	and $\lambda\geq2\sqrt{n}$.
	Employing the assumption of $V_i^1$ in Assumption \ref{UnboundedLossAssumptionSubGamma1}, we have 
	\begin{align}\label{PACBoundsUnboundedLoss1}
		\begin{split}
			& \mathbb{E}\left\{ \mathbb{E}_{\theta\sim\mathscr{P}}\mathbb{E}_{u_1\sim\mathbb{P}^{\theta}}\cdots 
			\mathbb{E}_{u_n\sim\mathbb{P}^{\theta}}\exp\left( 
			\frac{\gamma}{n}\sum_{i=1}^{n}\frac{1}{m_i}\sum_{j=1}^{m_i}[\mathcal{L}(u_i,\mathbb{D}_i) - \ell(u_i,z_{ij})]
			\right) \right\}^{\frac{2\sqrt{n}}{\gamma}} \\
			& \quad
			\leq \left\{ \mathbb{E}\prod_{i=1}^n\mathbb{E}_{\theta\sim\mathscr{P}}\mathbb{E}_{u_i\sim\mathbb{P}^{\theta}}
			\prod_{j=1}^{m_i} \exp\left( \frac{\gamma}{nm_i}[\mathcal{L}(u_i,\mathbb{D}_i) - \ell(u_i, z_{ij})] \right) \right\}^{\frac{2\sqrt{n}}{\gamma}} \\
			& \quad
			\leq \exp\left( \frac{\gamma s_{\text{I}}^2}{\sqrt{n}} \left( \frac{1}{n}\sum_{i=1}^{n}\frac{1}{m_i\left( 1-c_I\frac{\gamma}{nm_i} \right)} \right) \right),
		\end{split}
	\end{align}
	which can be written more compactly as follow:
	\begin{align}\label{PACBoundsUnboundedLoss2}
		\Big(\mathbb{E}\exp(2\sqrt{n}\Pi^1(\gamma))\Big)^{1/2} \leq \exp\left( \frac{\gamma s_{\text{I}}^2}{2\sqrt{n}} \left( \frac{1}{n}\sum_{i=1}^{n}\frac{1}{m_i\left( 1-c_I\frac{\gamma}{nm_i} \right)} \right) \right).
	\end{align}
	Employing the assumption of $V_i^2$ in Assumption \ref{UnboundedLossAssumptionSubGamma2}, we have 
	\begin{align*}
			& \mathbb{E}\left\{
			\mathbb{E}_{\theta\sim\mathscr{P}}\exp\left( 
			\frac{\lambda}{n}\sum_{i=1}^{n}
			\left[ 
			\mathbb{E}_{(\mathbb{D},m)\sim\mathcal{T}}\mathbb{E}_{S\sim\mathbb{D}^m}\mathcal{L}(\mathbb{Q}(S,\mathbb{P}^{\theta}),\mathbb{D})
			- \mathcal{L}(\mathbb{Q}(S_i,\mathbb{P}^{\theta}),\mathbb{D}_i)
			\right]
			\right)   
			\right\}^{\frac{2\sqrt{n}}{\lambda}} \\
			& 
			\leq \left\{ 
			\prod_{i=1}^{n}\mathbb{E}\mathbb{E}_{\theta\sim\mathscr{P}}\exp\left( 
			\frac{\lambda}{n}\left[ 
			\mathbb{E}_{(\mathbb{D},m)\sim\mathcal{T}}\mathbb{E}_{S\sim\mathbb{D}^m}\mathcal{L}(\mathbb{Q}(S,\mathbb{P}^{\theta}),\mathbb{D})
			- \mathcal{L}(\mathbb{Q}(S_i,\mathbb{P}^{\theta}),\mathbb{D}_i)
			\right]
			\right)\right\}^{2\frac{\sqrt{n}}{\lambda}}  \\
			& \leq \left\{ 
			\prod_{i=1}^{n}\exp\left( 
			\frac{\lambda^2 s_{\text{II}}^2}{2n^2\left(1-\frac{\lambda}{n} c_{\text{II}}\right)}
			\right)\right\}^{2\frac{\sqrt{n}}{\lambda}} = \exp\left( 
			\frac{\lambda s_{\text{II}}^2}{\sqrt{n}\left(1-\frac{\lambda}{n} c_{\text{II}}\right)}
			\right),
	\end{align*}
	which yields
	\begin{align}\label{PACBoundsUnboundedLoss3}
		\Big(\mathbb{E}\exp(2\sqrt{n}\Pi^2(\lambda))\Big)^{1/2} \leq \exp\left( 
		\frac{\lambda s_{\text{II}}^2}{2\sqrt{n}\left(1-\frac{\lambda}{n} c_{\text{II}}\right)}
		\right).
	\end{align}
	Considering estimates (\ref{PACBoundsUnboundedLoss2}) and (\ref{PACBoundsUnboundedLoss3}), we obtain the desired result. 
\end{proof}

\begin{corollary}\label{PACDependentThm}
	Assume that all of the assumptions in Theorem 2.4 
	Assumptions \ref{UnboundedLossAssumptionSubGamma1}, \ref{UnboundedLossAssumptionSubGamma2}, and 2.6 
    (in the main text) 
	hold true. In addition, we assume $2\sqrt{n}\leq\lambda\leq n$ and $4\sqrt{n}\leq2\gamma\leq n\min_{1\leq i\leq n}m_i$.
	For any confidence level $\delta\in(0,1]$, we have  
	\begin{align*}
			\mathbb{P}\Bigg( \forall \mathscr{Q}\in\mathcal{P}(\Theta), \,\, 
			\mathcal{L} & (\mathscr{Q}, \mathcal{T}) \leq \, \hat{\mathcal{L}}(\mathscr{Q},S_1,\ldots,S_n) + 
			\left( \frac{1}{\lambda} + \frac{1}{\gamma} \right)D_{\text{KL}}(\mathscr{Q} || \mathscr{P})  \\
			& \, + \frac{1}{\gamma}\sum_{i=1}^{n}\mathbb{E}_{\theta\sim\mathscr{Q}}D_{\text{KL}}
			\big(\mathbb{Q}(S_i,\mathbb{P}_{S_i}^{\theta}) || \mathbb{P}_{S_i}^{\theta}\big) 
			+ \frac{n}{2\gamma}\ln\Psi_E \\
			& \, 
			+ \frac{\gamma s_{\text{I}}^2}{n} \left( \frac{1}{n}\sum_{i=1}^{n}\frac{1}{m_i\left( 1-c_I\frac{\gamma}{nm_i} \right)} \right)
			+ \frac{\lambda s_{\text{II}}^2}{2n\left(1-\frac{\lambda}{n} c_{\text{II}}\right)}
			+ \frac{1}{\sqrt{n}}\ln\frac{1}{\delta}\Bigg) \geq 1-\delta. 
	\end{align*}
\end{corollary}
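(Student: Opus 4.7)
The plan is to mirror the argument of Corollary \ref{PACBoundsUnoundedLossThmSubGamma}, combining the sub-Gamma moment bounds with the data-dependent version of Theorem~2.4 that Assumption~2.9 unlocks. Concretely, I would first establish two exponential moment estimates analogous to \eqref{PACBoundsUnboundedLoss2} and \eqref{PACBoundsUnboundedLoss3}, one for $\Pi^1$ and one for $\Pi^2$, and then feed them into the data-dependent PAC-Bayes machinery that produces the additive $\frac{n}{2\gamma}\ln\Psi_E$ correction together with the final $\frac{1}{\sqrt{n}}\ln\frac{1}{\delta}$ deviation term.

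For $\Pi^2$, nothing needs to change: the random variables appearing inside the exponential only involve $\mathbb{Q}(S_i,\mathbb{P}^{\theta})$ through its composition with $\mathscr{P}$, and no data-dependent change of prior is present. I would copy the computation in \eqref{PACBoundsUnboundedLoss3} verbatim under Assumption~\ref{UnboundedLossAssumptionSubGamma2} to get $(\mathbb{E}\exp(2\sqrt{n}\Pi^2(\lambda)))^{1/2}\le\exp\bigl(\lambda s_{\text{II}}^2/[2\sqrt{n}(1-\lambda c_{\text{II}}/n)]\bigr)$. This explains why the $\Pi^2$-generated term in the bound is identical to the one in Corollary~\ref{PACBoundsUnoundedLossThmSubGamma}.

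For $\Pi^1$, the data-dependent prior $\mathbb{P}_{S_i}^{\theta}$ forces a change-of-measure step. The approach is to introduce a Cauchy--Schwarz split inside the expectation over $\theta\sim\mathscr{P}$ and $u_i\sim\mathbb{P}_{S_i}^{\theta}$: one factor absorbs the discrepancy between $\mathbb{P}_{S_i}^{\theta}$ and a data-independent surrogate and, via Assumption~2.9, contributes the $\Psi_E$ factor whose logarithm shows up as $\frac{n}{2\gamma}\ln\Psi_E$; the other factor, where the measure is data-independent, is bounded exactly by the sub-Gamma assumption of Assumption~\ref{UnboundedLossAssumptionSubGamma1}, but with parameter $2\tilde\gamma=2\gamma/(nm)$ instead of $\tilde\gamma$. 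Squaring after Cauchy--Schwarz is what makes the constant $2$ disappear from the denominator of the $\Pi^1$ term (so one reads $\frac{\gamma s_{\text{I}}^2}{nm(1-c_{\text{I}}\gamma/(nm))}$ rather than the half of it), and it is also why one must require $2\gamma/(nm)\in(0,1/c_{\text{I}})$, i.e. $2\gamma\le nm$, and correspondingly $4\sqrt{n}\le 2\gamma$.

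The main obstacle will be carrying out this Cauchy--Schwarz step cleanly with the right exponents so that (i) the sub-Gamma moment bound is applied at $2\tilde\gamma$ and survives inside $(1/c_{\text{I}})$, (ii) the data-dependent prior factor matches precisely the quantity that Assumption~2.9 controls by $\Psi_E$, and (iii) after raising to the power $2\sqrt{n}/\gamma$ the exponent of $\Psi_E$ lines up with the coefficient $n/(2\gamma)$ stated in the corollary. Once this $\Pi^1$ bound is in place, combining it with the $\Pi^2$ estimate and invoking Theorem~2.4 exactly as in the proof of Corollary~\ref{PACBoundsUnoundedLossThmSubGamma} produces the claimed uniform-in-$\mathscr{Q}$ inequality.
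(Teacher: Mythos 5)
Your plan is essentially the paper's own proof: the $\Pi^2$ moment bound is taken over unchanged from the data-independent case, while $\Pi^1$ is handled by the Assumption~2.9 change of measure followed by Cauchy--Schwarz, applying the sub-Gamma bound at the doubled parameter $2\tilde{\gamma}=2\gamma/(nm)$ (whence the requirements $4\sqrt{n}\le 2\gamma\le nm$ and the loss of the factor $\tfrac12$ in the $s_{\text{I}}^2$ term), producing the factor $\Psi_E^{\frac{n\sqrt{n}}{2\gamma}}$ that, fed into Theorem~2.4, yields the $\frac{n}{2\gamma}\ln\Psi_E$ correction. Your exponent bookkeeping for $\Psi_E$ and the final combination with Theorem~2.4 coincide with the paper's argument, so the proposal is correct and follows the same route.
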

\begin{proof}
	For the term $V_i^1$, we have 
	\begin{align}\label{PACDependentUnboundedLoss1}
		\begin{split}
			V_i^1 = & \mathbb{E}_{S_i\sim\mathbb{D}_{\mathcal{T}}}
			\mathbb{E}_{\theta\sim\mathscr{P}}\mathbb{E}_{u_i\sim\mathbb{P}_{S_i}^{\theta}}\exp\left( 
			\tilde{\gamma}\left[ 
			\mathcal{L}(u_i,\mathbb{D}_i) - \ell(u_i, z_{i})
			\right]\right) \\
			\leq & \mathbb{E}_{S_i^{'}\sim\mathbb{D}_{\mathcal{T}}}\mathbb{E}_{S_i\sim\mathbb{D}_{\mathcal{T}}}
			\mathbb{E}_{\theta\sim\mathscr{P}}\mathbb{E}_{u_i\sim\mathbb{P}_{S_i^{'}}^{\theta}} 
			\Psi(S_i,S_i^{'},u_i,\theta)\exp\left( \tilde{\gamma}[\mathcal{L}(u_i,\mathbb{D}_i) - \ell(u_i,z_i)] \right) \\
			\leq & \Psi_E^{1/2}
			\bigg(\mathbb{E}_{S_i^{'}\sim\mathbb{D}_{\mathcal{T}}}\mathbb{E}_{S_i\sim\mathbb{D}_{\mathcal{T}}}
			\mathbb{E}_{\theta\sim\mathscr{P}}\mathbb{E}_{u_i\sim\mathbb{P}_{S_i^{'}}^{\theta}} 
			\exp\left( 2\tilde{\gamma}[\mathcal{L}(u_i,\mathbb{D}_i) - \ell(u_i,z_i)] \right)\bigg)^{1/2} \\
			\leq & \Psi_E^{1/2}
			\exp\left( \frac{\tilde{\gamma}^2s_{\text{I}}^2}{1-\tilde{\gamma}c_{\text{I}}} \right),
		\end{split}
	\end{align}
	where we need $0<2\tilde{\gamma}<1/c_{\text{I}}$ for deriving the last inequality. 
	Through some simple calculations as for the bounded loss case shown in Corollary \ref{PACBoundsBoundedLoss2}, we find that 
	\begin{align}\label{GeneralSubGammaLossS2}
		\begin{split}
			\left(\mathbb{E}e^{2\sqrt{n}\Pi^1(\gamma)}\right)^{1/2} \leq & \Psi_E^{\frac{n\sqrt{n}}{2\gamma}}
			\exp\left( \frac{\gamma s_{\text{I}}^2}{\sqrt{n}} \left( \frac{1}{n}\sum_{i=1}^{n}\frac{1}{m_i\left( 1-c_I\frac{\gamma}{nm_i} \right)} \right) \right).
		\end{split}
	\end{align}
	Combining estimates on the term $\mathbb{E}e^{\sqrt{n}\Pi^2(\lambda)}$ (not given here since it is 
	similar to the data-independent case), we obtain 
	\begin{align}\label{GeneralSubGammaLossS3}
		\begin{split}
			\mathbb{E}\left[ e^{\sqrt{n}\Pi^1(\gamma) + \sqrt{n}\Pi^2(\lambda)} \right] \! \leq \! \Psi_E^{\frac{n\sqrt{n}}{2\gamma}}\!
			\exp\left(\! \frac{\gamma s_{\text{I}}^2}{\sqrt{n}}\left( \frac{1}{n}\sum_{i=1}^{n}\frac{1}{m_i\left( 1-c_I\frac{\gamma}{nm_i} \right)} \right) 
			\! + \! \frac{\lambda s_{\text{II}}^2}{2\sqrt{n}\left(1-\frac{\lambda}{n} c_{\text{II}}\right)} \! \right).
		\end{split}
	\end{align}
	Combining estimate (\ref{GeneralSubGammaLossS3}) with the results given in Theorem 2.4, 
	we obtain the desired result.
\end{proof}

\subsection{Discussion of Assumption 2.6}\label{AppA2}
{\color{black}
In this subsection, we revisit the example provided in Subsection 2.3, 
which demonstrates that Assumption 2.6 
from the main text could hold true under specific conditions. In the main text, we consider a Gaussian prior $\mathbb{P} := \mathcal{N}(0, \mathcal{C}_0)$, where $\mathcal{C}_0$ denotes a positive-definite, symmetric, and trace-class operator. Let $\mathcal{U}$ be a separable Hilbert space equipped with an orthogonal basis $\{e_k\}_{k=1}^{\infty}$. Assume the prior covariance operator admits the spectral decomposition 
$$\mathcal{C}_0 = \sum_{k=1}^{\infty} \lambda_k \, e_k \otimes e_k$$ 
with $\lambda_1 \geq \lambda_2 \geq \cdots > 0.$ Define $\mathcal{U}^2$ as the Hilbert scale induced by $\mathcal{C}_0$ (see \cite{Engl1996Book} or Subsection 3.1 
for details). Introduce the mapping 
$$f(S; \theta) := \big( f_m(S; \theta_1), \, \theta_2 \big) \in \mathcal{V} := \mathcal{U}^2 \times \mathbb{R}^{N_c}.$$  
Under Assumption 2.7 
in the main text, we know that the parameter $\theta_2 \in \big[ \theta_{\text{min}}, \theta_{\text{max}} \big]^{N_c}$ almost surely, where $\theta_{\text{min}}, \theta_{\text{max}} \in \mathbb{R}$ ($\theta_{\text{min}} < \theta_{\text{max}}$) are fixed constants. The data-dependent learned prior measure is specified as 
$$\mathbb{P}_S^{\theta} := \mathcal{N} \Big( f_m(S; \theta_1), \, \mathcal{C}_0(\theta_2) \Big),$$ 
where the covariance operator  
$$\mathcal{C}_0(\theta_2) = \sum_{k=1}^{N_c} e^{\theta_{2k}} \, e_k \otimes e_k + \sum_{k=N_c+1}^{\infty} \lambda_k \, e_k \otimes e_k$$
with $N_c \in \mathbb{N}^+$ is a predefined constant. For the mean function $f_m(S; \theta_1)$, we constrain all admissible $f(S; \theta_1)$ to the ball $B_{\mathcal{U}^2}(R_u) \subset \mathcal{U}^2$ defined by 
\begin{align}\label{BallConditionA}
	B_{\mathcal{U}^2}(R_u) := \{ f\in\mathcal{U}^2:\, \|\mathcal{C}_0^{-1}f\|_{\mathcal{U}} \leq R_u \},
\end{align}
where $R_u > 0$ being a sufficiently large prespecified constant.}
For the above condition (\ref{BallConditionA}), similar requirements are employed in classical investigations of inverse problems of PDEs. When studying inverse problems of PDEs using regularization methods, it is important for us to derive conditional stability estimates, which require the introduction of an admissible set \citep{Engl1996Book}. The admissible set is usually defined as a ball in some appropriate Banach space. For example, in the investigation of the backward diffusion problem, the admissible set is defined as a ball in the Sobolev space $H^{2\epsilon}$ (for any fixed $\epsilon > 0$) with a fixed finite radius \citep{Jingzhi2009CPAA}. In studies of the inverse coefficient for steady-state Darcy flow equations, the admissible set is defined as a ball in the $C^1$ space with a finite radius \citep{Richter1981IP,Vollmer2013IP}. In some recent investigations on statistical inverse problems, similar admissible sets have also been introduced; see Section 2 of \cite{Nickl2022LectureNotes}, for example. 

{\color{black}From a practical point of view, it is reasonable to restrict the parameter of interest within a ball of some Banach space, since such a priori information can often be obtained in practical problems. In some practical applications of PDE inverse problems (such as seismic imaging and medical imaging), the function parameter $u$ represents the sound speed of a medium (e.g., rocks or biological tissues). The sound speeds of rocks and biological tissues are typically determined through laboratory experiments. Furthermore, the types of rocks and tissues are finite and may be known to experts prior to inversion. In these applications, we aim to determine the sound speed distribution, which then allows us to identify the medium. Therefore, for many PDE inverse problems, we may know that the function parameter lies within some fixed-radius ball. Without some restricted ball conditions, the stability (even the weak log-type stability) for solving the inverse problems of PDEs usually cannot be obtained, which leads to an inverse problem that may not be numerically solved in a stable manner. 

A limitation of the current setting concerns the employed Hilbert scale norm. In practical applications, the unknown function parameter may not be sufficiently smooth and may only lie in a ball defined by a weaker norm. Current state-of-the-art theoretical work \cite{Nickl2020JUQ} also requires higher regularity of the true parameter than some practical applications typically demand. Reducing the regularity requirements of the Hilbert scale norm presents an interesting problem worthy of investigation.
}

{\color{black}Under these assumptions, we have 
\begin{align}\label{FormPsiA}
	\begin{split}
		\Psi(S,S',u,\theta) = & \exp\bigg( 
		\langle \mathcal{C}_0(\theta_2)^{-1}(f_m(S;\theta_1) - f_m(S';\theta_1)), u - f_m(S';\theta_1) \rangle_{\mathcal{U}} \\
		& \quad\quad\quad
		- \frac{1}{2}\|\mathcal{C}_0(\theta_2)^{-1/2}(f_m(S;\theta_1) - f_m(S';\theta_1))\|_{\mathcal{U}}^2
		\bigg)
	\end{split}
\end{align}
based on Theorem 2.23 in \cite{Prato2014Book}. 
For a small real number  $0 < \epsilon < \min\left\{ \frac{1}{4}e^{-\theta_{\text{max}}}, \frac{1}{4\lambda_1} \right\}$, we have the following estimate
\begin{align*}
		\Psi(S,S',u,\theta)^2 \leq & T_1 \cdot T_2,
\end{align*}
where 
\begin{align*}
	T_1 = \exp\left( \frac{1}{\epsilon}\|\mathcal{C}_0(\theta_2)^{-1}(f_m(S;\theta_1) - f(S';\theta_1))\|_{\mathcal{U}}^2 \right), \quad
	T_2 = \exp\left( \epsilon\|u - f_m(S';\theta_1)\|_{\mathcal{U}}^2 \right).
\end{align*}
Obviously, we know that 
\begin{align*}
		\Psi_E \leq & \mathbb{E}_{S'\sim\mathbb{D}_{\mathcal{T}}}
		\mathbb{E}_{S\sim\mathbb{D}_{\mathcal{T}}}\mathbb{E}_{u\sim\mathbb{P}_{S'}^{\theta}} T_1 \cdot T_2 \\
		\leq &  \mathbb{E}_{S'\sim\mathbb{D}_{\mathcal{T}}}
		\mathbb{E}_{S\sim\mathbb{D}_{\mathcal{T}}}T_1 \cdot \exp\left(2\epsilon\|f(S';\theta_1) - f_m(S;\theta_1)\|_{\mathcal{U}}^2\right)\mathbb{E}_{u\sim\mathbb{P}_0^{\theta_2}}
		\exp\left( 2\epsilon\|u\|_{\mathcal{U}}^2 \right) \\
		\leq & e^{8\epsilon R_u^2}\left[ \det\left( \text{I} - 4\epsilon \mathcal{C}_0(\theta_2) \right) \right]^{-\frac{1}{2}}\mathbb{E}_{S'\sim\mathbb{D}_{\mathcal{T}}}
		\mathbb{E}_{S\sim\mathbb{D}_{\mathcal{T}}}T_1 \\
		= & e^{8\epsilon R_u^2}\prod_{k=1}^{N_c}(1-4\epsilon e^{\theta_{2k}})^{-\frac{1}{2}}\prod_{k=N_c+1}^{\infty}(1-4\epsilon \lambda_k)^{-\frac{1}{2}}
		\mathbb{E}_{S'\sim\mathbb{D}_{\mathcal{T}}}
		\mathbb{E}_{S\sim\mathbb{D}_{\mathcal{T}}}T_1, \\
		\leq & e^{8\epsilon R_u^2} \frac{N_c}{\sqrt{1-4\epsilon e^{\theta_{\text{max}}}}} \prod_{k=N_c+1}^{\infty}(1-4\epsilon \lambda_k)^{-\frac{1}{2}}
		\mathbb{E}_{S'\sim\mathbb{D}_{\mathcal{T}}}
		\mathbb{E}_{S\sim\mathbb{D}_{\mathcal{T}}}T_1,
\end{align*}
and where $\mathbb{P}_0^{\theta_2} := \mathcal{N}(0,\mathcal{C}_0(\theta_2))$.
Remember the bounded condition of $f_m(S;\theta_1)$ for any $S$ and $\theta_1$, we finally obtain 
\begin{align}\label{EstimatePsiEA}
	\Psi_E &\leq \exp\left(\frac{(4+8\epsilon^2)R_u^2}{\epsilon}\right)\frac{N_c}{\sqrt{1-4\epsilon e^{\theta_{\text{max}}}}}
	\prod_{k=N_c+1}^{\infty}(1-4\epsilon \lambda_k)^{-\frac{1}{2}} < +\infty, 
\end{align}
with $0 < \epsilon < \min\left\{ \frac{1}{4}e^{-\theta_{\text{max}}}, \frac{1}{4\lambda_1} \right\}$. }

\begin{remark}
	Since we have not assumed that the parameter $u$ is uniformly bounded, the term $\Psi(S,S',u,\theta)$ is not generally bounded. Therefore, we cannot verify the $\alpha$-differentially private condition under the current setting. The right-hand side of inequality (\ref{EstimatePsiEA}) contains a term $e^{\frac{(4+8\epsilon^2)R_u^2}{\epsilon}}$, which can become very large even for moderate values of $R_u$. However, it is only the term $\ln\Psi_E$ that appears in the generalization bound and scales like $R_u^2$.
\end{remark}

{\color{black}
\subsection{Discussion of Assumptions 3.1}\label{AppGasussianPrior} 
In Assumptions 3.1, 
we assumed the base prior measure to be a Gaussian measure $\mathbb{P}_{S}^{\theta} = \mathcal{N}(f_m(S;\theta_1),\mathcal{C}_0(\theta_2))$ defined on the separable Hilbert space $\mathcal{U}$. We notice that the study \citep{Mora2025CMAME} focuses on constructing a systematic method to learn an operator (typically a system of partial differential equations) by integrating Gaussian process regression with neural operator methods. In our view, the approach developed in \cite{Mora2025CMAME} can conceptually be adapted to our setting. Moreover, the techniques in \cite{Mora2025CMAME} for handling high-resolution data will be crucial for our purposes.  

Gaussian measures defined on separable Hilbert spaces and Gaussian process theory are deeply interrelated. Any Gaussian random variable in a Banach space $\mathcal{H}$ can be viewed as a Gaussian process indexed by $\mathcal{H}^*$ (the dual space of $\mathcal{H}$). Conversely, Gaussian processes can often be realized as $\mathcal{H}$-valued random variables for some appropriate $\mathcal{H}$. Thus, the theories of Gaussian measures and processes mutually inform each other, and tools from both fields prove useful when appropriately applied. For detailed illustrations, see the books \cite{Nickl2022LectureNotes,Nickl2016Book}. For a practical introduction to Gaussian processes, we refer to the excellent book \cite{Rasmussen2006Book}.

For the current work, we adopt the Gaussian measure perspective, which may align with the Bayesian approach for inverse problems of PDEs \citep{Dashti2017}. Building on the Gaussian measure perspective and finite element discretization, a systematic discretization framework has been developed in \cite{Tan2013SISC,Petra2014SISC}, and \cite{Thanh2016IPI}. By our understanding, this framework is deeply related to the Gaussian process perspective. Unlike Gaussian process methods, Gaussian measure methods discretize functions using numerical PDE techniques. Sampling from a Gaussian measure is transformed into solving an elliptic partial differential equation. In the Gaussian process setting, various methods have been developed to enhance scalability with respect to high-dimensional discrete points. For the numerical methods illustrated in \cite{Tan2013SISC,Petra2014SISC}, and \cite{Thanh2016IPI}, the scalability challenge (inversion of large matrices) is transformed into the scalability of PDE solvers, which have been widely investigated.  

To learn the covariance operator, we adopt the methods described in \cite{Pinski2015SIAMMA,Pinski2015SISC}, and \cite{Jia2023JMLR}, which are essentially based on the eigensystem of the covariance operator $\mathcal{C}_0$. The covariance operator is typically represented as an integral operator:  
\begin{align*}
	(\mathcal{C}_0u)(x) = \int_{\Omega}c(x,y)u(y)\,\mathrm{d}y.
\end{align*}  
In Gaussian process methods, researchers often focus on the integral kernel function $c(x, y)$, which becomes a large matrix when dealing with numerous discrete points. For example, on a $100 \times 100$ mesh, the matrix size is $10^4 \times 10^4$, which can be reduced by various methods \citep{Rasmussen2006Book}. In Gaussian measure methods, we compute the dominant eigenvalues and corresponding eigenvectors of the covariance operator using PDE techniques. Since the eigenvalues of $\mathcal{C}_0$ frequently decay rapidly, a few eigenvalues and eigenvectors suffice to provide a satisfactory approximation. See the work \cite{Tan2013SISC} for details.  

In summary, at an abstract level, the method developed in \cite{Mora2025CMAME} can be adapted for our work. In fact, the base prior measure can be constructed more cleverly for specific inverse problems. For instance, when considering inverse scattering problems, direct sampling methods could be combined with a Gaussian prior measure \citep{Li2020SIAMImaging}, which could enhance the method developed in this work. This represents one of our future research directions.
}
\subsection{Discussion of Bounded Loss}\label{AppA3}

Before further discussions, we note that the following corollary, which concerns the generalization bound under a bounded loss assumption and data-dependent prior, holds true.

\begin{corollary}\label{PACBoundsBoundedLoss2}
	Assume that all of the assumptions in Theorem 2.4 
    and Assumption 2.6 
	hold true. In addition, we assume that the loss function $\ell(u,\bm{z})$ is bounded in $[a,b]$ 
	with $-\infty < a < b < +\infty$. Let us denote $\bar{m} = \left( \frac{1}{n}\sum_{i=1}^{n}\frac{1}{m_i} \right)^{-1}.$
	For any confidence level $\delta\in(0,1]$, $\gamma\geq2\sqrt{n}$, and $\lambda\geq2\sqrt{n}$, we have 
	\begin{align*}
		\mathbb{P}\Bigg( \forall \mathscr{Q}\in\mathcal{P}(\Theta), \,\,
		\mathcal{L}(\mathscr{Q}, \mathcal{T}) \leq & \, \hat{\mathcal{L}}(\mathscr{Q},S_1,\ldots,S_n) + 
		\left( \frac{1}{\lambda} + \frac{1}{\gamma} \right)D_{\text{KL}}(\mathscr{Q} || \mathscr{P})  \\
		& \, 
		+ \frac{1}{\gamma}\sum_{i=1}^{n}\mathbb{E}_{\theta\sim\mathscr{Q}}D_{\text{KL}}
		\big(\mathbb{Q}(S_i,\mathbb{P}_{S_i}^{\theta}) || \mathbb{P}_{S_i}^{\theta}\big) \\
		& \, + \frac{n}{2\gamma}\ln\Psi_E
		+ \left( \frac{\gamma}{4n\bar{m}} + \frac{\lambda}{8n} \right)(b-a)^2 
		+ \frac{1}{\sqrt{n}}\ln\frac{1}{\delta} \Bigg) \geq 1-\delta.
	\end{align*}
\end{corollary}
\begin{proof}
	Since $\mathbb{E}\mathbb{E}_{\theta\sim\mathscr{P}}\cdot = \mathbb{E}_{\theta\sim\mathscr{P}}\mathbb{E}\cdot$ 
	(The expectation $\mathbb{E}$ is defined as in Theorem 2.4 
    of the main text), 
	the estimates of $e^{2\sqrt{n}\Pi^2(\lambda)}$ will be the same as for the data-independent case. 
	In the following, we concentrate on the estimates of $e^{2\sqrt{n}\Pi^1(\gamma)}$. 
	For $\gamma\geq2\sqrt{n}$, simple calculations yield 
	\begin{align}\label{DataDependentBoundedLoss1}
		\begin{split}
			\Big(\mathbb{E}e^{2\sqrt{n}\Pi^1(\gamma)}\Big)^{1/2} \leq & (\mathbb{E}T_1)^{\frac{\sqrt{n}}{\gamma}},
		\end{split}
	\end{align}
	where
	\begin{align*}
		T_1 = \mathbb{E}_{\theta\sim\mathscr{P}}\prod_{i=1}^{n}\mathbb{E}_{u_i\sim\mathbb{P}_{S_i}^{\theta}}
		\exp\left( 
		\frac{\gamma}{nm_i}\sum_{j=1}^{m_i}\left[ 
		\mathbb{E}_{z\sim\mathbb{D}_{i}}\ell(u_i, z) -  \ell(u_i, z_{ij})
		\right]\right).
	\end{align*}
	For the term $\mathbb{E}T_1$ in inequality (\ref{DataDependentBoundedLoss1}), we have 
	\begin{align*}
			& \mathbb{E}_{\theta\sim\mathscr{P}}\prod_{i=1}^{n}\mathbb{E}_{S_i\sim\mathbb{D}_{\mathcal{T}}}
			\mathbb{E}_{u_i\sim\mathbb{P}_{S_i}^{\theta}}
			\!\!\exp\!\!\left( \!
			\frac{\gamma}{nm_i} \!\sum_{j=1}^{m_i}\!\left[ 
			\mathbb{E}_{z\sim\mathbb{D}_{i}}\ell(u_i, z) \!- \! \ell(u_i, z_{ij})
			\right]\!\!\right)  \\
			= & \mathbb{E}_{\theta\sim\mathscr{P}}\prod_{i=1}^{n}\mathbb{E}_{S_i^{'}\sim\mathbb{D}_{\mathcal{T}}}
			\mathbb{E}_{S_i\sim\mathbb{D}_{\mathcal{T}}}
			\mathbb{E}_{u_i\sim\mathbb{P}_{S_i}^{\theta}}
			\!\!\exp\!\!\left( \!
			\frac{\gamma}{nm_i} \!\sum_{j=1}^{m_i}\!\left[ 
			\mathbb{E}_{z\sim\mathbb{D}_{i}}\ell(u_i, z) \!- \! \ell(u_i, z_{ij})
			\right]\!\!\right) \\
			\leq & \mathbb{E}_{\theta\sim\mathscr{P}}\!\prod_{i=1}^{n}\!
			\mathbb{E}_{S_i^{'}\sim\mathbb{D}_{\mathcal{T}}}\!
			\mathbb{E}_{u_i\sim\mathbb{P}_{S_i^{'}}^{\theta}}\!
			\mathbb{E}_{S_i\sim\mathbb{D}_{\mathcal{T}}}\!
			\Psi(S_i, S_i^{'},u_i,\theta)\!
			\exp\!\!\left( \!
			\frac{\gamma}{nm_i} \!\sum_{j=1}^{m_i}\!\left[ 
			\mathbb{E}_{z\sim\mathbb{D}_{i}}\ell(u_i, z) \!- \! \ell(u_i, z_{ij})
			\right]\!\!\right) \\
			\leq & \mathbb{E}_{\theta\sim\mathscr{P}}\prod_{i=1}^{n}
			\left\{\mathbb{E}_{S_i^{'}\sim\mathbb{D}_{\mathcal{T}}}
			\mathbb{E}_{u_i\sim\mathbb{P}_{S_i^{'}}^{\theta}}
			\mathbb{E}_{S_i\sim\mathbb{D}_{\mathcal{T}}}\Psi(S_i, S_i^{'},u_i,\theta)^2\right\}^{1/2} \\
			& \qquad\qquad
			\left\{
			\mathbb{E}_{u_i\sim\mathbb{P}_{S_i^{'}}^{\theta}}
			\mathbb{E}_{S_i\sim\mathbb{D}_{\mathcal{T}}}
			\exp\left( \!
			\frac{2\gamma}{nm_i} \!\sum_{j=1}^{m_i}\!\left[ 
			\mathbb{E}_{z\sim\mathbb{D}_{i}}\ell(u_i, z) \!- \! \ell(u_i, z_{ij})
			\right]\right)
			\right\}^{1/2},
	\end{align*}
	which implies
	\begin{align}\label{DataDependentBoundedLoss2}
		\begin{split}
			\mathbb{E}T_1 \leq & \Psi_E^{\frac{n}{2}}
			e^{\frac{\gamma^2}{4n \bar{m}}(b-a)^2} 
		\end{split}
	\end{align}
	with $\Psi_E$ defined as in Assumption 2.6 
    of the main text. 
	Plugging estimate (\ref{DataDependentBoundedLoss2}) into estimate (\ref{DataDependentBoundedLoss1}), we find that 
	\begin{align}\label{DataDependentBoundedLoss3}
		\begin{split}
			\Big(\mathbb{E}e^{2\sqrt{n}\Pi^1(\gamma)}\Big)^{1/2} \leq & \Psi_E^{\frac{n\sqrt{n}}{2\gamma}}
			e^{\frac{\gamma}{4\sqrt{n}\bar{m}}(b-a)^2}.
		\end{split}
	\end{align}
	Combining estimates on the term $\mathbb{E}e^{2\sqrt{n}\Pi^2(\lambda)}$ 
	(not given here since it is similar to the data-independent case), we obtain
	\begin{align}\label{DataDependentBoundedLoss4}
		\begin{split}
			\mathbb{E}\left[ e^{(\sqrt{n}\Pi^1(\gamma) + \sqrt{n}\Pi^2(\lambda))} \right] \leq 
			\Psi_E^{\frac{n\sqrt{n}}{2\gamma}}
			\exp\left( \left( \frac{\gamma}{4\sqrt{n}\bar{m}} + \frac{\lambda}{8\sqrt{n}} \right) (b-a)^2 \right).
		\end{split}
	\end{align}
	Combining estimate (\ref{DataDependentBoundedLoss4}) with the results given in Theorem 2.4 
    of the main text, 
	we complete the proof.
\end{proof}

Inspired by investigations on the generalized Bayes' method, where the likelihood function is not directly derived from some probability distributions \citep{Germain2016NIPS,Guedj2019ArXiv}, we introduce the following truncated loss function:
\begin{align}\label{TruncatedLossDef}
	\ell(u, z) := \left\{\begin{aligned} 
		&\frac{1}{2}\|\mathcal{L}_{x}\mathcal{G}(u) - y\|_{\mathcal{H}}^2,  \quad &
		\text{if  }\frac{1}{2}\|\mathcal{L}_{x}\mathcal{G}(u) - y\|_{\mathcal{H}}^2 \leq N, \\
		&N, & \text{if  }\frac{1}{2}\|\mathcal{L}_{x}\mathcal{G}(u) - y\|_{\mathcal{H}}^2 > N.
	\end{aligned}\right.
\end{align}
Here, $N$ is a prespecified positive constant. With this assumption, the loss function is contained within the interval $[0, N]$, which allows us to apply Theorems \ref{PACBoundsBoundedLoss1} and \ref{PACBoundsBoundedLoss2} by replacing $(b-a)^2$ with $N^2$.
Introducing such a truncation may seem unnatural at first glance, especially from the perspective of Bayesian inverse methods. However, in the following, we provide an explanation that illustrates the appropriateness of introducing such a truncated loss function.

As discussed in Subsection \ref{AppA2}, an admissible set—usually defined as a ball in some Banach space—will be introduced when we discuss the conditional stability estimates of inverse problems \citep{Jingzhi2009CPAA,Richter1981IP,Vollmer2013IP}.
Based on this consideration, we assume that there exists a constant $R_u$ such that
\begin{align*}
	\supp\mathscr{E}\subset B_{\mathcal{U}}(R_u)
\end{align*}
with $\mathscr{E}\in\mathcal{P}(\mathcal{U})$ and 
$B_{\mathcal{U}}(R_u) := \{u\in\mathcal{U} :\, \|u\|_{\mathcal{U}} \leq R_u\}$.
Assume the forward operator $\mathcal{G}$ is continuous that satisfies 
\begin{align*}
	\|\mathcal{L}_{x}\mathcal{G}(u)\|_{\mathcal{H}} \leq & M(\|u\|_{\mathcal{U}}),  \\
	\|\mathcal{L}_{x}\mathcal{G}(u) - \mathcal{L}_{x}\mathcal{G}(v)\|_{\mathcal{H}} \leq &
	\tilde{M}(\|u\|_{\mathcal{U}}, \|v\|_{\mathcal{U}})\|u-v\|_{\mathcal{U}},
\end{align*}
where $x\in\mathcal{X}$, $M(\cdot)$ is a monotonic non-decreasing function, and 
$\tilde{M}(\cdot,\cdot)$ is a function monotonic non-decreasing separately in each argument. 
By our assumptions on $\mathscr{E}$, the parameter $u\in B_{\mathcal{U}}(R_u)$ almost surely, which indicates
\begin{align*}
		\|\mathcal{L}_{x}\mathcal{G}(u)\|_{\mathcal{H}} \leq & M(R_u), \\
		\|\mathcal{L}_{x}\mathcal{G}(u) - \mathcal{L}_{x}\mathcal{G}(v)\|_{\mathcal{H}} \leq & 
		\tilde{M}(R_u, R_u)\|u-v\|_{\mathcal{U}}
\end{align*}
for all $u,v\in B_{\mathcal{U}}(R_u)$. 

In the study of regularization methods for inverse problems, the noise is assumed to be bounded by a constant, known as the ``noise level'' \citep{Engl1996Book}. Intuitively, the noise level cannot be too large, or it would obscure the true measured data. Based on this consideration, we assume that the noise $\eta\in\mathcal{H}$ is bounded by 
$\mathcal{L}_{x}\mathcal{G}(u)$, i.e.,
\begin{align*}
	\|\eta\|_{\mathcal{H}} \leq C_{\eta}\|\mathcal{L}_{x}\mathcal{G}(u)\|_{\mathcal{H}}
\end{align*}
with $C_{\eta}$ be a positive constant. Usually, the constant $C_{\eta}$ is assumed to be smaller than $1$.
Let us define the loss function as follow:
\begin{align*}
	\ell(u, z) = \frac{1}{2}\|\mathcal{L}_{x}\mathcal{G}(u) - y\|_{\mathcal{H}}^2. 
\end{align*}
Then, we easily know that 
\begin{align*}
	\ell(u,z) \leq & \|\mathcal{L}_{x}\mathcal{G}(u) - \mathcal{L}_{x}\mathcal{G}(u^{\dag})\|_{\mathcal{H}}^2 
	+ \|\eta\|_{\mathcal{H}}^2 \\
	\leq & \tilde{M}(R_u, R_u)^2 \|u-u^{\dag}\|_{\mathcal{U}}^2 + C_{\eta}^2M(R_u)^2\|u^{\dag}\|_{\mathcal{U}}^2 \\
	\leq & \left( 4\tilde{M}(R_u, R_u)^2 + C_{\eta}^2M(R_u)^2 \right)R_u^2. 
\end{align*}
Hence, we can take the constant $N$ in formula (\ref{TruncatedLossDef}) as follow:
$$N:=\left( 4\tilde{M}(R_u, R_u)^2 + C_{\eta}^2M(R_u)^2 \right)R_u^2.$$
Under the current setting, the loss function may not have a probabilistic interpretation. 
Hence we can only formulate a generalized Bayes' formula that is well accepted in the studies of PAC-Bayesian learning theory. 

\subsection{Analysis of the Backward Diffusion and Darcy Flow Problems}\label{AppA4}
In this section, let us provide detailed illustrations of the two concrete inverse problems of PDEs that were mentioned in Subsection 3.2 
of the main text.

\textbf{The backward diffusion problem. }
We apply the theory to the backward diffusion problem, which is one of the most investigated inverse problems of PDEs \citep{Jia2016IP,Jia2018IPI,Stuart2010AN}. Let $\Omega\subset\mathbb{R}^{d} (d\leq 3)$ be a bounded open set with smooth boundary $\partial\Omega$. We define the Hilbert space $\mathcal{H}$ and the operator $A$ as follows:
\begin{align*}
	& \mathcal{H} = (L^{2}(\Omega), \langle\cdot,\cdot\rangle, \|\cdot\|), \quad
	A = -\Delta, \quad \mathcal{D}(A) = H^2(\Omega)\cap H_0^1(\Omega).
\end{align*}
Then the forward diffusion equation is an ordinary differential equation in $\mathcal{H}$:
\begin{align*}
	\frac{dv}{dt} + Av = 0, \quad v(0) = u.
\end{align*} 
\textbf{Elements of the backward diffusion problem: }
\begin{itemize}
	\item Datasets: the solution of the diffusion equation at time $T>0$, i.e., $v(T)$ is the measured data;
	\item Interested parameter: the initial function $u$ of the diffusion equation. 
\end{itemize}

In this section, we consider the functional data setting to demonstrate the flexibility of the theoretical framework (the finite discrete data case has been illustrated for the Darcy flow problem). However, we could alternatively consider the setting of discrete measured data, which was used for the numerical illustration.
By the operator semigroup theory, we know that the solution at time $T$ is $v(T) = \exp(-AT)u$, i.e., the forward operator 
$\mathcal{G}(u) = \exp(-AT)u$ and $\mathcal{L}_{x}:=\text{Id}$ is the identity operator from $\mathcal{H}$ to $\mathcal{H}$.
Let $\mathcal{U} = \mathcal{H}$, i.e., the interested parameter $u$ is assumed to belong to $\mathcal{H}$.
For $j=1,\ldots,m$, we have 
\begin{align}\label{BackdiffusionDef1}
	y_j = \mathcal{G}(u,x_j) + \eta_j =  \exp(-AT)u + \eta_j,
\end{align}
where $\{x_j\}_{j=1}^{m}$ are dummy variables. The above problem (\ref{BackdiffusionDef1}) can be written compactly as 
\begin{align}\label{BackdiffusionDef2}
	\bm{y} = \mathcal{L}_{\bm{x}}\mathcal{G}(u) + \bm{\eta} =  \exp(-AT)\bm{u} + \bm{\eta},
\end{align}
where $\bm{u} = (u_1,\ldots,u_{m})^T$ with $u_j=u$ for $j=1,\ldots,m$.

For this example, we take $\Gamma:=\tau^2\text{Id}$ with $\tau\in\mathbb{R}^{+}$, $\mathcal{C}_1=A^{-2}$, $\mathcal{C}_0=\lambda A^{-2}$ ($\lambda > 0$)
and $\alpha=0$. Concerned with $\mathcal{C}_0$, we have the following eigen-system decomposition:
\begin{align*}
	\mathcal{C}_0e_k = \lambda_k e_k, \quad \forall \, k=1,2,\ldots,
\end{align*}
where $\{\lambda_k\}_{k=1}^{\infty}$ are arranged in a descending order and $\{e_k\}_{k=1}^{\infty}$ are normalized eigenfunctions. 
According to the properties of the operator $A$, we know that $\lambda_k\asymp k^{-\frac{4}{d}}$, 
which yields $s_0 = s_1 = \frac{d}{4}$. In fact, for every $s > s_0$, we have 
\begin{align*}
	\text{Tr}(\mathcal{C}_0^{s}) = \sum_{k=1}^{\infty}\lambda_k^{s} \asymp \sum_{k=1}^{\infty}k^{-\frac{4s}{d}} < +\infty. 
\end{align*}
With these discussions, we can take $\frac{d}{4} < s=\tilde{s} < 1$ for the backward diffusion problem discussed here. 
In addition, the parameter $\alpha=0$ and $\mathcal{Y} = \mathcal{H}^{-s}$. {\color{black}According to Subsection 2.3 
of the main text, we can introduce the learnable parameter $\theta_2\in [\theta_{\text{min}}, \theta_{\text{max}}]^{N_c}$ almost surely and define the learnable covariance operator as follows:
\begin{align*}
	\mathcal{C}_0(\theta_2) = \sum_{k=1}^{N_c}e^{\theta_{2k}}e_k\otimes e_k + \sum_{k=N_c+1}^{\infty}\lambda_k e_k\otimes e_k,
\end{align*} 
where $N_c$ is a positive integer. }

Now we give estimates required in Assumptions 3.2 
of the main text for the current problem as follows:
\begin{align}\label{BackDiffusionInequalities}
	\begin{split}
		\|\mathcal{C}_1^{\frac{s}{2}}\Gamma^{-\frac{1}{2}}u\|_{\mathcal{H}} = & \tau^{-1}\|\mathcal{C}_1^{\frac{s}{2}}u\|_{\mathcal{H}}, \\
		\|\mathcal{C}_1^{-\frac{\rho}{2}}\Gamma^{\frac{1}{2}}u\|_{\mathcal{H}} = & \tau \|\mathcal{C}_1^{-\frac{\rho}{2}}u\|_{\mathcal{H}}, \\
		\|\mathcal{C}_1^{-\frac{s}{2}}\Gamma^{-\frac{1}{2}}e^{-AT}u\|_{\mathcal{H}} \leq & 
		\tau^{-1}\sup_k\left[ \lambda^{\frac{s}{2}}\lambda_k^{\frac{1}{2}-s} e^{-T\sqrt{\frac{\lambda}{\lambda_k}}} \right]\|u\|_{\mathcal{U}^{1-\tilde{s}}},  \\
		\|\mathcal{C}_1^{-\frac{s}{2}}\Gamma^{-\frac{1}{2}}e^{-AT}(u_1-u_2)\|_{\mathcal{H}} \leq & 
		\tau^{-1}\sup_k\left[ \lambda^{\frac{s}{2}}\lambda_k^{\frac{1}{2}-s} e^{-T\sqrt{\frac{\lambda}{\lambda_k}}} \right]\|u_1-u_2\|_{\mathcal{U}^{1-\tilde{s}}},
	\end{split}
\end{align}
which indicate that $C_1 = \tau^{-1}$, $C_2 = \tau$, and 
\begin{align*}
	M_1= M_2 = \tau^{-1}\sup_k\left[ \lambda^{\frac{s}{2}}\lambda_k^{\frac{1}{2}-s} e^{-T\sqrt{\frac{\lambda}{\lambda_k}}} \right]. 
\end{align*}
Inequalities (\ref{BackDiffusionInequalities}) are easily verified by employing the standard estimates of parabolic equations 
(see for example the Section 1.2 of \cite{Dashti2017} or the book \cite{Pazy1983Book}). 
For reader's convenience, we provide the proof of (\ref{BackDiffusionInequalities}) in Subsection \ref{AppB2}.

{\color{black}In order to employ Lemma 3.6 
of the main text, 
we need to verify the following condition 
$$\max\{ 3\tilde{\gamma}\tilde{C}^2M_2^2, 3\tilde{\gamma}^2M_1^2\text{Tr}(\mathcal{C}_1^s) \} \leq \min\left\{e^{-\theta_{\text{max}}},  \lambda_1^{-1}\right\}.$$ 
After a simple calculation similar to the proof of (\ref{BackDiffusionInequalities}), we find that
\begin{align}\label{AssumpC0Example1}
	\max\{ \tilde{\gamma}\lambda_1^{1+s}, \tilde{\gamma}^2\lambda_1\text{Tr}(\mathcal{C}_0), \tilde{\gamma}e^{(1+s)\theta_{\text{max}}}, \tilde{\gamma}^2e^{\theta_{\text{max}}}\text{Tr}(\mathcal{C}_0) \}
	\sup_{k}\left[ 
	\lambda_{k}^{1-s}\left( \frac{\lambda}{\lambda_k} \right)^{s}\!\! e^{-2T\sqrt{\frac{\lambda}{\lambda_k}}}
	\right] \!\leq \!\frac{\lambda^2\tau^2}{3}
\end{align}
ensures the above condition. Obviously, the above inequality holds true when we choose a large enough parameter $\lambda$. 
By assuming (\ref{AssumpC0Example1}) and employing estimates (\ref{BackDiffusionInequalities}), we find out the parameters
$s_{\text{I}}^2$ and $s_{\text{II}}^2$ in Lemma \ref{LemmaLinear-sIsII}(Lemma 3.6 
of the main text) are as follows:
\begin{align*}
		s_{\text{I}}^2 = & 
		\frac{\lambda_1^{2s}M_s^2 R_u^4}{4\lambda^{2s}\tau^4} + 
		\frac{2M_s \text{Tr}(\mathcal{C}_0^s)R_u^2 }{\tau^2\lambda^2}  
		 + \max\left\{ 
		\frac{\tau^2\lambda^s\ln3}{3M_s\text{Tr}(\mathcal{C}_0^s)\min\{\lambda_{N_c+1}, e^{\theta_{\text{max}}}\}}, \frac{2M_s\text{Tr}(\mathcal{C}_0^s)}{\tau^2\lambda^2}
		\right\}\text{Tr}(\mathcal{C}_0),  \nonumber
\end{align*}
\begin{align*}
		s_{\text{II}}^2 = &
		\frac{2\max\{\lambda_1^{2s},e^{2s\theta_{\text{max}}}\}(R_u^4 + \left[  N_ce^{\theta_{\text{max}}} + \text{Tr}(\mathcal{C}_0) \right]^2)M_s^2}{4\lambda^{2s}\tau^4} + 144R_u^4 e^{2(\theta_{\text{max}} - \theta_{\text{min}})} \\
		& + \frac{9\max\{\lambda_1^{2s},e^{2s\theta_{\text{max}}}\}\beta^2\left[ N_c e^{(\theta_{\text{max}} - \theta_{\text{min}})s} + \text{Tr}\left( \mathcal{C}_0^{s} \right) \right]^2M_s^2}{\lambda^{2s}\tau^4}\mathbb{E}_{(\mathbb{D},m)\sim\mathcal{T}}\left[\frac{1}{m^2}\right].
\end{align*}
where $M_s = \sup_k \left[ \lambda^{s}\lambda_k^{1-2s} e^{-2T\sqrt{\frac{\lambda}{\lambda_k}}} \right].$}
To maintain a clear illustration, we provide the calculation details in Subsection \ref{AppB2}.
Notice that for the backward diffusion problem, we could illustrate the auxiliary boundedness conditions required in Lemma \ref{LemmaLinear-sIsII} under the assumption that the number of measurements is fixed. Hence, we obtain estimates independent of $\tilde{\gamma}$ and $\tilde{\lambda}$. 
Using the above formula, we can immediately derive the PAC-Bayesian bounds given in Corollary \ref{PACBoundsUnoundedLossThmSubGaussian} and Corollary 2.10 
of the main text for the data-independent prior and data-dependent prior cases when $\gamma=n\bar{m}$ and $\lambda=n$ or $\gamma=n\sqrt{\bar{m}}$ and $\lambda=2\sqrt{n}$.

\textbf{The Darcy flow problem. }
Regarding the nonlinear forward operator case, we apply the general theory to a steady-state Darcy flow problem, which is a well-known example in the field of statistical inverse problems \citep{Dashti2017,Jia2022SINUM}.
Let $\Omega \subset \mathbb{R}^2$ be a bounded open set with a smooth boundary $\partial\Omega$, and define $\mathcal{U}$ as the Hilbert space $(L^2(\Omega), \langle \cdot, \cdot \rangle, \| \cdot \|)$.
The Darcy flow equation has the following form: 
\begin{align}\label{DarcyEqApp}
	\begin{split}
		-\nabla\cdot(e^u\nabla w) & = f \quad\text{in }\Omega, \\
		w & = 0 \quad\text{on }\partial\Omega, 
	\end{split}
\end{align}
where $f$ denotes the sources, and $e^{u(x)}$ describes the permeability of the porous medium.

\textbf{Elements of the inverse problems for Darcy flow: }
\begin{itemize}
	\item Datasets: At some discrete points $\{x_j\}_{j=1}^{m}$, we measure the value of the solution $w$ through 
	a measurement operator defined as $\mathcal{L}_{x_j}(w) = w(x_j)$ with $x_j\in\Omega$ for $j=1,\ldots,m$.
	\item Interested parameter: The function $u(x)$.
\end{itemize}

For this problem, the operator $\mathcal{G}$ is the solution operator of the Darcy flow. Hence, we have 
\begin{align*}
	y_j = \mathcal{L}_{x_j}\mathcal{G}(u) + \eta_j = \mathcal{L}_{x_j}(w) + \eta_j,
\end{align*}
which can be written compactly as 
\begin{align*}
	\bm{y} = \mathcal{L}_{\bm{x}}\mathcal{G}(u) + \bm{\eta}.
\end{align*}
Obviously, the space $\mathcal{X} = \mathbb{R}^2$, $\mathcal{Y} = \mathbb{R}$, $\bm{x}\in\mathbb{R}^{2m}$, and $\bm{y}\in\mathbb{R}^m$ in this example. Let us set $\Gamma := \tau^2$($\tau\in\mathbb{R}^+$) and the covariance operator of the prior measure $\mathcal{C}_0:=A^{-3}$, where $A:=\alpha(\text{Id}-\Delta)$($\alpha>0$) with the domain of $\Delta$ given by $\mathcal{D}(\Delta):=\left\{u\in H^2(\Omega):\frac{\partial u}{\partial \bm{n}} = 0\right\}$. Under this setting, we know that $s_0=\frac{1}{3}$, i.e., we can take $\frac{1}{3} < \tilde{s} < \frac{2}{3}$. Since $\mathcal{H}$ is a finite-dimensional space, we need not introduce the operator $\mathcal{C}_1$. Considering the Lemma 20 illustrated in \cite{Nickl2020JUQ}, we can easily derive the estimates required in Assumptions 3.2 
of the main text for the Darcy flow problem as follows:
\begin{align}\label{ConditionForDarcyFlow1points}
	\begin{split}
		|\mathcal{L}_{x}\mathcal{G}(u)| \leq & M_3(\|u\|_{\mathcal{U}^{1-\tilde{s}}}), \\
		|\mathcal{L}_{x}\mathcal{G}(u_1) - \mathcal{L}_{x}\mathcal{G}(u_2)| \leq & M_4(\|u_1\|_{\mathcal{U}^{1-\tilde{s}}},\|u_2\|_{\mathcal{U}^{1-\tilde{s}}})\|u_1-u_2\|_{\mathcal{U}^{1-\tilde{s}}},
	\end{split}
\end{align}
where we employed $\|u\|_{L^{\infty}} \leq M\|u\|_{\mathcal{U}^{1-\tilde{s}}}$ for some general constant $M$ when $\tilde{s} < \frac{2}{3}$. With the above estimates (\ref{ConditionForDarcyFlow1points}), we immediately obtain the PAC-Bayesian bounds presented in Corollary \ref{PACBoundsUnoundedLossThmSubGaussian} 
and Corollary 2.10 
of the main text for both the data-independent prior and data-dependent prior cases. Due to the techniques used for proving the Lemma 20 illustrated in \cite{Nickl2020JUQ}, we may hardly find out the explicit forms of the functions $M_3$ and $M_4$.

Alternatively, we choose another type of measurement operator defined as follow:
\begin{align*}
	\mathcal{L}_{x_j}(w) = \int_{\Omega}\frac{1}{2\pi\delta^2}e^{-\frac{1}{2\delta^2}\|x-x_j\|^2}w(x)dx,
\end{align*}
with $\delta > 0$ being a sufficiently small number and $x_j\in\Omega$ for $j=1,\ldots,m$.
Using similar derivations as for Theorem 17 given in \cite{Jia2022SINUM} (a more careful calculation of the constants is needed), we give estimates required in Assumptions 3.2 
of the main text for the Darcy flow problem as follows:
\begin{align}\label{ConditionForDarcyFlow1}
	\begin{split}
		|\mathcal{L}_{x}\mathcal{G}(u)| \leq & M_3(\|u\|_{\mathcal{U}^{1-\tilde{s}}}), \\
		|\mathcal{L}_{x}\mathcal{G}(u_1) - \mathcal{L}_{x}\mathcal{G}(u_2)| \leq & M_4(\|u_1\|_{\mathcal{U}^{1-\tilde{s}}},\|u_2\|_{\mathcal{U}^{1-\tilde{s}}})\|u_1-u_2\|_{\mathcal{U}^{1-\tilde{s}}},
	\end{split}
\end{align}
where 
\begin{align*}
	M_3(\|u\|_{\mathcal{H}^{1-\tilde{s}}}) = & \frac{2d_{\Omega}^2}{\pi\delta}\exp\left(\|u\|_{\mathcal{U}^{1-\tilde{s}}}\right)\|f\|_{\mathcal{U}}, \\
	M_4(\|u_1\|_{\mathcal{H}^{1-\tilde{s}}},\|u_2\|_{\mathcal{U}^{1-\tilde{s}}}) = & 
	\frac{2d_{\Omega}^2}{\pi\delta}\exp\left( \|u_1\|_{\mathcal{U}^{1-\tilde{s}}} + \|u_2\|_{\mathcal{U}^{1-\tilde{s}}} \right)
	\|f\|_{\mathcal{U}}. 
\end{align*}
Here $d_{\Omega}$ denotes the diameter of the domain $\Omega$. 
With estimates (\ref{ConditionForDarcyFlow1}) at hand, we find out the parameters $s_{\text{I}}^2$ and $s_{\text{II}}^2$ 
in Lemma 3.7 
are as follows:
\begin{align}\label{example2est}
	\begin{split}
		s_{\text{I}}^2 = & \frac{100d_{\Omega}^8}{\pi^4\delta^4}e^{8R_u} + \frac{4d_{\Omega}^4}{\pi^2\delta^2}e^{2R_u}, \quad
		s_{\text{II}}^2 = \frac{400d_{\Omega}^8}{\pi^4\delta^4}e^{4R_u}.  
	\end{split}
\end{align}
Relying on the above formulas (\ref{example2est}), we immediately obtain the PAC-Bayesian bounds presented in Theorems \ref{PACBoundsUnoundedLossThmSubGaussian} and Corollary 2.10 
of the main text for both the data-independent prior and data-dependent prior cases. 

As a preliminary study, our main aim is to develop a general theoretical framework that can yield explicit bounds. For deriving sharper estimates, more detailed structural information of the PDEs is expected to be utilized. This suggests that different techniques for estimation may need to be developed for inverse problems associated with various PDEs.
Here, we provide two typical examples that satisfy our general formulation. It is worth mentioning that our general formulation can also be applied to study other problems, such as the inverse problems of fractional differential equations  \citep{Jia2017JDE,Jia2018IPI,Jin2015IP}, or the inverse medium scattering problems \citep{Bao2015IP,Jia2019IP,Jia2018JFA}. 

\subsection{Details of Constructing Learning Algorithm}\label{AppA5}
Here, let us provide more detailed discussions on constructing learning algorithms based on the PAC-Bayesian generalization bound derived in the main text. Restricted to finite-dimensional space, the following illustrations obviously hold true as illustrated in \cite{Rothfuss2021PMLR} for machine learning problems. For the inverse problems of PDEs, we need some auxiliary illustrations in infinite-dimensional space. In this part, we choose the parameter $\gamma = n\beta$, where $\beta$ is the parameter appeared in the generalized Bayes' formula given in Theorem 3.3 
of the main text. In the following, we denote $Z_m = Z_m(S, \mathbb{P}_{S}^{\theta})$, since $Z_m$ depends on the dataset and the prior measure. With this choice of the base posterior measure, the Kullback-Leibler (KL) divergence term of the base prior and posterior measures in PAC-Bayesian bounds (e.g., estimates in Corollary \ref{PACBoundsBoundedLoss2} and Corollary 2.10 
of the main text) can be further reduced as follows:
\begin{align}\label{AlgDLReducedA}
	\begin{split}
		D_{\text{KL}}(\mathbb{Q}(S_i,\mathbb{P}_{S_i}^{\theta}) || \mathbb{P}_{S_i}^{\theta}) = &
		\int_{\mathcal{U}}\left( \ln\frac{d\mathbb{Q}(S_i,\mathbb{P}_{S_i}^{\theta})}{d\mathbb{P}_{S_i}^{\theta}} \right)d\mathbb{Q}(S_i,\mathbb{P}_{S_i}^{\theta})  \\
		= & -\ln Z_{m_i}(S_i, \mathbb{P}_{S_i}^{\theta}) - \int_{\mathcal{U}} \frac{\beta}{m_i}\sum_{j=1}^{m_i}\Phi(u;z_{ij}) d\mathbb{Q}(S_i,\mathbb{P}_{S_i}^{\theta}) \\
		= & -\ln Z_{m_i}(S_i, \mathbb{P}_{S_i}^{\theta}) - \mathbb{E}_{u\sim\mathbb{Q}(S_i,\mathbb{P}_{S_i}^{\theta})}\bigg[\frac{\beta}{m_i}\sum_{j=1}^{m_i}\Phi(u;z_{ij})\bigg].
	\end{split}
\end{align}

Choosing the loss function $\ell(u;z)$ to be the potential function $\Phi(u;z)$ and using (\ref{AlgDLReducedA}), 
we immediately find 
\begin{align}\label{AlgDKSumA}
	\begin{split}
		\frac{1}{n\beta}\sum_{i=1}^{n}\mathbb{E}_{\theta\sim\mathscr{Q}}D_{\text{KL}}(\mathbb{Q}(S_i,\mathbb{P}_{S_i}^{\theta}) || \mathbb{P}_{S_i}^{\theta}) = & \,-\frac{1}{n\beta}\sum_{i=1}^{n}\mathbb{E}_{\theta\sim\mathscr{Q}}
		\ln Z_{m_i}(S_i,\mathbb{P}_{S_i}^{\theta})  \\
		&\, - \hat{\mathcal{L}}(\mathscr{Q},S_1,\ldots,S_n).
	\end{split}
\end{align}
Plugging the equality (\ref{AlgDKSumA}) into the PAC-Bayes bounds, e.g., the estimate given in Corollary 2.10 
of the main text, we obtain that
\begin{align}\label{AlgBoundA}
	\begin{split}
		\mathcal{L}(\mathscr{Q},\mathcal{T}) \leq & - \frac{1}{n\beta}\sum_{i=1}^{n}\mathbb{E}_{\theta\sim\mathscr{Q}}\ln 
		Z_{m_i}(S_i, \mathbb{P}_{S_i}^{\theta}) + \frac{\lambda + n\beta}{\lambda n \beta}D_{\text{KL}}(\mathscr{Q}||\mathscr{P}) \\
		& + \frac{1}{2\beta}\ln\Psi_{E} + \frac{\beta s_{\text{I}}^2}{\bar{m}} + \frac{\lambda s_{\text{II}}^2}{2n} + \frac{1}{\sqrt{n}}\ln\frac{1}{\delta}
	\end{split}
\end{align}
holds uniformly with probability $1-\delta$.
To minimize the right-hand side of (\ref{AlgBoundA}), we need to solve the following optimization problem
\begin{align}\label{AlgOptimalProblemA}
	\argmin_{\mathscr{Q}\in\mathcal{P}(\Theta)}\mathbb{E}_{\theta\sim\mathscr{Q}}\bigg[ 
	-\frac{\lambda}{\lambda + n\beta}\sum_{i=1}^{n}\ln Z_{m_i}(S_i, \mathbb{P}_{S_i}^{\theta}) 
	\bigg] + D_{\text{KL}}(\mathscr{Q}||\mathscr{P}).
\end{align} 
In addressing this optimization problem, we present Theorem 4.1 
in the main text, which provides an analytic formula for the optimal measure $\mathscr{Q}$. In the following remark, we offer a discussion on the utility of simplifying the problem.

\begin{remark}\label{TwoLevelRemarkA}
	Inspired by the work of \cite{Rothfuss2021PMLR} for finite-dimensional machine learning model, we choose the base posterior measure to be the posterior measure given by Bayes' formula. Relying on this, we obtain the hyper-posterior measure analytically. Generally, the bounds we derived in the present work (e.g., estimates in Corollaries \ref{PACBoundsBoundedLoss2} and 2.10 
    of the main text) yield an optimization problem as follow:
	\begin{align*}
		\argmin_{\mathscr{Q}\in\mathcal{P}(\Theta)} \,\hat{\mathcal{L}}(\mathscr{Q}, S_1,\ldots,S_n) \!+\! 
		\frac{\lambda + n\beta}{\lambda n \beta}D_{\text{KL}}(\mathscr{Q}||\mathscr{P}) \!+\! \frac{1}{n\beta}\sum_{i=1}^{n}\mathbb{E}_{\theta\sim\mathscr{Q}}
		D_{KL}(\mathbb{Q}(S_i,\mathbb{P}_{S_i}^{\theta})||\mathbb{P}_{S_i}^{\theta}),
	\end{align*}
	where the base posterior measure $\mathbb{Q}(S_i,\mathbb{P}_{S_i}^{\theta})$ is not necessarily provided by the Bayes' formula. 
	Solving an optimization problem like the above one turns into a difficult two-level optimization problem 
	\citep{Amit2018ICML,Pentina2014ICML} which can hardly be used for the inverse problems of PDEs due to the large computational complexity for solving PDEs. 
\end{remark}

With the formula (13) 
in the main text, we are ready to extract information about the parameter $\theta$ from the hyper-posterior measure $\mathscr{Q}$. There are three types of methods listed as follows:
\begin{itemize}
	\item Evaluate the maximum a posteriori estimator using gradient-based optimization algorithms \citep{Bottou2018SIAMReview,Kazufumi2015Book}; 
	\item Sampling from the hyper-posterior measure can be performed using Markov chain Monte Carlo (MCMC) or sequential Monte Carlo (SMC) algorithms. For the preconditioned Crank-Nicolson algorithm, we refer to \citep{Cotter2013SS,Dashti2017}, while implementations of SMC algorithms are discussed in \citep{Beskos2015SC,Lu2024arXiv}.
	\item Several approximate sampling algorithms are available, including variational inference methods. Notable examples include the mean-field variational inference approach \citep{Jia2021SISC}, Stein variational gradient descent \citep{Jin2010JCP,Liu2016NIPS}, and normalizing flow aided variational inference \citep{Premchandar2023NoticeAMS,Zhao2024FNF}.
\end{itemize}

Notice that the log-likelihood function of the hyper-poserior measure given in Theorem 4.1 
of the main text is defined as follows:
\begin{align}\label{logsumexp1A}
	\sum_{i=1}^{n}\ln Z_{m_i}(S_i,\mathbb{P}_{S_i}^{\theta}) = \sum_{i=1}^{n}\ln\int_{\mathcal{U}}\exp\bigg(
	-\frac{\beta}{m_i}\sum_{j=1}^{m_i}\Phi(u;z_{ij})
	\bigg) \mathbb{P}_{S_i}^{\theta}(du).
\end{align}

For the backward diffusion problem considered in Subsection \ref{AppA4}, it is possible to find the explicit formula of (\ref{logsumexp1A}). However, for general inverse problems of PDEs, such as the Darcy flow problem illustrated in Subsection \ref{AppA4}, it is generally not possible to calculate explicitly.
Generally, formula (\ref{logsumexp1A}) can be calculated by
\begin{align}\label{logsumexp2A}
	\begin{split}
		\sum_{i=1}^{n}\ln Z_{m_i}(S_i,\mathbb{P}_{S_i}^{\theta}) \approx \sum_{i=1}^{n}\ln\left[\frac{1}{L}\sum_{\ell=1}^{L}\exp\bigg(
		-\frac{\beta}{m_i}\sum_{j=1}^{m_i}\Phi(u_{\ell}^{i};z_{ij})
		\bigg)\right],
	\end{split}
\end{align}
where $\{u_{\ell}^{i}\}_{\ell=1}^{L}$ are samples from the base prior measure $\mathbb{P}_{S_i}^{\theta}$ for $i=1,\ldots,n$.
By applying a simple reduction, we obtain the following result:
\begin{align}\label{logsumexp3Asupp}
	\begin{split}
		\sum_{i=1}^{n}\ln Z_{m_i}(S_i,\mathbb{P}_{S_i}^{\theta}) \approx \sum_{i=1}^{n}\ln\left[\sum_{\ell=1}^{L}\exp\bigg(
		-\frac{\beta}{m_i}\sum_{j=1}^{m_i}\Phi(u_{\ell}^{i};z_{ij})
		\bigg)\right] - n\ln L.
	\end{split}
\end{align}
To evaluate the right-hand side of (\ref{logsumexp3Asupp}), we need to calculate $nL$ forward PDEs, which is a time-consuming procedure (it is computationally more feasible compared with the two-level optimization problem illustrated in Remark \ref{TwoLevelRemarkA}). Hence, we focus on evaluating the maximum a posteriori estimate in the present work.
For particular linear problems, e.g., backward diffusion, we can compute $\{Z_{m_i}\}_{i=1}^{n}$ much more efficiently, which allows us to utilize approximate sampling or Markov chain Monte Carlo type methods.

{\color{black} 
In Section 4 
of the main text, we asserted that the maximum a posteriori (MAP) estimator introduced in \cite{Agapiou2018IP} yields the optimization problem:
\begin{align}\label{MAPoptim4Appendix}
	\argmin_{\theta} - \frac{\lambda}{\lambda + n\beta}\sum_{i=1}^{n}\ln\left[\sum_{\ell=1}^{L}\exp\bigg(
	-\frac{\beta}{m_i}\sum_{j=1}^{m_i}\Phi(u_{\ell}^{i};z_{ij})
	\bigg)\right] + \frac{1}{2}\|\theta_1\|_{E_{\Theta_1}}^2 - \ln p_2(\theta_2).
\end{align}
We now provide the technical justification for this claim.

When the parameter $\theta$ belongs to $\Theta$ (a separable Hilbert space), the standard definition of the MAP estimator as the maximizer of a probability density function is unavailable, since Lebesgue measure does not exist in infinite-dimensional spaces. To address this, several generalized definitions of MAP estimators have been proposed \citep{Agapiou2018IP,Helin2015IP,Dashti2013IP}; these definitions coincide in finite-dimensional settings. Working within the framework of \cite{Agapiou2018IP}, we establish the following result.

\begin{theorem}\label{MAPtheorem}
Assume all of the Assumptions in Theorem 4.1 
hold true and, in addition, we assume 
$\|D_{\theta}\tilde{\Phi}(u;S_i,\theta)\|_{\Theta^{*}} \leq M_5(r,\theta_{\text{min}},\theta_{\text{max}}) \|u\|_{\mathcal{U}^{1-\tilde{s}}} + M_6(r,\theta_{\text{min}},\theta_{\text{max}})$, 
where $\theta_1\in B_{\Theta_1}(r):=\{\theta_1\in\Theta_1 : \|\theta_1\|_{\Theta_1} < r\}$, $M_5(r,\theta_{\text{min}},\theta_{\text{max}})$ and  $M_6(r,\theta_{\text{min}},\theta_{\text{max}})$ are constants depending on $r, \theta_{\text{min}}, \theta_{\text{max}}$, 
and $\tilde{\Phi}(u;S_i,\theta)$ is defined through $\tilde{\Phi}(u;S_i,\theta) = -\ln\frac{d\mathbb{P}_{S_i}^{\theta}}{d\mathbb{P}_0}(u)$
with $\mathbb{P}_0 := \mathcal{N}(0,\mathcal{C}_0)$ being a Gaussian measure. 
For the first component of the hyper-prior measure $\mathscr{P}_1$, we assume that it is a zero mean Gaussian measure defined on $\Theta_1$.
Denote $(E_{\Theta_1}, \|\cdot\|_{E_{\Theta_1}})$ as the Cameron-Martin space of $\mathscr{P}_1$, and $p_2(\cdot)$ as the density function of the second component of the hyper-prior measure $\mathscr{P}_2$. 
Then we have 
\begin{align}\label{MAPsupplement}
	\lim_{\delta\rightarrow 0}\frac{\mathscr{Q}(B_{\Theta}(\theta^{1}, \delta))}{\mathscr{Q}(B_{\Theta}(\theta^{2},\delta))} = 
	\exp(I(\theta^{2}) - I(\theta^{1})),
\end{align}
where 
$B_{\Theta}(\theta^i,\delta):=\{\theta\in\Theta : \|\theta - \theta^i\|_{\Theta} < \delta\}$ ($i=1,2$) and 
\begin{align*}
	I(\theta):=\left\{\begin{aligned}
		& \!-\sum_{i=1}^{n}\!\frac{\lambda\ln Z_{m_i}(S_i, \mathbb{P}_{S_i}^{\theta})}{\lambda + n\beta}\! + \! \frac{1}{2}\|\theta_1\|_{E_{\Theta_1}}^2 
		\!\!\! - \!\ln p_2(\theta_2) \quad\! \text{if }\theta_1\in E_{\Theta_1}, \theta_2\in \!(\theta_{\text{min}}, \theta_{\text{max}})^{N_c} \text{, and} \\
		& +\infty \qquad\qquad \qquad \qquad \qquad \qquad \quad\quad\quad\quad  \text{else.}
	\end{aligned}\right.
\end{align*}
\end{theorem}

\begin{remark}
We analyze formula (\ref{MAPsupplement}) in detail. When $\theta^1$ is the MAP estimator, the left-hand side of (\ref{MAPsupplement}) should exceed $1$. Similarly, the right-hand side must also exceed $1$, implying that $I(\theta^2) \geq I(\theta^1)$. This analysis confirms the natural interpretation that identifying the MAP estimator corresponds to solving the optimization problem (\ref{MAPoptim4Appendix}).
\end{remark}

\begin{proof}
Let us denotes
\begin{align}\label{mapthm1}
	\begin{split}
		\Phi^h(\theta) := & \frac{-\lambda}{\lambda + n\beta}\sum_{i=1}^{n}\ln Z_{m_i}(S_i,\mathbb{P}_{S_i}^{\theta})  \\
		= & \frac{-\lambda}{\lambda + n\beta}\sum_{i=1}^{n}\ln\int_{\mathcal{U}}\exp\bigg( 
		-\frac{\beta}{m_i}\sum_{j=1}^{m_i}\Phi(u;z_{ij}) - \tilde{\Phi}(u;S_i,\theta)
		\bigg) \mathbb{P}_0(du),
	\end{split}
\end{align}
where 
\begin{align*}
	\Phi(u;z_{ij}) := &   \frac{1}{2}\|\Gamma^{-1/2}\mathcal{L}_{x_{ij}}\mathcal{G}(u)\|_{\mathcal{H}}^2 - 
	\langle \Gamma^{-1/2}y_{ij}, \Gamma^{-1/2}\mathcal{L}_{x_{ij}}\mathcal{G}(u) \rangle_{\mathcal{H}}, \\
	\tilde{\Phi}(u;S_i,\theta) := & -\ln\frac{d\mathbb{P}_{S_i}^{\theta}}{d\mathbb{P}_0}(u). 
\end{align*}
Similar to the proof of Theorem 4.1 
of the main text, we will know that $\Phi^{h}$ is locally bounded from above and below with respect to the parameter $\theta$. So the main point is to verify the local Lipschitz continuity of $\Phi^h$ with respect to $\theta$. Obviously, the Fr\'{e}chet derivative of $\Phi^h$ is 
\begin{align*}
	D_{\theta}\Phi^h = \sum_{i=1}^{n}\frac{\int_{\mathcal{U}}
		\exp\left( -\frac{\beta}{m_i}\sum_{j=1}^{m_i}\Phi(u;z_{ij}) - \tilde{\Phi}(u;S_i,\theta) \right) D_{\theta}\tilde{\Phi}(u;S_i,\theta)\mathbb{P}_{0}(du)
	}{\left( 1+\frac{n\beta}{\lambda} \right)
		\int_{\mathcal{U}}\exp\left( -\frac{\beta}{m_i}\sum_{j=1}^{m_i}\Phi(u;z_{ij}) \right)\mathbb{P}_{S_i}^{\theta}(du)
	}.
\end{align*}
Recall the following assumption
\begin{align*}
	\|D_{\theta}\tilde{\Phi}(u;S_i,\theta)\|_{\Theta^{*}} \leq M_5(r,\theta_{\text{min}},\theta_{\text{max}}) \|u\|_{\mathcal{U}^{1-\tilde{s}}} + M_6(r,\theta_{\text{min}},\theta_{\text{max}}), 
\end{align*}
where $\theta_1\in B_{\Theta_1}(\delta):=\{\theta_1\in\Theta_1 : \|\theta_1\|_{\Theta_1} < r\}$, and $M_5(r,\theta_{\text{min}},\theta_{\text{max}}), M_6(r,\theta_{\text{min}},\theta_{\text{max}})$ are constants depend on $r$, $\theta_{\text{min}}$, and $\theta_{\text{max}}$. 
Then we have 
\begin{align}
		\|D_{\theta}& \Phi^h\|_{\Theta^*} \leq \sum_{i=1}^{n}\frac{\int_{\mathcal{U}}
			\exp\left( -\frac{\beta}{m_i}\sum_{j=1}^{m_i}\Phi(u;z_{ij}) - \tilde{\Phi}(u;S_i,\theta) \right) \|D_{\theta}\tilde{\Phi}(u;S_i,\theta)\|_{\Theta^*}\mathbb{P}_{0}(du)
		}{\left( 1+\frac{n\beta}{\lambda} \right)
			\int_{\mathcal{U}}\exp\left( -\frac{\beta}{m_i}\sum_{j=1}^{m_i}\Phi(u;z_{ij}) \right)\mathbb{P}_{S_i}^{\theta}(du)
		} \nonumber \\
		\leq & \sum_{i=1}^{n}\frac{ \int_{\mathcal{U}}
			\exp\left( -\frac{\beta}{m_i}\sum_{j=1}^{m_i}\Phi(u;z_{ij}) - \tilde{\Phi}(u;S_i,\theta) \right) M_5(r,\theta_{\text{min}},\theta_{\text{max}})\|u\|_{\mathcal{U}^{1-\tilde{s}}} \mathbb{P}_{0}(du)
		}{\left( 1+\frac{n\beta}{\lambda} \right)
			\int_{\mathcal{U}}\exp\left( -\frac{\beta}{m_i}\sum_{j=1}^{m_i}\Phi(u;z_{ij}) \right)\mathbb{P}_{S_i}^{\theta}(du)
		}  \label{mapthm2} \\ 
		& + \sum_{i=1}^{n}\frac{ \int_{\mathcal{U}}
			\exp\left( -\frac{\beta}{m_i}\sum_{j=1}^{m_i}\Phi(u;z_{ij}) - \tilde{\Phi}(u;S_i,\theta) \right)  M_6(r,\theta_{\text{min}},\theta_{\text{max}})\mathbb{P}_{0}(du)
		}{\left( 1+\frac{n\beta}{\lambda} \right)
			\int_{\mathcal{U}}\exp\left( -\frac{\beta}{m_i}\sum_{j=1}^{m_i}\Phi(u;z_{ij}) \right)\mathbb{P}_{S_i}^{\theta}(du).
		} \nonumber
\end{align}
For the numerator term on the right-hand side of (\ref{mapthm2}), it can be bounded from above by employing similar techniques 
as for deriving estimate (\ref{AlgBoundZpProof3}). The only difference is the extra term $\|u\|_{\mathcal{U}^{1-\tilde{s}}}$ that 
can be bounded by $\|u\|_{\mathcal{U}^{1-\tilde{s}}} \leq C\exp(\epsilon\|u\|_{\mathcal{U}^{1-\tilde{s}}})$ with arbitrarily small 
constant $\epsilon>0$. Hence, the upper bound is obtained by taking $\delta_2 + \epsilon$ 
($\delta_2$ is the same as in (\ref{AlgBoundZpProof3})) small enough. 
For the denominator term on the right-hand side of (\ref{mapthm2}), it is exactly the same as that of (\ref{AlgBoundZpProof5}) with 
a positive lower bound independent of $\theta$. Now we can conclude that 
\begin{align*}
	\|D_{\theta}\Phi^h\|_{\Theta^*} \leq C(r) < +\infty, 
\end{align*}
where $C(r)$ is some constant depends on $r$. This obviously indicates $\Phi^h$ is locally Lipschitz continuous with respect to 
the parameter $\theta$. Now, following the illustrations given in Section 3 of \cite{Dashti2013IP}, we will obtain the desired
results. Here, we have an extra finite-dimensional component $\theta_2$ with negative log-likelihood function $-\ln p_2(\theta_2)$. In comparison to the potentially infinite-dimensional component $\theta_1$, this finite-dimensional component $\theta_2$ can be more straightforwardly managed.  
\end{proof}
}

{\color{black}At the end of this section, we provide some clarifying remarks on the reparametrization trick employed in the computation of $u^{i}_{\ell}$ as defined in formula (\ref{logsumexp3Asupp}). As detailed in Subsection 3.4 of \cite{Thanh2016IPI}, the finite-dimensional approximation for sampling from a Gaussian measure takes the form
\begin{align*} 
	u_{\ell}^i = f_{m}(S_i;\theta_1) + V\Lambda a,
\end{align*}  
where $f_{m}(S;\theta_1)$ denotes the mean function, $V$ contains the eigenvectors of the covariance operator, $\Lambda$ is a diagonal matrix with eigenvalues as entries, and $a\sim\mathcal{N}(0,\text{Id})$ is a random vector. Through this parametrization, we can compute gradients with respect to the parameters in the mean vector and the eigenvalues. Similar reparametrization tricks are widely employed in machine learning; see, for example, \cite{Yue2019NIPS,Yue2024TPAMI}.

For non-Gaussian general prior measures, the simple reparametrization trick described above is inapplicable. Investigating non-Gaussian prior settings lies beyond the scope of this work, but we here outline potential directions. In a recent study \cite{Papamakarios2021JMLE}, the authors provided a comprehensive review of normalizing flows, which constitute a broad class of generative models. Briefly, a normalizing flow is a carefully designed neural network $F_{\theta}$ with trainable parameters $\theta$. For a random vector $a\sim\mathcal{N}(0,\text{Id})$, the transformed variable $F_{\theta}(a)$ follows a complex probability distribution $\mathbb{P}$. Evidently, $\mathbb{P}$ can serve as our prior. Generating a new random variable then involves sampling a standard Gaussian random vector $a$ and applying the transformation $F_{\theta}(a)$. The network parameters can be optimized via gradient descent algorithms.  

One obstacle to applying normalizing flows is that they are typically defined on discrete finite-dimensional spaces, which differs from the neural operator framework. Thus, classical normalizing flow models are incompatible with the current infinite-dimensional setting. However, a recent work \cite{Zhao2024FNF} established general conditions for the equivalence of the probability measures defined on infinite-dimensional space. Leveraging these conditions, infinite-dimensional normalizing flow models have been constructed that accommodate functions of arbitrary discrete dimension. How to integrate the methods developed in \cite{Zhao2024FNF} with the present study is an interesting future research direction.}

\subsection{Numerical Details and More Results}\label{AppA6}
We establish our theories within the framework of separable infinite-dimensional function spaces, which will be useful for constructing discretization-invariant algorithms. With this goal in mind, it is desirable that the employed neural network exhibits a discretization-invariant property, meaning that the network is capable of learning a nonlinear operator rather than merely a nonlinear function. Currently, there are several works focused on constructing neural networks for nonlinear operator learning \citep{anandkumar2020ICLR,Bhattacharya2021SMAI-JCM,Li2020ICLR,Nelsen2021SISC}. In our numerical studies, we employ the Fourier neural operator (FNO), as proposed in \cite{Li2020ICLR}, which consists of three Fourier layers. We choose the FNO due to its simplicity in implementation and its computational efficiency. For readers interested in more detailed theoretical studies on FNO, we refer to \cite{Kovachki2021JMLR}.

In our setting, the measurement data consist of discrete variables that are statistically equally spaced, as discussed in Subsection 2.1. 
However, these data cannot be directly utilized as inputs for the FNO framework, which is designed to operate on functions—rather than discrete vectors—as both inputs and outputs. To bridge this gap, we apply the adjoint of the measurement operator to the noisy data, thereby lifting the discrete observations into a function space. The resulting function is then slightly smoothed by applying the elliptic regularization operator $(\text{Id} - \alpha\Delta)^{-1}$, with $\alpha = 0.05$ in our numerical experiments. This smoothed function subsequently serves as the input to the standard FNO architecture. We now proceed to describe the details more precisely.

As in the main text, let $\{\phi_k\}_{k=1}^{N}$ be the basis functions of the finite element discretization. We define the finite-dimensional space $V_N = \text{span}\{\phi_1,\ldots,\phi_N\}$, where approximate functions $f = \sum_{k=1}^Nf_k\phi_k \in V_N\subset L^2(\Omega)$. Here, we use boldface letters to denote vectors and matrices.
Following \cite{Tan2013SISC}, we define the mass matrix $\bm{M} = (M_{k\ell})_{k,\ell=1,\ldots, N}$ by
\begin{align}
	M_{k\ell} = \int_{\Omega}\phi_k(\bm{x})\phi_{\ell}(\bm{x})d\bm{x}.
\end{align}
We define the weighted finite-dimensional space $\mathbb{R}_{\bm{M}}^N$ as the Euclidean space $\mathbb{R}^N$ equipped with a weighted inner product $(\bm{m}_1, \bm{m}_2)_{\bm{M}} := \bm{m}_1^T \bm{M}\bm{m}_2$, where $\bm{m}_1,\bm{m}_2 \in \mathbb{R}^N$.
For a function $u \in V_N$, we have $u = \sum_{k=1}^N u_k\phi_k$ and denote $\bm{u} := (u_1,\ldots,u_N)^T$.
Obviously, we have $(m_1, m_2)_{L^2(\Omega)} \approx (\bm{m}_1, \bm{m}_2)_{\bm{M}}$ for $m_1, m_2 \in V_N \subset L^2(\Omega)$.
We denote $\bm{S}$ as the discretized measurement operator that maps from $\mathbb{R}_{\bm{M}}^N$ to $\mathbb{R}^{N_d}$.
Here, $N_d$ is the number of measured points. We denote $\bm{S}^{\natural}$ as the adjoint of $\bm{S}$, which is an operator that maps from $\mathbb{R}^{N_d}$ to $\mathbb{R}_{\bm{M}}^N$.
By the definition of the adjoint operator, it follows that $(\bm{S}\bm{u}, \bm{y}) = (\bm{u}, \bm{S}^{\natural}\bm{y})_{\bm{M}}$ for all $\bm{u} \in \mathbb{R}_{\bm{M}}^N$ and $\bm{y} \in \mathbb{R}^{N_d}$, which implies that $\bm{u}^T\bm{S}^T\bm{y} = \bm{u}^T\bm{M}\bm{S}^{\natural}\bm{y}$. That is to say, we have $\bm{S}^{\natural} = \bm{M}^{-1}\bm{S}^T$.

{\color{black}
With the above setup in place, it becomes clear that the transformation of the discrete data vector $\bm{y} \in \mathbb{R}^{N_d}$ into a corresponding discretized function can be explicitly computed via the expression $\bm{M}^{-1}\bm{S}^T\bm{y}$. This discretized function is then mildly smoothed by applying an elliptic regularization step, resulting in the transformed input
$$
\bm{f} := (\bm{M} + 0.05\bm{B})^{-1}\bm{M}^{-1}\bm{S}^T\bm{y},
$$
where the matrix $\bm{B} = (B_{k\ell})_{k,\ell=1}^N$ is defined by the bilinear form
$$
B_{k\ell} = \int_{\Omega} \nabla \varphi_k(\bm{x}) \cdot \nabla \varphi_\ell(\bm{x}) \, d\bm{x}, \quad k, \ell = 1, \ldots, N.
$$
The resulting function $\bm{f}$ is then used as the input to the FNO architecture. Thanks to the intrinsic structure of Fourier Neural Operators—which operate directly on function spaces—the performance of the model is not expected to be significantly affected by the choice of discretization dimension.

To compute the gradient $\nabla_{\theta_k} L(\theta_k)$ within Algorithm 1 
of the main text, we employed PyTorch's automatic differentiation framework. However, since the loss functional $L(\theta_k)$ incorporates partial differential equations (PDEs) through the potential function $\Phi$, which are not natively differentiable in PyTorch, we implemented custom differentiation rules via extensions of ``torch.autograd.Function''. These extensions were derived using the adjoint method, tailored specifically to the backward diffusion and Darcy flow problems under consideration. For a detailed exposition of the adjoint method, we refer the reader to \cite{Ghattas2021ActaNum}.

It is important to note that all finite element computations are carried out using the open-source software FEniCS (version 2019.1.0) \citep{Logg2012Book}. The associated neural network architectures are implemented using PyTorch (version 2.5.1+cu121). The FNO is implemented with minor adaptations based on the publicly available code accompanying the original work \citep{Li2020ICLR}. All stiffness and mass matrices generated by FEniCS are exported as NumPy arrays and subsequently converted into PyTorch tensors. These are then incorporated into the neural network framework to enable the differentiable formulation of both the forward and adjoint PDE solvers within the learning pipeline. As stated in the main text, all programs ran on a system with an Intel(R) Xeon(R) Platinum 8180 CPU, 48 GB NVIDIA RTX A6000 GPU, and Ubuntu 22.04.5 LTS OS. 
}

\subsubsection*{Numerical results for the backward diffusion problem}
{\color{black}
The backward diffusion problem is a well-studied inverse problem in the field of PDEs \citep{Stuart2010AN}. We define the Hilbert space $\mathcal{H}$ to be $(L^{2}(\Omega), \langle\cdot,\cdot\rangle, \|\cdot\|)$, and the operator $A := -\Delta$ has the domain $\mathcal{D}(A) = H^2(\Omega)\cap H_0^1(\Omega)$. Here $H^2(\Omega)$ and $H_0^1(\Omega)$ denote the standard Sobolev space and Sobolev space with zero boundary trace functions. The forward diffusion equation is represented by the ordinary differential equation in $\mathcal{H}$:
\begin{align*}
	\frac{dv}{dt} + Av = 0, \qquad	v(0) = u.
\end{align*}
For the inverse problems, the dataset consists of the solution $v(T)$ of the diffusion equation at time $T>0$, and the parameter of interest is the initial condition $u$ of the equation.

In our study of the backward diffusion problem on $\Omega=(0,1)$ with $T=0.01$, we mitigate the inverse crime \citep{Kaipio2004Book} by using a 600-point mesh for dataset generation and a 70-point mesh to learn the base prior's mean function. For base prior learning, we set $L=10$ samples and $H=20$ mini-batches in the formula (16) 
of the main text.

To explore more possibilities, there are some different settings for the backward diffusion problem compared with the Darcy flow problem. Here, we also take $ m_i = m $ for each $i = 1, \ldots, n $, and set $ \beta = m $, $ \gamma = nm $, and $ \lambda = n.$ We choose $\mathcal{U} = \mathcal{H} = L^2(\Omega)$ for $\Omega=(0,1)$, and consider Gaussian noise $\eta\sim\mathcal{N}(0,\Gamma)$ with $\Gamma=\tau^2 \text{Id}$ ($\tau>0$). To define the base prior, we introduce the operator $\mathcal{C}_0:=A_h^{-2}$, where $A_h:=\text{Id}-0.01\Delta$ and $\Delta$ has domain $\mathcal{D}(\Delta):=\{u\in H^2(\Omega):\frac{\partial u}{\partial\bm{n}}=0\}$, with $\bm{n}$ as the outward normal vector. Recall Assumption 2.7 
stated in the main text, we take $N_c = 1$ and restate the assumption below for reader's convenience. 

\begin{assumption}
	Consider the parameter space $\Theta = \Theta_1 \times \Theta_2$, with $\Theta_1$ a separable Hilbert space, and $\Theta_2 = \mathbb{R}$. Assume the hyper-prior $\mathscr{P} = \mathscr{P}_1 \otimes \mathscr{P}_2$, where $\mathscr{P}_1$ is a probability measure on $\Theta_1$, and $\mathscr{P}_2$ is a probability measure on $\mathbb{R}$ with compact support $[\theta_{\text{min}}, \theta_{\text{max}}]$. Here, the parameters $\theta_{\text{min}},\theta_{\text{max}}\in\mathbb{R}$ ($\theta_{\text{min}} < \theta_{\text{max}}$) are fixed real numbers. 
\end{assumption}

We define the mapping  
$f(S; \theta) := \big( f_m(S; \theta_1), \theta_2 \big) \in \mathcal{V} = \mathcal{U} \times \mathbb{R},$  
where \( f_m(S; \theta_1) \) represents the data-dependent mean. The learned prior measure \(\mathbb{P}_S^\theta\) is then given by the Gaussian measure  
$\mathbb{P}_S^\theta := \mathcal{N}\big( f_m(S; \theta_1), \mathcal{C}_0(\theta_2) \big),$  
with covariance operator  
\begin{align}\label{SuppBackwardDiffusionPriorCov}
	\mathcal{C}_0(\theta_2) := \left( e^{\theta_2} \text{Id} - 0.01 \Delta \right)^{-2}. 
\end{align}  
This construction differs from the base prior measure used for the Darcy flow problem in the main text. For notational simplicity, we retain the same symbols, as the distinction will be clear from context.  

\begin{remark}
We define the covariance operator with learned parameter $\theta_2\in\mathbb{R}$ by (\ref{SuppBackwardDiffusionPriorCov}) motivated by considerations of Gaussian measure equivalence. A related framework was employed in \cite{Dunlop2017StatComput} for constructing hierarchical Bayesian models in infinite-dimensional spaces. By applying Theorem 2.9 of \cite{Prato2006IDAnalysis}, we can easily verify the equivalence of measures for distinct values of \(\theta_2\).  
\end{remark}

As the Darcy flow case, for the function $f_m(S;\theta_1)$ in the base prior, we consider two cases:   
\begin{itemize}
	\item If $f_m$ is data-independent, we use $f_m(\theta_1) = \sum_{k=1}^{N}\theta_{1k}\phi_k$ with $\{\phi_k\}_{k=1}^{N}$ as the basis of the finite approximate $V_N$.
	\item If $f_m$ depends on the dataset $S$, we employ a modified neural operator with $\theta_1$ as its parameters and $S$ as input.
\end{itemize} 
In our numerical experiments, the hyper-prior \(\mathscr{P}\) for the data-independent mean function of the base prior is defined as \(\mathcal{N}(0, A_p^{-4})\), where \(A_p := -0.005\Delta\) and \(\Delta\) is the Laplacian operator with domain $\mathcal{D}(\Delta) = \{ u \in H^2(\Omega) : u = 0 \text{ on } \partial\Omega \}.$  
For the data-dependent mean function \(f_m(S; \theta_1)\) of the base prior, specifying a hyper-prior is nontrivial due to the inherent complexity of the FNO's parameters \(\theta_1\). To address this, we exploit the a priori regularity of \(f_m(S; \theta_1)\) by randomly selecting \(n_S = 4\) datasets \(\{S_i\}_{i=1}^{n_S}\) and defining the auxiliary function $g(\theta_1)$ as the sum of $f_m(S_i;\theta_1)$ over these datasets. The hyper-prior \(\mathscr{P}\) for \(f_m(S; \theta_1)\) is then constructed as the pushforward measure \(g_{\#}\mathcal{N}(0, A_p^{-4})\).

Our numerical experiments consider three types of base prior measures:
\begin{itemize}
	\item Unlearned prior: $\mathbb{P} := \mathcal{N}(0,\mathcal{C}_0)$;
	\item Learned data-independent prior: $\mathbb{P}^{\theta} := \mathcal{N}(f_m(\theta_1),\mathcal{C}_0(\theta_2))$;
	\item Learned data-dependent prior: $\mathbb{P}_{S}^{\theta} := \mathcal{N}(f_m(S;\theta_1),\mathcal{C}_0(\theta_2))$.
\end{itemize}

\textbf{Simple and complex environments.} 
We evaluate our method in both simple and complex environments. In the simple environment, the measure \(\mathscr{E}\) generates ground-truth parameters via the following random function:
\begin{align}\label{RandFunBackDiff1}
	u(x) = \left(5\beta x + a \sin\left(2(5 x - b)\right) + c\right)e^{-20\left(x-\frac{1}{2}\right)^2},
\end{align}
where \(\beta \sim \mathcal{N}(0.5, 0.5)\), \(a \sim U(5, 15)\), \(b \sim U(0, 0.1)\), and \(c \sim \mathcal{N}(4, 1)\). Here, \(U(x_1, x_2)\) denotes the uniform distribution with lower and upper bounds \(x_1\) and \(x_2\), respectively. Additionally, let \(\text{Bern}(0.5)\) denotes the Bernoulli distribution with parameter \(0.5\). For the complex environment, we employ the following random function:
\begin{align}\label{RandFunBackDiff2}
	u(x) = (2\alpha - 1)\left(5\beta x + a \sin\left(2(5 x - b)\right) + c\right)e^{-20\left(x-\frac{1}{2}\right)^2},
\end{align}
where \(\alpha \sim \text{Bern}(0.5)\), and the remaining parameters are identical to those in the simple environment setting. For both settings, we generate 2000 random functions for training and 50 random functions for testing. It is important to emphasize that the learning algorithm does not directly access the random functions themselves. Instead, it extracts information from noisy datasets. These noisy datasets are generated by solving the forward diffusion equations using the random functions, and additive Gaussian noise is introduced at the measurement points. For this example, the number of the measurement points is $20$ ($m_i = m = 20$). Specifically, the noise level is set to \(0.1\) (the relative errors under $L^2$-norm approximately equal to $8\%$), meaning that \(\eta \sim \mathcal{N}(0, 0.1^2\text{Id})\), where \(\text{Id}\) denotes the identity matrix. 

Before presenting quantitative comparisons, we provide a visual comparison in Figure \ref{FigEx1_1} for the simple environment. In panel (a) of Figure \ref{FigEx1_1}, we display five random functions generated from formula (\ref{RandFunBackDiff1}), illustrating the model parameters. In panels (b) and (c) of Figure \ref{FigEx1_1}, we compare the true functions (solid black lines) with the estimated mean functions of the data-independent prior \(\mathbb{P}^{\theta}\) (dashed blue lines) and the data-dependent prior \(\mathbb{P}_S^{\theta}\) (dash-dotted red lines). The estimated means of both priors capture the main characteristics of the true function, though the data-dependent prior demonstrates closer visual similarity to the true function. This observation holds for both background source functions presented in panels (b) and (c).

\begin{figure}[ht!]
	\centering
	\includegraphics[width=1.0\textwidth]{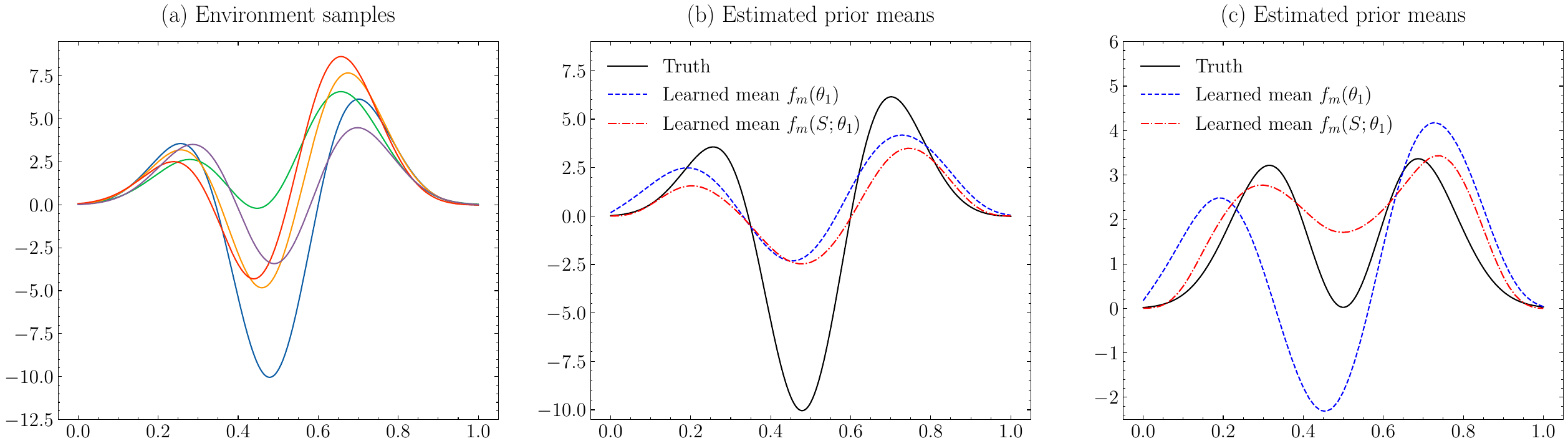}\\
	\vspace*{0.0em}
	\caption{Random functions generated according to formula (\ref{RandFunBackDiff1}).
		(a) Ground truth functions used for constructing the datasets.
		(b)(c) Learned mean functions: the data-independent assumption $f_m(\theta_1)$ (dashed blue line) and the data-dependent assumption $f_m(S;\theta_1)$ (dash-dotted red line).
	}\label{FigEx1_1}
\end{figure}

For the complex environment, we provide a visual comparison in Figure \ref{FigEx1_2}. The parameter \(\alpha\) in formula (\ref{RandFunBackDiff2}) introduces two distinct branches in the model parameters, as illustrated in panels (a) and (d) of Figure \ref{FigEx1_2}, each displaying five samples. In panels (b), (c), (e), and (f) of Figure \ref{FigEx1_2}, we compare the true functions (solid black lines) with the estimated mean functions of the data-independent prior \(\mathbb{P}^{\theta}\) (dashed blue lines) and the data-dependent prior \(\mathbb{P}_{S}^{\theta}\) (dash-dotted red lines). The estimated mean of \(\mathbb{P}^{\theta}\) fails to capture the main features of the branches, whereas the estimated mean of \(\mathbb{P}_{S}^{\theta}\) effectively reflects the characteristics of the true source function.

\begin{figure}[ht!]
	\centering
	\includegraphics[width=1.0\textwidth]{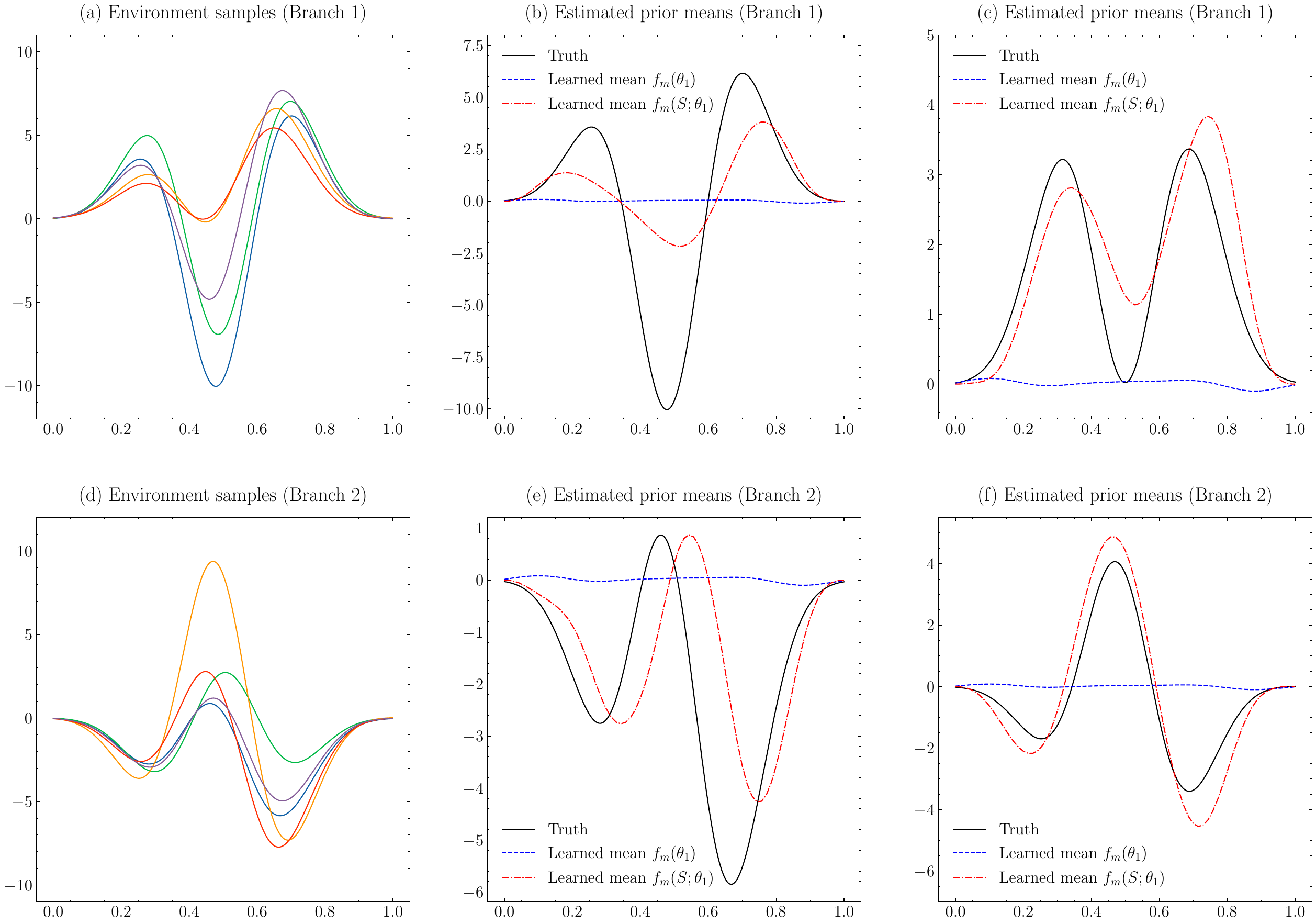}\\
	\vspace*{0.0em}
	\caption{Constructed datasets from equation (\ref{RandFunBackDiff2}) yield the following random and estimated mean functions: (a) shows one set of random functions from Branch 1; (b) and (c) display the ground true functions of Branch 1 alongside the estimated mean functions from data-independent and data-dependent priors, respectively; (d) presents another set of random functions from Branch 2; and (e) and (f) compare the estimated mean functions to the ground true functions of Branch 2.
	}\label{FigEx1_2}
\end{figure}

\begin{table}[ht!]
	\caption{For the 50 test datasets, we list the average relative errors of the MAP estimates from base posteriors under priors $\mathbb{P}$, $\mathbb{P}^{\theta}$, and $\mathbb{P}_{S}^{\theta}$, respectively. Here, ItN denotes the iteration count of the inexact matrix-free Newton-conjugate gradient method. }
	\begin{center}
		\begingroup
		\setlength{\tabcolsep}{1.0pt}
		\renewcommand{\arraystretch}{1.0}
		\begin{spacing}{1.1}
			\begin{tabular}{c|ccc|ccc}
				\Xhline{1.1pt}
				& \multicolumn{3}{c|}{Simple Environment} & \multicolumn{3}{c}{Complex Environment}  \\
				\hline
				$\quad$ItN $\quad$& $\qquad$$\mathbb{P}$$\qquad$ & $\qquad$$\mathbb{P}^{\theta}$$\qquad$ & $\qquad$$\mathbb{P}_S^{\theta}$$\qquad$ &  $\qquad$$\mathbb{P}$$\quad$ & $\qquad$$\mathbb{P}^{\theta}$$\qquad$ & $\qquad$$\mathbb{P}_S^{\theta}$$\qquad$ \\
				\hline
				$1$  & .6717 & .2148 & \textbf{.1623} & .6737 & .6758 & \textbf{.1966} \\
				$2$  & .5733 & .2010 & \textbf{.1620} & .5792 & .5889 & \textbf{.1951}	\\
				$3$  & .5711 & .2001 & \textbf{.1615} & .5773 & .5885 & \textbf{.1944} 	\\ 
				$4$  & .5738 & .1989 & \textbf{.1612} & .5816 & .5947 & \textbf{.1939}	\\               
				$5$  & .5692 & .1988 & \textbf{.1611} & .5776 & .5900 & \textbf{.1939}	\\
				$6$  & .5686 & .1988 & \textbf{.1612} & .5773 & .5899 & \textbf{.1939}	\\
				$7$  & .5685 & .1988 & \textbf{.1611} & .5770 & .5900 & \textbf{.1939}	\\
				\Xhline{1.1pt}
			\end{tabular}
		\end{spacing}
		\endgroup
	\end{center}\label{TableExamp1_1}
\end{table}

Following the visual comparison, we now present quantitative comparisons in Table \ref{TableExamp1_1}. For the backward diffusion problem - a linear inverse problem - we assume Gaussian priors and noise. Consequently, the posterior mean equals the maximum a posteriori (MAP) estimate. Similarly, for the Darcy flow problem discussed in the main text, we use the matrix-free Newton conjugate gradient algorithm \citep{Ghattas2021ActaNum} to compute the MAP estimate. The terms ``ItN'' (iteration number) and ``relative error'' maintain their definitions as provided in the main text.

In the left portion of Table \ref{TableExamp1_1}, we present the average relative errors obtained using the unlearned, learned data-independent, and learned data-dependent base Bayesian models under the simple environment setting. In this simple environment, the learned data-independent model provides more accurate estimates than the unlearned model. The learned data-dependent model's posterior mean estimate exhibits the smallest average relative error, indicating that the proposed method is superior to the data-independent model even in a simple environment setting. In the right portion of Table \ref{TableExamp1_1}, we present the average relative errors obtained using the unlearned, learned data-independent, and learned data-dependent base Bayesian models under the complex environment setting. Similar to the simple environment case, the learned data-dependent model provides the most accurate inversion estimates. 

\begin{figure}[ht!]
	\centering
	\includegraphics[width=1\textwidth]{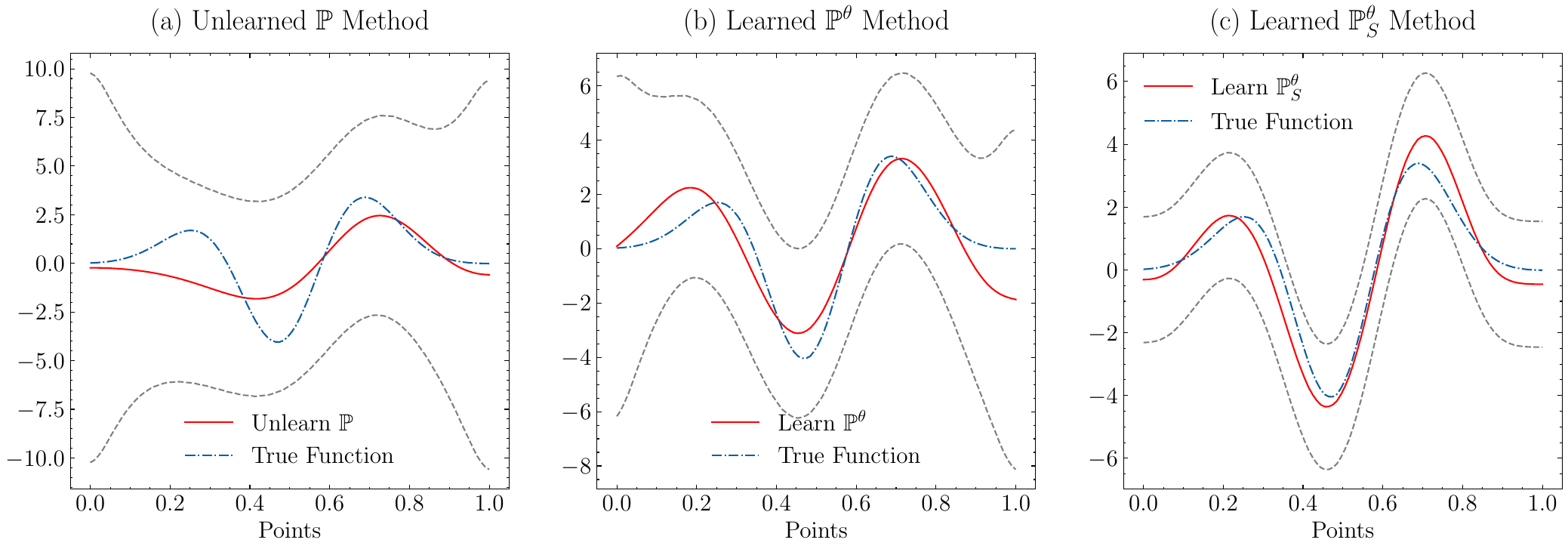}\\
	\vspace*{-3pt}
	\caption{The figure shows one of the posterior mean estimates and $95\%$ credible regions in the simple environment. In all three panels: the blue dash-dotted line represents the true function, the solid red line shows the base posterior mean estimate, and the dashed gray line indicates the $95\%$ credible region. Results from the unlearned, learned data-independent, and learned data-dependent methods are displayed in panels (a), (b), and (c), respectively.
	}\label{FigSimpleDiffusionStd}
\end{figure}

\begin{figure}[ht!]
	\centering
	\includegraphics[width=1\textwidth]{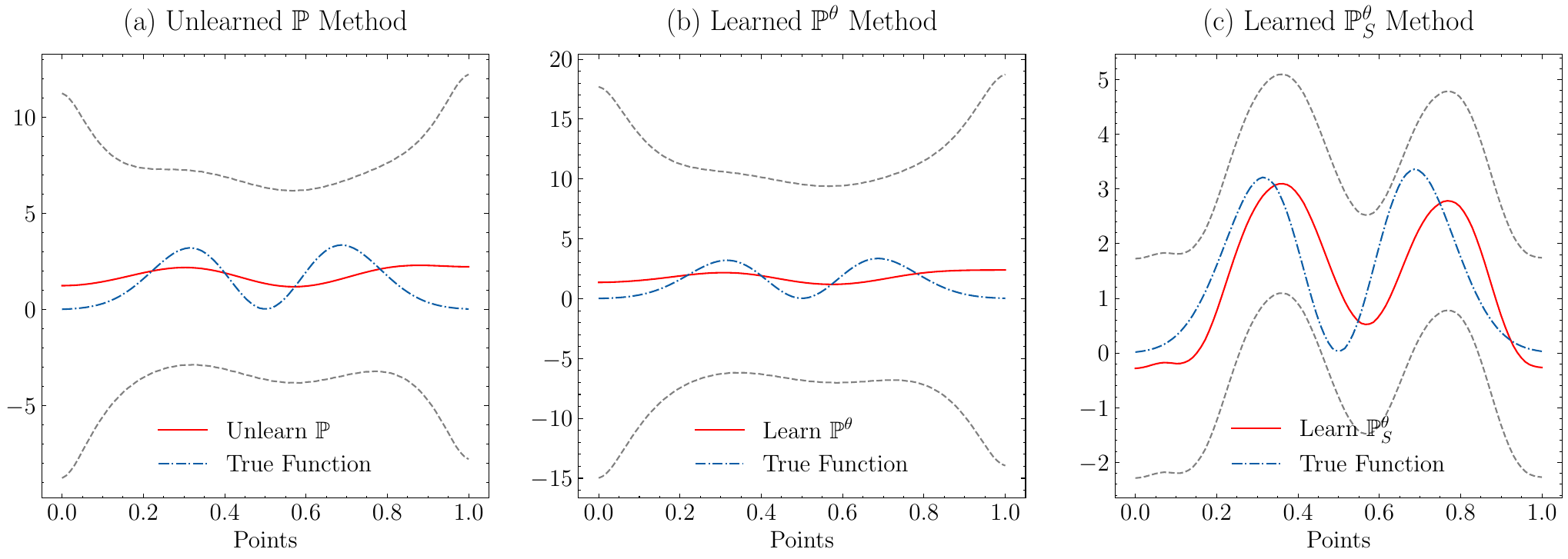}\\
	\vspace*{-3pt}
	\caption{The figure shows one of the posterior mean estimates and $95\%$ credible regions in the branch with $\alpha=1$ of the complex environment. In all three panels: the blue dash-dotted line represents the true function, the solid red line shows the base posterior mean estimate, and the dashed gray line indicates the $95\%$ credible region. Results from the unlearned, learned data-independent, and learned data-dependent methods are displayed in panels (a), (b), and (c), respectively.
	}\label{FigComplexDiffusionStd1}
\end{figure}

\begin{figure}[ht!]
	\centering
	\includegraphics[width=1\textwidth]{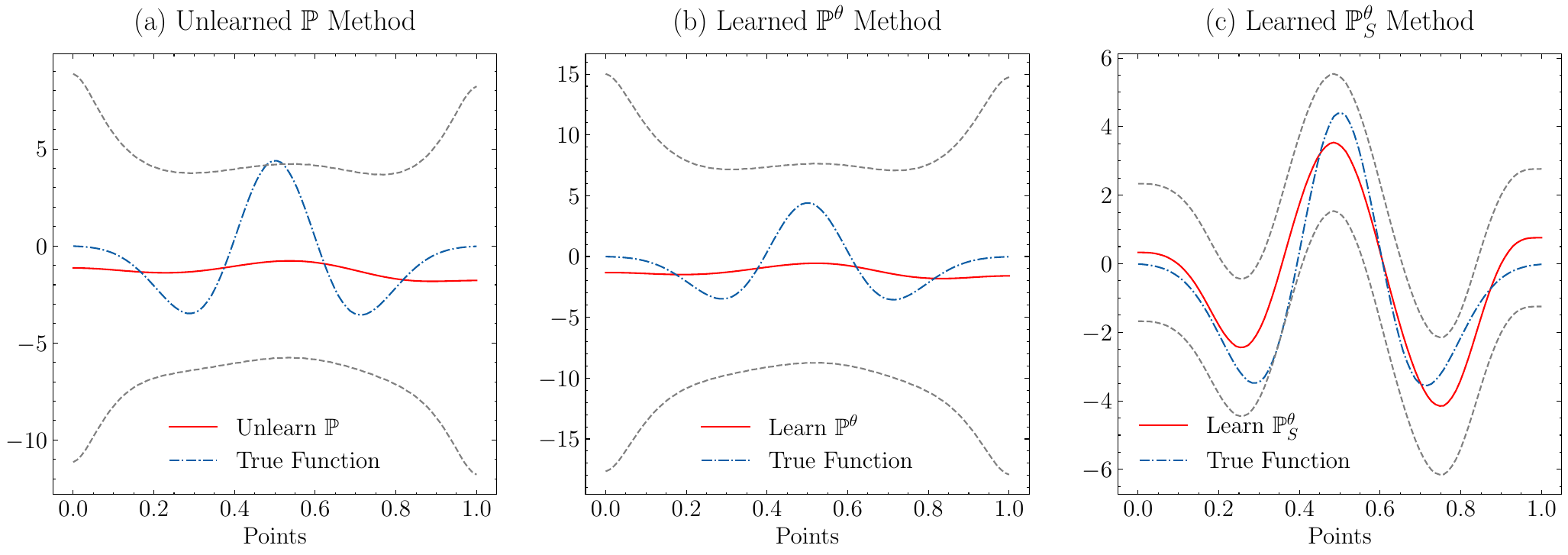}\\
	\vspace*{-3pt}
	\caption{The figure shows one of the posterior mean estimates and $95\%$ credible regions in the branch with $\alpha=0$ of the complex environment. In all three panels: the blue dash-dotted line represents the true function, the solid red line shows the base posterior mean estimate, and the dashed gray line indicates the $95\%$ credible region. Results from the unlearned, learned data-independent, and learned data-dependent methods are displayed in panels (a), (b), and (c), respectively.
	}\label{FigComplexDiffusionStd2}
\end{figure}

Under the Bayesian framework, it is important to also present uncertainty quantification results. In Figure \ref{FigSimpleDiffusionStd}, we display one of the true functions selected from the testing dataset. In all three panels: the blue dash-dotted line represents the true function, the solid red line shows the base posterior mean estimate, and the dashed gray line indicates the $95\%$ credible region. Results from the unlearned, learned data-independent, and learned data-dependent methods are shown in panels (a), (b), and (c), respectively. The unlearned Bayesian model provides a wide credible region, which appears uninformative. The learned data-independent Bayesian model yields a more accurate posterior mean with a narrower credible region. For the learned data-dependent Bayesian model, we obtain the most accurate posterior mean estimates along with a tight credible region. These results demonstrate that the learned data-dependent base prior measure effectively encodes informative features, thereby enhancing performance when using a fixed number of measurement points.

In Figures \ref{FigComplexDiffusionStd1} and \ref{FigComplexDiffusionStd2}, we present uncertainty quantification results for the complex environment. Figure \ref{FigComplexDiffusionStd1} displays one estimate selected from the branch with $\alpha=1$, while Figure \ref{FigComplexDiffusionStd2} shows one estimate from the branch with $\alpha=0$. All results demonstrate that the learned data-dependent Bayesian model provides the most accurate posterior mean estimates along with tight credible regions.
Under the complex environment setting, the learned data-independent model adapts its prior covariance to enlarge the $95\%$ credible region. From formula (\ref{RandFunBackDiff2}), we observe that functions in both branches have values contained within $[-15, 15]$. With this information, we note that it is reasonable for the learned data-independent model to enlarge its credible region, as the posterior mean estimate cannot provide useful information.

\textbf{Effectiveness of the current theory.}
In Sections 2 and 3 
of the main text, we establish several PAC-Bayes bounds. The tightness of these bounds remains theoretically unclear. While tighter bounds could potentially improve algorithm design, we demonstrate that the prior measure learning method derived from these PAC-Bayes bounds achieves a certain degree of tightness. 

To achieve this objective, we generate only 250 and 500 training datasets for the simple and complex environment cases, respectively. Furthermore, we set the number of measurement points to $m=5$. Table \ref{TableExamp1_1_small_sample_size} presents the average relative errors of the base posterior mean estimates obtained using unlearned, learned data-independent, and learned data-dependent Bayesian models trained with this limited number of historical datasets. Comparing these results with those in Table \ref{TableExamp1_1} reveals that the average relative errors are only slightly higher than the results obtained with 2000 training datasets and $m=20$ measurement points, indicating that the bound achieves a certain degree of tightness.

\begin{table}[ht!]
	\caption{For the 50 test datasets, we list the average relative errors of the MAP estimates from base posteriors under priors $\mathbb{P}$, $\mathbb{P}^{\theta}$, and $\mathbb{P}_{S}^{\theta}$, respectively. Here, ItN denotes the iteration count of the inexact matrix-free Newton-conjugate gradient method. }
	\begin{center}
		\begingroup
		\setlength{\tabcolsep}{1.0pt}
		\renewcommand{\arraystretch}{1.0}
		\begin{spacing}{1.1}
			\begin{tabular}{c|ccc|ccc}
				\Xhline{1.1pt}
				& \multicolumn{3}{c|}{Simple Environment} & \multicolumn{3}{c}{Complex Environment}  \\
				\hline
				$\quad$ItN $\quad$& $\qquad$$\mathbb{P}$$\qquad$ & $\qquad$$\mathbb{P}^{\theta}$$\qquad$ & $\qquad$$\mathbb{P}_S^{\theta}$$\qquad$ &  $\qquad$$\mathbb{P}$$\quad$ & $\qquad$$\mathbb{P}^{\theta}$$\qquad$ & $\qquad$$\mathbb{P}_S^{\theta}$$\qquad$ \\
				\hline
				$1$  & .6717 & .3450 & \textbf{.1826} & .6737 & .6762 & \textbf{.2386} \\
				$2$  & .5733 & .3338 & \textbf{.1812} & .5792 & .5921 & \textbf{.2371}	\\
				$3$  & .5711 & .3324 & \textbf{.1809} & .5773 & .5935 & \textbf{.2361} 	\\ 
				$4$  & .5738 & .3305 & \textbf{.1807} & .5816 & .5997 & \textbf{.2356}	\\                 
				$5$  & .5692 & .3302 & \textbf{.1806} & .5776 & .5955 & \textbf{.2355}	\\
				$6$  & .5686 & .3302 & \textbf{.1806} & .5773 & .5957 & \textbf{.2355}	\\
				$7$  & .5685 & .3302 & \textbf{.1806} & .5770 & .5957 & \textbf{.2355}	\\
				\Xhline{1.1pt}
			\end{tabular}
		\end{spacing}
		\endgroup
	\end{center}\label{TableExamp1_1_small_sample_size}
\end{table}

\textbf{Analysis of parameter $L$.}  
The parameter $L$ appears for approximating the following integral (see formula (14) 
of the main text) shown as follows:
\begin{align*}
	\int_{\mathcal{U}}\exp\left( -\frac{\beta}{m_i}\sum_{j=1}^{m_i}\Phi(u;z_{ij}) \right)\mathbb{P}_{S_i}^{\theta}(du) \approx 
	\frac{1}{L}\sum_{\ell=1}^{L}\exp\left( -\frac{\beta}{m_i}\sum_{j=1}^{m_i}\Phi(u_{\ell}^i;z_{ij}) \right).
\end{align*}
How to choose $L$ depends on the measure $\mathbb{P}_{S_i}^{\theta}$ and the potential function $\Phi$, which may be theoretically analyzed by using tools from the field of functional data analysis. However, it is not the main focus of the current work and it seems not a trivial task. 
Here, if we use $L$ samples $u_{\ell}^i$ to approximate the integral, we need to calculate $L$ forward problems, i.e., solving $L$ PDEs during the computations. 

At the beginning of training, the prior measure \(\mathbb{P}_{S_i}^{\theta}\) has large support, which suggests that a large \(L\) is needed. For a learned measure \(\mathbb{P}_{S_i}^{\theta}\), the support shrinks into smaller areas. Hence, we may employ a small \(L\). According to some related investigations \citep{Yue2024TPAMI,Jia2023JMLR}, we recognize that \(L\) may not need to be so large. If \(L\) is small, the approximation errors may be compensated through more loops of training. Choosing a very large \(L\) may waste a lot of computational resources at the beginning of training, since a comparably accurate approximation is not needed at the beginning stage. Since the problems are too complicated to provide a theoretical illustration, we present some numerical discussions below. This problem seems not highly relevant to the environment setting, so we only discuss the backward diffusion problem under the simple environment setting.

To provide intuition regarding the parameter \( L \) and the maximum number of iterations denoted as \( N_{\text{max}} \), we test the learning algorithm with  
\begin{align*}
	L & = 1, 5, 10, 15, 20,  \\ 
	N_{\text{max}} & = 500, 1000, 1500, 2000, 2500, 3000, 3500, 4000, 
\end{align*}  
for the backward diffusion equations under the simple setting with the data-dependent prior $\mathbb{P}_{S}^{\theta}=\mathcal{N}(f_m(S;\theta_1),\mathcal{C}_0(\theta_2))$. We learn the prior mean functions under the above settings and compare the estimated mean function with those from 50 test samples. Smaller relative errors clearly indicate better performance. In Table \ref{TableAnalysisL}, we present the relative errors averaged over 50 test samples. As the maximum iteration number \( N_{\text{max}} \) increases, the performance improves even when \( L = 1 \). As the sampling number \( L \) increases (especially when \( L \geq 5 \)), the improvement in the obtained estimates is minimal. At least in this experiment, increasing the maximum iteration number appears more efficient than increasing the parameter \( L \). In our understanding, similar scenarios are commonly encountered in the field of machine learning; see, for example, \cite{Xie2019AAAI} (where only 4 steps of MCMC sampling are used).

\begin{table}[ht!]
	\caption{Average relative errors of the learned mean function for the base prior measure $\mathbb{P}^{\theta}=\mathcal{N}(f_m(S;\theta_1),\mathcal{C}_0(\theta_2))$, computed over 50 test samples, for the backward diffusion problem under a simple environmental setting.}
	\begin{center}
		\vskip 0.2 cm
		\begin{tabular}{c|ccccc}
			\Xhline{1.1pt}
			$\qquad$$\qquad$$\quad$$\qquad$ & $\,\,\,\,$$L=1$$\,\,\,\,$ & $\,\,\,\,$$L=5$$\,\,\,\,$ & $\,\,\,$$L=10$$\,\,\,$ & $\,\,\,$$L=15$$\,\,\,$ & $\,\,\,$$L=20$$\,\,\,$ \\
			\hline
        	$N_{\text{max}}=500$  & 0.64381 & 0.65140 & 0.66780 & 0.63853 & 0.79176 \\
        	$N_{\text{max}}=1000$ & 0.60163 & 0.61871 & 0.67920 & 0.62381 & 0.65089 \\
        	$N_{\text{max}}=1500$ & 0.57476 & 0.58199 & 0.59338 & 0.57781 & 0.63865 \\
        	$N_{\text{max}}=2000$ & 0.46756 & 0.54002 & 0.57640 & 0.51437 & 0.50309 \\
        	$N_{\text{max}}=2500$ & 0.51841 & 0.49031 & 0.50162 & 0.43469 & 0.61830 \\
        	$N_{\text{max}}=3000$ & 0.42851 & 0.34553 & 0.30276 & 0.30557 & 0.40400 \\
        	$N_{\text{max}}=3500$ & 0.38195 & 0.23546 & 0.23547 & 0.23434 & 0.28699 \\
        	$N_{\text{max}}=4000$ & 0.34303 & 0.20873 & 0.20528 & 0.21346 & 0.20832 \\
			\Xhline{1.1pt}
		\end{tabular}
	\end{center}\label{TableAnalysisL}
\end{table}

\textbf{Discussion of the hidden features in FNO.}  
In this work, we used hidden channels with dimensions 15 for the backward diffusion. The number of hidden channels used here is much smaller than in the original paper \cite{Li2020ICLR}. Here, we provide some explanations. The objective of the current work is to learn the base prior mean function, which can be viewed as a rough guess providing an inaccurate estimate. This inaccurate estimate is refined by computing the MAP or posterior mean of the base posterior. Since we have a learned prior encoding useful (though insufficiently accurate) information, the computation of the MAP and posterior mean is faster compared to methods relying solely on an unlearned prior. These settings resemble meta-learning, which accelerates the retraining process and reduces the required number of training data pairs. In our case, we reduce computational time and enhance estimation accuracy for a new inverse task by encoding information from related historical inverse tasks into the learned prior measure.

Unlike typical applications of FNOs, we lack supervised learning data pairs (i.e., dataset \( S \) and true function parameter \( u \)). Instead, we have a series of datasets \( \{S_i\}_{i=1}^{n} \) generated by  
\[
\bm{y}_i = \mathcal{L}_{\bm{x}_i}\mathcal{G}(u_i) + \bm{\eta}_i, \quad i = 1,\ldots,n,
\]  
where \( \bm{x}_{i} = (x_{i1},\ldots,x_{im_i})^T \), \( \bm{y}_i = (y_{i1},\dots,y_{im_i})^T \), and \( S_i = \{(x_{ij}, y_{ij})\}_{j=1}^{m_i} \). The true parameters \( \{u_i\}_{i=1}^{n} \) are unknown, as inverse problems for PDEs rarely provide access to true values. Given this setting, FNOs cannot be expected to generate a mean function that fully captures the true function's detailed structure. Instead, FNOs capture the general trend and large-scale characteristics, which (compared to typical FNO applications) may not require complex architectures. Indeed, in meta-learning, smaller-scale neural networks are preferred for constructing meta-learners (see works \cite{Shu2021JMLR,ShuYuan2023TPAMI}, and \cite{ShuZhu2023TPAMI}). We build upon this intuition to adopt a simpler neural network architecture. 

In the following, we present numerical illustrations for the backward diffusion problem under complex environmental settings with varying hidden channel dimensions (denoted as $D_h$). Specifically, we set $D_h = 5, 10, 15, 20, 25, 30$ to analyze the effect of the hidden dimension. Using 50 test sample datasets, we compute the average relative errors of estimates predicted by the learned mean function $f_m(S;\theta_1)$ of the base prior, where smaller average relative errors indicate better performance. For other parameters, we fix $L=10$ and the maximum iteration number $N_{\text{max}}=4000$ across all $D_h$ values. Under these settings, the average relative errors are found to be  
$$0.46115, \,\, 0.52680, \,\, 0.28499, \,\, 0.19798, \,\, 0.18275, \,\, 0.17392$$  
for hidden channel dimensions $D_h = 5, 10, 15, 20, 25$ and $30$, respectively. From these results, we observe that the algorithm's performance does not improve significantly when the hidden dimension $D_h \geq 15$. In our numerical experiment, using more hidden channels seems to enhance the performance a little bit, but a larger neural operator requires more computational resources.
}

\subsubsection*{Numerical results for the Darcy flow problem}
{\color{black}
In this section, we present additional numerical results and provide more detailed interpretations of certain subtle phenomena observed in the experiments. For methodological consistency and to avoid committing the inverse crime \citep{Kaipio2004Book}—an issue not discussed in the main text due to page limitations—we adopt the following experimental setup: the dataset is generated using a fine spatial discretization on a $200 \times 200$ uniform grid, while all computations involving prior learning, as well as the estimation of the MAP point, posterior mean, and credible intervals, are performed on a coarser $50 \times 50$ grid.

\textbf{Sequential Monte Carlo algorithm.} In the main text, we mentioned that the sequential Monte Carlo (SMC) algorithm has been utilized to generate samples from the base posterior and we used the total number of temperatures $K_{\text{total}}$ to measure the sampling speed. Hence, for the reader's convenience, we would better give a short introduction. The basic ideas of SMC type algorithms are dividing the sampling procedure into a series of easier sampling problems. Let us introduce a series of positive numbers $\{h_k\}_{j=k}^{K_{\text{total}}}$ usually called temperatures such that 
\begin{align*} 
	\sum_{k=1}^{K_{\text{total}}}h_k = 1. 
\end{align*} 
Then the original problem 
\begin{align*}
	\frac{d\mathbb{Q}}{d\mathbb{P}}(u) \propto \exp\left( -\frac{\beta}{m}\sum_{j=1}^{m}\Phi(u;x_j,y_j) \right) 
\end{align*}
decomposed into a series of problems
\begin{align*}
	\frac{d\mathbb{Q}^{K}}{d\mathbb{P}}(u) \propto \exp\left( -\sum_{k=1}^{K}h_k\frac{\beta}{m}\sum_{j=1}^{m}\Phi(u;x_j,y_j) \right) 
\end{align*}
with $K = 1,\ldots,K_{\text{total}}$. Obviously, we have 
\begin{align}
	\frac{d\mathbb{Q}^{K+1}}{d\mathbb{Q}^K} \propto \exp\left( -h_{K+1}\frac{\beta}{m}\sum_{j=1}^{m}\Phi(u;x_j,y_j) \right).
\end{align}
Since the parameter $h_K$ is typically small, one expects that the measure $\mathbb{Q}^{K+1}$ is close to $\mathbb{Q}^K$ in an appropriate sense. Motivated by this observation, we draw $N_s$ independent samples $\{u_k^0\}_{k=1}^{N_s}$, referred to as particles, from the prior distribution $\mathbb{P}$. Leveraging the proximity between consecutive measures in the sequence $\{\mathbb{Q}^K\}_{K=1}^{K_{\text{total}}}$, we can efficiently transform particles sampled from $\mathbb{Q}^K$ into approximate samples from $\mathbb{Q}^{K+1}$. By iterating this transformation over $K_{\text{total}}$ steps, we ultimately obtain a set of samples from the base posterior measure $\mathbb{Q} = \mathbb{Q}^{K_{\text{total}}}$. The transition between these measures is typically carried out through the following three-step procedure:
\begin{itemize}
	\item Re-weighting:  Importance sampling is employed to transition particles from the measure $\mathbb{Q}^K$ to $\mathbb{Q}^{K+1}$. This step involves assigning each particle an importance weight based on its likelihood under the updated measure, thereby adjusting the contribution of each particle in representing the new distribution.
	\item Resampling: Following the re-weighting stage, resampling is performed to eliminate particles with low importance weights and to replicate those with high weights. This helps maintain numerical stability and prevents degeneracy in the particle approximation. However, repeated resampling may reduce sample diversity over successive iterations.
	\item Mutation: To recover lost diversity and ensure better exploration of the support of $\mathbb{Q}^{K+1}$, a mutation step is introduced. This involves applying a $\mathbb{Q}^{K+1}$-invariant Markov transition kernel to each particle, typically implemented via a Markov chain Monte Carlo (MCMC) method, allowing the particles to explore the posterior distribution more effectively.
	
\end{itemize}
Choosing the temperatures is extremely important for SMC algorithm. If these temperatures are too dense, the SMC will run slowly; if they are too sparse, the adjacent posteriors \(\mathbb{Q}^{K}\) and \(\mathbb{Q}^{K+1}\) will differ significantly, which is not conducive to importance sampling. Following the works \cite{Beskos2015SC} and \cite{Lu2024arXiv}, we use the effective sample size (ESS) to select appropriate temperatures:
\begin{align*}
	\text{ESS} := \left(\sum_{i=1}^{N_s} w_i^2\right)^{-1},
\end{align*}
where \(w_i\) is the weight of the \(i\)-th particle. A small \(h_k\) can result in a large ESS. An excessively large value of \(h_k\) can cause the loss of a large number of samples during the resampling step, thereby leading to a small ESS. Thus, we can use a simple bisection method to find an \(h_k\) such that \(\text{ESS} > N_{\text{thresh}} := 0.6 N_s\). From the above brief introduction of SMC, we know that the computational cost rough corresponding to the number $K_{\text{total}}$ when the particle numbers are fixed. 

In the main text, we primarily present results obtained using the mixture Gaussian based sequential Monte Carlo (MGSMC) algorithm \citep{Lu2024arXiv}, motivated by considerations of computational efficiency. As the name suggests, the MGSMC algorithm replaces the mutation step with sampling from a mixture Gaussian distribution that is adaptively estimated from the current set of particles. This approximation significantly reduces the computational burden compared to full MCMC-based mutation step.

\textbf{Posterior mean. }In the main text, we focused on reporting the average relative errors of the MAP estimates obtained from three different approaches: the unlearned method, the learned data-independent method, and the learned data-dependent method. It is also informative to examine the corresponding average relative errors of the posterior mean estimates, which provide another characterization of the inferred solutions.

In Table~\ref{TableSMCDarcyFlowMain}, we present the average relative errors in the posterior mean functions computed via the MGSMC algorithm \citep{Lu2024arXiv}. Due to the computational demands of the sampling procedure, these averages are computed over 10 test datasets rather than 50. In both the simple and complex environmental settings, the learned data-independent method performs slightly worse than the unlearned method. This degradation is attributed to the fact that the learned mean function introduces biased information for solving inverse problems. In line with the results observed for the MAP estimates, the learned data-dependent method achieves the lowest error, further substantiating its effectiveness in capturing the true underlying characteristics of the solution space.

\begin{table}[ht!]
\caption{For the 10 test datasets, we list the average relative errors of the posterior mean (base posteriors) under priors $\mathbb{P}$, $\mathbb{P}^{\theta}$, and $\mathbb{P}_{S}^{\theta}$, respectively. }
\begin{center}
\begin{tabular}{c|ccc}
	\Xhline{1.1pt}
	& $\,\,\mathcal{N}(0,\mathcal{C}_0)\,\,\,$ & $\,\,\,\mathcal{N}(f_m(\theta_1),\mathcal{C}_0(\theta_2))\,\,\,$ & $\,\,\,\mathcal{N}(f_m(S;\theta_1),\mathcal{C}_0(\theta_2))\,\,\,$ \\
	\hline
	Simple environment  $\,\,$& .1588 & .1634 & .0945 \\
	Complex environment $\,\,$& .1371 & .1953 & .0885 \\
	\Xhline{1.1pt}
\end{tabular}
\end{center}\label{TableSMCDarcyFlowMain}
\end{table}

\textbf{Analysis of the MAP. }In the main text, the two branches are generated by the following randomized functions: 
\begin{align*}
	u(x_1,x_2) = (2\alpha-1)(u_1(x_1,x_2) + u_2(x_1,x_2) + u_3(x_1,x_2)),
\end{align*}
where $\alpha\sim\text{Bern}(0.5)$ and 
$$u_i(x_1,x_2) = a_{i3}(1-x_1^2)^{a_{i1}}(1-x_2^2)^{a_{i2}}e^{-a_{i4}(x_1-a_{i5})^2 - a_{i6}(x_2-a_{i7})^2}$$ 
with $i=1,2,3$.
For the parameters in the above formula, we let 
\begin{align*}
	& a_{i1}\sim U(0.1,0.5), \quad a_{i2}\sim U(0.1,0.5), \quad a_{i3}\sim U(3,4), \quad a_{i4}\sim U(30, 35), \\
	& a_{i6}\sim U(30, 35), \quad a_{15}\sim U(0.15,0.25), \quad a_{17}\sim U(0.65,0.75), \quad a_{25}\sim U(0.45,0.55), \\ 
	& a_{27}\sim U(0.45,0.55), \quad a_{35}\sim U(0.65,0.75), \quad a_{37}\sim U(0.15,0.25),
\end{align*} 
where $i=1,2,3$. One branch of functions takes positive values, and the other takes negative values.
To see clearly, we present the relative errors of the estimations for each branch separately in Table~\ref{TableDarcyFlowPositiveBranch} (positive-valued solutions) and Table~\ref{TableDarcyFlowNegativeBranch} (negative-valued solutions).

\begin{table}[ht!]
	\caption{Consider the branches with positive function values in the complex environment, we assess the average relative errors of maximum a posteriori estimates using unlearned, learned data-independent, and learned data-dependent prior measures. Here, ItN denotes the iteration count of the applied inexact matrix-free Newton-conjugate gradient method.}
	\begin{center}
		\vskip 0.2 cm
		\begin{tabular}{c|ccccc}
			\Xhline{1.1pt}
			& $\mathcal{N}(0,\mathcal{C}_0)$ & $\mathcal{N}(f_m(\theta_1),\mathcal{C}_0(\theta_2))$ & $\mathcal{N}(f_m(S;\theta_1),\mathcal{C}_0(\theta_2))$ \\
			\hline
			$\text{ItN}=1$  & .7308 & .3823 & .1727  \\
			$\text{ItN}=5$  & .2747 & .1873 & .1502  \\
			$\text{ItN}=10$ & .1574 & .1498 & .1424  \\
			$\text{ItN}=15$ & .1393 & .1489 & .1386  \\
			$\text{ItN}=20$ & .1332 & .1489 & .1377  \\
			$\text{ItN}=25$ & .1283 & .1489 & .1376  \\
			$\text{ItN}=30$ & .1258 & .1489 & .1375  \\
			\Xhline{1.1pt}
		\end{tabular}
	\end{center}\label{TableDarcyFlowPositiveBranch}
\end{table}

\begin{table}[ht!]
	\caption{Consider the branches with negative function values in the complex environment, we assess the average relative errors of maximum a posteriori estimates using unlearned, learned data-independent, and learned data-dependent prior measures. Here, ItN denotes the iteration count of the applied inexact matrix-free Newton-conjugate gradient method.}
	\begin{center}
		\vskip 0.2 cm
		\begin{tabular}{c|ccccc}
			\Xhline{1.1pt}
			& $\mathcal{N}(0,\mathcal{C}_0)$ & $\mathcal{N}(f_m(\theta_1),\mathcal{C}_0(\theta_2))$ & $\mathcal{N}(f_m(S;\theta_1),\mathcal{C}_0(\theta_2))$ \\
			\hline
			$\text{ItN}=1$  & .6952 & 1.4003 & .0425  \\
			$\text{ItN}=5$  & .4283 & 1.0054 & .0358  \\
			$\text{ItN}=10$ & .4023 & .9697  & .0358  \\
			$\text{ItN}=15$ & .4018 & .9675  & .0358  \\
			$\text{ItN}=20$ & .4013 & .9647  & .0357  \\
			$\text{ItN}=25$ & .4008 & .9624  & .0357  \\
			$\text{ItN}=30$ & .3806 & .9607  & .0357  \\
			\Xhline{1.1pt}
		\end{tabular}
	\end{center}\label{TableDarcyFlowNegativeBranch}
\end{table}

For the branch with positive-valued functions, all three approaches yield similar results as measured by the $L^2$-norm based average relative error. For the branch with negative-valued functions, the learned data-dependent Bayesian model provides superior estimates compared to the unlearned and learned data-independent Bayesian model. We examined the employed matrix-free Newton-conjugate gradient approach, which can hardly find appropriate descending directions for the unlearned and learned data-independent Bayesian model. The matrix-free Newton-conjugate gradient approach frequently meets the negative curvature condition, which makes the algorithm struggle to find an appropriate descending direction (see Page 465 of \cite{Ghattas2021ActaNum} for more details). The unlearned method performs better because it uses an initial value of 0, whereas the learned data-independent Bayesian model starts with the learned mean function $f_m(\theta_1)$, which takes positive values (see Figure 5 
in the main text), leading to slower convergence when approximating negative-valued solutions.

To validate our analysis, we executed the optimization algorithm on a test dataset sampled from the branch of negative-valued functions. The maximum iteration number was set to $10^4$ to ensure the algorithm converges before reaching the iteration limit. In Figure \ref{FigRelativeErrorMAP} below, we display the relative errors across iterations. Notably, the convergence rates in both scenarios are exceedingly slow; it is only in the final few steps - when negative curvature does not occur - that the optimization process accelerates. \textbf{The finally obtained relative errors are 0.1057 and 0.0607 for the learned data-independent and unlearned Bayesian models, respectively.} The results mean that: if we ignore the computational cost, we expect to obtain more accurate estimates for the negative-valued function branch compared with the positive-valued function branch. The proposed data-dependent learned Bayesian model achieves the MAP estimates with a much smaller computational cost. Since the estimates of the three methods are similar for the positive-valued function branch (and the proposed learned data-dependent Bayesian model consumes much less time), the final mixed relative errors are approximately determined by the negative-valued function branch.  

\begin{figure}[ht!]
	\centering
	\includegraphics[width=0.85\textwidth]{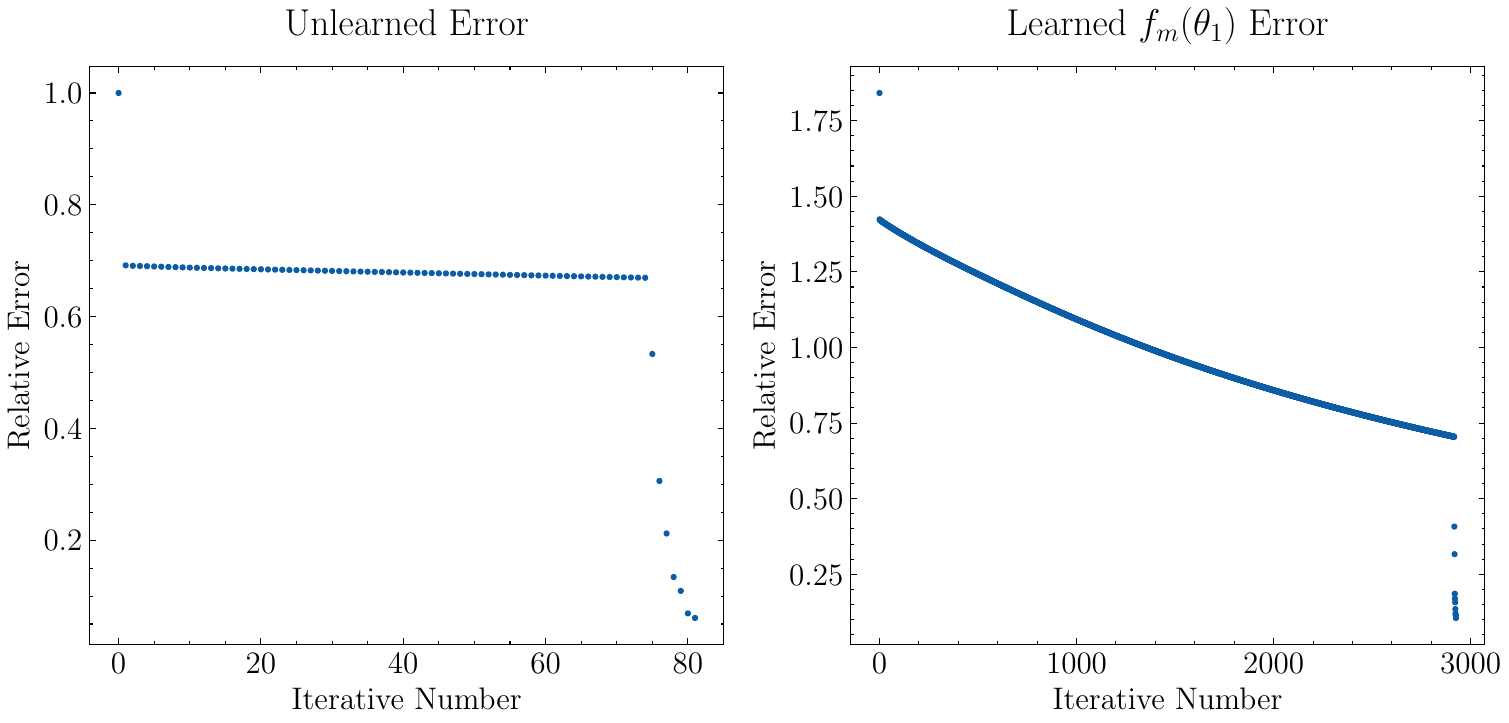}\\
	\vspace*{0.0em}
	\caption{\emph{\small Comparison of relative errors between unlearned and learned data-independent methods for a dataset generated from a negative-valued function branch.}
	}\label{FigRelativeErrorMAP}
\end{figure}

In summary, the learned mean function $f_m(\theta_1)$ provides a poor initial guess for the branch of functions assuming negative values. When the initial guess is insufficiently good, the optimization algorithm converges slowly for datasets generated from negative-valued functions. More advanced optimization algorithms may circumvent these difficulties. However, the current results just verified that our proposed learned data-dependent prior method captures the essential information from the historical inverse tasks under the complex environment situation. 

We examine the results presented in Table~\ref{TableDarcyFlowPositiveBranch} (positive-valued solutions) and Table~\ref{TableDarcyFlowNegativeBranch} (negative-valued solutions). The learned data-dependent method yields average relative errors of $0.1375$ and $0.0357$ for the positive-valued and negative-valued branches, respectively. For comparison, the learned data-independent and unlearned methods achieve relative errors of $0.1057$ and $0.0607$ upon convergence of the optimization (computed using a single randomly selected testing dataset but averaged over $50$ test datasets). Notably, the optimization algorithm provides more accurate estimates for the negative-valued branch once convergence is attained. We now provide an explanation for this observation.

A key aspect of our approach is the transformation of the randomized function $u$ via the exponential mapping $u \mapsto e^u$, where the corresponding data are obtained by solving the elliptic partial differential equation  
$$
-\nabla \cdot (e^u \nabla w) = f \quad \text{in } \Omega, \qquad w = 0 \quad \text{on } \partial\Omega.
$$
We notice that the parameter $u$ has been transformed by an exponential function to $e^u$, which rapidly stretches the positive-valued functions. Intuitively, the solution 
\begin{align*} 
w = (-\nabla\cdot e^u\nabla)^{-1}f,
\end{align*}
where $e^u$ can be thought of as occupying the denominator position. For positive-valued parameters $u$, the stretching by $e^u$ makes the data $\{w(x_j)\}_{j=1}^{m}$ exhibit only small differences for different positive-valued $u$. Hence, it is harder to obtain accurate estimates for positive-valued functions in the complex environment that contains one positive-valued branch and one negative-valued branch. Thus, the nonlinear transformation and the structure of the PDE yield different difficulties for estimating the two branches under the complex environment setting.

\begin{table}[ht!]
	\caption{Consider the complex environments, we assess the average relative errors of the posterior mean estimates using unlearned, learned data-independent, and learned data-dependent prior measures.}
	\begin{center}
		\vskip 0.2 cm
		\begin{tabular}{c|ccccc}
			\Xhline{1.1pt}
			& $\mathcal{N}(0,\mathcal{C}_0)$ & $\mathcal{N}(f_m(\theta_1),\mathcal{C}_0(\theta_2))$ & $\mathcal{N}(f_m(S;\theta_1),\mathcal{C}_0(\theta_2))$ \\
			\hline
			All of the 10 test datasets   & .13707 & .19528 & .08851  \\
			Branch of positive functions  & .17889 & .21597 & .15981  \\
			Branch of negative functions  & .06866 & .15360 & .01721  \\
			\Xhline{1.1pt}
		\end{tabular}
	\end{center}\label{TableDarcyFlowMGSMCComplexBranches}
\end{table}

To further validate our results, we compute posterior mean estimates of the base posterior measure using the MGSMC algorithm. Since sampling algorithms are computationally more expensive than optimization methods, we report relative errors averaged over 10 datasets, compared to the 50 test datasets used for the optimization approach. Table \ref{TableDarcyFlowMGSMCComplexBranches} shows the relative errors of the base posterior mean function. These results align with the conclusions from the MAP estimates: the learned data-dependent model consistently yields reliable estimates, while the negative-valued branch remains easier to estimate than the positive-valued branch. However, due to the nonlinear transformation $e^u$, the learned data-independent model generates inaccurate mean function estimates, resulting in the largest relative errors.
}

\section{Proof Details}\label{AllProof}
\subsection{Proof Details of the Main Text}\label{ProofMainApp}
In this supplementary section, we will offer detailed proofs for all the theorems and lemmas that were mentioned but not elaborated upon in the main text. These proofs are essential for a rigorous understanding of the mathematical framework and the theoretical underpinnings of the concepts discussed.
\subsubsection*{\textbf{Proof of Theorem 2.4 
		(The General PAC-Bayesian Bound)}}
\begin{proof}
	For proving Theorem 2.4, 
	we need to introduce an intermediate quantity, 
	the expected multi-task error:
	\begin{align*}
		\tilde{\mathcal{L}}(\mathscr{Q}, \mathbb{D}_1,\ldots,\mathbb{D}_n) = \mathbb{E}_{\theta\sim\mathscr{Q}}\left[ 
		\frac{1}{n}\sum_{i=1}^n \mathcal{L}(\mathbb{Q}(S_i, \mathbb{P}_{S_i}^{\theta}), \mathbb{D}_i)
		\right]. 
	\end{align*}
	Let us recall the Donsker-Varadhan variational formula (formula (A.1) in \cite{Pinski2015SIAMMA})
	\begin{align}\label{DV_VariationalFormula}
		D_{\text{KL}}(\mathsf{Q}||\mathsf{P}) = \sup_{g} \left( 
		\mathbb{E}_{\mathsf{Q}}[g] - \ln\mathbb{E}_{\mathsf{P}}[e^g]
		\right),
	\end{align}
	where the measures $\mathsf{Q}$ and $\mathsf{P}$ are defined on some separable Banach space $\mathcal{W}$, and 
	the supremum are taken all bounded measurable functions $g:\mathcal{W}\rightarrow\mathbb{R}$. 
	From the above Donsker-Varadhan variational formula (\ref{DV_VariationalFormula}), we easily obtain the following inequality
	\begin{align}\label{MeasureInequality}
		\mathbb{E}_{\mathsf{Q}}[g] \leq \frac{1}{\zeta} \Big(
		D_{\text{KL}}(\mathsf{Q}||\mathsf{P}) + \ln\mathbb{E}_{\mathsf{P}}[e^{\zeta g}]
		\Big)
	\end{align}
	holds for any positive real number $\zeta>0$ and any integrable measurable function $g$.
	
	In the following, we assume that the hyper-posterior measure $\mathscr{Q}$ is absolutely continuous with respect 
	to the hyper-prior measure $\mathscr{P}$. For every $i=1,\ldots,n$, we assume that 
	the posterior measure $\mathbb{Q}(S_i,\mathbb{P}_{S_i}^{\theta})$ 
	is absolutely continuous with respect to the prior measure $\mathbb{P}_{S_i}^{\theta}$ for all 
	$S_i\in\mathcal{Z}^{m_i}$ and $\theta\in\Theta$. If any of these absolute continuity assumptions are violated, 
	the right-hand side of the estimate (3) 
    of the main text will be infinite. Hence the estimate (3) 
    holds true. The following proof is inspired by the finite-dimensional case presented in \cite{Rothfuss2021PMLR}.
 
	\textbf{Step 1 (Base-posterior learning generalization)}:  
	The posterior measure $\mathbb{Q}$ depends on the datasets and the prior measure. 
	Given a dataset $S_i\sim\mathbb{D}_i^{m_i}$ of size $m_i$ and a prior probability measure $\mathbb{P}_{S_i}^{\theta}$, 
	we denote $\mathbb{Q}_i = \mathbb{Q}(S_i, \mathbb{P}_{S_i}^{\theta})$ be the posterior measure of 
	the parameter $u_i\in\mathcal{U}$ for $i=1,\ldots,n$. Let $S'= \cup_{i=1}^{n}S_i$ be the union of all the datasets. 
	Let $\mathcal{W} = \Theta\times\prod_{i=1}^n\mathcal{U}$, 
	$\mathsf{P}(d\theta, du_1,\ldots,du_n):=\left(\otimes_{i=1}^n\mathbb{P}_{S_i}^{\theta}(du_i)\right)\mathscr{P}(d\theta)$,  
	$\mathsf{Q}(d\theta,du_1,\ldots,du_n):=\left(\otimes_{i=1}^n\mathbb{Q}_i(du_i)\right)\mathscr{Q}(d\theta)$, and 
	\begin{align*} 
		g(\theta, u_1,\ldots,u_n) := \sum_{i=1}^{n}\sum_{j=1}^{m_i}\left( 
		\mathbb{E}_{z\sim\mathbb{D}_i}\left[ \frac{1}{nm_i}\ell(u_i, z) \right] - 
		\frac{1}{nm_i}\ell(u_i, z_{ij})
		\right).
	\end{align*} 
	Now, we can use inequality (\ref{MeasureInequality}) directly to derive
	\begin{align}\label{Estimate1Step1}
		\mathbb{E}_{\theta\sim\mathscr{Q}}\mathbb{E}_{u_1\sim\mathbb{Q}_1}\cdots\mathbb{E}_{u_n\sim\mathbb{Q}_n}[g] \leq 
		\frac{1}{\gamma}D_{\text{KL}}\left( \mathsf{Q} || \mathsf{P} \right) + \Pi^1(\gamma),
	\end{align}
	where
	\begin{align*}
		\Pi^{1}(\gamma) \!= \!\frac{1}{\gamma}\!\ln\mathbb{E}_{\theta\sim\mathscr{P}}\mathbb{E}_{u_1\sim\mathbb{P}_{S_1}^{\theta}}
		\!\!\cdots\mathbb{E}_{u_n\sim\mathbb{P}_{S_n}^{\theta}}\!\!\exp\!\!\left( \!
		\frac{\gamma}{n} \!\sum_{i=1}^{n}\!\frac{1}{m_i}\sum_{j=1}^{m_i}\!\left[ 
		\mathbb{E}_{z\sim\mathbb{D}_{i}}\ell(u_i, z) \!- \! \ell(u_i, z_{ij})
		\right]\!\!\right). 
	\end{align*}
	For the left-hand side of (\ref{Estimate1Step1}), we have 
	\begin{align}\label{Estimate2Step1}
		\mathbb{E}_{\theta\sim\mathscr{Q}}\mathbb{E}_{u_1\sim\mathbb{Q}_1}\cdots\mathbb{E}_{u_n\sim\mathbb{Q}_n}[g] = &
		\frac{1}{n}\sum_{i=1}^{n}\mathbb{E}_{\theta\sim\mathscr{Q}}\mathbb{E}_{u_i\sim\mathbb{Q}_i}\mathcal{L}(u_i,\mathbb{D}_i)
		- \frac{1}{n}\sum_{i=1}^{n}\mathbb{E}_{\theta\sim\mathscr{Q}}\hat{\mathcal{L}}(\mathbb{Q}_i, S_i)  \nonumber \\
		= & \frac{1}{n}\sum_{i=1}^{n}\mathbb{E}_{\theta\sim\mathscr{Q}}\mathcal{L}(\mathbb{Q}_i,\mathbb{D}_i)
		- \frac{1}{n}\sum_{i=1}^{n}\mathbb{E}_{\theta\sim\mathscr{Q}}\hat{\mathcal{L}}(\mathbb{Q}_i,S_i). 
	\end{align}
	For the first term on the right-hand side of (\ref{Estimate1Step1}), we have  
	\begin{align}\label{Estimate3Step1}
		\begin{split}
			D_{\text{KL}}(\mathsf{Q} || \mathsf{P}) = & \mathbb{E}_{\theta\sim\mathscr{Q}}\mathbb{E}_{u_1\sim\mathbb{Q}_1}\cdots\mathbb{E}_{u_n\sim\mathbb{Q}_n}
			\ln\left( 
			\frac{d\mathscr{Q}}{d\mathscr{P}}(\theta)\prod_{i=1}^{n}\frac{d\mathbb{Q}_i}{d\mathbb{P}_{S_i}^\theta}(u_i) 
			\right) \\
			= & \mathbb{E}_{\theta\sim\mathscr{Q}}\ln\frac{d\mathscr{Q}}{d\mathscr{P}} + 
			\sum_{i=1}^{n} \mathbb{E}_{\theta\sim\mathscr{Q}}\mathbb{E}_{u_i\sim\mathbb{Q}_i}\ln\frac{d\mathbb{Q}_i}{d\mathbb{P}_{S_i}^{\theta}} \\
			= & D_{\text{KL}}(\mathscr{Q} || \mathscr{P}) + 
			\sum_{i=1}^{n}\mathbb{E}_{\theta\sim\mathscr{Q}}D_{\text{KL}}(\mathbb{Q}_i || \mathbb{P}_{S_i}^{\theta}). 
		\end{split}
	\end{align}
	Plugging formulas (\ref{Estimate2Step1}) and (\ref{Estimate3Step1}) into inequality (\ref{Estimate1Step1}), we finally arrive at 
	\begin{align}\label{Estimate4Step1}
		\begin{split}
			\frac{1}{n}\sum_{i=1}^{n}\mathbb{E}_{\theta\sim\mathscr{Q}}\mathcal{L}(\mathbb{Q}_i, \mathbb{D}_i) \leq & \,
			\frac{1}{n}\sum_{i=1}^{n}\mathbb{E}_{\theta\sim\mathscr{Q}}\hat{\mathcal{L}}(\mathbb{Q}_i,S_i) + 
			\frac{1}{\gamma}D_{\text{KL}}(\mathscr{Q}||\mathscr{P}) \\
			& \, + \frac{1}{\gamma}\sum_{i=1}^{n}\mathbb{E}_{\theta\sim\mathscr{Q}}D_{\text{KL}}(\mathbb{Q}_i||\mathbb{P}_{S_i}^{\theta})
			+ \Pi^1(\gamma).  
		\end{split}
	\end{align}
	
	\textbf{Step 2 (Hyper-posterior learning generalization)}:  
	Now, we choose the space $\mathcal{W} = \Theta$, $\mathsf{Q} = \mathscr{Q}$, $\mathsf{P} = \mathscr{P}$, and 
	\begin{align*}
		g(\theta) = \sum_{i=1}^{n}\left( 
		\mathbb{E}_{(\mathbb{D},m)\sim\mathcal{T}}\mathbb{E}_{S\sim\mathbb{D}^{m}}\left[ \frac{1}{n}\mathcal{L}(\mathbb{Q}(S, \mathbb{P}_{S}^{\theta}), \mathbb{D}) \right] - 
		\frac{1}{n}\mathcal{L}(\mathbb{Q}(S_i,\mathbb{P}_{S_i}^{\theta}),\mathbb{D}_i) 
		\right)
	\end{align*}
	in inequality (\ref{MeasureInequality}). Then, we obtain 
	\begin{align}\label{Estimate1Step2} 
		\mathbb{E}_{\theta\sim\mathscr{Q}}[g(\theta)] \leq \frac{1}{\lambda}D_{\text{KL}}(\mathscr{Q}||\mathscr{P}) + \Pi^2(\lambda),
	\end{align}
	where 
	\begin{align*}
		\Pi^2(\lambda) \! = \! \frac{1}{\lambda} \! \ln\mathbb{E}_{\theta\sim\mathscr{P}}\exp\!\left(\!
		\frac{\lambda}{n} \sum_{i=1}^{n}\left[ 
		\mathbb{E}_{(\mathbb{D},m)\sim\mathcal{T}}\mathbb{E}_{S\sim\mathbb{D}^{m}}\!\left[ \mathcal{L}(\mathbb{Q}(S, \mathbb{P}_{S}^{\theta}), \mathbb{D}) \right] \! - \! 
		\mathcal{L}(\mathbb{Q}(S_i,\mathbb{P}_{S_i}^{\theta}),\mathbb{D}_i) 
		\right]
		\!\!\right).
	\end{align*}
	By a simple reduction for the left-hand side of (\ref{Estimate1Step2}), we obtain
	\begin{align}\label{Estimate2Step2} 
		\mathcal{L}(\mathscr{Q},\mathcal{T}) \leq \tilde{L}(\mathscr{Q}, \mathbb{D}_1, \ldots, \mathbb{D}_n)
		+ \frac{1}{\lambda}D_{\text{KL}}(\mathscr{Q}||\mathscr{P}) + \Pi^2(\lambda). 
	\end{align}
	
	\textbf{Step 3 (Combination)}:
	Combining estimates (\ref{Estimate4Step1}) and (\ref{Estimate2Step2}), we obtain 
	\begin{align}\label{Estimate1Both1}
		\begin{split}
			\mathcal{L}(\mathscr{Q}, \mathcal{T}) \leq & \, \hat{\mathcal{L}}(\mathscr{Q},S_1,\ldots,S_n) + 
			\left( \frac{1}{\lambda} + \frac{1}{\gamma} \right)D_{\text{KL}}(\mathscr{Q} || \mathscr{P})  \\
			& \, + \frac{1}{\gamma}\sum_{i=1}^{n}\mathbb{E}_{\theta\sim\mathscr{Q}}D_{\text{KL}}(\mathbb{Q}_i || \mathbb{P}_{S_i}^{\theta})
			+ \Pi^1(\gamma) + \Pi^2(\lambda). 
		\end{split}
	\end{align}
	Taking $\epsilon > 0$ be a positive small number, we have 
	\begin{align}\label{Estimate1Both3} 
		\begin{split}
			\mathbb{P}\left[ \Pi^1(\gamma) + \Pi^2(\lambda) > \ln\epsilon \right] = & 
			\mathbb{P}\left[ \exp\left( \Pi^1(\gamma) + \Pi^2(\lambda) \right) > \epsilon \right] \\
			\leq & \mathbb{P}\left[ \exp\left( \sqrt{n}\Pi^1(\gamma) + \sqrt{n}\Pi^2(\lambda) \right) > \epsilon^{\sqrt{n}} \right] \\
			\leq & \frac{\mathbb{E}\left(\exp(\sqrt{n}\Pi^1(\gamma) + \sqrt{n}\Pi^2(\lambda))\right)}{\epsilon^{\sqrt{n}}},
		\end{split}
	\end{align}
	where we employed the Markov's inequality to derive the second inequality. Let
	\begin{align}\label{Estimate1Both4}
		\delta = \frac{\mathbb{E}\left(\exp(\sqrt{n}\Pi^1(\gamma) + \sqrt{n}\Pi^2(\lambda))\right)}{\epsilon^{\sqrt{n}}},
	\end{align}
	then we find that
	\begin{align}\label{Estimate1Both5}
		\ln\epsilon = \frac{1}{\sqrt{n}}\ln\mathbb{E}\left(\exp(\sqrt{n}\Pi^1(\gamma) + \sqrt{n}\Pi^2(\lambda))\right) 
		+ \frac{1}{\sqrt{n}}\ln\frac{1}{\delta}. 
	\end{align}
	Combining the estimates (\ref{Estimate1Both3}), (\ref{Estimate1Both4}), and (\ref{Estimate1Both5}), we proved that 
	\begin{align*}
		\Pi^1(\gamma) + \Pi^2(\lambda) \leq 
		\frac{1}{\sqrt{n}}\ln\mathbb{E}\left(\exp(\sqrt{n}\Pi^1(\gamma) + \sqrt{n}\Pi^2(\lambda))\right) 
		+ \frac{1}{\sqrt{n}}\ln\frac{1}{\delta}
	\end{align*}
	holds true with probability at least $1-\delta$, which implies the desired conclusion. 
\end{proof}

\subsubsection*{\textbf{Proof of Corollary 2.10
		(Bound for the Sub-Gaussian Loss [Data-Dependent Prior])}}
\begin{proof}
	For the term $V_i^1$, we have 
	\begin{align}\label{PACDependentUnboundedLoss21}
		\begin{split}
			V_i^1 = & \mathbb{E}_{S_i\sim\mathbb{D}_{\mathcal{T}}}
			\mathbb{E}_{\theta\sim\mathscr{P}}\mathbb{E}_{u_i\sim\mathbb{P}_{S_i}^{\theta}}\exp\left( 
			\tilde{\gamma}\left[ 
			\mathcal{L}(u_i,\mathbb{D}_i) - \ell(u_i, z_{i})
			\right]\right) \\
			\leq & \mathbb{E}_{S_i^{'}\sim\mathbb{D}_{\mathcal{T}}}\mathbb{E}_{S_i\sim\mathbb{D}_{\mathcal{T}}}
			\mathbb{E}_{\theta\sim\mathscr{P}}\mathbb{E}_{u_i\sim\mathbb{P}_{S_i^{'}}^{\theta}} 
			\Psi(S_i,S_i^{'},u_i,\theta)\exp\left( \tilde{\gamma}[\mathcal{L}(u_i,\mathbb{D}_i) - \ell(u_i,z_i)] \right) \\
			\leq & \Psi_E^{1/2}
			\bigg(\mathbb{E}_{S_i^{'}\sim\mathbb{D}_{\mathcal{T}}}\mathbb{E}_{S_i\sim\mathbb{D}_{\mathcal{T}}}
			\mathbb{E}_{\theta\sim\mathscr{P}}\mathbb{E}_{u_i\sim\mathbb{P}_{S_i^{'}}^{\theta}} 
			\exp\left( 2\tilde{\gamma}[\mathcal{L}(u_i,\mathbb{D}_i) - \ell(u_i,z_i)] \right)\bigg)^{1/2} \\
			\leq & \Psi_E^{1/2}
			\exp\left( \tilde{\gamma}^2s_{\text{I}}^2\right).
		\end{split}
	\end{align}
	Through some simple calculations as for the bounded loss case shown in Corollary \ref{PACBoundsBoundedLoss2}, we find that 
	\begin{align}\label{GeneralSubGammaLossS22}
		\begin{split}
			\Big(\mathbb{E}e^{2\sqrt{n}\Pi^1(\gamma)}\Big)^{1/2} \leq & \Psi_E^{\frac{n\sqrt{n}}{2\gamma}}
			\exp\left( \frac{\gamma s_{\text{I}}^2}{\sqrt{n}\bar{m}} \right).
		\end{split}
	\end{align}
	Combining estimates on the term $\mathbb{E}e^{2\sqrt{n}\Pi^2(\lambda)}$ (not given here since it is 
	similar to the data-independent case shown in Corollary \ref{PACBoundsUnoundedLossThmSubGaussian}), we obtain 
	\begin{align}\label{GeneralSubGammaLossS23}
		\begin{split}
			\mathbb{E}\left[ e^{\sqrt{n}\Pi^1(\gamma) + \sqrt{n}\Pi^2(\lambda)} \right] \leq \Psi_E^{\frac{n\sqrt{n}}{2\gamma}}
			\exp\left( \frac{\gamma s_{\text{I}}^2}{\sqrt{n}\bar{m}} + 
			\frac{\lambda s_{\text{II}}^2}{2\sqrt{n}} \right).
		\end{split}
	\end{align}
	Combining estimate (\ref{GeneralSubGammaLossS23}) with the results given in Theorem 2.4 
    of the main text, we obtain the desired result.
\end{proof}

\subsubsection*{\textbf{Proof of Theorem 3.3
		(Bayesian Well-Posedness of Inverse Problems)}} 
\begin{proof}In the following, we only present the proofs when $\mathcal{H}$ is assumed to be an infinite-dimensional separable Hilbert space. When $\mathcal{H}=\mathbb{R}^{N_d}$, the proof is similar.
	
	\textbf{Step 1 (Well defined Bayes' formula).}
	Let us define $\mathbb{P}_0:=\mathcal{N}(0,\mathcal{C}_0)$.
	Since $f(S;\theta)\in\mathcal{C}_0^{\frac{1}{2}}\mathcal{U}$, 
	the measure $\mathbb{P}_S^{\theta}$ is equivalent to the measure $\mathbb{P}_0$. 
	Then we transform the Bayes' formulation as follow
	\begin{align}\label{ProofBayesFormula1}
		\frac{d\mathbb{Q}(S,\mathbb{P}_S^{\theta})}{d\mathbb{P}_0} = \frac{1}{Z_m}\exp\left( 
		-\frac{\beta}{m}\sum_{j=1}^{m}\Phi(u;x_j,y_j)
		\right)\frac{d\mathbb{P}_S^{\theta}}{d\mathbb{P}_0}.
	\end{align}
	If formula (\ref{ProofBayesFormula1}) is well defined, we know that formula (6) 
    of the main text is well defined. {\color{black}Let $u_k := \langle u, e_k \rangle$, where $\{e_k\}_{k=1}^{\infty}$ denotes the complete orthonormal system of eigenfunctions associated with the covariance operator $\mathcal{C}_0$.
	Based on Theorem 2.23 in \cite{Prato2014Book}, we find that 
	\begin{align*}
		\frac{d\mathbb{P}_{S}^{\theta}}{d\mathbb{P}_0}(u) = \exp\left( -\tilde{\Phi}(u; S) \right),
	\end{align*}
	where 
	\begin{align*}
		\tilde{\Phi}(u;S) := & -\left\langle \mathcal{C}_0(\theta_2)^{-\frac{1}{2}}f_m(S;\theta_1),\mathcal{C}_0(\theta_2)^{-\frac{1}{2}}(u-f_m(S;\theta_1)) \right\rangle_{\mathcal{U}}
		+\frac{1}{2}\left\| \mathcal{C}_0(\theta_2)^{-\frac{1}{2}}f_m(S;\theta_1) \right\|_{\mathcal{U}}^2 \\
		& + \frac{1}{2}\sum_{k=1}^{N_c}(e^{-\theta_{2k}}-\lambda_{k}^{-1})u_k^2
		+ \frac{1}{2}\sum_{k=1}^{N_c}(\theta_{2k} - \ln\lambda_k).
	\end{align*}
	Then, formula (\ref{ProofBayesFormula1}) can be written as 
	\begin{align}\label{ProofBayesFormula2}
		\frac{d\mathbb{Q}(S,\mathbb{P}_S^{\theta})}{d\mathbb{P}_0} = \frac{1}{Z_m}\exp\left( 
		-\frac{\beta}{m}\sum_{j=1}^{m}\Phi(u;x_j,y_j) - \tilde{\Phi}(u;S)
		\right).
	\end{align}}
	Considering the Assumptions 3.1 
    of the main text and Lemma 3.3 in \cite{Agapiou2013SPA}, 
	we notice that $u\in \mathcal{U}^{1-\tilde{s}}$ $\mathbb{P}_S^{\theta}$-a.s. when $\tilde{s}\in(s_0,1]$.
	According to the theory presented in Section 4.1 of \cite{Dashti2017}, we need to verify
	\begin{itemize}
		\item (\textbf{continuity condition}) the potential function 
		$$\frac{\beta}{m}\sum_{j=1}^m\Phi(u;x_j,y_j) + \tilde{\Phi}(u;S)\in C(\mathcal{U}^{1-\tilde{s}}\times(\mathcal{H}^{\alpha-s})^{m};\mathbb{R}),$$ 
		\item (\textbf{lower bound condition}) for all $u\in\mathcal{U}^{1-\tilde{s}}$, 
		$y_j\in B_{\mathcal{H}^{\alpha-s}}(r):=\{y\in\mathcal{H}:\|y\|_{\mathcal{H}^{\alpha-s}}< r\}$
		$$\frac{\beta}{m}\sum_{j=1}^m\Phi(u;x_j,y_j) + \tilde{\Phi}(u;S) \geq -N(r,\|u\|_{\mathcal{U}^{1-\tilde{s}}}),$$
		where $N:\mathbb{R}^{+}\times\mathbb{R}^{+}\rightarrow\mathbb{R}^{+}$ is monotonic non-decreasing separately in each argument.
		In addition, $\exp\left( N(r,\|u\|_{\mathcal{U}^{1-\tilde{s}}}) \right) \in L_{\mathbb{P}_0}^1(\mathcal{U}^{1-\tilde{s}};\mathbb{R})$. 
	\end{itemize}
	
	\textbf{Continuity of $\frac{\beta}{m}\sum_{j=1}^m\Phi(u;x_j,y_j)$.}
	For the summation $\frac{\beta}{m}\sum_{j=1}^m\Phi(u;x_j,y_j)$, it is enough to consider one of its term $\Phi(u;x_j,y_j)$. 
	For the continuity of $u$, we have 
	\begin{align}\label{ProofBayesFormula3}
		\begin{split}
			|\Phi(u_1;x_j,y_j) - \Phi(u_2;x_j,y_j)| = & \frac{1}{2}\left| \|\Gamma^{-\frac{1}{2}}\mathcal{L}_{x_j}\mathcal{G}(u_1)\|_{\mathcal{H}}^2 - \|\Gamma^{-\frac{1}{2}}\mathcal{L}_{x_j}\mathcal{G}(u_2)\|_{\mathcal{H}}^2\right| \\
			& + \left| \left\langle \Gamma^{-\frac{1}{2}}y_j, \Gamma^{-\frac{1}{2}}\mathcal{L}_{x_j}\mathcal{G}(u_1)- \Gamma^{-\frac{1}{2}}\mathcal{L}_{x_j}\mathcal{G}(u_2) \right\rangle_{\mathcal{H}} \right| \\
			= & \text{I}_1 + \text{I}_2 
		\end{split}
	\end{align}
	For the first term on the right-hand side of (\ref{ProofBayesFormula3}), we have 
	\begin{align}\label{ProofBayesFormula4}
		\begin{split}
			\text{I}_1 \leq & \frac{\tilde{C}^2}{2}(M_1(\|u_1\|_{\mathcal{U}^{1-\tilde{s}}})+M_1(\|u_2\|_{\mathcal{U}^{1-\tilde{s}}}))\| \mathcal{C}_1^{-\frac{s}{2}}\Gamma^{-\frac{1}{2}}\mathcal{L}_{x_j}(\mathcal{G}(u_1)- 
			\mathcal{G}(u_2))\|_{\mathcal{H}} \\
			\leq & \frac{\tilde{C}^2}{2}(M_1(\|u_1\|_{\mathcal{U}^{1-\tilde{s}}})+M_1(\|u_2\|_{\mathcal{U}^{1-\tilde{s}}}))
			M_2(\|u_1\|_{\mathcal{U}^{1-\tilde{s}}},\|u_2\|_{\mathcal{U}^{1-\tilde{s}}})\|u_1-u_2\|_{\mathcal{U}^{1-\tilde{s}}},
		\end{split}
	\end{align}
	where $\tilde{C} := \|\mathcal{C}_1^{\frac{s}{2}}\|_{\mathcal{B}(\mathcal{H})}$ is the operator norm of $\mathcal{C}_1^{\frac{s}{2}}$. 
	For the second term on the right-hand side of (\ref{ProofBayesFormula3}), we have 
	\begin{align}\label{ProofBayesFormula5}
		\begin{split}
			\text{I}_2 \leq & \|y_j\|_{\mathcal{H}^{\alpha-s}}\| \mathcal{C}_1^{-\frac{s}{2}}\Gamma^{-\frac{1}{2}}(\mathcal{L}_{x_j}\mathcal{G}(u_1)- \mathcal{L}_{x_j}\mathcal{G}(u_2))\|_{\mathcal{H}} \\
			\leq & \|y_j\|_{\mathcal{H}^{\alpha-s}} M_2(\|u_1\|_{\mathcal{U}^{1-\tilde{s}}},\|u_2\|_{\mathcal{U}^{1-\tilde{s}}})\|u_1-u_2\|_{\mathcal{U}^{1-\tilde{s}}}.
		\end{split}
	\end{align}
	From the estimates (\ref{ProofBayesFormula3}), (\ref{ProofBayesFormula4}), and (\ref{ProofBayesFormula5}), we obtain the continuity
	of $\Phi$ for the parameter $u$. {\color{black}For the variable $y_j$, we have 
	\begin{align*}
			|\Phi(u;x_j,y_j) - \Phi(u;x_j,y'_j)| & = \left| \left\langle 
			\Gamma^{-\frac{1}{2}}(y_j-y'_j), \Gamma^{-\frac{1}{2}}\mathcal{L}_{x_j}\mathcal{G}(u)
			\right\rangle_{\mathcal{H}} \right| + 
			\frac{1}{2}\sum_{k=1}^{N_c}|e^{-\theta_{2k}} - \lambda_k^{-1}||u_{1k}^2 - u_{2k}^2| \\
			\leq & \|y_j-y'_j\|_{\mathcal{H}^{\alpha-s}}M_1(\|u\|_{\mathcal{U}^{1-\tilde{s}}})+ \frac{1}{2}N_c(e^{-\theta_{\text{min}}} + \lambda_{N_c}^{-1})\max_{1\leq k\leq N_c}|u_{1k}^2 - u_{2k}^2|,
	\end{align*}
	which implies the continuity of $\Phi$ for the variable $y_j$.} Hence, we proved 
	\begin{align}\label{ProofBayesFormula6}
		\frac{\beta}{m}\sum_{j=1}^m\Phi(u;x_j,y_j) \in C(\mathcal{U}^{1-\tilde{s}}\times(\mathcal{H}^{\alpha-s})^{m};\mathbb{R}).
	\end{align}
	
	\textbf{Continuity of $\tilde{\Phi}(u;S)$.} For the continuity of $u$, we have 
	\begin{align}\label{ProofBayesFormula7}
		\begin{split}
			|\tilde{\Phi}(u_1, S)-\tilde{\Phi}(u_2, S)| \leq & \left| \left\langle 
			\mathcal{C}_0(\theta_2)^{-\frac{1}{2}}f_m(S;\theta_1), \mathcal{C}_0(\theta_2)^{-\frac{1}{2}}(u_1-u_2)
			\right\rangle_{\mathcal{U}} \right| \\
			\leq & \lambda_1 e^{-\theta_{\text{min}}} \|f_m(S;\theta_1)\|_{\mathcal{U}^{1+\tilde{s}}}\|u_1-u_2\|_{\mathcal{U}^{1-\tilde{s}}}.
		\end{split}
	\end{align}
	{\color{black}Let $S=\{(x_j,y_j)\}_{j=1}^{m}$ and $S'=\{(x_j,y'_j)\}_{j=1}^{m}$, we have 
	\begin{align}\label{ProofBayesFormula8}
		\begin{split}
			|\tilde{\Phi}(u,S) -  \tilde{\Phi}& (u,S')| = \left| 
			\left\langle \mathcal{C}_0(\theta_2)^{-\frac{1}{2}}(f_m(S;\theta_1)-f(S';\theta_1)), \mathcal{C}_0(\theta_2)^{-\frac{1}{2}}u \right\rangle_{\mathcal{U}}\right| \\
			& + \frac{3}{2}\left|\|\mathcal{C}_0(\theta_2)^{-\frac{1}{2}}f(S';\theta_1)\|_{\mathcal{U}}^2
			- \|\mathcal{C}_0(\theta_2)^{-\frac{1}{2}}f_m(S;\theta_1)\|_{\mathcal{U}}^2
			\right|  \\
			\leq & \lambda_1 e^{-\theta_{\text{min}}}\|f_m(S;\theta_1) - f(S';\theta_1)\|_{\mathcal{U}^{1+\tilde{s}}}\|u\|_{\mathcal{U}^{1-\tilde{s}}} \\
			& + \frac{3\lambda_1 e^{-\theta_{\text{min}}}}{2}\left( \|f_m(S;\theta_1)\|_{\mathcal{U}^{1}} + \|f(S';\theta_1)\|_{\mathcal{U}^{1}} \right)
			\|f_m(S;\theta_1) - f(S';\theta_1)\|_{\mathcal{U}^{1}}. 
		\end{split}
	\end{align}
	}Combining the estimates (\ref{ProofBayesFormula7}) and (\ref{ProofBayesFormula8}), 
	and the following assumptions
        \begin{align}\label{ContinuityOfFS}
		\|f(S;\theta) - f(S';\theta)\|_{\mathcal{U}^{1+\tilde{s}}}\rightarrow 0, \quad 
		\text{as }x_j\rightarrow x'_j \text{ in }\mathcal{X}\text{ and }
		y_j\rightarrow y'_j\text{ in }\mathcal{\mathcal{H}}^{\alpha-s},
	\end{align}
	we obtain $\tilde{\Phi}(u;S)\in C(\mathcal{U}^{1-\tilde{s}}\times(\mathcal{H}^{\alpha-s})^{m};\mathbb{R})$.
	
	{\color{black}To verify the lower bound condition, we have 
	\begin{align}\label{ProofBayesFormula9}
		\begin{split}
			\frac{\beta}{m}\sum_{j=1}^{m}\Phi(u;x_j,y_j) & \geq -\frac{\beta}{m}\sum_{j=1}^{m} \|\mathcal{C}_1^{\frac{s}{2}}\Gamma^{-\frac{1}{2}}y_j\|_{\mathcal{H}} 
			\|\mathcal{C}_1^{-\frac{s}{2}}\Gamma^{-\frac{1}{2}}\mathcal{L}_{x_j}\mathcal{G}(u)\|_{\mathcal{H}} \\
			& \geq - \frac{\beta}{m}\sum_{j=1}^{m}\|y_j\|_{\mathcal{H}^{\alpha-s}}M_1(\|u\|_{\mathcal{U}^{1-\tilde{s}}}) \\
			& \geq -\frac{1}{4\delta_1}\left(\frac{\beta}{m}\sum_{j=1}^{m}\|y_j\|_{\mathcal{H}^{\alpha-s}}\right)^2 - \delta_1 M_1(\|u\|_{\mathcal{U}^{1-\tilde{s}}})^2,
		\end{split}
	\end{align}
	and 
	\begin{align}\label{ProofBayesFormula10}
		\begin{split}
			\tilde{\Phi}(u,S) = & -\left\langle \mathcal{C}_0(\theta_2)^{-\frac{1}{2}}f_m(S;\theta_1),\mathcal{C}_0(\theta_2)^{-\frac{1}{2}}u \right\rangle_{\mathcal{U}} + \frac{3}{2}\|\mathcal{C}_0(\theta_2)^{-\frac{1}{2}}f_m(S;\theta_1)\|_{\mathcal{U}}^2 \\
			& +\frac{1}{2}\sum_{k=1}^{N_c}(e^{-\theta_{2k}}-\lambda_{k}^{-1})u_k^2 
			+ \frac{1}{2}\sum_{k=1}^{N_c}(\theta_{2k} - \ln\lambda_{k})  \\
			\geq & -\lambda_1e^{-\theta_{\text{min}}}\|f_m(S;\theta_1)\|_{\mathcal{U}^{1+\tilde{s}}}\|u\|_{\mathcal{U}^{1-\tilde{s}}} + 
			\frac{3\lambda_1}{2e^{\theta_{\text{min}}}}\|\mathcal{C}_0^{-\frac{1}{2}}f_m(S;\theta_1)\|_{\mathcal{U}}^2 \\ 
			& +\frac{1}{2}\sum_{k=1}^{N_c}(e^{-\theta_{2k}}-\lambda_{k}^{-1})u_k^2 
			+ \frac{1}{2}\sum_{k=1}^{N_c}(\theta_{2k} - \ln\lambda_{k})     \\ 
			\geq & -\delta_2 \|u\|_{\mathcal{U}^{1-\tilde{s}}}^2 - 
			\left( \frac{\lambda_1^2}{4\delta_2e^{2\theta_{\text{min}}}} - \frac{3\lambda_1}{2e^{\theta_{\text{min}}}} \right)\|f(S;\theta)\|_{\mathcal{U}^{1+\tilde{s}}}^2 \\
			& +\frac{1}{2}\sum_{k=1}^{N_c}(e^{-\theta_{2k}}-\lambda_{k}^{-1})u_k^2 
			+ \frac{1}{2}\sum_{k=1}^{N_c}(\theta_{2k} - \ln\lambda_{k}).
		\end{split}
	\end{align}
	According to the assumptions on $M_1(\cdot)$ and Proposition 1.13 in \cite{Prato2006IDAnalysis}, we know that 
	\begin{align}\label{AppliLowerBound11}
		\mathbb{E}_{u\sim\mathbb{P}_{S}^{\theta}}\exp\left( \delta_1  M_1(\|u\|_{\mathcal{U}^{1-\tilde{s}}})^2  + 
		\delta_2 \|u\|_{\mathcal{U}^{1-\tilde{s}}}^2 \right) < +\infty,
	\end{align}
	when $\delta_1$ and $\delta_2$ are chosen to be small enough positive numbers.}
	Considering the above inequality (\ref{AppliLowerBound11}) and $\|f(S;\theta)\|_{\mathcal{U}^{1+\tilde{s}}}\leq M(r)$ if $\|y_j\|_{\mathcal{H}^{\beta-s}} \leq r$ ($j=1,\ldots,m$), we easily find that the lower bound condition is fulfilled. 
	
	\textbf{Step 2 (Stability with respect to datasets).}
	From the definition of Hellinger distance, we have 
	\begin{align*}
		d_{\text{Hell}}(\mathbb{Q}(S,\mathbb{P}_{S}^{\theta}), \mathbb{Q}(S',\mathbb{P}_{S'}^{\theta}))^2 \leq I_1 + I_2,
	\end{align*}
	where
	\begin{align*}
			I_1 = & \frac{1}{Z_m} \int_{\mathcal{U}^{1-s}}\Bigg[ 
			\exp\left( -\frac{\beta}{2m}\sum_{j=1}^{m}\Phi(u;x_j,y_j) - \frac{1}{2}\tilde{\Phi}(u;S) \right) \\
			& \qquad\qquad\quad\,\,
			- \exp\left( -\frac{\beta}{2m}\sum_{j=1}^{m}\Phi(u;x'_j,y'_j) - \frac{1}{2}\tilde{\Phi}(u;S') \right)
			\Bigg]^2 \mathbb{P}_0(du),
	\end{align*}
	and 
	\begin{align*}
			I_2 = & \left| Z_m^{-\frac{1}{2}} - (Z'_m)^{-\frac{1}{2}} \right|^2 \int_{\mathcal{U}^{1-\tilde{s}}} 
			\exp\left( -\frac{\beta}{m}\sum_{j=1}^{m}\Phi(u;x'_j,y'_j) - \tilde{\Phi}(u;S') \right) \mathbb{P}_0(du),
	\end{align*}
	where $Z'_m$ is the normalization constant concerned with the datasets $S'$.
	For the term $I_1$, we have 
	\begin{align*}
			I_1 \leq \frac{1}{2Z_m}\int_{\mathcal{U}^{1-\tilde{s}}}e^{N(r,\|u\|_{\mathcal{U}^{1-\tilde{s}}})} (I_{11} + I_{12}) \mathbb{P}_0(du),
	\end{align*}
	where
	\begin{align*}
		I_{11} & = \left| \frac{\beta}{m}\sum_{j=1}^{m}\left[ 
		\Phi(u;x_j,y_j) - \Phi(u;x'_j,y'_j)
		\right] \right|^2, \\
		I_{12} & = \left| 
		\tilde{\Phi}(u;S) - \tilde{\Phi}(u;S')
		\right|^2.
	\end{align*}
	For the term $I_{11}$, we firstly consider the following calculations
	\begin{align*}
			\Phi(u;x_j,y_j) - & \Phi(u;x'_j,y'_j) = \frac{1}{2}\|\Gamma^{-\frac{1}{2}}\mathcal{L}_{x_j}\mathcal{G}(u)\|_{\mathcal{H}}^2 
			- \frac{1}{2}\|\Gamma^{-\frac{1}{2}}\mathcal{L}_{x'_j}\mathcal{G}(u)\|_{\mathcal{H}}^2 \\
			& + \left\langle \Gamma^{-\frac{1}{2}}y'_j, \Gamma^{-\frac{1}{2}}\mathcal{L}_{x'_j}\mathcal{G}(u)\right\rangle_{\mathcal{H}}
			- \left\langle \Gamma^{-\frac{1}{2}}y'_j, \Gamma^{-\frac{1}{2}}\mathcal{L}_{x_j}\mathcal{G}(u)\right\rangle_{\mathcal{H}}\\
			& + \left\langle \Gamma^{-\frac{1}{2}}y'_j, \Gamma^{-\frac{1}{2}}\mathcal{L}_{x_j}\mathcal{G}(u)\right\rangle_{\mathcal{H}}
			- \left\langle \Gamma^{-\frac{1}{2}}y_j, \Gamma^{-\frac{1}{2}}\mathcal{L}_{x_j}\mathcal{G}(u)\right\rangle_{\mathcal{H}},
	\end{align*}
	which implies that 
	\begin{align}\label{ApplicationEstimateI11}
		\begin{split}
			& |\Phi(u;x_j,y_j) - \Phi(u;x'_j,y'_j)| \leq \tilde{C}^2 M_1(\|u\|_{\mathcal{U}^{1-\tilde{s}}})
			\|\mathcal{C}_1^{-\frac{s}{2}}\Gamma^{-\frac{1}{2}}(\mathcal{L}_{x_j}\mathcal{G}(u) -  \mathcal{L}_{x_j'}\mathcal{G}(u))\|_{\mathcal{H}} \\
			& \quad 
			+ M_1(\|u\|_{\mathcal{U}^{1-\tilde{s}}})\|y_j-y'_j\|_{\mathcal{H}^{\alpha-s}}
			+ \|y'_j\|_{\mathcal{H}^{\alpha-s}}
			\|\mathcal{C}_1^{-\frac{s}{2}}\Gamma^{-\frac{1}{2}}(\mathcal{L}_{x_j}\mathcal{G}(u) -  \mathcal{L}_{x_j'}\mathcal{G}(u))\|_{\mathcal{H}}.
		\end{split}
	\end{align}
	The above estimate (\ref{ApplicationEstimateI11}) yields the following result
	\begin{align}\label{ApplicationEstimateI11Final}
		\text{I}_{11} \rightarrow 0, \quad\text{as }x_j\rightarrow x'_j \text{ in }\mathcal{X}\text{ and }
		y_j\rightarrow y'_j\text{ in }\mathcal{\mathcal{H}}^{\alpha-s}.
	\end{align}
	Concerned with the term $\text{I}_{12}$, estimate (\ref{ProofBayesFormula8}) already implies
	\begin{align}\label{ApplicationEstimateI12Final}
		\text{I}_{12} \rightarrow 0, \quad\text{as }x_j\rightarrow x'_j \text{ in }\mathcal{X}\text{ and }
		y_j\rightarrow y'_j\text{ in }\mathcal{\mathcal{H}}^{\alpha-s}.
	\end{align}
	Combining the above statements (\ref{ApplicationEstimateI11Final}) and (\ref{ApplicationEstimateI12Final}), we finally obtain
	\begin{align}\label{ApplicationEstimateI1Final}
		\text{I}_{1} \rightarrow 0, \quad\text{as }x_j\rightarrow x'_j \text{ in }\mathcal{X}\text{ and }
		y_j\rightarrow y'_j\text{ in }\mathcal{\mathcal{H}}^{\alpha-s}
	\end{align}
	by applying the Lebesgue's dominated convergence theorem.
	For the term $\text{I}_2$, we have 
	\begin{align*}
			\text{I}_2 \leq C\max\left(Z_m^{-3}, (Z'_m)^{-3}\right)|Z_m-Z'_m|^2.
	\end{align*}
	Similar to the derivation of (\ref{ApplicationEstimateI1Final}), we can prove $Z_m\rightarrow Z'_m$ as 
	$x_j\rightarrow x'_j \text{ in }\mathcal{X}\text{ and }y_j\rightarrow y'_j\text{ in }\mathcal{\mathcal{H}}^{\alpha-s}$. 
	Hence, we easily obtain 
	\begin{align}\label{ApplicationEstimateI2Final}
		\text{I}_{2} \rightarrow 0, \quad\text{as }x_j\rightarrow x'_j \text{ in }\mathcal{X}\text{ and }
		y_j\rightarrow y'_j\text{ in }\mathcal{\mathcal{H}}^{\alpha-s}.
	\end{align}
\end{proof}

\subsubsection*{\textbf{Proof of Lemma 3.6
		(Estimate Sub-Gaussian Parameters for the General Linear Inverse Problem)}} 

Before delving into the details of the proof, let us restate Lemma 3.6, 
incorporating the explicit estimations of $s_I$ and $s_{II}$ that were omitted in the main text.

\vskip 0.2 cm 
{\color{black}
\begin{lemma}\label{LemmaLinear-sIsII}
	Assume that Assumptions 3.1 and 3.2 
    in the main text are satisfied, and the conditions for the linear forward operator are reformulated as follows:
	\begin{align}\label{LinearForwardConditions}
		\begin{split}
			\|\mathcal{C}_1^{-\frac{s}{2}}\Gamma^{-\frac{1}{2}}\mathcal{L}_{x}\mathcal{G}u\|_{\mathcal{H}} \leq M_1 \|u\|_{\mathcal{U}^{1-\tilde{s}}} \text{ and }
			\|\mathcal{C}_1^{-\frac{s}{2}}\Gamma^{-\frac{1}{2}}\mathcal{L}_{x}\mathcal{G}(u_1 - u_2)\|_{\mathcal{H}} \leq
			M_2 \|u_1 - u_2\|_{\mathcal{U}^{1-\tilde{s}}},
		\end{split}
	\end{align}
	where $M_1$ and $M_2$ are two positive constants. Also, assume that the probability measure $\mathscr{E}$ has compact support, satisfying $\supp\mathscr{E}\subset B_{\tilde{s}}(R_u):=\{ u\in\mathcal{U} \,:\, \|u\|_{\mathcal{U}^{1-\tilde{s}}} \leq R_u \}$ with $R_u\in\mathbb{R}^{+}$. Let $\tilde{C}=\|\mathcal{C}_1^{\frac{s}{2}}\|_{\mathcal{B}(\mathcal{H})}$
	and assume that $\max\left\{3\tilde{\gamma}\tilde{C}^2M_2^2, 3\tilde{\gamma}^2M_1^2\text{Tr}(\mathcal{C}_1^s) \right\}\leq \min\left\{e^{-\theta_{\text{max}}},\lambda_1^{-1}\right\}$, where $\theta_{\text{max}}$ and $\lambda_1$ are as defined in Assumptions 3.1. 
    Furthermore, the base posterior measure is derived from formula (8). 
	Let us define the following two operators: 
	\begin{align*}
		\mathcal{C}_0(\theta_{\text{max}}) & = \sum_{k=1}^{N_c} e^{\theta_{\text{max}}}e_k\otimes e_k + \sum_{k=N_c+1}^{\infty} \lambda_{k}e_k \otimes e_k,  \\
		\mathcal{C}_0(\theta_{\text{min}}) & = \sum_{k=1}^{N_c} e^{\theta_{\text{min}}}e_k\otimes e_k + \sum_{k=N_c+1}^{\infty} \lambda_{k}e_k \otimes e_k,
	\end{align*}
	where $\theta_{\text{max}}$ and $\theta_{\text{min}}$ are two prespecified constants as in Subsection 2.3 
    of the main text. Consequently, for $s_{\text{I}}^2$ and $s_{\text{II}}^2$ as defined in Assumptions 2.8 and 2.9, 
    we have 
	\begin{align*}
		s_{\text{I}}^2 = &  
		\frac{8\tilde{C}^2M_2^2R_u^2}{\tilde{\gamma}} - \frac{\ln\det(\text{Id}-2\tilde{\gamma}\tilde{C}^2M_2^2\mathcal{C}_0(\theta_{\text{max}}))}{2\tilde{\gamma}^2}
		+ \frac{\tilde{C}^4M_1^4R_u^4}{4} 
		+ 2M_1^2\text{Tr}(\mathcal{C}_1^s)R_u^2 \\ 
		& + \max\left\{ \frac{\ln3}{3M_1^2\text{Tr}(\mathcal{C}_1^s)\lambda_{N_c+1}}, \frac{\ln3}{3M_1^2\text{Tr}(\mathcal{C}_1^s)e^{\theta_{\text{max}}}}, 2M_1^2\text{Tr}(\mathcal{C}_1^s) \right\}\text{Tr}(\mathcal{C}_0(\theta_{\text{max}})).
	\end{align*}
	\vskip -0.4 cm
	\begin{align*}
		s_{\text{II}}^2 = & \frac{\tilde{C}^4M_1^4R_u^4}{4} + \frac{12R_u^2\tilde{C}^2M_2^2}{\tilde{\lambda}} + 
		\frac{\tilde{C}^2M_2^2}{\tilde{\lambda}}\mathbb{E}_{(\mathbb{D},m)\sim\mathcal{T}}\left[ \frac{3\beta^2}{m^2} \mathbb{E}_{\bm{x}\sim\mathbb{D}_1^m}\text{Tr}(K_{\bm{x}}\Gamma_m K_{\bm{x}}^*) + 
		\frac{m}{\beta}\mathbb{E}_{\bm{x}\sim\mathbb{D}_{1}^m}\text{Tr}(\tilde{\mathcal{C}}_p)
		\right] \\
		&\, + \frac{12R_u^2}{\tilde{\lambda}} \mathbb{E}_{(\mathbb{D},m)\sim\mathcal{T}}\left[  \frac{\beta^2}{m^2}\mathbb{E}_{\bm{x}\sim\mathbb{D}_1^{m}}\|\mathcal{C}_0(\theta_{\text{max}})(\mathcal{L}_{\bm{x}}\mathcal{G})^{*}
		(\Gamma_m + \frac{\beta}{m}\mathcal{L}_{\bm{x}}\mathcal{G}\mathcal{C}_0(\theta_{\text{min}})(\mathcal{L}_{\bm{x}}\mathcal{G})^{*})^{-1}\mathcal{L}_{\bm{x}}\mathcal{G}
		\|_{\mathcal{B}(\mathcal{U}^{1-\tilde{s}})}^2 \right],
	\end{align*}
	where 
	\begin{align*}
	& \qquad\quad  \tilde{\mathcal{C}}_p^{-1} = (\mathcal{L}_{x}\mathcal{G})^*\Gamma_m^{-1}\mathcal{L}_{x}\mathcal{G} + 
	\frac{m}{\beta}\mathcal{C}_0(\theta_{\text{max}})^{-1}, \\
	& K_{\bm{x}} = \mathcal{C}_0(\theta_{\text{max}})^{\frac{1+\tilde{s}}{2}}(\mathcal{L}_{\bm{x}}\mathcal{G})^{*}(\Gamma_m + \frac{\beta}{m}\mathcal{L}_{\bm{x}}\mathcal{G}\mathcal{C}_0(\theta_{\text{min}})(\mathcal{L}_{\bm{x}}\mathcal{G})^{*})^{-1}.
	\end{align*}
	In addition, if the measurement variables $\bm{x}$ are dummy variables and the following boundedness assumptions hold true
	\begin{align*}
		& \quad \mathbb{E}_{(\mathbb{D},m)\sim\mathcal{T}}\left[\frac{\beta^2}{m^2} \mathbb{E}_{\bm{x}\sim\mathbb{D}_1^m}\text{Tr}(K_{\bm{x}}\Gamma_mK_{\bm{x}}^*)\right] < \infty, \\
		\mathbb{E}_{(\mathbb{D},m)\sim\mathcal{T}}\mathbb{E}_{S\sim\mathbb{D}^m}\| 
		& \mathcal{C}_0(\theta_{\text{max}})(\mathcal{L}_{\bm{x}}\mathcal{G})^{*}\big(\text{\small$\frac{m}{\beta}$}\Gamma_m  + \mathcal{L}_{\bm{x}}\mathcal{G}\mathcal{C}_0(\theta_{\text{min}})(\mathcal{L}_{\bm{x}}\mathcal{G})^{*}\big)^{-1}
		\mathcal{L}_{\bm{x}}\mathcal{G}
		\|_{\mathcal{B}(\mathcal{U}^{1-\tilde{s}})}^2 < \infty,
	\end{align*}
	we have 
	\begin{align*}
		s_{\text{I}}^2 = &  
		\max\left\{ \frac{\ln3}{3M_1^2\text{Tr}(\mathcal{C}_1^s)\lambda_{N_c+1}}, \frac{\ln3}{3M_1^2\text{Tr}(\mathcal{C}_1^s)e^{\theta_{\text{max}}}}, 2M_1^2\text{Tr}(\mathcal{C}_1^s) \right\}\text{Tr}(\mathcal{C}_0(\theta_{\text{max}})) \\
		& + \frac{\tilde{C}^4M_1^4R_u^4}{4} 
		+ 2M_1^2\text{Tr}(\mathcal{C}_1^s)R_u^2.
	\end{align*}
	and 
	\begin{align*}
		s_{\text{II}}^2 &  =\frac{\tilde{C}^4(M_1^4 + M_2^4)(R_u^4 + \text{Tr}(\mathcal{C}_p)^2)}{4} + 
		9\tilde{C}^4M_2^4 \left(\mathbb{E}_{(\mathbb{D},m)\sim\mathcal{T}}\left[ 
		\frac{\beta^2}{m^2} \left[\mathbb{E}_{\bm{x}\sim\mathbb{D}_1^m}\text{Tr}(K_{\bm{x}}\Gamma_mK_{\bm{x}}^*) \right]
		\right]\right)^2 \\
		& + 144R_u^4
		\left(\mathbb{E}_{(\mathbb{D},m)\sim\mathcal{T}}\left[
		\frac{\beta^2}{m^2}
		\mathbb{E}_{\bm{x}\sim\mathbb{D}_1^{m}}\|\mathcal{C}_0(\theta_{\text{max}})(\mathcal{L}_{\bm{x}}\mathcal{G})^{*}
		(\Gamma_m + \frac{\beta}{m}\mathcal{L}_{\bm{x}}\mathcal{G}\mathcal{C}_0(\theta_{\text{min}})(\mathcal{L}_{\bm{x}}\mathcal{G})^{*})^{-1}\mathcal{L}_{\bm{x}}\mathcal{G}
		\|_{\mathcal{B}(\mathcal{U}^{1-\tilde{s}})}^2
		\right]\right)^2,
	\end{align*}
	which are independent of $\tilde{\gamma}$ and $\tilde{\lambda}$.
\end{lemma}
}
\begin{remark}
	Before delving into the details of the proof, let us provide some discussions concerned with the above estimates of $s_{\text{I}}^2$ and $s_{\text{II}}^2$. The general estimates of $s_{\text{I}}^2$ and $s_{\text{II}}^2$ presented in the Lemma \ref{LemmaLinear-sIsII} differ slightly from those in the conventional sub-Gaussian case, as they depend on $\tilde{\gamma}$ and $\tilde{\lambda}$. 
	To construct the general estimates, we shall set the parameters to $\tilde{\gamma}=\frac{\gamma}{n\bar{m}}$ and $\tilde{\lambda} = \frac{\lambda}{n}$. This implies that we must select $\gamma$ and $\lambda$ to scale as $n\bar{m}$ and $n$, respectively, to ensure that $s_{\text{I}}$ and $s_{\text{II}}$ are finite. However, by incorporating some addition assumptions, we obtain the estimates that are independent of $\tilde{\gamma}$ and $\tilde{\lambda}$, which implies that the parameters $\gamma$ and $\lambda$ can be selected to scale as $n\sqrt{\bar{m}}$ and $\sqrt{n}$. Under this selection, the gap of between the transfer-error and the multi-task error would vanish as $n,\bar{m}\rightarrow\infty$. Finally, we should mention that the addition assumptions can be illustrated for the backward diffusion problem.
\end{remark}

\begin{proof}
	\textbf{Estimates of $V_i^1$}: 
	For the measurement data $y$, we assume that $y = \mathcal{L}_{x}\mathcal{G}u^{\dag} + \eta$ with 
	$u^{\dag}$ be the background true parameter sampled from $\mathscr{E}$. 
	Then the loss function can be reformulated as follow:
	\begin{align}\label{liziwujiexianxing1}
		\begin{split}
			\ell(u, z) = &\, \frac{1}{2}\|\Gamma^{-\frac{1}{2}}\mathcal{L}_{x}\mathcal{G}(u)\|_{\mathcal{H}}^2 - 
			\langle \Gamma^{-\frac{1}{2}}y, \Gamma^{-\frac{1}{2}}\mathcal{L}_{x}\mathcal{G}(u)\rangle_{\mathcal{H}} \\
			= &\, \frac{1}{2}\|\Gamma^{-\frac{1}{2}}\mathcal{L}_{x}(\mathcal{G}(u) - \mathcal{G}(u^{\dag}))\|_{\mathcal{H}}^2 - 
			\frac{1}{2}\|\Gamma^{-\frac{1}{2}}\mathcal{L}_{x}\mathcal{G}(u^{\dag})\|_{\mathcal{H}}^2 \\
			& \,- \langle\Gamma^{-\frac{1}{2}}\mathcal{L}_{x}\mathcal{G}(u), \Gamma^{-\frac{1}{2}}\eta \rangle_{\mathcal{H}}. 
		\end{split}
	\end{align}
	Relying on the above reformulation (\ref{liziwujiexianxing1}), for $i=1,\ldots,n$, we have 
	\begin{align}
		\begin{split}
			\mathcal{L}(u_i,\mathbb{D}_i) - \ell(u_i,z_{i}) = &\, \mathbb{E}_{z\sim\mathbb{D}_i}\ell(u_i,z) - \ell(u_i,z_i) \\ 
			= &\, I_1 + I_2 + I_3,
		\end{split}
	\end{align}
	where
	\begin{align*}
		I_1 = & \, \frac{1}{2}\mathbb{E}_{x\sim\mathbb{D}_1}
		\|\Gamma^{-\frac{1}{2}}\mathcal{L}_{x}(\mathcal{G}(u_i) - \mathcal{G}(u^{\dag}))\|_{\mathcal{H}}^2
		- \frac{1}{2}\|\Gamma^{-\frac{1}{2}}\mathcal{L}_{x_i}(\mathcal{G}(u_i) - \mathcal{G}(u^{\dag}))\|_{\mathcal{H}}^2, \\
		I_2 = &\, \frac{1}{2}\|\Gamma^{-\frac{1}{2}}\mathcal{L}_{x_i}\mathcal{G}(u_i^{\dag})\|_{\mathcal{H}}^2 - 
		\frac{1}{2}\mathbb{E}_{x\sim\mathbb{D}_1}\|\Gamma^{-\frac{1}{2}}\mathcal{L}_{x}\mathcal{G}(u_i^{\dag})\|_{\mathcal{H}}^2, \\
		I_3 = &\, \langle\Gamma^{-\frac{1}{2}}\mathcal{L}_{x_i}\mathcal{G}(u_i), \Gamma^{-\frac{1}{2}}\eta_i\rangle_{\mathcal{H}}. 
	\end{align*}
	Let us denote the constant $\tilde{C}:=\|\mathcal{C}_1^{\frac{s}{2}}\|_{\mathcal{B}(\mathcal{H})}$.
	For the term $I_1$, we have 
	\begin{align*}
		I_1 \leq & \frac{\tilde{C}^2}{2}\mathbb{E}_{x\sim\mathbb{D}_1}\| \mathcal{C}_1^{-\frac{s}{2}}\Gamma^{-\frac{1}{2}}
		\mathcal{L}_{x}(\mathcal{G}u_i - \mathcal{G}u_{i}^{\dag}) \|_{\mathcal{H}}^2 
		\leq \frac{\tilde{C}^2M_2^2}{2}\| u_i - u_i^{\dag} \|_{\mathcal{U}^{1-\tilde{s}}}^2. 
	\end{align*}
	{\color{black}Hence, we obtain
	\begin{align}\label{I1Est1}
		\begin{split} 
			& \mathbb{E}_{(\mathbb{D}_i,m_i)\sim\mathcal{T}}\mathbb{E}_{\theta\sim\mathscr{P}}\mathbb{E}_{u_i\sim\mathbb{P}_{S'}^{\theta}}
			\exp\left( \tilde{\gamma}I_1 \right)  \\
			\leq & \mathbb{E}_{(\mathbb{D}_i,m_i)\sim\mathcal{T}}\mathbb{E}_{\theta\sim\mathscr{P}}\mathbb{E}_{u_i\sim\mathbb{P}_{S'}^{\theta}}
			\!\exp\left( \tilde{\gamma}\tilde{C}^2M_2^2(\|u_i - f(S';\theta)\|_{\mathcal{U}^{1-\tilde{s}}}^2 
			\!+ \!\|f(S';\theta) - u_{i}^{\dag}\|_{\mathcal{U}^{1-\tilde{s}}}^2)\right) \\
			\leq & \exp\left( \tilde{\gamma}4\tilde{C}^2M_2^2R_u^2 \right)\mathbb{E}_{\theta\sim\mathscr{P}}\mathbb{E}_{u_i\sim\mathbb{P}_{S'}^{\theta}}
			\exp\left( \tilde{\gamma}\tilde{C}^2M_2^2(\|u_i - f(S';\theta)\|_{\mathcal{U}^{1-\tilde{s}}}^2 \right) \\
			\leq & \exp\left( \tilde{\gamma}4\tilde{C}^2M_2^2R_u^2 \right)
			\mathbb{E}_{\theta\sim\mathscr{P}}\mathbb{E}_{u_i\sim\mathbb{P}_0^{\theta_2}}\exp\left( \tilde{\gamma}\tilde{C}^2M_2^2\|u_i\|_{\mathcal{U}^{1-\tilde{s}}}^2 \right),
		\end{split}
	\end{align}
	where $\mathbb{P}_0^{\theta_2} := \mathcal{N}(0,\mathcal{C}_0(\theta_2))$. For the last term of the above inequality (\ref{I1Est1}), we have 
	\begin{align}\label{I1Est2}
		\begin{split}
			\mathbb{E}_{u_i\sim\mathbb{P}_0^{\theta_2}}\exp\left( \tilde{\gamma}\tilde{C}^2M_2^2\|u_i\|_{\mathcal{U}^{1-\tilde{s}}}^2 \right)
			\leq \det(\text{Id}-2\tilde{\gamma}\tilde{C}^2M_2^2\mathcal{C}_0(\theta_{\text{max}}))^{-1/2},
		\end{split}
	\end{align} 
	where the assumption $3\tilde{\gamma}\tilde{C}^2M_2^2 \leq \min\left\{e^{-\theta_{\text{max}}},\lambda_1^{-1}\right\}$ and Theorem 2.17 in \cite{Prato2014Book} are employed. 
	Combining (\ref{I1Est1}) and (\ref{I1Est2}), we find that 
	\begin{align}\label{I1FinalEst}
		\mathbb{E}_{\mathbb{D}_i\sim\mathcal{T}}\mathbb{E}_{\theta\sim\mathscr{P}}\mathbb{E}_{u_i\sim\mathbb{P}_{S'}^{\theta}}
		e^{\tilde{\gamma}I_1} \leq 
		\exp\left( \frac{\tilde{\gamma}^2}{2}\left[  
		\frac{8\tilde{C}^2M_2^2R_u^2}{\tilde{\gamma}} - \frac{\ln\det(\text{Id}-2\tilde{\gamma}\tilde{C}^2M_2^2\mathcal{C}_0(\theta_{\text{max}}))}{2\tilde{\gamma}^2}
		\right]\right).
	\end{align}
	}For the term $I_2$, we obviously have $\mathbb{E}_{z_i\sim\mathbb{D}_i}I_2 = 0$. 
	According to Assumptions 3.2 
	and the compactly supported condition illustrated in Lemma 3.6 
    of $\mathscr{E}$ in the main text, we find that
	\begin{align}\label{boundedEst2}
		\begin{split}
			\|\Gamma^{-\frac{1}{2}}\mathcal{L}_{x}\mathcal{G}(u_i^{\dag})\|_{\mathcal{H}}^2 \leq &\, 
			\tilde{C}^2 \|\mathcal{C}_1^{-\frac{s}{2}}\Gamma^{-\frac{1}{2}}\mathcal{L}_{x}\mathcal{G}(u_i^{\dag})\|_{\mathcal{H}}^2
			\leq \tilde{C}^2 M_1^2 R_u^2. 
		\end{split}
	\end{align}
	From the above estimate (\ref{boundedEst2}), we arrive at 
	\begin{align}\label{boundedEst3}
		\begin{split}
			-\frac{1}{2}\tilde{C}^2M_1^2 R_u^2 & \leq \text{I}_2 \leq \frac{1}{2}\tilde{C}^2M_1^2R_u^2. 
		\end{split}
	\end{align}
	Now, using the Hoeffding's lemma and estimate (\ref{boundedEst3}), we find that 
	\begin{align}\label{boundedEst4}
		\mathbb{E}_{(\mathbb{D}_i,m_i)\sim\mathcal{T}}\mathbb{E}_{x_i\sim\mathbb{D}_{i1}}e^{\tilde{\gamma}\text{I}_2}
		\leq \exp\left( 
		\frac{\tilde{\gamma}^2\tilde{C}^4M_1^4R_u^4}{8}
		\right). 
	\end{align}
	Let us denote $\{\zeta_k, \varphi_k\}_{k=1}^{\infty}$ be an eigen-system of the operator $\mathcal{C}_1$. 
	Since $\mathcal{C}_1\Gamma = \Gamma\mathcal{C}_1$, we know that the operator $\Gamma$ has an eigen-system $\{\gamma_k, \varphi_k\}_{k=1}^{\infty}$, which has the same eigenbasis as the operator $\mathcal{C}_1$ (Subsection 2.4 in \cite{Prato2006IDAnalysis}). 
	Without loss of generality, 
	the eigenvalues $\{\zeta_k\}_{k=1}^{\infty}$ and $\{\gamma_k\}_{k=1}^{\infty}$ are all rearranged in a descending order. 
	With these notations, we have 
	\begin{align}\label{boundedEst5-0}
		\begin{split}
			\mathbb{E}_{\eta_i\sim\mathbb{D}_3}e^{\tilde{\gamma}I_3} = &\, 
			\mathbb{E}_{\eta_i\sim\mathbb{D}_3}\exp\left( \tilde{\gamma}\langle\mathcal{C}_1^{-\frac{s}{2}}
			\Gamma^{-\frac{1}{2}}\mathcal{L}_{x_i}\mathcal{G}(u_i), \mathcal{C}_1^{\frac{s}{2}}\Gamma^{-\frac{1}{2}}\eta_i\rangle_{\mathcal{H}}
			\right) \\
			= &\, \mathbb{E}_{\eta_i\sim\mathbb{D}_3}\exp\left( \tilde{\gamma}\sum_{k=1}^{\infty}
			\langle\mathcal{C}_1^{-\frac{s}{2}}
			\Gamma^{-\frac{1}{2}}\mathcal{L}_{x_i}\mathcal{G}(u_i), \varphi_k\rangle_{\mathcal{H}}
			\langle\mathcal{C}_1^{\frac{s}{2}}\Gamma^{-\frac{1}{2}}\eta_i, \varphi_k\rangle_{\mathcal{H}}
			\right) \\
			\leq &\, \mathbb{E}_{\eta_i\sim\mathbb{D}_3}\prod_{k=1}^{\infty}\exp\left( 
			\tilde{\gamma}M_1 \|u_i\|_{\mathcal{U}^{1-\tilde{s}}} \zeta_k^{\frac{s}{2}}\gamma_k^{-\frac{1}{2}}\langle\eta_i, \varphi_k\rangle_{\mathcal{H}}
			\right)  \\
			= &\, \prod_{k=1}^{\infty}\frac{1}{\sqrt{2\pi}\gamma_k}\int_{\mathbb{R}}
			\exp\left(\tilde{\gamma}M_1 \|u_i\|_{\mathcal{U}^{1-\tilde{s}}} \zeta_k^{\frac{s}{2}}\gamma_k^{-\frac{1}{2}}\eta_{ik}
			- \frac{1}{2\gamma_k}\eta_{ik}^2
			\right)d\eta_{ik} \\
			= &\, \prod_{k=1}^{\infty}\exp\left( 
			\frac{\tilde{\gamma}^2M_1^2\|u_i\|_{\mathcal{U}^{1-\tilde{s}}}^2}{2}\zeta_k^{s}
			\right) \\
			= &\, \exp\left( \frac{\tilde{\gamma}^2}{2}M_1^2\text{Tr}(\mathcal{C}_1^s) \|u_i\|_{\mathcal{U}^{1-\tilde{s}}}^2 \right),
		\end{split}
	\end{align}
	where $\eta_{ik} = \langle\eta_i,\varphi_k\rangle_{\mathcal{H}}$. 
	{\color{black}Relying on the above estimate (\ref{boundedEst5-0}), we find that 
	\begin{align}\label{boundedEst5-1}
		\begin{split} 
			\mathbb{E}_{\eta\sim\mathbb{D}_3}\mathbb{E}_{\theta\sim\mathscr{P}}\mathbb{E}_{u_i\sim\mathbb{P}_{S'}^{\theta}}& e^{\tilde{\gamma}I_3} \leq \mathbb{E}_{\theta\sim\mathscr{P}}\mathbb{E}_{u_i\sim\mathbb{P}_{S'}^{\theta}}\exp\left( 
			\tilde{\gamma}^2M_1^2\text{Tr}(\mathcal{C}_1^s)\left[\|u_i - f(S';\theta)\|_{\mathcal{U}^{1-\tilde{s}}}^2 + R_u^2\right]\right) \\
			\leq & \exp\left( \tilde{\gamma}^2M_1^2\text{Tr}(\mathcal{C}_1^s)R_u^2 \right) 
			\mathbb{E}_{u\sim \mathbb{P}_0}\exp\left( \tilde{\gamma}^2M_1^2\text{Tr}(\mathcal{C}_1^2)\|u\|_{\mathcal{U}^{1-\tilde{s}}}^2 \right) \\
			\leq & \exp\left( \tilde{\gamma}^2M_1^2\text{Tr}(\mathcal{C}_1^s)R_u^2 \right) \det\left( \text{Id} - 2\tilde{\gamma}^2M_1^2\text{Tr}(\mathcal{C}_1^s)\mathcal{C}_0(\theta_{\text{max}}) \right)^{-1/2} \\
			= & \exp\left( 
			\tilde{\gamma}^2M_1^2\text{Tr}(\mathcal{C}_1^s)R_u^2 + \tilde{\gamma}^2 
			\frac{-\ln\det\left( \text{Id} - 2\tilde{\gamma}^2M_1^2\text{Tr}(\mathcal{C}_1^s)\mathcal{C}_0(\theta_{\text{max}}) \right)}{2\tilde{\gamma}^2}
			\right).
		\end{split}
	\end{align}
	Now we need to provide a detailed analysis of the term 
	$$M_d := \frac{-\ln\det\left( \text{Id} - 2\tilde{\gamma}^2M_1^2\text{Tr}(\mathcal{C}_1^s)\mathcal{C}_0(\theta_{\text{max}}) \right)}{2\tilde{\gamma}^2}$$ 
	appeared in the above estimate (\ref{boundedEst5-1}), which is important for our applications. 
	According to the definition of the determinant of the operator $\mathcal{C}_0(\theta_{\text{max}})$, we have 
	\begin{align}\label{estimateMd} 
		\begin{split}
			M_d & = \sum_{k=1}^{N_c} \frac{-\ln\left(1-2\tilde{\gamma}^2M_1^2\text{Tr}(\mathcal{C}_1^s)e^{\theta_{\text{max}}}\right)}{2\tilde{\gamma}^2} + \sum_{k=N_c+1}^{\infty}\frac{-\ln\left(1-2\tilde{\gamma}^2M_1^2\text{Tr}(\mathcal{C}_1^s)\lambda_k\right)}{2\tilde{\gamma}^2} \\
			& = 
			\sum_{k=1}^{N_c}e^{\theta_{\text{max}}} \frac{-\ln\left(1-2\tilde{\gamma}^2e^{\theta_{\text{max}}}M_1^2\text{Tr}(\mathcal{C}_1^s)\right)}{2\tilde{\gamma}^2e^{\theta_{\text{max}}}} + 
			\sum_{k=N_c+1}^{\infty}\lambda_k \frac{-\ln\left(1-2\tilde{\gamma}^2\lambda_kM_1^2\text{Tr}(\mathcal{C}_1^s)\right)}{2\tilde{\gamma}^2\lambda_k}.
		\end{split}
	\end{align}
	Since $\tilde{\gamma}$ is bounded, $\lambda_k\rightarrow 0$ as $k\rightarrow\infty$,  
	\begin{align*}
			\lim_{t\rightarrow 0}\frac{-\ln\left( 1-2tM_1^2\text{Tr}(\mathcal{C}_1^2) \right)}{2t} = M_1^2\text{Tr}(\mathcal{C}_1^s),
	\end{align*}
	\begin{align*}
			\frac{-\ln\left(1-2\tilde{\gamma}^2\lambda_kM_1^2\text{Tr}(\mathcal{C}_1^s)\right)}{2\tilde{\gamma}^2\lambda_k} \leq \frac{\ln3}{6M_1^2\text{Tr}(\mathcal{C}_1^s)\lambda_{N_c+1}}, \quad \forall k\geq N_c+1, 
	\end{align*}
	and 
	\begin{align*}
			\frac{-\ln\left(1-2\tilde{\gamma}^2e^{\theta_{\text{max}}}M_1^2\text{Tr}(\mathcal{C}_1^s)\right)}{2\tilde{\gamma}^2e^{\theta_{\text{max}}}} \leq \frac{\ln3}{6M_1^2\text{Tr}(\mathcal{C}_1^s)e^{\theta_{\text{max}}}},
	\end{align*}
	we know that 
	\begin{align}\label{estimateMdfinal}
		M_d \leq \max\left\{ \frac{\ln3}{6M_1^2\text{Tr}(\mathcal{C}_1^s)\lambda_1}, \frac{\ln3}{6M_1^2\text{Tr}(\mathcal{C}_1^s)e^{\theta_{\text{max}}}}, M_1^2\text{Tr}(\mathcal{C}_1^s) \right\}\text{Tr}(\mathcal{C}_0(\theta_{\text{max}})),
	\end{align}
	which is bounded when $\tilde{\gamma}\rightarrow 0$. Plugging the estimate (\ref{estimateMdfinal}) into the estimate (\ref{boundedEst5-1}), we finally arrive at 
	\begin{align*}
			\mathbb{E}_{\eta\sim\mathbb{D}_3}\mathbb{E}_{\theta\sim\mathscr{P}}\mathbb{E}_{u_i\sim\mathbb{P}_{S'}^{\theta}}e^{\tilde{\gamma}I_3} & \leq 
			\exp\Bigg( 
			\tilde{\gamma}^2 \Bigg[ M_1^2\text{Tr}(\mathcal{C}_1^s)R_u^2  \\
			& + 
			\max\left\{ \frac{\ln3}{6M_1^2\text{Tr}(\mathcal{C}_1^s)\lambda_1}, \frac{\ln3}{6M_1^2\text{Tr}(\mathcal{C}_1^s)e^{\theta_{\text{max}}}}, M_1^2\text{Tr}(\mathcal{C}_1^s) \right\}\text{Tr}(\mathcal{C}_0(\theta_{\text{max}}))
			\Bigg] \! \Bigg).
	\end{align*}
	Combining the above estimate of $I_3$ and the estimates (\ref{I1FinalEst}) and (\ref{boundedEst4}), we finally obtain the desired inequality
	\begin{align*}
		V_i^1 \leq \exp\Bigg( \frac{\tilde{\gamma}^2}{2}\Bigg[ &  
		\frac{8\tilde{C}^2M_2^2R_u^2}{\tilde{\gamma}} - \frac{\ln\det(\text{Id}-2\tilde{\gamma}\tilde{C}^2M_2^2\mathcal{C}_0(\theta_{\text{max}}))}{2\tilde{\gamma}^2}
		+ \frac{\tilde{\gamma}^2\tilde{C}^4M_1^4R_u^4}{4} + 2M_1^2\text{Tr}(\mathcal{C}_1^s)R_u^2   \\
		&  + 
		\max\left\{ \frac{\ln3}{6M_1^2\text{Tr}(\mathcal{C}_1^s)\lambda_1}, \frac{\ln3}{6M_1^2\text{Tr}(\mathcal{C}_1^s)e^{\theta_{\text{max}}}}, M_1^2\text{Tr}(\mathcal{C}_1^s) \right\}\text{Tr}(\mathcal{C}_0(\theta_{\text{max}}))
		\Bigg]\Bigg).
	\end{align*}
	}\textbf{Estimates of $V_i^2$}: 
	As in the estimates of $V_i^1$, we assume that the data $y_i$ is generated from some 
	true parameter $u_i^{\dag}$ sampled from the measure $\mathscr{E}$. Hence, we have 
	\begin{align}
			\mathcal{L}(\mathbb{Q}(S_i, \mathbb{P}_{S_i}^{\theta}), \mathbb{D}_i) = &\,
			\mathbb{E}_{u\sim\mathbb{Q}(S_i,\mathbb{P}_{S_i}^{\theta})}\mathbb{E}_{z\sim\mathbb{D}_i}\left[ 
			\frac{1}{2}\|\Gamma^{-\frac{1}{2}}\mathcal{L}_{x}\mathcal{G}u\|_{\mathcal{H}}^2 - 
			\langle \Gamma^{-\frac{1}{2}}\mathcal{L}_{x}\mathcal{G}u, \Gamma^{-\frac{1}{2}}y_i\rangle_{\mathcal{H}}
			\right] \nonumber \\
			= &\, \mathbb{E}_{u\sim\mathbb{Q}(S_i,\mathbb{P}_{S_i}^{\theta})}\mathbb{E}_{z\sim\mathbb{D}_i}\bigg[ 
			\frac{1}{2}\|\Gamma^{-\frac{1}{2}}\mathcal{L}_{x}\mathcal{G}u\|_{\mathcal{H}}^2 - 
			\langle \Gamma^{-\frac{1}{2}}\mathcal{L}_{x}\mathcal{G}u, \Gamma^{-\frac{1}{2}}\mathcal{L}_{x}\mathcal{G}u_i^{\dag}\rangle_{\mathcal{H}} \nonumber\\
			&\, -\langle \Gamma^{-\frac{1}{2}}\mathcal{L}_{x}\mathcal{G}u, \Gamma^{-\frac{1}{2}}\bm{\eta}\rangle_{\mathcal{H}}
			\bigg]  \label{LEst1} \\
			= &\, \mathbb{E}_{u\sim\mathbb{Q}(S_i,\mathbb{P}_{S_i}^{\theta})}\mathbb{E}_{x\sim\mathbb{D}_1}\left[ 
			\frac{1}{2}\|\Gamma^{-\frac{1}{2}}\mathcal{L}_{x}\mathcal{G}(u-u_i^{\dag})\|_{\mathcal{H}}^{2}\right] \nonumber\\
			& \, - \mathbb{E}_{x\sim\mathbb{D}_1}
			\left[ \frac{1}{2}\|\Gamma^{-\frac{1}{2}}\mathcal{L}_{x}\mathcal{G}u_i^{\dag}\|_{\mathcal{H}}^2 \right],\nonumber
	\end{align}
	which yields 
	\begin{align}
		\begin{split}
			V_i^2 \leq &\, \mathbb{E}_{(\mathbb{D}_i,m_i)\sim\mathcal{T}}\mathbb{E}_{S_i\sim\mathbb{D}_i^{m_i}}\mathbb{E}_{\theta\sim\mathscr{P}}
			\exp\left( \tilde{\lambda}\text{I}_1 + \tilde{\lambda}\text{I}_2 \right),
		\end{split}
	\end{align}
	where 
	\begin{align*}
		\text{I}_1 = &\, \mathbb{E}_{(\mathbb{D},m)\sim\mathcal{T}}\mathbb{E}_{S\sim\mathbb{D}^{m}}\mathbb{E}_{u\sim\mathbb{Q}(S,\mathbb{P}_{S}^{\theta})}
		\mathbb{E}_{x\sim\mathbb{D}_1}\left[ 
		\frac{1}{2}\|\Gamma^{-\frac{1}{2}}\mathcal{L}_{x}\mathcal{G}(u-u^{\dag})\|_{\mathcal{H}}^2
		\right], \\
		\text{I}_2 = &\, \mathbb{E}_{x\sim\mathbb{D}_1}\left[ \frac{1}{2}\|\Gamma^{-\frac{1}{2}}\mathcal{L}_{x}\mathcal{G}u_i^{\dag}\|_{\mathcal{H}}^2 \right] - 
		\mathbb{E}_{(\mathbb{D},m)\sim\mathcal{T}}\mathbb{E}_{S\sim\mathbb{D}^m}\mathbb{E}_{x\sim\mathbb{D}_1}\left[ 
		\frac{1}{2}\|\Gamma^{-\frac{1}{2}}\mathcal{L}_{x}\mathcal{G}u^{\dag}\|_{\mathcal{H}}^2 \right].
	\end{align*}
	For the term $\text{I}_2$, we easily obtain
	\begin{align*}
		-\frac{1}{2}\tilde{C}^2M_1^2R_u^2 \leq \text{I}_2 \leq \frac{1}{2}\tilde{C}^2M_1^2R_u^2. 
	\end{align*}
	The above boundedness estimate combined with the Hoeffding's lemma yields
	\begin{align}\label{linearEstI2} 
		\mathbb{E}_{\mathbb{D}_i\sim\mathcal{T}}\mathbb{E}_{S_i\sim\mathbb{D}_i^m}e^{\tilde{\lambda}\text{I}_2} \leq 
		\exp\left( \frac{\tilde{\lambda}^2 \tilde{C}^4M_1^4R_u^4}{8} \right).
	\end{align} 
	For the term $\text{I}_1$, we have 
	\begin{align}\label{linearEstI3}
		\begin{split}
			\text{I}_1 \leq &\, \frac{1}{2}\tilde{C}^2M_2^2\mathbb{E}_{(\mathbb{D},m)\sim\mathcal{T}}\mathbb{E}_{S\sim\mathbb{D}^m}
			\mathbb{E}_{u\sim\mathbb{Q}(S,\mathbb{P}_{S}^{\theta})}\|u-u^{\dag}\|_{\mathcal{U}^{1-\tilde{s}}}^2 \\
			= &\, \frac{1}{2}\tilde{C}^2M_2^2\bigg( 
			\mathbb{E}_{(\mathbb{D},m)\sim\mathcal{T}}\mathbb{E}_{S\sim\mathbb{D}^m}\mathbb{E}_{u\sim\mathbb{Q}(S,\mathbb{P}_{S}^{\theta})}\|u-u_p\|_{\mathcal{U}^{1-\tilde{s}}}^2  \\
			&\, \qquad\qquad + \mathbb{E}_{(\mathbb{D},m)\sim\mathcal{T}}\mathbb{E}_{S\sim\mathbb{D}^m}\mathbb{E}_{u\sim\mathbb{Q}(S,\mathbb{P}_{S}^{\theta})}\|u_p - u^{\dag}\|_{\mathcal{U}^{1-\tilde{s}}}^2
			\bigg) \\
			= &\, \frac{1}{2}\tilde{C}^2M_2^2\left( 
			\mathbb{E}_{(\mathbb{D},m)\sim\mathcal{T}}\mathbb{E}_{S\sim\mathbb{D}^m}\text{Tr}(\mathcal{C}_p) + 
			\mathbb{E}_{(\mathbb{D},m)\sim\mathcal{T}}\mathbb{E}_{S\sim\mathbb{D}^m}\mathbb{E}_{u\sim\mathbb{Q}(S,\mathbb{P}_{S}^{\theta})}\|u_p - u^{\dag}\|_{\mathcal{U}^{1-\tilde{s}}}^2
			\right).
		\end{split}
	\end{align}
	For the posterior covariance operator $\mathcal{C}_p$, we should notice that $\text{Tr}(\mathcal{C}_p)$ is bounded since the operator $\mathcal{C}_0$ is of trace class. 
	To obtain our final estimate, we need to analyze the term 
	$\mathbb{E}_{(\mathbb{D},m)\sim\mathcal{T}}\mathbb{E}_{S\sim\mathbb{D}^m}\mathbb{E}_{u\sim\mathbb{Q}(S,\mathbb{P}_{S}^{\theta})}\|u_p - u^{\dag}\|_{\mathcal{U}^{1-\tilde{s}}}^2$ in detail. Relying on the explicit form of $u_p$, we have 
	\begin{align}\label{linearEstI4}
		\begin{split}
			\mathbb{E}_{(\mathbb{D},m)\sim\mathcal{T}}\mathbb{E}_{S\sim\mathbb{D}^m}\|u_p - u^{\dag}\|_{\mathcal{U}^{1-\tilde{s}}}^2 
			\leq \text{I}_{11} + \text{I}_{12} + \text{I}_{13},
		\end{split}
	\end{align}
	{\color{black}where 
	\begin{align*}
		\text{I}_{11} = & 3\mathbb{E}_{(\mathbb{D},m)\sim\mathcal{T}}\mathbb{E}_{S\sim\mathbb{D}^m}\|f_m(S;\theta_1) - u^{\dag}\|_{\mathcal{U}^{1-\tilde{s}}}^2, \\
		\text{I}_{12} = & 3\mathbb{E}_{(\mathbb{D},m)\sim\mathcal{T}}\mathbb{E}_{S\sim\mathbb{D}^m}\|
		\mathcal{C}_0(\theta_{\text{max}})(\mathcal{L}_{\bm{x}}\mathcal{G})^{*}
		\big(\text{\small$\frac{m}{\beta}$}\Gamma_m + \mathcal{L}_{\bm{x}}\mathcal{G}\mathcal{C}_0(\theta_{\text{min}})(\mathcal{L}_{\bm{x}}\mathcal{G})^{*}\big)^{-1}\bm{\eta}
		\|_{\mathcal{U}^{1-\tilde{s}}}^2, \\
		\text{I}_{13} = & 3\mathbb{E}_{(\mathbb{D},m)\sim\mathcal{T}}\mathbb{E}_{S\sim\mathbb{D}^m}\|
		\mathcal{C}_0(\theta_{\text{max}}\!)(\mathcal{L}_{\bm{x}}\mathcal{G})^{*}\big(\text{\small$\frac{m}{\beta}$}\Gamma_m \! + \! \mathcal{L}_{\bm{x}}\mathcal{G}\mathcal{C}_0(\theta_{\text{min}})(\mathcal{L}_{\bm{x}}\mathcal{G})^{*}\big)^{-1}(
		\mathcal{L}_{\bm{x}}\mathcal{G}(u^{\dag} \! - \! f_m(S;\theta_1)))
		\|_{\mathcal{U}^{1-\tilde{s}}}^2.
	\end{align*}
	Relying on the assumptions of $f$ and $\mathscr{E}$, we have the following estimate for term $\text{I}_{11}$ and $\text{I}_{13}$
	\begin{align*}
			\text{I}_{11} \leq & 12 R_u^2, \\
			\text{I}_{13} \leq & 12 R_u^2 \mathbb{E}_{(\mathbb{D},m)\sim\mathcal{T}}\left[ 
			\frac{\beta^2}{m^2}\mathbb{E}_{\bm{x}\sim\mathbb{D}_1^{m}}\|\mathcal{C}_0(\theta_{\text{max}}\!)(\mathcal{L}_{\bm{x}}\mathcal{G})^{*}
			\big(\Gamma_m + \text{\small$\frac{\beta}{m}$}\mathcal{L}_{\bm{x}}\mathcal{G}\mathcal{C}_0(\theta_{\text{min}}\!)(\mathcal{L}_{\bm{x}}\mathcal{G})^{*}\big)^{-1}\mathcal{L}_{\bm{x}}\mathcal{G}
			\|_{\mathcal{B}(\mathcal{U}^{1-\tilde{s}})}^2
			\right].
	\end{align*}
	Remembering our assumptions on $\bm{\eta}$, we obtain 
	\begin{align}\label{linearEstI6}
		\text{I}_{12} \leq 3 \mathbb{E}_{(\mathbb{D},m)\sim\mathcal{T}}\left[ \frac{\beta^2}{m^2} \mathbb{E}_{\bm{x}\sim\mathbb{D}_1^m}\text{Tr}(K_{\bm{x}}\Gamma_mK_{\bm{x}}^*) \right],
	\end{align}
	where $K_{\bm{x}} = \mathcal{C}_0(\theta_{\text{max}})^{\frac{1+\tilde{s}}{2}}(\mathcal{L}_{\bm{x}}\mathcal{G})^{*}(\Gamma_m + \text{\small$\frac{\beta}{m}$}\mathcal{L}_{\bm{x}}\mathcal{G}\mathcal{C}_0(\theta_{\text{min}})(\mathcal{L}_{\bm{x}}\mathcal{G})^{*})^{-1}$. 
	Combining estimates from (\ref{linearEstI2}) to (\ref{linearEstI6}), we obtain the desired result. 
	In addition, if the following assumptions hold true
	\begin{align*}
		\mathbb{E}_{(\mathbb{D},m)\sim\mathcal{T}}\mathbb{E}_{S\sim\mathbb{D}^m}\| &
		\mathcal{C}_0(\theta_{\text{max}})(\mathcal{L}_{\bm{x}}\mathcal{G})^{*}\big(\text{\small$\frac{m}{\beta}$}\Gamma_m + \mathcal{L}_{\bm{x}}\mathcal{G}\mathcal{C}_0(\theta_{\text{min}})(\mathcal{L}_{\bm{x}}\mathcal{G})^{*}\big)^{-1}
		\mathcal{L}_{\bm{x}}\mathcal{G}
		\|_{\mathcal{B}(\mathcal{U}^{1-\tilde{s}})}^2 < \infty, 
	\end{align*}
	and 
	\begin{align*}
		\mathbb{E}_{(\mathbb{D},m)\sim\mathcal{T}}\left[\frac{\beta^2}{m^2} \mathbb{E}_{\bm{x}\sim\mathbb{D}_1^m}\text{Tr}(K_{\bm{x}}\Gamma_mK_{\bm{x}}^*)\right] < \infty,
	\end{align*}
	we can apply the Hoeffding's lemma to obtain an estimate of $s_{\text{II}}^2$ independent of $\tilde{\lambda}$.}
\end{proof}

\subsubsection*{\textbf{Proof of Lemma 3.7
		(Estimate Sub-Gaussian Parameters for the General Nonlinear Inverse Problem)}} 
\begin{proof}
	\textbf{Calculate} $s_{\text{I}}^2$. 
	The calculation of $s_{\text{I}}^2$ is similar to the first part proof of Lemma 3.6. 
	The only difference is the estimate of $I_1$ defined in the proof of Lemma 3.6. 
	Hence we will only provide the different parts in the following. The term $I_1$ is defined as follow:
	\begin{align*}
		I_1 = & \, \frac{1}{2}\mathbb{E}_{x\sim\mathbb{D}_1}
		\|\Gamma^{-\frac{1}{2}}\mathcal{L}_{x}(\mathcal{G}(u) - \mathcal{G}(u^{\dag}))\|_{\mathcal{H}}^2
		- \frac{1}{2}\|\Gamma^{-\frac{1}{2}}\mathcal{L}_{x_i}(\mathcal{G}(u_i) - \mathcal{G}(u^{\dag}))\|_{\mathcal{H}}^2.
	\end{align*}
	Obviously, we have $\mathbb{E}_{x_i\sim\mathbb{D}}I_1 = 0$. Let us denote $\tilde{C}:=\|\mathcal{C}_1^{\frac{s}{2}}\|_{\mathcal{B}(\mathcal{H})}$. 
	According to the truncated Gaussian prior measure assumption and the compactly supported condition of $\mathcal{E}$, i.e., $\supp\mathcal{E}\subset B_{\tilde{s}}(R_u)=\{ u\in\mathcal{U} \,:\, \|u\|_{\mathcal{U}^{1-\tilde{s}}} \leq R_u \}$, we find that 
	\begin{align*}
		\|\Gamma^{-\frac{1}{2}}\mathcal{L}_{x}(\mathcal{G}(u_i) - \mathcal{G}(u_i^{\dag}))\|_{\mathcal{H}}^2 \leq &\, 
		\tilde{C}^2 \|\mathcal{C}_1^{-\frac{s}{2}}\Gamma^{-\frac{1}{2}}\mathcal{L}_{x}(\mathcal{G}(u_i) - \mathcal{G}(u_i^{\dag}))\|_{\mathcal{H}}^2  \\
		\leq &\, \tilde{C}^2 M_2(\|u_i\|_{\mathcal{U}^{1-\tilde{s}}}, \|u_i^{\dag}\|_{\mathcal{U}^{1-\tilde{s}}})^2 
		\|u_i - u_i^{\dag}\|_{\mathcal{U}^{1-\tilde{s}}}^2 \\
		\leq &\, 4\tilde{C}^2 M_2(R_u,R_u)^2 R_u^2,
	\end{align*}
	which yields 
	\begin{align*}
		-2\tilde{C}^2M_2(R_u,R_u)^2R_u^2 & \leq \text{I}_1 \leq 2\tilde{C}^2M_2(R_u,R_u)^2R_u^2.
	\end{align*}
	{\color{black}Now, relying on the Hoeffding's lemma, we find that 
	\begin{align}\label{Example2EstimationI1buchong}
		\mathbb{E}_{(\mathbb{D}_i,m_i)\sim\mathcal{T}}\mathbb{E}_{x_i\sim\mathbb{D}_{i1}}e^{\tilde{\gamma}\text{I}_1}
		\leq \exp\left( 
		\frac{\tilde{\gamma}^2 4\tilde{C}^4M_2(R_u,R_u)^4R_u^4}{2}
		\right). 
	\end{align}
	Combining estimates (\ref{Example2EstimationI1buchong}) with the estimates (\ref{boundedEst4}) and (\ref{boundedEst5-0}) given in the proof of Lemma 3.6: 
	\begin{align*}
		\mathbb{E}_{(\mathbb{D}_i,m_i)\sim\mathcal{T}}\mathbb{E}_{x_i\sim\mathbb{D}_{i1}}e^{\tilde{\gamma}I_2} & \leq \exp\left( \frac{\tilde{\gamma}^2}{2}\frac{1}{4}\tilde{C}^4M_1(R_u)^4R_u^4 \right),  \\
		\mathbb{E}_{\eta\sim\mathbb{D}_3}\mathbb{E}_{\theta\sim\mathscr{P}}\mathbb{E}_{u_i\sim\mathbb{P}_{S'}^{\theta}}e^{\tilde{\gamma}I_3} & \leq \exp\left( \frac{\tilde{\gamma}^2}{2}M_1(R_u)^2\text{Tr}(\mathcal{C}_1^s)R_u^2 \right), 
	\end{align*} 
	we obtain the required estimate.}
	
	\textbf{Calculate} $s_{\text{II}}^2$. 
	Similar to the derivation of (\ref{LEst1}), we have 
	\begin{align*}
		\mathcal{L}(\mathbb{Q}(S_i, \mathbb{P}_{R_u}^{S_i,\theta}), \mathbb{D}_i) = &\, 
		\mathbb{E}_{u\sim\mathbb{Q}(S_i,\mathbb{P}_{R_u}^{S_i,\theta})}\mathbb{E}_{x\sim\mathbb{D}_1}
		\left[ \frac{1}{2}\|\Gamma^{-\frac{1}{2}}\mathcal{L}_{x}(\mathcal{G}(u) - \mathcal{G}(u_i^{\dag}))\|_{\mathcal{H}}^2 
		\right]  \\
		&\, - \mathbb{E}_{x\sim\mathbb{D}_1}\left[ \frac{1}{2}\|\Gamma^{-\frac{1}{2}}\mathcal{L}_{x}\mathcal{G}(u_i^{\dag})\|_{\mathcal{H}}^2 \right],
	\end{align*}
	where $\mathbb{P}_{R_u}^{S,\theta}$ is the truncated Gaussian prior measure defined in Remark 3.5. 
	Relying on the above formulation, we obtain
	\begin{align}\label{EstVi2}
		V_i^2 \leq \mathbb{E}_{(\mathbb{D}_i,m_i)\sim\mathcal{T}}\mathbb{E}_{S_i\sim\mathbb{D}_i^m}\mathbb{E}_{\theta\sim\mathscr{P}}
		\exp\left( \tilde{\lambda}\text{I}_1 + \tilde{\lambda}\text{I}_2 \right),
	\end{align}
	where 
	\begin{align*}
		\text{I}_1 = &\, \mathbb{E}_{(\mathbb{D},m)\sim\mathcal{T}}\mathbb{D}_{S\sim\mathbb{D}^m}
		\mathbb{E}_{u\sim\mathbb{Q}(S,\mathbb{P}_{R_u}^{S,\theta})}\mathbb{E}_{x\sim\mathbb{D}_1}
		\left[ \frac{1}{2}\|\Gamma^{-\frac{1}{2}}\mathcal{L}_{x}(\mathcal{G}(u) - \mathcal{G}(u^{\dag}))\|_{\mathcal{H}}^2 
		\right] \\
		&\, - \mathbb{E}_{u\sim\mathbb{Q}(S_i,\mathbb{P}_{R_u}^{S_i,\theta})}\mathbb{E}_{x\sim\mathbb{D}_1}
		\left[ \frac{1}{2}\|\Gamma^{-\frac{1}{2}}\mathcal{L}_{x}(\mathcal{G}(u) - \mathcal{G}(u_i^{\dag}))\|_{\mathcal{H}}^2 
		\right], \\
		\text{I}_2 = &\, \mathbb{E}_{x\sim\mathbb{D}_1}\left[ \frac{1}{2}\|\Gamma^{-\frac{1}{2}}\mathcal{L}_{x}\mathcal{G}(u_i^{\dag})\|_{\mathcal{H}}^2\right] -
		\mathbb{E}_{(\mathbb{D},m)\sim\mathcal{T}}\mathbb{D}_{S\sim\mathbb{D}^m}\mathbb{E}_{x\sim\mathbb{D}_1}\left[ \frac{1}{2}\|\Gamma^{-\frac{1}{2}}\mathcal{L}_{x}\mathcal{G}(u^{\dag})\|_{\mathcal{H}}^2\right].
	\end{align*}
	At this point, we should notice that the posterior measure $\mathbb{Q}(S,\mathbb{P}_{R_u}^{S,\theta})$ obtained 
	from the Bayes' formula with the truncated Gaussian measure has compact support which means 
	$\supp\mathbb{Q}(S, \mathbb{P}_{R_u}^{S,\theta})\subset B_{\tilde{s}}(R_u)$. Then we have 
	\begin{align}\label{InequalityEst}
		\begin{split}
			-2\tilde{C}^2M_1(R_u)^2 & \leq \text{I}_1 \leq 2\tilde{C}^2M_1(R_u)^2, \\
			-\frac{1}{2}\tilde{C}^2M_1(R_u)^2 &\leq \text{I}_2 \leq \frac{1}{2}\tilde{C}^2M_1(R_u)^2.
		\end{split}
	\end{align}
	Combining the above estimates (\ref{InequalityEst}) and estimate (\ref{EstVi2}), we could apply the Hoeffding's lemma to obtain
	\begin{align*}
		V_i^2 \leq \exp\left( \frac{\tilde{\lambda}^2 25\tilde{C}^4M_1(R_u)^4}{2} \right),
	\end{align*}
	which is just the desired result. 
\end{proof}

\subsubsection*{\textbf{Proof of Theorem 4.1
		(Bayes' Formula for the Hyper-Posterior)}} 
\begin{proof}
	Here we only provide details for the case of the unbounded loss function since the bounded loss function case is actually easier to be proved.
	{\color{black}In the main text, we consider the parameter space $\Theta = \Theta_1 \times \Theta_2$, where $\Theta_1$ is a separable Hilbert space and $\Theta_2 = \mathbb{R}^{N_c}$. Specifically, we assume the hyper-prior measure $\mathscr{P} = \mathscr{P}_1 \otimes \mathscr{P}_2$, where $\mathscr{P}_1$ denotes a probability measure defined on $\Theta_1$, and $\mathscr{P}_2$ denotes a probability measure defined on $\mathbb{R}^{N_c}$ with compact support $[\theta_{\text{min}},\theta_{\text{max}}]^{N_c}$. Here, the parameters $\theta_{\text{min}},\theta_{\text{max}}\in\mathbb{R}$ ($\theta_{\text{min}} < \theta_{\text{max}}$) are two fixed real numbers.} 
	Recall the proofs of Theorem 15 in \cite{Dashti2017}, we know that the key point is to prove $0 < Z_p < \infty$. 
	Through a trivial calculation, we have 
	\begin{align}
		Z_p = \int_{\Theta} \prod_{i=1}^{n}\bigg[ \int_{\mathcal{U}} 
		\exp\bigg( -\frac{\beta}{m_i}\sum_{j=1}^{m_i}\Phi(u;z_{ij}) \bigg) \mathbb{P}_{S_i}^{\theta}(du)
		\bigg]^{\frac{\lambda}{\lambda + n\beta}} \mathscr{P}(d\theta). 
	\end{align} 
	In the following, we assume the potential function is defined by 
	\begin{align*}
		\Phi(u;z_{ij}) = \frac{1}{2}\|\Gamma^{-\frac{1}{2}}\mathcal{L}_{x_{ij}}\mathcal{G}(u)\|_{\mathcal{H}}^2 
		- \langle\Gamma^{-\frac{1}{2}}y_{ij}, \Gamma^{-\frac{1}{2}}\mathcal{L}_{x_{ij}}\mathcal{G}(u)\rangle_{\mathcal{H}}.
	\end{align*}
	For the potential function defined as in Remark 3.4, 
	similar techniques can be used to derive the 
	desired results. As in the proof of Theorem 3.3, 
	we find that for a small enough constant $\delta_1$ such that
	\begin{align}\label{AlgBoundZpProof1}
		\frac{\beta}{m_i}\sum_{j=1}^{m_i}\Phi(u;z_{ij}) \geq -\frac{1}{4\delta_1}\left(\frac{\beta}{m_i}\sum_{j=1}^{m_i}\|y_{ij}\|_{\mathcal{H}^{\alpha-s}}\right)^2
		- \delta_1 M_1(\|u\|_{\mathcal{U}^{1-\tilde{s}}})^2. 
	\end{align}
	Obviously, we have 
	\begin{align*}
		\frac{d\mathbb{P}_{S_i}^{\theta}}{d\mathbb{P}_0}(u) = \exp\bigg( 
		-\tilde{\Phi}(u;S_i)	\bigg),
	\end{align*}
	{\color{black}where 
	\begin{align*}
	\tilde{\Phi}(u;S_i)= & -\langle \mathcal{C}_0(\theta_2)^{-\frac{1}{2}}f(S_i;\theta_1), \mathcal{C}_0(\theta_2)^{-\frac{1}{2}}(u-f(S_i;\theta_1))\rangle_{\mathcal{U}} 
	+ \frac{1}{2}\|\mathcal{C}_0(\theta_2)^{-\frac{1}{2}}f(S_i;\theta_1)\|_{\mathcal{U}}^2  \\
	& + \frac{1}{2}\sum_{k=1}^{N_c}(e^{\theta_{2k}} - \lambda_k^{-1})u_k^2 + \sum_{k=1}^{N_c}(\theta_{2k} - \ln\lambda_k).
	\end{align*} 
	Exactly the same as for the proof of Theorem 3.3, 
	we derive 
	\begin{align}\label{AlgBoundZpProof2}
		-\tilde{\Phi}(u;S_i) \leq & \delta_2 \|u\|_{\mathcal{U}^{1-\tilde{s}}}^2 
		+ \bigg( \frac{\lambda_1^2}{4\delta_2e^{2\theta_{\text{min}}}} - \frac{3\lambda_1}{2e^{\theta_{\text{min}}}} \bigg) R_u^2 \\
		& + \frac{1}{2}\sum_{k=1}^{N_c}(\lambda_k^{-1}-e^{\theta_{2k}})u_k^2 + \sum_{k=1}^{N_c}(\ln\lambda_k-\theta_{2k}).
	\end{align}
	Combining the estimates (\ref{AlgBoundZpProof1}) and (\ref{AlgBoundZpProof2}), for sufficiently small positive real numbers $\delta_1$ and $\delta_2$, we find 
	\begin{align}\label{AlgBoundZpProof3}
		\begin{split}
		\int_{\mathcal{U}} &
		\exp\bigg( -\frac{\beta}{m_i}\sum_{j=1}^{m_i}\Phi(u;z_{ij}) \bigg) \mathbb{P}_{S_i}^{\theta}(du) \\
		\leq & 
		\exp\bigg( \frac{1}{4\delta_1}\left(\frac{\beta}{m_i}\sum_{j=1}^{m_i}\|y_{ij}\|_{\mathcal{H}^{\alpha-s}}\!\!\right)^2 \!\!+ \bigg( \frac{\lambda_1^2}{4\delta_2e^{2\theta_{\text{min}}}} - \frac{3\lambda_1}{2e^{\theta_{\text{min}}}} \bigg)R_u^2
		+ \frac{N_c(\ln\lambda_1 + \theta_{\text{max}})}{2}
		\bigg)F,
		\end{split} 
	\end{align}
	where 
	\begin{align*} 
		F = \int_{\mathcal{U}} \exp\bigg( 
		\delta_1 M_1(\|u\|_{\mathcal{U}^{1-\tilde{s}}})^2 + \delta_2\|u\|_{\mathcal{U}^{1-\tilde{s}}}^2 + 
		\frac{1}{2}\sum_{k=1}^{N_c}(\lambda_k^{-1} - e^{-\theta_{2k}})u_k^2
		\bigg)\mathbb{P}_0(du) < +\infty.
	\end{align*}
	Plugging estimate (\ref{AlgBoundZpProof3}) into the formula of $Z_p$, we derive $Z_p < \infty$. 
	Using the assumptions on the forward operator and the function $f(S;\theta)$, we have 
	\begin{align}\label{AlgBoundZpProof4}
		\begin{split}
			\sum_{j=1}^{m_i}\Phi(u;z_{ij}) \leq & \frac{1}{2}\|\mathcal{C}_0^{\frac{s}{2}}\|_{\mathcal{B}(\mathcal{U})}^2
			M_1(\|u\|_{\mathcal{U}^{1-\tilde{s}}})^2 + M_1(\|u\|_{\mathcal{U}^{1-\tilde{s}}})\sum_{j=1}^{m_i}\|y_{ij}\|_{\mathcal{H}^{\alpha-s}} \\
			\tilde{\Phi}(u;S_i) \leq & \, \frac{3R_u^2}{2} + R_u \|u\|_{\mathcal{U}^{1-\tilde{s}}} + \frac{N_c}{2e^{\theta_{\text{min}}}}\|u\|_{\mathcal{U}^{1-\tilde{s}}}^2 + N_c(\theta_{\text{max}} + \ln\lambda_1). 
		\end{split}
	\end{align}
	Using estimates in (\ref{AlgBoundZpProof4}), we derive 
	\begin{align}\label{AlgBoundZpProof5}
		\int_{\mathcal{U}}\exp\bigg( -\frac{\beta}{m_i}\sum_{j=1}^{m_i}\Phi(u;z_{ij}) \bigg) \mathbb{P}_{S_i}^{\theta}(du) > 0 
	\end{align}
	independent of the parameter $\theta$, which yields $Z_p > 0$.}
\end{proof}

\subsection{Proof Details of Subsection \ref{AppA4}}\label{AppB2}
\subsubsection*{\textbf{Verify Condition (\ref{BackDiffusionInequalities}) for the Backward Diffusion Problem}}
\begin{proof}
	Since the operator $\mathcal{C}_0$ has the following eigen-system decomposition: 
	\begin{align}\label{Example11}
		\mathcal{C}_0e_k = \lambda_ke_k, \quad \text{for all } k=1,2,\ldots,
	\end{align}
	we easily find that 
	\begin{align}\label{Example12}
		Ae_k = \sqrt{\frac{\lambda}{\lambda_k}}e_k, \quad \mathcal{C}_1e_k = \frac{\lambda_k}{\lambda}e_k, \quad \text{for all} k=1,2,\ldots. 
	\end{align}
	Employing (\ref{Example11}) and (\ref{Example12}), we obtain 
	\begin{align*}
		\|\mathcal{C}_1^{-\frac{s}{2}}\Gamma^{-\frac{1}{2}}e^{-AT}u\|_{\mathcal{H}}^2 = & \tau^{-2}
		\left\| \sum_{k=1}^{\infty}u_k \left( \frac{\lambda}{\lambda_k} \right)^{\frac{s}{2}} e^{-T\sqrt{\frac{\lambda}{\lambda_k}}}e_k \right\|_{\mathcal{H}}^2 \\
		= & \tau^{-2}\sum_{k=1}^{\infty}u_k^2 \frac{\lambda^s}{\lambda_k^s}e^{-2T\sqrt{\frac{\lambda}{\lambda_k}}} \\
		= & \tau^{-2}\sum_{k=1}^{\infty}\lambda_k^{-(1-\tilde{s})}u_k^2 \lambda_k^{1-s}\frac{\lambda^s}{\lambda_k^s}e^{-2T\sqrt{\frac{\lambda}{\lambda_k}}} \\
		\leq & \tau^{-2}\left( 
		\sup_{k}\lambda^s\lambda_k^{1-2s}e^{-2T\sqrt{\frac{\lambda}{\lambda_k}}}
		\right)\|u\|_{\mathcal{U}^{1-\tilde{s}}}^2,
	\end{align*}
	which is the required estimate. 
\end{proof}

\subsubsection*{\textbf{Calculate $s_{\text{I}}$ and $s_{\text{II}}$ for the Backward Diffusion Problem}}
\begin{proof}
	For every $u\in\mathcal{H}$, denote $u_k = \langle u, e_k\rangle_{\mathcal{H}}$, we have 
	\begin{align*}
		\|\mathcal{C}_0^{\frac{s}{2}}u\|_{\mathcal{H}}^2 = & \sum_{k=1}^{\infty}\lambda_k^{s} u_k^2 \leq \lambda_1^{s}\|u\|_{\mathcal{H}}^2.
	\end{align*}
	Taking $u = u_1e_1$, we obtain $\|\mathcal{C}_0^{\frac{s}{2}}u\|_{\mathcal{H}}^2\geq \lambda_1^{s}\|u\|_{\mathcal{H}}^2$, 
	which yields $\tilde{C} = \lambda^{-s/2}\|\mathcal{C}_0^{\frac{s}{2}}\|_{\mathcal{B}(\mathcal{H})}= \left( \frac{\lambda_1}{\lambda} \right)^{s/2}$.
	Plugging $\tilde{C}$, $M_1$, and $M_2$ into the equation of $s_{\text{I}}^2$, we have 
	\begin{align*}
		s_{\text{I}}^2 = & 
		\frac{\lambda_1^{2s}M_s^2 R_u^4}{4\lambda^{2s}\tau^4} + 
		\frac{2M_s \text{Tr}(\mathcal{C}_0^s)R_u^2 }{\tau^2\lambda^2} 
		 + \max\left\{ 
		\frac{\tau^2\lambda^s\ln3}{3M_s\text{Tr}(\mathcal{C}_0^s)\min\{\lambda_{N_c+1},e^{\theta_{\text{max}}}\}}, \frac{2M_s\text{Tr}(\mathcal{C}_0^s)}{\tau^2\lambda^2}
		\right\}\text{Tr}(\mathcal{C}_0),
	\end{align*}
	where $M_s = \sup_k \left[ \lambda^{s}\lambda_k^{1-2s} e^{-2T\sqrt{\frac{\lambda}{\lambda_k}}} \right].$
	{\color{black}Concerned with $s_{\text{II}}^2$, we firstly give the following two estimates: 
	\begin{align}\label{liziapp1}
		\begin{split}
			\frac{m}{\beta}\text{Tr}(\tilde{\mathcal{C}}_p) = & 
			\frac{m}{\beta}\sum_{k=1}^{\infty}\langle\tilde{\mathcal{C}}_pe_k, e_k\rangle_{\mathcal{H}} \\
			= & \frac{m}{\beta}\sum_{k=1}^{N_c}\frac{1}{\tau^{-2}me^{-2\lambda_k^{-1/2}T} + \frac{m}{\beta}e^{-\theta_{\text{max}}}} 
			+ \frac{m}{\beta}\sum_{k=N_c+1}^{\infty}\frac{1}{\tau^{-2}me^{-2\lambda_k^{-1/2}T} + \frac{m}{\beta}\lambda_k^{-1}} \\
			= & \sum_{k=1}^{N_c}\frac{1}{\tau^{-2}\beta e^{-2\lambda_k^{-1/2}T} + e^{-\theta_{\text{max}}}} 
			+\sum_{k=N_c+1}^{\infty} \frac{1}{\tau^{-2}\beta e^{-2\lambda_k^{-1/2}T} + \lambda_k^{-1}}  \\
			\leq &  N_ce^{\theta_{\text{max}}} + \sum_{k=N_+1}^{\infty}\lambda_k \\
			\leq & N_ce^{\theta_{\text{max}}} + \text{Tr}(\mathcal{C}_0),
		\end{split}
	\end{align}
	and
	\begin{align}\label{liziapp2}
		\begin{split}
			\frac{3\beta^2}{m^2} \mathbb{E}_{\bm{x}\sim\mathbb{D}_1^m}\text{Tr}(K_{\bm{x}}\Gamma_m K_{\bm{x}}^*)= &  
			\tau^2\frac{3\beta^2}{m^2} \sum_{k=1}^{\infty}\langle K_{\bm{x}}K_{\bm{x}}^* e_k, e_k\rangle_{\mathcal{H}} \\
			= & \tau^2\frac{3\beta^2}{m^2} \Bigg(\sum_{k=1}^{N_c} e^{\theta_{\text{max}}(1+s)}e^{-2\sqrt{\frac{\lambda}{\lambda_k}}T}
			\left( \tau^2 + \frac{\beta}{m}e^{-2\sqrt{\frac{\lambda}{\lambda_k}}T}e^{\theta_{\text{min}}} \right)^{-2} \\
			& + \sum_{k=N_c}^{\infty}
			\lambda_k^{1+s}e^{-2\sqrt{\frac{\lambda}{\lambda_k}}T}
			\left( \tau^2 + \frac{\beta}{m}e^{-2\sqrt{\frac{\lambda}{\lambda_k}}T}\lambda_k \right)^{-2} \Bigg) \\
			\leq & \frac{3\beta}{m} N_c e^{(\theta_{\text{max}} - \theta_{\text{min}})s} + \frac{3\beta}{m} \sum_{k=N_c+1}^{\infty}\lambda_k^{s}  \\
			= & \frac{3\beta}{m}\Big(N_c e^{(\theta_{\text{max}} - \theta_{\text{min}})s} + \text{Tr}\left( \mathcal{C}_0^{s} \right) \Big).
		\end{split}
	\end{align}
	From the following estimate
	\begin{align*}
		& \|\mathcal{C}_0(\theta_{\text{max}})(\mathcal{L}_{\bm{x}}\mathcal{G})^{*}
		(\Gamma_m + \frac{\beta}{m}\mathcal{L}_{\bm{x}}\mathcal{G}\mathcal{C}_0(\theta_{\text{min}})(\mathcal{L}_{\bm{x}}\mathcal{G})^{*})^{-1}\mathcal{L}_{\bm{x}}\mathcal{G}u\|_{\mathcal{H}^{1-s}}^2  \\
		= & \sum_{k=1}^{N_c}\left[ 
		e^{\theta_{\text{max}}}e^{-2\sqrt{\frac{\lambda}{\lambda_k}}T}\left( \tau^2 + \frac{\beta}{m}e^{-2\sqrt{\frac{\lambda}{\lambda_k}}T}e^{\theta_{\text{min}}} \right)^{-1}
		\right]^2\lambda_k^{s-1}u_k^2  \\
		& + 
		\sum_{k=N_c}^{\infty}\left[ 
		\lambda_ke^{-2\sqrt{\frac{\lambda}{\lambda_k}}T}\left( \tau^2 + \frac{\beta}{m}e^{-2\sqrt{\frac{\lambda}{\lambda_k}}T}\lambda_k \right)^{-1}
		\right]^2\lambda_k^{s-1}u_k^2 \\
		\leq & \frac{m^2}{\beta^2}e^{2(\theta_{\text{max}} - \theta_{\text{min}})}\|u\|_{\mathcal{H}^{1-s}}^2, 
	\end{align*}
	we derive 
	\begin{align*}
			\frac{12\beta^2}{m^2} R_u^2 \mathbb{E}_{\bm{x}\sim\mathbb{D}_1^{m}}\|\mathcal{C}_0(\mathcal{L}_{\bm{x}}\mathcal{G})^{*}
			(\Gamma_m + \frac{\beta}{m}\mathcal{L}_{\bm{x}}\mathcal{G}\mathcal{C}_0(\mathcal{L}_{\bm{x}}\mathcal{G})^{*})^{-1}\mathcal{L}_{\bm{x}}\mathcal{G}
			\|_{\mathcal{B}(\mathcal{U}^{1-\tilde{s}})}^2 \leq 12 R_u^2 e^{2(\theta_{\text{max}} - \theta_{\text{min}})}. 
	\end{align*}
	}Combining the above estimate with estimates (\ref{liziapp1}) and (\ref{liziapp2}), we obtain the desired formula of $s_{\text{II}}^2$, 
	which completes the proof. 
\end{proof}


\bibliographystyle{agsm}
\bibliography{reference}
\end{appendix}